\title[$\mathrm{X}$-evolution flows and $\mathrm{X}$-foliations]
{From green mutation to $\mathrm{X}$-evolution: \\ Flows and foliations
on cluster complexes}
\author{Yu Qiu}
\address{Qy: Yau Mathematical Sciences Center and Department of Mathematical Sciences, Tsinghua University, 100084 Beijing, China.
    \&
Beijing Institute of Mathematical Sciences and Applications, Yanqi Lake, Beijing, China}
\email{yu.qiu@bath.edu}
\author{Liheng Tang}
\address{Tl: Qiuzhen College and Department of Mathematical Sciences, Tsinghua University, 100084 Beijing, China.}
\email{tang-lh20@mails.tsinghua.edu.cn}
\thanks{Qy is supported by National Natural Science Foundation of China (Grant No. 12425104).}
\dedicatory{Dedicated to Thomas Br\"{u}stle on the occasion of his sixtieth birthday}
\tikzset{->-/.style={decoration={  markings,  mark=at position #1 with
    {\arrow{>}}},postaction={decorate}}}
\tikzset{-<-/.style={decoration={  markings,  mark=at position #1 with
    {\arrow{<}}},postaction={decorate}}}
\newcolumntype{L}{>{$}l<{$}} 
\theoremstyle{plain}
\newtheorem{theorem}{Theorem}[section]
\newtheorem{lemma}[theorem]{Lemma}
\newtheorem{corollary}[theorem]{Corollary}
\newtheorem{proposition}[theorem]{Proposition}
\newtheorem{lemdef}[theorem]{Definition/Lemma}
\newtheorem{conjecture}[theorem]{Conjecture}
\newtheorem{convention}[theorem]{Convention}
\theoremstyle{definition}
\newtheorem{definition}[theorem]{Definition}
\newtheorem{example}[theorem]{Example}
\newtheorem{remark}[theorem]{Remark}
\numberwithin{equation}{section}
\newcommand\hua{\mathcal}
\newcommand\NN{\mathbb{N}}
\newcommand\ZZ{\mathbb{Z}}
\newcommand\RR{\mathbb{R}}
\newcommand\<{\langle}
\renewcommand\>{\rangle}
\renewcommand{\setminus}{\smallsetminus}
\renewcommand{\emptyset}{\varnothing}
\newcommand{\Proj}{\operatorname{Proj}}
\newcommand\Add{\operatorname{add}}
\newcommand\Ind{\operatorname{Ind}}
\newcommand\Sim{\operatorname{Sim}}
\newcommand\Hom{\operatorname{Hom}}
\newcommand\End{\operatorname{End}}
\newcommand\Ext{\operatorname{Ext}}
\newcommand\Irr{\operatorname{Irr}}
\newcommand{\Int}{\operatorname{Int}}
\newcommand\Br{\operatorname{Br}} 
\newcommand\Grot{\operatorname{K}} 
\newcommand{\h}{\operatorname{\hua{H}}} 
\newcommand{\C}{\operatorname{\hua{C}}} 
\newcommand{\D}{\operatorname{\hua{D}}} 
\newcommand{\per}{\operatorname{per}} 
\newcommand\Aut{\operatorname{Aut}} 
\newcommand{\EG}{\operatorname{EG}} 
\newcommand{\SEG}{\operatorname{SEG}} 
\newcommand{\ST}{\operatorname{ST}}  
\newcommand{\CEG}{\operatorname{CEG}} 
\newcommand{\uCEG}{\underline{\CEG}} 
\newcommand\ceg{\operatorname{\hua{CEG}}}
\renewcommand{\k}{\mathbf{k}}
\newcommand{\Cone}{\operatorname{Cone}}
\newcommand{\vsource}{\otimes}
\newcommand{\vsink}{\odot}
\newcommand{\vertx}{\bullet}
\newcommand{\on}[1]{\operatorname{#1}}
\newcommand\class{\mathfrak{Q}}
\def\nn{node{$\bullet$}}
\newcommand\surf{\mathbf{S}}  
\def\Y{\mathcal{Y}}
\def\be{\begin{equation}}
\def\ee{\end{equation}}
\def\wt{\mathbf{w}}
\def\CCx{\on{Cpx}}   
\def\cc{\on{cell}}  
\def\Star{\on{Star}}    
\def\link{\on{Link}}    
\def\susp{\Sigma}       
\def\Serre{\mathbb{S}}  
\def\SS{\mathbf{S}}    
\def\ww{W}
\def\vv{V}
\def\uu{U}
\def\ctc{\mathcal{\vv}}   
\def\cts{\mathbf{\vv}}    
\def\cto{\mathbf{\vv}_\oplus} 
\def\Red{\operatorname{Red}}
\newcommand{\App}[3]{\on{App}^{\text{#1}}_{#2}(#3)}
\def\pvd{\on{pvd}}
\newcommand{\DynA}[1]{A_{#1}}
\newcommand{\DynE}[1]{E_{#1}}
\newcommand{\EucA}[2]{\widetilde{A_{#1,#2}}}
\newcommand{\EucD}[1]{\widetilde{D_{#1}}}
\newcommand{\EucE}[1]{\widetilde{E_{#1}}}
\def\ar{\on{AR}}
\def\pl{\mathbb{P}^1}
\def\coh{\mathrm{coh}}
\def\source{X[1]}
\def\sink{X}
\def\ueot{\Psi^{\downarrow}_{\sink}}
\def\ueby{\Psi^{\uparrow}_{\sink}}
\def\ue{\Psi^{?}_{\sink}}
\def\Xevo{\ccpt{X}[1]}
\newcommand{\dflow}[1]{\on{Evo}^{\downarrow}_{#1}}
\newcommand{\uflow}[1]{\on{Evo}^{\uparrow}_{#1}}
\def\Traj{L}
\def\num{m}
\newcommand{\Dflow}[1]{\widetilde{\on{Evo}}^{\downarrow}_{#1}}
\newcommand{\Uflow}[1]{\widetilde{\on{Evo}}^{\uparrow}_{#1}}
\def\ori{\mathrm{Tr}^{\uparrow}_{\sink}}
\def\des{\mathrm{Tr}^{\downarrow}_{\sink}}
\newcommand{\pctc}[1]{\mathcal{#1}}
\newcommand{\pcts}[1]{\mathbf{#1}}
\newcommand{\pcto}[1]{\mathbf{#1}_\oplus} 
\newcommand{\ccpt}[1]{\mathrm{#1}}
\newcommand{\real}[1]{\underline{#1}}
\newcommand{\dapp}[1]{\pcts{#1}^{\on{apr}}}
\def\Ps{\on{Ps}}
\newcommand{\sqd}[3]{\on{Sqd}_{#1,#2,#3}}
\def\vect{\on{Vect}}
\def\tube{\on{Tube}}
\def\rgd{\circledR} 
\def\Psp{\on{Ps}_{+}^\rgd}
\def\Psm{\on{Ps}_{-}^\rgd}
\def\uat{\underline{\tau}}
\newcommand{\mut}[2]{\pcts{#1}_{\hat{#2}}}
\def\DQ{\D(Q)}
\def\CQ{\C(Q)}
\def\Qminus{Q\backslash\{0\}}
\def\plpqr{\pl_{p,q,r}}
\def\II{\on{ITV}}
\def\xevo{\ccpt{\sink}}
\def\invsum{\frac{1}{p}+\frac{1}{q}+\frac{1}{r}}
\newcommand{\proj}[1]{\on{pr}_{\link(#1)}}
\def\con{\sigma}
\def\fan{\Delta}
\begin{document}
\begin{abstract}
For any point $\mathrm{X}$ in the cluster complex $\mathrm{Cpx}(\mathcal{C})$ of a 2-Calabi-Yau category $\mathcal{C}$, we introduce $\xevo$-evolution flow on $\mathrm{Cpx}(\mathcal{C})$. We show that such a flow induces a piecewise linear one-dimensional $\mathrm{X}$-foliation with two singularities, the unique sink $\mathrm{X}$ and the unique source $\mathrm{X}[1]$.
Moreover, we show that evolution flows on cluster complexes are continuous refinement/generalization of green mutations on cluster exchange graphs.

For the cluster category of a Dynkin or Euclidean quiver $Q$,
we prove that the $\mathrm{X}$-foliation is compact or semi-compact, for various choices of $\mathrm{X}$. As an application, we show that $\mathrm{Cpx}(\mathcal{C})$ is spherical (Dynkin case) or contractible (Euclidean case). As a byproduct, we show that the fundamental group of the cluster exchange graph of $Q$ is generated by squares and pentagons.

\bigskip\noindent
\emph{Key words:}
green mutation, $\xevo$-evolution flow, $\xevo$-foliation, cluster complex, $g$-vectors, cluster exchange graphs
\end{abstract}
\maketitle

\tableofcontents \addtocontents{toc}{\setcounter{tocdepth}{1}}
\setlength\parindent{0pt}
\setlength{\parskip}{5pt}

\section{Introduction}
\subsection{Motivation}\
\paragraph{\textbf{Cluster complexes}}\

Arc (in general, curve) complexes are crucial tools in the study of mapping class groups of surfaces, cf. e.g. \cite{FM}. Motivated by the cluster theory,
Fomin-Shapiro-Thurston \cite{FST} studied cluster complexes,
a generalization of arc complexes, for a marked surface $\surf$.
Such simplicial complexes are naturally associated with cluster algebras in general.
When the marked surface $\surf$ has no punctures,
the cluster complex $\CCx(\surf)$ coincides with the arc complex $A(\surf)$ of $\surf$.

Hatcher \cite{Hat} proved that the arc complexes are usually contractible, except for two simple cases--
in which cases, they are spherical (=homotopy to some spheres).
Our original motivation is to give a categorification of his proof (to calculate the homotopy type of cluster complexes) that applies to all Dynkin and Euclidean cases.
We fail to do precisely that but end up with a novel purely representation theoretical approach.
The tool developed in the process seems to be more interesting than the application itself.
More precisely, we construct a continuous family of flows/foliations on cluster complexes,
which provides a dynamical tool in the study of cluster theory.

\paragraph{\textbf{Green mutation}}\

Green mutation was introduced by Keller, cf. \cite{Ke2}, with motivation coming from cluster theory and Donaldson-Thomas (DT) theory.
It can be defined combinatorially using Fomin-Zelevinsky's quiver mutation (for the principal extension of the initial quiver).
Alternatively, green mutation can be interpreted as an orientation of the cluster exchange graph
with a unique source and a unique sink.
The existence of a green-to-red mutation sequence (=source-to-sink directed path) plays a key role in proving many results.
For instance, such a sequence provides a formula to calculate DT-invariants using products of quantum dilogarithms.
Surprisingly, our flows/foliations on cluster complexes turn out to be are continuous refinement/generalization of (discrete) green mutations on cluster exchange graphs.

\subsection{$\xevo$-evolution flows and $\xevo$-foliations}\

Initiated by \cite{BMRRT} (cf. the survey \cite{Ke2}),
additive categorification of cluster algebras provides a powerful tool that successfully solves many conjectures in cluster theory.
For us, the cluster complex $\CCx(\C)$ will be defined
for a 2-Calabi-Yau category $\C$ admitting clusters (=cluster tilting sets) and
the cells $\pcts{\vv}$ of $\CCx(\C)$ correspond to partial clusters (cf. \Cref{conv}).
Note that any rigid object $V=\oplus_{i=0}^k V_{i}^{d_i}$, for non-isomorphic indecomposables $V_i$'s,
can be realized as a point
$
    \real{V}=\left( {\sum_{i=0}^k d_i V_i} \right) \big/ \left( {\sum_{i=0}^k d_i} \right)
$
in $k$-cell $\pcts{\vv}=\{V_i\}_{i=0}^k$ of $\CCx(\C)$.

\paragraph{\textbf{$\sink$-evolving triangles}}\

Fix a rigid indecomposable $\sink$ in $\C$, i.e. a vertex (0-cell) in $\CCx(\C)$.
We say a triangle
\begin{gather}\label{eq:evo-tri0}
    \sink \xrightarrow{g} \ww \to \uu \xrightarrow{f} \source
\end{gather}
is a \emph{$\sink$-evolving triangle} if $\Ext^1(\uu,\ww)=0$ and $\Add(\ww)\cap\Add(\uu)=0$ (cf. \Cref{def:evo-tri}).

For a $k$-cell $\pcts{\vv}$, its \emph{downward (resp. upward) $\sink$-evolving triangle} is such a triangle satisfying,
in addition, that $f$ (resp. $g$) is the right (resp. left) approximation of $\source$ (resp. $\sink$) in $\Add(\pcto{\vv})$.
We will focus on the downward version from now on (and they are compatible in certain sense, cf. \Cref{lem:non-split}).

Note that for a top-cell $\pcts{\vv}$ (i.e. when $\pcts{\vv}$ is a cluster),
its downward and upward $\sink$-evolving triangles coincide and,
also coincide with the triangle that defines the index/$g$-vector for $\source$,
with respect to $\pcto{\vv}$, studied in \cite{DK} (cf. \cite{KR}).


\paragraph{\textbf{A $\CCx(\C)$-family of $\xevo$-evolution flows}}\

\begin{definition}[\Cref{def:flow}]
Let $\ccpt{\vv}$ be an (interior) point in $\CCx(\C)$ of a $k$-cell $\pcts{\vv}$
with the downward $\sink$-evolving triangle \eqref{eq:evo-tri0}.
Define the (downward) $\sink$-evolution flow at $\ccpt{\vv}$ to be
\begin{gather}\label{def:dflow-med0}
    \dflow{\sink}(\ccpt{\vv})=
    \begin{cases}
      \real{\ww}-\ccpt{\vv}, & \mbox{if $\uu=0\Leftrightarrow \ww=\sink$}, \\
      \real{\ww}-\real{\uu}, & \mbox{if $\ww\ne0\ne \uu$}, \\
      \ccpt{\vv}-\real{\uu}, & \mbox{if $\ww=0 \Leftrightarrow \uu=\source $}.
    \end{cases}
\end{gather}

\end{definition}
By definition, the only singularities of the $\sink$-evolution flow is at $\sink$ and $\source$,
in the sense that $\dflow{\sink}(\ccpt{\vv})=0$ if and only if $\ccpt{\vv}$ is $\sink$ or $\source$.

The $\sink$-evolutions flows, for any vertex $\sink$ of $\CCx(\C)$
can be extended to $\xevo$-evolution flows, for any point $\xevo=\sum_{i=0}^k c_i\sink_i$ in $\CCx(\C)$,
by linear combination in the obvious way:
\[
    \dflow{\xevo}=\sum_{i=0}^k c_i \dflow{\sink_i},
\]
despite that well-definedness is not so obvious (cf. \Cref{lem:well-defined}).
Similar as above, the only singularities of the $\xevo$-evolution flow is at $\xevo$ and $\Xevo$ (cf. \Cref{lem:sing}).

\paragraph{\textbf{Inducing $\xevo$-foliations}}\

One can think of the $\xevo$-evolution flows are like lights:
$\Xevo$ is the light source and $\xevo$ is the light sink;
each top-cell in the cluster complex is a kind of medium.
When propagating within the interior of a top-cell, the $\xevo$-evolution flow has a fixed direction.

Refraction happened during cell-crossing.

The first result about general $\xevo$-evolution flows is the following, where
we require a mild condition (cf. the beginning of \Cref{sec:evoCC}) on the 2-Calabi-Yau category $\C$.

\begin{theorem}[\Cref{thm:foliation}]
For any point $\xevo$ in $\CCx(\C)$, the $\xevo$-evolution flow induces a piecewise linear one-dimensional $\xevo$-foliation with two singularities, the unique sink $\xevo$ and the unique source $\Xevo$.
\end{theorem}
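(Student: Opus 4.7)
The plan is to first reduce to the vertex case, where $\xevo = \sink$ is a single indecomposable. Since $\dflow{\xevo} = \sum_{i=0}^{k} c_i \dflow{\sink_i}$ by definition, the foliation for a general point $\xevo$ in a $k$-cell is assembled as the linear combination of vertex-foliations; piecewise linearity and the dimension count pass through such sums, and the absence of spurious singularities in $\dflow{\xevo}$ is controlled by \Cref{lem:sing}. The main content is therefore the construction of the $\sink$-foliation for an arbitrary vertex $\sink$.

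The first step is to integrate $\dflow{\sink}$ within a single top-cell $\pcts{\vv}$. Let $\sink \to \ww \to \uu \to \source$ be its downward $\sink$-evolving triangle. The summands $\ww$ and $\uu$ depend only on $\pcts{\vv}$, so the points $\real{\ww}$ and $\real{\uu}$ are fixed on the interior of $\pcts{\vv}$. In the generic case of \eqref{def:dflow-med0}, $\dflow{\sink}(\ccpt{\vv}) = \real{\ww} - \real{\uu}$ is a constant vector field and its trajectories are parallel line segments; in the two degenerate cases the field is radial toward $\sink$ or away from $\source$, and its trajectories are line segments through the corresponding vertex. In every case, a trajectory starting at an interior point leaves $\pcts{\vv}$ through a unique facet after finite parameter time, unless the starting point is $\sink$ or $\source$.

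The main obstacle is the refraction at a facet $\pcts{\vv}_0$ shared by two adjacent top-cells $\pcts{\vv}$ and $\pcts{\vv}'$: I must show that the line segment ending at a point of $\pcts{\vv}_0$ and coming from $\pcts{\vv}$ continues unambiguously into a line segment inside $\pcts{\vv}'$, so as to form a single leaf. The plan is to relate the two downward $\sink$-evolving triangles for $\pcts{\vv}$ and $\pcts{\vv}'$ via mutation of the unique summand that distinguishes them, exploiting the uniqueness of right $\Add(\pcto{\vv})$-approximations together with the $2$-Calabi-Yau duality, and to compare both with the triangle for $\pcts{\vv}_0$ itself to verify that the entering and exiting directions agree transversally to $\pcts{\vv}_0$. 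The compatibility between downward and upward $\sink$-evolving triangles recorded in \Cref{lem:non-split} is expected to play a role here. Once this matching is established, piecewise linearity of the leaves is immediate from the previous step.

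Granting refraction, leaves of the $\sink$-foliation are built by concatenating line segments across successive top-cells starting from any non-singular point and flowing in either direction. Since $\dflow{\sink}(\ccpt{\vv}) = 0$ precisely when $\ccpt{\vv} \in \{\sink, \source\}$, such a trajectory can only terminate at these two singularities, yielding the asserted sink--source structure. Linearly combining over the vertices of the cell containing a general $\xevo$ and invoking \Cref{lem:well-defined} and \Cref{lem:sing} to control the global zero set of $\dflow{\xevo}$ then completes the proof.
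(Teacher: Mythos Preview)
Your overall architecture---integrate within cells, glue across boundaries, invoke \Cref{lem:sing} for the singularities---matches the paper, but two features of your plan diverge from the actual argument and one of them is a genuine gap.

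First, the reduction to the vertex case does not buy what you want. While $\dflow{\xevo}=\sum_i c_i\,\dflow{\sink_i}$ is linear, the cell into which the combined flow enters at a boundary point $\ccpt{\vv}$ is \emph{not} any of the cells the individual $\dflow{\sink_i}(\ccpt{\vv})$ point into; it is the larger partial cluster $\pcts{Y}$ of \eqref{eq:YY}, and identifying it already requires the general-$\xevo$ work of \Cref{lem:well-defined}. The paper therefore proves \Cref{lem:non-split} directly for arbitrary $\xevo$ rather than assembling from vertices.

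Second, and more importantly, your refraction step is framed incorrectly. You propose to compare the downward $\sink$-evolving triangles of two \emph{adjacent top-cells} $\pcts{\vv}$ and $\pcts{\vv}'$ related by a single mutation. The paper does not do this, and your framing would miss the point: cell-crossing need not occur through a codimension-one facet, and the matching is not between two top-cell triangles at all. What \Cref{lem:non-split} actually proves is the following. If $\ccpt{\vv}$ is the source of a local leaf $l$ and lies in the interior of the (possibly low-dimensional) cell $\pcts{\vv}$, then the downward $\sink_i$-evolving triangles for $\pcts{\vv}$ coincide with \emph{both} the downward and upward $\sink_i$-evolving triangles for the larger cell $\pcts{Y}=C_l$ into which the flow enters (this is an application of \Cref{pp:complete} together with the rigidity established in \Cref{lem:well-defined}). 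Hence $\dflow{\xevo}(\ccpt{\vv})$ and $-\uflow{\xevo}(\ccpt{\vv'})$ are positive multiples of each other for any $\ccpt{\vv'}$ interior to $l$, and this is precisely the statement that the local leaves glue. No mutation comparison is involved; the mechanism is that approximations with respect to a face extend to approximations with respect to the containing cell once the extra summands $\ww_j$ already satisfy the relevant $\Ext^1$-vanishing.

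So \Cref{lem:non-split} is not a side ingredient that is ``expected to play a role''; it \emph{is} the gluing lemma, and its proof is about the coincidence of evolving triangles along the inclusion $\pcts{\vv}\subset\pcts{Y}$, not about mutation between neighbours.
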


We expect that the topological shape of the $\xevo$-foliation is like
electric field lines of two equipotential point charges with opposite signs shown in \Cref{fig:2shapes}.
In the left picture, one has the compact/spherical case:
the charges are in the poles of a sphere and the foliations are longitudes.
In the right picture, one has the semi-compact/Euclidean case:
some flows go/come from infinity.
\begin{figure}[hbt]\centering\makebox[\textwidth][c]{
  \includegraphics[height=8cm]{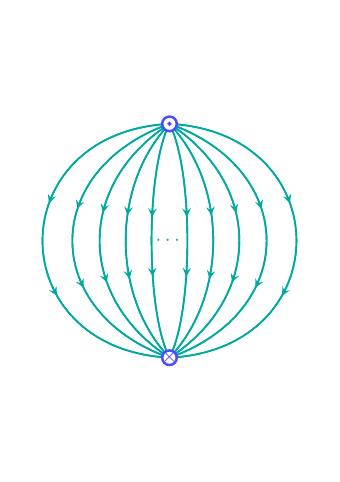}\hskip -1cm
  \includegraphics[height=8cm]{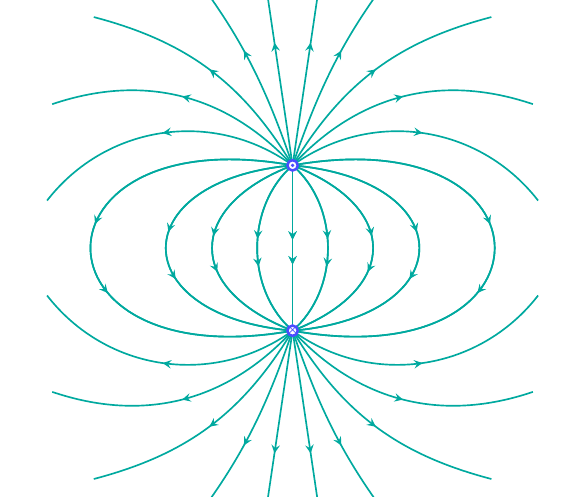}
  }
\caption{Two types of $\sink$-foliation shapes: compact and semi-compact} \label{fig:2shapes}
\end{figure}

When the $\xevo$-foliation is compact, i.e. every $\xevo$-leaf is compact,
then the flow induces a contraction from $\CCx(\C)\setminus\{\Xevo\}$ to $\{\xevo\}$
(and dually a contraction from $\CCx(\C)\setminus\{\xevo\}$ to $\{\Xevo\}$).
If $\xevo=\sink$ is a vertex, the $\sink$-trace induces:
\begin{itemize}
\item a deformation retract from $\CCx(\C)\setminus\{\xevo,\Xevo\}$ to $\CCx(\C\backslash\xevo)$, and
\item a homotopy equivalence
\begin{gather} \label{eq:inductive}
    \CCx(\C)\simeq \Sigma\CCx(\C\backslash\sink),
\end{gather}
where $\C\backslash\sink$ is the Calabi-Yau reduction of $\C$ by $\sink$, cf. \Cref{sec:CTS}.
\end{itemize}

\paragraph{\textbf{Green mutations are special $\xevo$-evolutions}}\

The philosophies of the terminology `evolution' is the following.
\begin{itemize}
\item Mutations of (partial) clusters are local (on the level of `genes of individuals').
\item $\xevo$-evolutions are global (on the level of `species') and
the chosen point $\xevo$ controls the direction of mutation (i.e. playing the role of `environment' in the evolution).
\end{itemize}
In fact, we show that Keller's green mutation is induced by special $\xevo$-evolutions,
which was a conjecture in the previous version of the paper.

Recall that given an (initial) cluster $\pcts{Y}$, Keller \cite{Ke2} introduces the green mutation using combinatorics of the principal extension of the associated quivers.
In other words, the initial cluster $\pcts{Y}$ induces a preferable direction of the mutation (the green ones).
Such a direction is equivalent to an orientation of the cluster exchange graph, cf. \cite[\S~9]{KQ1}..
In the general setting, the signs of c-vectors determine such this direction.

Categorically, when identifying the cluster exchange graph with an interval in the exchange graphs of hearts in the corresponding 3-Calabi-Yau categories the orientation can be easily interpreted as simple backward tilting,
cf. \cite[Thm.~1.1]{Q12}.
We prove the following theorem, which indicates that
$\xevo$-evolution flow on cluster complexes is a continuous generalization/refinement of
green mutation on cluster exchange graphs.

\begin{theorem}[\Cref{thm:green}]
Suppose $\C$ is the cluster category of a Jacobi-finite non-degenerate quiver with potential.
Let $\xevo$ be a \emph{generic} point in $\CCx(\C)$, i.e. an interior point of a top-cell.
Then the downward $\xevo$-evolution flow induces an orientation on the unoriented cluster exchange graph,
given by the green mutation with respect to $\pcts{\sink}[1]$.
\end{theorem}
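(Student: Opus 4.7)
The plan is to verify the claim edge-by-edge on the cluster exchange graph. An unoriented edge corresponds to two adjacent top-cells $\pcts{V}$ and $\pcts{V}'=\mut{V}{j}$ of $\CCx(\C)$, sharing the codimension-one face opposite to $V_j$. It suffices to show that the downward $\xevo$-evolution flow in the interior of $\pcts{V}$ exits through this face precisely when mutation at $V_j$ is green with respect to $\pcts{\sink}[1]$.

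The first step is to compute the flow explicitly on the interior of a top-cell. Write $\pcts{V}=\{V_0,\dots,V_n\}$ and $\xevo=\sum_i c_i\,\real{X_i}$, with all $c_i>0$ by genericity. By the remark following the definition of evolving triangles, the $X_i$-evolving triangle at any interior point of $\pcts{V}$ coincides with the $g$-vector triangle
\[
X_i \to W_i \to U_i \to X_i[1]
\]
for $X_i[1]$ with respect to $\pcto{V}$. Writing $W_i=\bigoplus_k V_k^{a_{ik}}$ and $U_i=\bigoplus_k V_k^{b_{ik}}$, the condition $\Add(W_i)\cap\Add(U_i)=0$ from the definition of an evolving triangle forces $a_{ij}\,b_{ij}=0$ for all $(i,j)$. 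Hence
\[
\dflow{\xevo}(\ccpt{V})=\sum_i c_i\bigl(\real{W_i}-\real{U_i}\bigr)
\]
is constant on the interior of $\pcts{V}$, and its coefficient along $V_j$ in barycentric coordinates has the same sign as $\sum_i c_i\,\gamma_{ij}$, where $\gamma_i=(a_{i\bullet}-b_{i\bullet})$ is the $g$-vector of $X_i[1]$ relative to $\pcto{V}$. The flow crosses from $\pcts{V}$ into $\pcts{V}'$ through the face opposite $V_j$ iff this sign is negative; the degenerate cases $U_i=0$ or $W_i=0$ absorb into the same formula after interpreting $\real{X_i}$, $\real{X_i[1]}$ as the corresponding vertices of $\CCx(\C)$.

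The second step matches this sign condition with Keller's criterion for green mutation with respect to $\pcts{\sink}[1]$. By sign-coherence of both $g$- and c-vectors (Derksen-Weyman-Zelevinsky, Plamondon in the categorical setting) together with tropical duality (Nakanishi-Zelevinsky), the matrix with rows $\gamma_i$ is, up to a shift-induced sign, the inverse transpose of the c-matrix of $\pcts{V}$ with respect to $\pcts{\sink}[1]$. Consequently the sign of $\sum_i c_i\,\gamma_{ij}$ is controlled by the sign of a positive linear combination of the entries of the $j$-th c-vector of $\pcts{V}$; sign-coherence forces this c-vector to be strictly positive or strictly negative, and genericity of $\xevo$ prevents any cancellation. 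Keller's definition then identifies the resulting sign with the colour (green or red) of the mutation at $V_j$ relative to $\pcts{\sink}[1]$, completing the identification of the two orientations.

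The main obstacle is the careful bookkeeping of sign conventions: downward versus upward flow, green versus red, and the effect of the shift $[1]$ on the duality between the $g$-matrix of $\pcts{\sink}[1]$ relative to $\pcts{V}$ and the c-matrix of $\pcts{V}$ relative to $\pcts{\sink}[1]$. Once these are aligned, the argument is local at each edge and assembles into the global orientation of the cluster exchange graph. The categorical identification of the cluster exchange graph with an interval of heart tiltings in the associated 3-Calabi-Yau category (as recalled in the introduction) provides a conceptual sanity check: green mutation is precisely simple backward tilting, and the downward $\xevo$-evolution flow is its continuous refinement.
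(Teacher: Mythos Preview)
Your approach is the paper's: compute the $V_j$-coefficient of the flow vector on a top-cell via the $X_i$-evolving (equivalently, index) triangles, then match its sign with the green/red criterion using tropical duality and sign-coherence of $c$-vectors.

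There is, however, a genuine gap in your first step. You assert that the $V_j$-coefficient of $\sum_i c_i(\real{W_i}-\real{U_i})$ has the same sign as $\sum_i c_i\gamma_{ij}$. This is not automatic: the normalizations in $\real{W_i}=\bigl(\sum_k a_{ik}V_k\bigr)\big/\bigl(\sum_k a_{ik}\bigr)$ and $\real{U_i}$ mean the actual coefficient is
\[
b_j=\sum_i c_i\Bigl(\frac{a_{ij}}{\sum_k a_{ik}}-\frac{b_{ij}}{\sum_k b_{ik}}\Bigr),
\]
and these denominators can reverse the sign of a mixed-sign sum (e.g.\ $\gamma_{1j}=5$, $\gamma_{2j}=-1$ with $\sum_k a_{1k}=100$, $\sum_k b_{2k}=1$). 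What saves you is sign-coherence, but it must be invoked \emph{at this point}, not deferred to step two: since $a_{ij}b_{ij}=0$, each summand above has the sign of $\gamma_{ij}$; tropical duality identifies the column $(-\gamma_{ij})_i$ with the $c$-vector $c^{\pcts{\sink}[1]}_{V_j,\pcts{V}}$; sign-coherence then forces all $\gamma_{ij}$ (fixed $j$, varying $i$) to share a common sign, and only then does $b_j$ inherit that sign (with $c_i>0$ by genericity). The paper's proof runs in exactly this order.

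A minor slip: $\gamma_i=a_{i\bullet}-b_{i\bullet}$ is the $g$-vector of $X_i$ with respect to $\pcts{V}$ (since $[X_i]=[W_i]-[U_i]$ in the Grothendieck group), not of $X_i[1]$. This is part of the sign bookkeeping you flag, but it should be fixed rather than deferred.
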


For instance, \Cref{fig:A3-F-F*} and \Cref{fig:A12-F-F*} show how the theorem works
for the $A_3$ case and the $\EucA{1}{2}$ case respectively.

\subsection{Case study and applications}\

We investigate the case when $\C=\CQ$ is the cluster category associated with a Dynkin or an Euclidean quiver, which demonstrates all the philosophies/phenomena above.

\begin{theorem}[\Cref{thm:Dynkin}, \Cref{thm:Ess} and \Cref{thm:E-charge}]
Let $\C=\CQ$.
\begin{itemize}
  \item The $\sink$-foliation is compact, for any Dynkin quiver $Q$ with any $\sink$ in $\Ind\CQ$.
  \item The $\sink$-foliation is compact, for any Euclidean quiver $Q$ with any rigid regular simple $\sink$.
  \item The $\sink$-foliation on $\CQ$ is semi-compact, for an affine type A quiver $Q$ and any rigid indecomposable $\sink$ not in the tubes.
\end{itemize}
\end{theorem}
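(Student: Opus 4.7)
The plan is to treat the three statements in a common framework (trace each flow line, bound its length via an appropriate invariant), specialised to the geometry of each case. For a vertex $\sink \in \Ind\CQ$, I would reduce compactness/semi-compactness of the $\sink$-foliation to a finiteness statement about the sequence of top-cells traversed by a typical leaf: inside each top-cell the piecewise linear flow $\dflow{\sink}$ is linear, so a leaf is determined (at least for generic starting points) by the discrete sequence of cells it enters, and each cell-crossing corresponds to a mutation of the ambient cluster. Hence the question becomes one about mutation sequences in the cluster exchange graph $\EG(\CQ)$.

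For the Dynkin case I would use induction on $\rank Q$. The base case $\DynA{1}$ is trivial; for the inductive step, fix $\sink\in\Ind\CQ$ and pick a generic starting point $\ccpt{\vv}$. By the theorem identifying the generic $\xevo$-evolution with Keller's green mutation, the sequence of clusters visited by the downward $\sink$-leaf from $\ccpt{\vv}$ lifts to a directed path in the oriented cluster exchange graph $\EG(\CQ)$. Since $\EG(\CQ)$ is a finite acyclic oriented graph with unique source and sink in the Dynkin case, this path must terminate at the cluster containing $\sink$ (resp.\ $\source$) in finitely many steps, so the leaf is compact. Non-generic starting points (lying on positive-codimension cells) are handled by the piecewise linear structure of the foliation established earlier and by continuity of leaves across lower-dimensional strata.

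For the Euclidean case with $\sink$ rigid regular simple, the new phenomenon is the presence of tubes $\mathcal{T}_1,\dots,\mathcal{T}_s$, one of which contains $\sink$. Here $\EG(\CQ)$ is infinite and green sequences can in principle be infinite, so one must genuinely bound the flow. The key observation I would exploit is that, for $\sink$ a regular simple, the downward $\sink$-evolving triangle $\sink\to\ww\to\uu\to\source$ forces $\ww$ and $\uu$ to lie in a finite union of rays in $\mathcal{T}$ and of AR-orbits in the transjective component determined by $\sink$. Therefore the $g$-vector of $\source$ with respect to the currently traversed cluster, restricted along any $\sink$-leaf, can only take finitely many values; combined with the tube-periodicity one rules out infinite cycles of cell-crossings and concludes compactness. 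Alternatively, one can verify this directly using the rank-$(r-1)$ tube structure of the Calabi-Yau reduction $\CQ\backslash\sink$ and a secondary induction on the tube rank.

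For the semi-compact statement in affine type $\EucA{p}{q}$ with $\sink$ indecomposable and transjective (not in a tube), the flow can now push points to infinity along the transjective AR-component, and the best one can hope for is semi-compactness. I would identify the set of escaping leaves explicitly: they correspond to starting cells whose downward $\sink$-evolving triangles produce an unbounded sequence of mutations along the transjective direction pointing away from $\sink$. For all other starting points, the $g$-vector argument from the regular-simple case still applies (because the relevant approximations remain in a bounded region of the AR-quiver controlled by $\sink$), and compactness of the corresponding leaves follows. The main obstacle will be the Euclidean regular-simple case: one must prove that neither infinite tube-internal mutation cycles nor infinite back-and-forth between tube and transjective part can occur in the flow trace. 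This should reduce to a careful but finite combinatorial check using the constraint $\Ext^1(\uu,\ww)=0$, $\Add(\ww)\cap\Add(\uu)=0$, and the explicit position of $\sink$ in the AR-quiver; once this rigidity is established, the $g$-vector finiteness gives the result.
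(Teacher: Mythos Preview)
Your Dynkin argument has a genuine gap: the green-mutation theorem (\Cref{thm:green}) applies only when $\xevo$ is a \emph{generic} point, i.e.\ an interior point of a top-cell. Here $\sink$ is a \emph{vertex} of $\CCx(\CQ)$, and for such $\sink$ the downward flow does \emph{not} in general induce an orientation on $\uCEG(\CQ)$: the coefficient $b_\alpha$ of \Cref{lem:cell-crossing-b} can vanish on a codimension-one face (the paper's own $A_3$ examples exhibit this, e.g.\ the cell $P_1\oplus I_1$ in the $P_0$-foliation), so a leaf may travel along such a face rather than crossing it, and there is no green-mutation path to appeal to. Moreover, even where $b_\alpha\neq 0$, you have not shown that the resulting partial orientation is acyclic; the paper in fact records acyclicity of vertex-$\sink$-foliations as an open conjecture (\Cref{conj1}), so you cannot invoke it. Finally, your ``induction on $\rank Q$'' never actually uses the inductive hypothesis.

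The paper's approach is completely different and bypasses green mutation entirely. For each leaf $L$ and each cell $\pcts{\vv}$ that $L$ meets nontrivially, it attaches a combinatorial invariant built from intervals in the AR-quiver, namely $\II(\pcts{\vv})=[\sink,\dapp{\vv}]$ where $\dapp{\vv}=\cc(\App{R}{\pctc{\vv}}{\source})$, and proves a cell-crossing lemma (\Cref{lem:ccross1}) asserting that the relevant interval strictly shrinks at every crossing. A general termination lemma (\Cref{lem:dd}) then forces $\II(\pcts{\vv}_k)=\emptyset$ after finitely many steps, which is exactly the criterion $\pcts{\vv}_k\subset\Star(\sink)$. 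In the Euclidean regular-simple case the same scheme is run with a triple of invariants $(m_-,\II|_{\vect},m_+)$ that decrease lexicographically (\Cref{lem:ccross2}); in the $\EucA{p}{q}$ transjective case one first shows the dichotomy $\dapp{\vv}\subset[\uat\sink,\infty)\cup\tube(Q)$ or $\dapp{\vv}\subset\tube(Q)\cup(-\infty,\source]$ and then runs separate tube- and vect-interval arguments (\Cref{lem:ccross3}). Your ``$g$-vector finiteness'' sketch for the Euclidean cases does not establish any such monotone quantity, and the assertion that the $g$-vector of $\source$ takes only finitely many values along a leaf is precisely what needs proof.
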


\paragraph{\textbf{Homotopy type of cluster complexes}}\

The first application is that we can calculate the homotopy type of the cluster complexes in these cases. Depending on the choice of $\sink$, we obtain many inductive homotopy relations between various cluster complexes.

In the Dynkin case, let $\sink$ is in the $\tau$-orbit of the projective $P_0$, for some vertex $0$ of $Q$.
Then $\CQ\backslash\sink\cong\C(\Qminus)$ and hence \eqref{eq:inductive} becomes
\begin{gather}
    \CCx(\CQ)\simeq \Sigma\CCx(\C(\Qminus)),
\end{gather}
where $\Qminus$ is the quiver obtained from $Q$ by deleting vertex $0$.

In the Euclidean case, suppose that $Q$ is connected and derived equivalent to the weighted projective line $\pl_{p,q,r}$
for some $p,q,r\in\ZZ_+$ with $1/p+1/q+1/r>1$.
Take $\sink$ to be any rigid indecomposable in the bottom of a fat tube, say the one of rank $r\ge2$.
Then $\CQ\backslash\sink\cong\C(\pl_{p,q,r-1})$ and hence \eqref{eq:inductive} becomes
\begin{gather}
    \CCx( \C(\pl_{p,q,r}) )\simeq \Sigma\CCx( \C(\pl_{p,q,r-1}) ).
\end{gather}

In either case, we can inductively deduce the following.

\begin{corollary}
In particular, $\CCx(\CQ)$ is homotopy to an $(n-1)$-sphere if $Q$ is a Dynkin quiver with $n$ vertices
and is contractible if $Q$ is an Euclidean quiver.
\end{corollary}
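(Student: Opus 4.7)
The plan is to deduce both assertions from the suspension equivalence $\CCx(\C)\simeq\Sigma\CCx(\C\backslash\sink)$ of \eqref{eq:inductive}, which holds whenever the $\sink$-foliation is compact, combined with the compactness results of the cited theorem.

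\emph{Dynkin case.} I would argue by induction on $n=|Q_0|$. The base case $n=1$ reduces to $Q=A_1$: the cluster category then has exactly two indecomposables, which form the two isolated vertices of $\CCx(\CQ)$, so $\CCx(\CQ)\simeq S^0$. For the inductive step, pick any vertex $0\in Q_0$ and set $\sink=P_0$, the indecomposable projective at $0$. Since $Q$ is Dynkin, the first bullet gives compactness of the $\sink$-foliation, so \eqref{eq:inductive} yields $\CCx(\CQ)\simeq\Sigma\CCx(\C(\Qminus))$. Now $\Qminus$ is a disjoint union of Dynkin quivers with a total of $n-1$ vertices; using that the cluster category of a disjoint union splits as a product and hence the cluster complex of a disjoint union is the join of the components' cluster complexes, together with the identity $S^a\ast S^b\simeq S^{a+b+1}$, the induction hypothesis gives $\CCx(\C(\Qminus))\simeq S^{n-2}$. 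Hence $\CCx(\CQ)\simeq\Sigma S^{n-2}\simeq S^{n-1}$.

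\emph{Euclidean case.} For connected $Q$ corresponding to $\plpqr$ (the case of affine $D$ or $E$), I iterate the fat-tube reduction of the second bullet: at each step pick a rigid regular simple at the bottom of a tube of rank $\geq 2$ and apply \eqref{eq:inductive}. After $(p-1)+(q-1)+(r-1)$ iterations the chain collapses to
\[
\CCx(\C(\plpqr))\simeq\Sigma^{p+q+r-3}\,\CCx(\C(\pl_{1,1,1})),
\]
so it suffices to show that the Kronecker cluster complex $\CCx(\C(\pl_{1,1,1}))=\CCx(\C(\tilde A_1))$ is contractible. For this, and more generally for any $Q$ of affine type $A$, the third bullet applies: picking $\sink$ to be any rigid indecomposable outside the tubes, the semi-compact $\sink$-foliation produces a flow whose leaves, sliding back onto $\{\sink\}$, yield a strong deformation retract of $\CCx(\CQ)$ onto $\{\sink\}$---the leaves escaping to infinity account for the fact that the other singularity is not reachable inside the complex. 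Since the suspension of a nonempty contractible CW complex is again contractible, iterating the suspension equivalence yields contractibility of $\CCx(\CQ)$ in every Euclidean case.

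\emph{Main obstacle.} The principal difficulty is making precise the deformation retract in the semi-compact case. In the compact setting, the leaves of the $\sink$-foliation are closed arcs joining the two singularities and the sliding flow is elementary. In the semi-compact setting, some leaves escape to the ``boundary at infinity'' of the cluster complex, and one must verify that the induced flow still glues continuously across cell walls and contracts the whole complex onto $\{\sink\}$---in particular, that the non-returning leaves leave no residual topology. A minor, essentially formal, auxiliary point is the join decomposition $\CCx(\C(Q_1\sqcup Q_2))\simeq\CCx(\C(Q_1))\ast\CCx(\C(Q_2))$, which is needed in the Dynkin induction whenever $\Qminus$ becomes disconnected.
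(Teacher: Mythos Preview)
Your Dynkin argument and your Euclidean reduction down to the Kronecker case are both correct and match the paper's approach exactly: iterate the compact $\sink$-foliation to obtain suspension equivalences, using the join decomposition for disconnected $\Qminus$ in the Dynkin induction.

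The gap is in your base case for the Euclidean part. A semi-compact $\sink$-foliation does \emph{not} produce a deformation retract onto $\{\sink\}$. By definition (\Cref{sec:type}), a semi-compact leaf is homeomorphic to a half-open interval with its unique endpoint at $\sink$ \emph{or} at $\source$; both types occur. Your intuition that ``the leaves escaping to infinity account for the fact that the other singularity is not reachable'' is mistaken: $\source$ is a genuine vertex of $\CCx(\CQ)$, and there are leaves whose sole endpoint is $\source$. Along such a leaf the downward flow runs off to infinity, never approaching $\sink$, so sliding along the flow cannot contract the whole complex to $\{\sink\}$. Concretely, for $\widetilde{A_{1,1}}$ with $\sink=\mathcal{O}(1)$ the complex is $\RR$ with $\mathcal{O}(m)\leftrightarrow m$, and the half-line on the $\source$-side flows away from $\sink$ under $\dflow{\sink}$. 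Indeed \Cref{lem:non-compact} proves that this foliation is strictly non-compact precisely by contraposition from the contractibility you are trying to establish, so you cannot bootstrap contractibility from it.

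The paper sidesteps this entirely: the base case $\CCx(\widetilde{A_{1,1}})$ is handled by the direct identification with $\RR$ (noted after \Cref{def:evo-tri}), which is obviously contractible. You should replace your semi-compact argument with this observation; the rest of your proof then goes through.
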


Previous results concerning homotopy types of arc/cluster complexes include:
\begin{itemize}
  \item Hatcher \cite{Har} for arc complexes of marked surfaces (gentle type);
  \item Fomin-Shapiro-Thurston \cite{FST} for cluster complexes of marked surface
     (equipped punctures with extra $\ZZ_2$-symmetry, i.e. skew-gentle type);
  \item Igusa-Orr-Todorov-Weyman \cite{IOTW} for cluster complexes of Dynkin quivers.
\end{itemize}

\paragraph{\textbf{Fundamental groups of cluster exchange graphs}}\

Let $\uCEG(\C)$ be the (unoriented) cluster exchange graph of $\C$,
which is the graph dual to the top two dimensional cells of $\CCx(\C)$.
It is well-known that there are squares and pentagons in $\uCEG(\C)$, cf. \Cref{sec:ceg}.
In many scenario, one expects that these squares and pentagons are generators of $\pi_1\uCEG(\C)$.
Previous results include:
\begin{itemize}
  \item Hatcher and Fomin-Shapiro-Thurston, as above
  for the marked surfaces case;
  \item Qiu \cite{Q1} (cf. \cite{QW}) for Dynkin quivers.
\end{itemize}
The $\xevo$-evolution flows provide a uniform approach to attack such a question
and we prove the following.

\begin{corollary}\label{cor:pi1}
For a Dynkin/Euclidean quiver $Q$,
$\pi_1\uCEG(Q)$ is generated by squares and pentagons.
\end{corollary}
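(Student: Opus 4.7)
The plan is to reduce this to the homotopy type of $\CCx(\CQ)$ computed in the previous theorems. First I would form the 2-complex $\widetilde{\uCEG}(Q)$ by attaching one 2-cell to $\uCEG(Q)$ along each square and each pentagon. By the standard analysis of links in the cluster complex (recalled in Section~\ref{sec:ceg}), every codim-2 cell of $\CCx(\CQ)$ has link a 4-cycle or a 5-cycle, and these are in bijection with the squares and pentagons of $\uCEG(Q)$. Hence $\widetilde{\uCEG}(Q)$ is precisely the dual 2-skeleton of $\CCx(\CQ)$: its vertices are the top cells, its edges are the codim-1 cells, and its 2-cells are the codim-2 cells.

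Next I would establish
\[
    \pi_1\widetilde{\uCEG}(Q)\isom\pi_1\CCx(\CQ).
\]
The key point is that $\CCx(\CQ)$ has pseudomanifold-like structure: every codim-1 cell is contained in exactly two top cells (mutation pairs) and every codim-2 cell has cyclic link. A general-position / simplicial-approximation argument then homotopes any loop in $\CCx(\CQ)$ off the codim-$\ge 3$ skeleton into $\uCEG(Q)$, and subdivides any null-homotopy disk so that its 2-cells are the dual 2-cells of $\widetilde{\uCEG}(Q)$. Alternatively, and more in the spirit of this paper, one can argue directly by picking a generic $\xevo$ and pushing a loop in $\uCEG(Q)$ along the compact (Dynkin) or semi-compact (Euclidean) $\xevo$-foliation toward $\xevo$; each transversal crossing of a leaf with a codim-2 cell contributes exactly one square or pentagon relation. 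Combining with the previous theorems---$\CCx(\CQ)\simeq\sphere^{n-1}$ for Dynkin $Q$ of rank $n\ge 3$, and $\CCx(\CQ)$ contractible for Euclidean $Q$---gives $\pi_1\widetilde{\uCEG}(Q)=1$, which is exactly the statement that squares and pentagons generate $\pi_1\uCEG(Q)$. The two small Dynkin cases $A_1$ and $A_2$ are handled by inspection: $\uCEG(A_1)$ is a single edge, and $\uCEG(A_2)$ is itself a pentagon whose only nontrivial loop is its single 2-cell.

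The main obstacle is establishing the $\pi_1$-equality above. The cluster complex is not a manifold---links in codimension $\ge 3$ can be arbitrarily complicated---so classical manifold duality does not apply verbatim. One has to either check carefully that the combinatorial conditions at codim-1 and codim-2 suffice for the standard pushing argument, or, in the flow-based approach, verify that generic $\xevo$-leaves meet the codim-2 stratum transversally and (crucially in the Euclidean case) that leaves escaping to infinity close up only through square and pentagon relations rather than contributing new relations.
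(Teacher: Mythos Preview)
Your approach differs from the paper's and contains a gap in the Euclidean case. You assert that every codimension-$2$ cell of $\CCx(\CQ)$ has link a $4$-cycle or a $5$-cycle, citing Section~\ref{sec:ceg}; but that section only records that $\dim\Irr=0$ yields a square and $\dim\Irr=1$ a pentagon, without claiming these exhaust the possibilities. In Dynkin type they do, but in Euclidean type they do not: the link of a codimension-$2$ cell is the cluster complex of a rank-$2$ Calabi--Yau reduction, and by \Cref{cor:red} this reduction may be of type $\EucA{1}{1}$, whose cluster complex is homeomorphic to $\RR$ rather than a cycle. Concretely, in $\CCx(\EucA{1}{2})$ take $\sink$ a regular simple in the rank-$2$ tube; then $\link(\sink)\cong\CCx(\EucA{1}{1})\cong\RR$ by \Cref{lem:EG-red}. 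Thus your $\widetilde{\uCEG}(Q)$ is not the full dual $2$-skeleton, and the general-position argument for $\pi_1\widetilde{\uCEG}(Q)\cong\pi_1\CCx(\CQ)$ breaks at such cells: a null-homotopy disk may meet them and you have attached no $2$-cell to record the crossing. The gap is repairable---these Kronecker-type links are contractible, so the disk can be pushed off at no cost---but you must isolate and treat this third case explicitly; as written, the argument is incomplete.

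The paper sidesteps this by a different, inductive route. Using simple connectivity of $\CCx(Q)$ and the open cover by stars of vertices, \Cref{lem:starsum} writes any loop in $\CCx^{\ge n-2}(Q)\simeq\uCEG(Q)$ as a sum of loops each supported in some $\Star^{\ge n-2}(\sink_i)$. The projection $\proj{\sink_i}$ then identifies $\pi_1\Star^{\ge n-2}(\sink_i)$ with $\pi_1\uCEG(\hat{Q}_i)$ for a quiver $\hat{Q}_i$ of rank $n-1$ whose components are again Dynkin or Euclidean (\Cref{cor:red}), and induction applies. The base case $n=2$ handles $\EucA{1}{1}$ directly via contractibility of $\uCEG(\EucA{1}{1})$. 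So rather than crossing codimension-$2$ cells and cataloguing their links, the paper retracts into vertex-stars and drops rank; the Kronecker phenomenon is absorbed into the base case instead of appearing as an exceptional link to be dealt with mid-argument.
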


\subsection{Further studies}\label{sec:further}\

%
%
%

A general expectation that $\xevo$-evolution flows have directions (down toward $\xevo$ and up toward $\Xevo$)
and hence there should be no cycles. This is compatible with the acyclic property of the green mutation.
\begin{conjecture}[Acyclicity]\label{conj1}\
Any $\xevo$-foliation is acyclic.
\end{conjecture}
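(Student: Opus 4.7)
The plan is to construct a strict Lyapunov function for the flow $\dflow{\xevo}$, that is, a continuous function $\phi: \CCx(\C) \setminus \{\xevo, \Xevo\} \to \RR$ whose value strictly decreases along every non-constant trajectory. The existence of such a $\phi$ immediately forbids closed leaves and hence yields acyclicity. I would first reduce to the vertex case $\xevo = \sink$ using the decomposition $\dflow{\xevo} = \sum c_i \dflow{\sink_i}$: if each $\dflow{\sink_i}$ admits a cellwise linear Lyapunov function $\phi_i$ with a consistent normalization, then $\phi = \sum c_i \phi_i$ works for $\dflow{\xevo}$.

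For a vertex $\sink$, the natural candidate for $\phi_\sink$ is a function built from the index / $g$-vector of $\source$ with respect to the local cluster. On the interior of a top-cell $\pcts{\vv}$ with downward evolving triangle $\sink \to \ww \to \uu \to \source$, the flow vector $\real{\ww}-\real{\uu}$ points from the face spanned by the indecomposable summands of $\uu$ toward the face spanned by those of $\ww$. The natural candidate for $\phi_\sink$ on this top-cell is the linear functional in barycentric coordinates that assigns weight $+1$ to each indecomposable summand of $\ww$ and $-1$ to each summand of $\uu$, suitably rescaled so that $\phi_\sink$ glues continuously across facets. Within a top-cell, strict monotonicity is then automatic and purely linear-algebraic.

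The main obstacle, and the heart of the argument, is verifying that these cellwise functionals glue continuously across codimension-one faces of $\CCx(\C)$ and remain strictly monotone under the refraction of the flow at each facet crossing. Two adjacent top-cells share their cluster modulo a single summand, so one must show that the two local indices of $\source$ are related by an exchange formula compatible with monotonicity; this would rely on \Cref{lem:non-split} together with the standard mutation combinatorics of $g$-vectors. A complementary and potentially cleaner route is to leverage \Cref{thm:green}: for generic $\xevo$ the flow induces on the dual 1-skeleton the green mutation orientation, which is acyclic by the backward-tilting picture in the associated 3-Calabi-Yau heart, cf. \cite[Thm.~1.1]{Q12}; one would then try to extend acyclicity from this generic stratum to all $\xevo$ by a semicontinuity argument on the natural stratification of $\CCx(\C)$ by the dimension of the cell containing $\xevo$. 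In either approach, the decisive difficulty is controlling the non-smooth behavior of the flow at cell crossings, where a brute-force local check does not suffice and one must exploit global structure from the categorification.
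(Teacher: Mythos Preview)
This statement is a \emph{conjecture} in the paper, not a theorem: the authors explicitly leave acyclicity of general $\xevo$-foliations open (it appears in the ``Further studies'' section with no proof, only the remark that it is compatible with acyclicity of green mutation). So there is no proof in the paper to compare your proposal against; you are attempting to settle an open problem.

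On the substance of your sketch, there are two genuine gaps. First, the reduction to the vertex case does not work as stated: even if each $\phi_i$ is a strict Lyapunov function for $\dflow{\sink_i}$, the sum $\phi=\sum_i c_i\phi_i$ need not be monotone along $\dflow{\xevo}=\sum_i c_i\dflow{\sink_i}$, because $\phi_i$ may increase along $\dflow{\sink_j}$ for $j\neq i$. You would need a much stronger compatibility, essentially that each $\phi_i$ is non-increasing along \emph{every} $\dflow{\sink_j}$, and there is no reason to expect this. Second, the alternative route via \Cref{thm:green} is also incomplete: acyclicity of the green orientation on $\uCEG(\C)$ constrains only the sequence of top-cells that a leaf visits via codimension-one crossings, but a leaf can spend time in lower-dimensional strata (the $b_\alpha=0$ faces where no crossing occurs), and the semicontinuity step you propose for passing from generic $\xevo$ to boundary $\xevo$ is not justified---foliations do not in general vary in a way that lets acyclicity pass to limits. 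Your identification of the gluing/refraction step as the crux is correct, but neither route you outline actually addresses it.
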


For Dynkin and Euclidean case, we have more specific predictions.
\begin{conjecture}
\label{conj-DE}\
Let $\C=\CQ$ be the cluster category of an acyclic quiver.
\begin{itemize}
  \item If $Q$ is a Dynkin quiver, then any $\xevo$-foliation is compact.
  \item If $Q$ is an Euclidean quiver, then any $\xevo$-foliation is semi-compact.
  Moreover, when $\xevo=\sink$ is a vertex, then
    the $\sink$-foliation is compact if and only if $\sink$ in some tube.
\end{itemize}
\end{conjecture}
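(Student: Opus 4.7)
The plan is to reduce Conjecture \ref{conj-DE} to a combination of the acyclicity Conjecture \ref{conj1} and a type-dependent boundary analysis, then to supply the supplementary ingredients case by case. For the Dynkin case, the cluster complex $\CCx(\CQ)$ is a finite simplicial complex, since there are only finitely many cluster tilting sets. Granting acyclicity, any leaf of the $\xevo$-foliation is an injective piecewise linear path; in a finite piecewise linear complex such a path must have bounded combinatorial length and hence terminate at a singularity, which is forced to be $\xevo$ or $\Xevo$. This upgrades the vertex case, already proved for Dynkin quivers in the theorem above, to an arbitrary $\xevo$. I would execute this reduction by induction on the dimension of the cell containing $\xevo$, using the linearity $\dflow{\xevo}=\sum c_i\dflow{\sink_i}$ together with the observation that, as $\xevo$ varies continuously inside a top-cell, the sequence of cells traversed by a leaf changes only when the leaf becomes tangent to a wall, and such transitions occur only finitely often in the Dynkin setting.

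For the Euclidean case, the strategy rests on the Auslander-Reiten decomposition of $\Ind\CQ$ into a transjective component and a family of (homogeneous and exceptional) tubes. The tubes are mutation-closed of bounded width, while the transjective component forms an infinite strip. When $\sink$ lies in a tube, I would prove a \emph{funnel lemma}: flow lines originating in the transjective region eventually enter the tube and then remain inside, thanks to the $\Hom$-vanishing between distinct tubes and the rigidity constraints on $\sink$-evolving triangles; the finite-width argument from the Dynkin case then forces each leaf to terminate at $\sink$ or $\source$. When $\sink$ is transjective, the infinite preprojective and preinjective rays supply chains of mutations along which flow lines escape to infinity without ever hitting $\sink$, yielding genuine non-compactness. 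The known cases (rigid regular simple $\sink$ in any Euclidean type, and affine type $\EucA{1}{2}$ with $\sink$ outside the tubes) already exhibit both halves of this dichotomy. For a higher-dimensional cell point $\xevo=\sum c_i\sink_i$, linearity of the flow reduces compactness to the requirement that every $\sink_i$ lies in some tube, and semi-compactness to the absence of unbounded closed leaves.

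The main obstacle is Conjecture \ref{conj1} itself, which the paper explicitly lists as open: without acyclicity a leaf could spiral indefinitely within a compact region, yielding neither termination nor escape. To resolve this I would seek a Lyapunov-type function strictly decreasing along $\dflow{\xevo}$; the most natural candidate is a weighted ``distance to $\xevo$'' built from $g$-vectors or indices of the cluster tilting sets traversed, mirroring the green-to-red direction of mutation established in \Cref{thm:green}. Alternatively one could pass to the 3-Calabi-Yau cover and exploit a phase function coming from Bridgeland stability conditions. A secondary obstacle, specific to the Euclidean case, is pinning down which leaves actually escape in the transjective setting; this reduces to a combinatorial question about when iterated mutation along the flow-induced orientation of $\uCEG(Q)$ fails to terminate, which in turn can be addressed using the green mutation picture of \Cref{thm:green}.
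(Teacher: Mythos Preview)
This statement is a \emph{conjecture} in the paper, not a theorem; the paper offers no proof and explicitly leaves it open. So there is no proof to compare your proposal against. What the paper does prove are strictly weaker partial results: compactness for \emph{vertex} $\sink$ in the Dynkin case (\Cref{thm:Dynkin}), compactness for $\sink$ a rigid regular \emph{simple} in the Euclidean case (\Cref{thm:Ess}), non-compactness for $\sink\in\vect(Q)$ (\Cref{lem:non-compact}), and semi-compactness for $\sink\in\vect(Q)$ only in affine type~$A$ (\Cref{thm:E-charge}). Your reduction of the Dynkin case to acyclicity is exactly the observation the paper records immediately after stating the conjecture, but neither you nor the paper proves acyclicity, and your suggested Lyapunov-function or stability-phase approaches remain speculative.

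There is also a genuine gap in your proposed reduction for general $\xevo=\sum c_i\sink_i$. You assert that ``linearity of the flow reduces compactness to the requirement that every $\sink_i$ lies in some tube,'' but linearity holds only for the flow \emph{direction} $\dflow{\xevo}$, not for the leaves themselves: the integral curves of a sum of vector fields bear no simple relation to the integral curves of the summands, so compactness of each $\sink_i$-foliation does not formally yield compactness of the $\xevo$-foliation. The paper's actual technique for the proved vertex cases is quite different from your outline: it constructs an explicit combinatorial invariant $\II(\pcts{\vv})$ (an interval in the AR-quiver) attached to each cell the leaf traverses, and shows via \Cref{lem:dd} that this interval strictly shrinks under cell-crossings until the leaf enters $\Star(\sink)$. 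Extending that interval-shrinking machinery to non-vertex $\xevo$, or to arbitrary rigid $\sink$ in a tube (not just regular simples), is what would be needed, and neither your proposal nor the paper supplies it.
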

Not that in the Dynkin case, the acyclicity of $\xevo$-foliation is equivalent to compactness of $\xevo$-foliation,
due to finiteness.

Following \Cref{cor:pi1},
we expect the following for the cluster braid groups (see \cite[\S2]{KQ2} for details).
Recall that the cluster groupoid $\ceg(\C)$ of $\C$ is an enhancement of $\uCEG(\C)$,
obtained from which by doubling of $\uCEG(\C)$ (=replacing each unoriented edges by a 2-cycle)
and gluing oriented square/pentagon/hexagons for each unoriented square/pentagons.
One can regard it as a 2-dim CW-complex.
Denote by $\Br(Q)$ the Artin braid group associated to $Q$ (cf. \cite[App.~C]{Q24}).
We will prove the following conjecture in a follow-up work in the near future.

\begin{conjecture}\label{conj3}
For any Dynkin/Euclidean quiver $Q$, there is canonical isomorphism
\begin{equation}\label{eq:Br}
  \pi(\ceg(Q),\pcts{\vv}_Q)\cong\Br(Q),
\end{equation}
sending the standard generators to the standard ones.
Here, $\pcts{\vv}_Q$ is the initial cluster associated to $Q$.
\end{conjecture}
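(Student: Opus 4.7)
The plan is to construct mutually inverse homomorphisms between $\Br(Q)$ and $\pi(\ceg(Q),\pcts{\vv}_Q)$. Writing $\pcts{\vv}_Q=\{\vv_1,\dots,\vv_n\}$ and letting $b_i\in\Br(Q)$ denote the standard Artin generator attached to vertex $i$ of $Q$, I would first define $\phi\colon\Br(Q)\to\pi(\ceg(Q),\pcts{\vv}_Q)$ by sending $b_i$ to the class of the oriented $2$-cycle in $\ceg(Q)$ that mutates at $\vv_i$ and returns. Showing that $\phi$ is well-defined reduces to verifying that the Artin relations hold in $\pi(\ceg(Q),\pcts{\vv}_Q)$: the commutation relations $b_ib_j=b_jb_i$ for non-adjacent $i,j$ correspond to the square $2$-cells glued into $\ceg(Q)$, and the braid relations $b_ib_jb_i=b_jb_ib_j$ for adjacent $i,j$ correspond to the pentagon $2$-cells. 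This is the cluster-theoretic incarnation of the identification between mutations of initial cluster-tilting summands and simple backward tilts of the canonical heart, as in \cite{KQ1}.

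Surjectivity of $\phi$ is where I would exploit \Cref{cor:pi1} together with \Cref{thm:green}. By construction $\ceg(Q)$ contains a $2$-cell for every square and every pentagon of $\uCEG(Q)$, so \Cref{cor:pi1} implies that every element of $\pi(\ceg(Q),\pcts{\vv}_Q)$ is represented by a product of doubled-edge loops. Using connectedness of $\uCEG(Q)$ and conjugation by edge paths from $\pcts{\vv}_Q$ to arbitrary vertices, it suffices to express each doubled-edge loop in terms of conjugates of the $\phi(b_i)$. Here the downward $\xevo$-evolution flow at a generic $\xevo$ plays the decisive role: by \Cref{thm:green} it orients $\uCEG(Q)$ as Keller's green mutation tree rooted at $\pcts{\vv}_Q$, and this orientation provides the canonical system of green-to-red paths along which to transport any local mutation back to the base vertex.

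The main obstacle will be injectivity of $\phi$, since a priori $\pi(\ceg(Q),\pcts{\vv}_Q)$ could carry relations beyond the Artin ones. The plan is to exhibit a faithful $\Br(Q)$-action on a contractible space that receives a $\Br(Q)$-equivariant map from the universal cover of $\ceg(Q)$. In the Dynkin case, I would adapt the stability-condition approach of \cite{Q1,QW} inside the associated $3$-Calabi--Yau category, where the principal component of the space of finite hearts is contractible and carries a faithful braid group action. For the Euclidean case, which is genuinely new, I would run an induction on the rank of $Q$ using the homotopy equivalence $\CCx(\C)\simeq \susp\CCx(\C\backslash\sink)$ from \eqref{eq:inductive} combined with the compactness/semi-compactness results of \Cref{thm:Dynkin,thm:Ess,thm:E-charge}; each deformation retract in the inductive step should lift to an equivariant retraction of universal covers.

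The hardest part, which I expect to require either the acyclicity \Cref{conj1} of the $\xevo$-foliation or a separate local argument, is to rule out the extra relations that could be produced by the tube structure in the Euclidean setting, where the cluster complex is only contractible rather than spherical and the rigid regular simples form genuinely new geometric obstructions. Once the tubular contribution is controlled — likely by analysing the compact $\sink$-foliation around a regular simple $\sink$ from \Cref{thm:E-charge} and matching its monodromy with commutation relations among Artin generators indexed by a tube — the rest of the injectivity argument should reduce by induction to the Dynkin case via \eqref{eq:inductive}.
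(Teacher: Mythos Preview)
The statement you are attempting to prove is \emph{not} proved in the paper: it is stated as \Cref{conj3}, and the paper explicitly says ``We will prove the following conjecture in a follow-up work in the near future.'' So there is no proof in the paper to compare your proposal against; the paper only records the Dynkin case as known (via \cite{QW}, reformulated in \cite[Thm.~2.16]{KQ2}) and a variant for $\widetilde{A_{p,q}}$ (via \cite[Thm.~3.17]{KQ2}).

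That said, your outline has genuine gaps beyond the fact that the paper does not supply a proof. First, your well-definedness step is incomplete: the cluster groupoid $\ceg(Q)$ is built by gluing oriented squares, pentagons \emph{and hexagons} (see the description just before \Cref{conj3}), and it is the hexagon cells, not the pentagons, that encode the Artin braid relation $b_ib_jb_i=b_jb_ib_j$; the oriented pentagon alone does not give a length-$3$ relation between the doubled-edge generators. Second, your surjectivity argument invokes \Cref{cor:pi1}, but that result computes $\pi_1\uCEG(Q)$, not $\pi(\ceg(Q),\pcts{\vv}_Q)$; passing from the former to the latter requires knowing that the $2$-cells of $\ceg(Q)$ kill exactly the square/pentagon subgroup, which is part of what you are trying to prove rather than an input. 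Third, and most seriously, your injectivity strategy via the suspension formula \eqref{eq:inductive} does not match the algebra: deleting a vertex from $Q$ does not in general exhibit $\Br(Q)$ as built from $\Br(Q\backslash\{0\})$ in a way compatible with a suspension of classifying spaces, so an induction on rank using \eqref{eq:inductive} cannot by itself control the kernel of $\phi$. The known Dynkin argument goes through faithfulness of the spherical-twist action on the $3$-Calabi--Yau category and contractibility of stability spaces \cite{QW}, and an Euclidean analogue would need a comparable faithfulness input rather than the homotopy type of $\CCx(Q)$ alone.
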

Note that such a conjecture holds for Dynkin quiver, essentially due to \cite{QW}
but was reformulated in such a form in \cite[Thm.~2.16]{KQ2}.
A variation of the conjecture also holds for braid twist groups; see \cite[Thm.~3.17]{KQ2}, which covers type $\widetilde{A_{p,q}}$ case.

%
%
%

\subsection*{Acknowledgments}
The appexdix of this paper is the bachelor thesis of Tl under the supervision of Qy
(which will be submitted separately).
We would like to thank He Ping for proofreading the early versions and pointing out references.
Also thanks to Fu Changjian for inspiring conversations.
Tl would like to thank Qy for his patient guidance and thoughtful insights, which greatly encouraged him to complete this paper.

\section{Cluster complexes}\label{sec:cc}

\subsection{Preliminaries}\label{sec:pre}\

For simplicity, we assume $\k$ is an algebraically-closed field.
All categories considered will be $\k$-linear.
The triangulated shift functor will be denoted by $[1]$.

Let $\C$ be a triangulated category.
A functor $\Serre:\C\to\C$ is called a \emph{Serre functor} if there exists a functorial isomorphism
\[
    \Hom(Y,Z)\cong D\Hom(Z,\Serre Y),
\]
for any objects $Y,Z$ in $\C$.
The triangulated category $\C$ is called \emph{$2$-Calabi-Yau}, if it admits a Serre functor $\Serre$ such that $\Serre\cong[2]$.

Assume $\hua{Y} \subset \C$ be a full subcategory of $\C$, then its left and right perpendicular categories are the full subcategories with following indecomposables.
\begin{gather*}
    \hua{Y}^\perp=\{Z\in\C:\Hom_{\C}(Y,Z)=0~\forall Y\in \hua{Y}\},\\
    {^\perp}\hua{Y}=\{Z\in\C:\Hom_{\C}(Z,Y)=0~\forall Y\in \hua{Y}\}.
\end{gather*}

The \emph{right $\hua{Y}$-approximation} of an object $C\in\C$ is a morphism $f: Y\to C$ together with an object $Y\in \hua{Y}$ such that $\Hom(Y',f)$ is an epimorphism for any object $Y'\in\hua{Y}$.
A \emph{minimal right $\hua{Y}$-approximation} is a right $\hua{Y}$-approximation that is also \emph{right minimal}, i.e. for each $g\in\End(Y)$ such that $f\circ g=f$, we have $g$ is an isomorphism. Dually, we can define the left and minimal left $\hua{Y}$-approximation.

Moreover, we say the full subcategory $\hua{Y}$ is \emph{contravariantly (resp. covariantly) finite} if every object in $\C$ admits a right(resp. left) $\hua{Y}$-approximation. If $\hua{Y}$ is both contravariantly and covariantly finite, then $\hua{Y}$ is called \emph{functorially finite}.

\begin{definition}
Let $\Y$ be a functorially finite subcategory of $\C$. For any object $C\in\C$, we denote its right minimal $\hua{Y}$-approximation by $\App{R}{\hua{Y}}{C}$ and its left $\hua{Y}$-approximation by $\App{L}{\hua{Y}}{C}$.
For simplicity, we will write $\App{?}{Y}{C}$ for $\App{?}{\Add(Y)}{C}$, where $?\in\{L,R\}$.
\end{definition}

\subsection{Cluster tilting subcategories and mutation}\label{sec:CTS}\

In this subsection, we recall the terminologies about cluster tilting theory from \cite{BMRRT,KR}.

\begin{definition}\cite[\S5.1]{KR}\label{def:ctc}
A subcategory $\ctc$ of $\C$ is called a \emph{partial cluster tilting subcategory} if the following conditions hold:
\begin{itemize}
    \item $\ctc$ is functorially finite;
    \item $\ctc$ is rigid, that is, $\Ext^1(\ctc,\ctc)=0$;
\end{itemize}
If in addition, $\ctc={^\perp}\ctc[1]=\ctc^{\perp}[-1]$ holds, then $\ctc$ is called \emph{cluster tilting subcategory}.
\end{definition}

\begin{remark}\label{rem:cts/o}
We will use the following alternative equivalent terminologies.
An \emph{cluster tilting set} $\cts=\{\vv_1,\ldots, \vv_n\}$ in $\C$ is an $\Ext$-configuration (cf.\cite[\S2]{BMRRT}), that is, a maximal collection of non-isomorphic indecomposable objects such that
\[
\Ext^1_{\C}(\vv_i,\vv_j)=0
\]
for all $1\le i,j\le n$.
The object $\cto=\bigoplus_{i=1}^n \vv_i$ is called \emph{a cluster tilting object} for a cluster tilting set $\{\vv_{1},\ldots,\vv_{n}\}$.

The partial cluster tilting set/object is a subset/direct summand of the cluster tilting one.
\end{remark}

\begin{definition}\label{def:mut}
Let $\cts=\{\vv_1,\ldots,\vv_n\}$ be a cluster tilting set of a 2-Calabi-Yau category $\C$.
The \emph{forward mutation $\mu_i^\sharp$} at the object $\vv_i$ is an operation that produces another cluster tilting set $\mu_i(\cts)$ by replacing $Y_i$ by
\[
    \vv_i^\sharp=\Cone(\vv_i\overset{f}{\to}\bigoplus_{j\ne i}\Irr_{\Add(\pcto{\vv})}(\vv_i,\vv_j)^*\otimes \vv_j),
\]
where $\Irr_{\Add(\pcto{\vv})}(\vv_i,\vv_j)$ is the space of irreducible maps from $\vv_i$ to $\vv_j$
in the additive category $\Add\pcto{\vv}$.
Similarly, the backward mutation $\mu_i^\flat$ replaces $\vv_i$ by
\[
    \vv_i^\flat=\Cone(\bigoplus_{j\ne i}\Irr_{\Add(\pcto{\vv})}(\vv_j,\vv_i)\otimes \vv_j\overset{g}{\to} \vv_i)[-1].
\]
\end{definition}

The 2-Calabi-Yau property implies that $\vv_i^\sharp=\vv_i^\flat$ and hence $\mu_i^\sharp=\mu_i^\flat$
(i.e., mutation is an involution).
Thus, we can drop $\sharp$ and $\flat$ in the notations.
The so-called \emph{unoriented cluster exchange graph} $\uCEG(\C)$ is the graph whose vertices are clusters and
whose edges are mutations. Such a graph is $n$-regular,
meaning that each vertex is $n$-valent.

\subsection{Calabi-Yau reduction}\label{sec:Red}\

We recall the Calabi-Yau reduction introduced in \cite[\S4]{IY}. Let $\C$ be a triangulated category with a Serre functor $\Serre$. Suppose that $\hua{\sink}\subset\hua{Y}$ are two subcategories of $\C$ with auto-equivalence $\Serre\circ[-2]$, such that:
\begin{itemize}
    \item $\hua{Y}$ is extension-closed, i.e. $\hua{Y}*\hua{Y}\subset\hua{Y}$,
    \item $(\hua{\sink}*\hua{Y}[1])\cap^\bot\hua{\sink}[1],~(\hua{Y}[-1]*\hua{\sink})\cap\hua{\sink}[-1]^\bot\subset\hua{Y}$,
\end{itemize}

Denote by $\hua{Y}/\hua{\sink}$ the quotient category
with respect to the ideals consisting of morphisms that factor through objects in $\hua{\sink}$.
Under the assumptions as above, \cite[Prop.~2.6]{IY} constructs a functor $\<1\>\colon\hua{Y}/\hua{\sink}\to\hua{Y}/\hua{\sink}$:
\begin{gather}\label{def:shift}
Y\xrightarrow{\alpha} \sink_Y\to Y\<1\>\to Y[1],
\end{gather}
where $\alpha$ is a minimal left $\hua{\sink}$-approximation of $Y$, such that
\begin{enumerate}
    \item $(\hua{Y}/\hua{\sink}, \<1\>)$ forms a triangulated category and the quotient
    $\Red_{\hua{\sink}}:\hua{Y}\to\hua{Y}/\hua{\sink}$ is a triangle functor,
    \item $\hua{Y}/\hua{\sink}$ has a Serre functor $\Serre_2\circ\< 2\>$. Moreover, $\hua{Y}/\hua{\sink}$ is $2$-Calabi-Yau if $\C$ is.
    \item The functor $\Red_{\hua{\sink}}:\hua{Y}\to\hua{Y}/\hua{\sink}$ induces a one-to-one correspondence between cluster tilting subcategories of $\C$ containing $\hua{\sink}$ and cluster tilting subcategories of $\hua{Y}/\hua{\sink}$.
\end{enumerate}
In particular, fixing an rigid indecomposable $\sink$ in $\C$ we will take
$\hua{Y}={^\perp}(\source)$, $\hua{\sink}=\Add(\sink)$
and denote by $\C\backslash \sink={^\perp}(\source)/\sink$ the quotient category. Hence we have the reduction functor for $\sink$:
\begin{gather}\label{eq:redS}
\Red_{\sink}:{^\perp}(\source)\to \C\backslash\sink.
\end{gather}
Noticing that ${^\perp}(\sink[2])=(\sink)^\perp$ by the 2-Calabi-Yau condition,
we have another reduction functor for $\source$:
\begin{gather}\label{eq:redS1}
\Red_{\source}:(\sink){^\perp}\to \C\backslash \source.
\end{gather}

\subsection{Cluster complexes}\label{sec:ccx}\

Let $\C$ be the 2-Calabi-Yau category with cluster tilting sets.

\begin{definition}\label{def:CCx}
The \emph{cluster complex} $\CCx(\C)$ is a clique complex for the compatibility relation on the ground set whose simplices are partial cluster tilting sets, and whose maximal simplices are cluster tilting sets.

This is an equivalent analogue of \cite[\S7]{FST}.
\end{definition}

In the case when $\C=\C(Q)$ for an acyclic quiver $Q$, we will write $\CCx(Q)$ for $\CCx(\C(Q))$ for short. Similarly for $\CCx(\plpqr)$. (cf. \Cref{sec:qc})

\begin{remark}\label{rem:dual}
Note that the top-cells of the cluster complex $\CCx(\C)$ are precisely the cluster tilting sets and the codimension-1 cells are precisely the mutations. As a result, the 1-skeleton of the dual complex of $\CCx$ is precisely the cluster exchange graph introduced in \Cref{def:mut}.
For instance, \Cref{fig:A2} shows the duality between $\CCx(A_2)$ and $\uCEG(A_2)$,
where we use the geometric model (the pentagon) to demonstrate,
that arcs/diagonals are indecomposable objects in $\C(A_2)$ and triangulations are clusters.

\begin{figure}[hb]\centering\makebox[\textwidth][c]{
  \includegraphics[width=12cm]{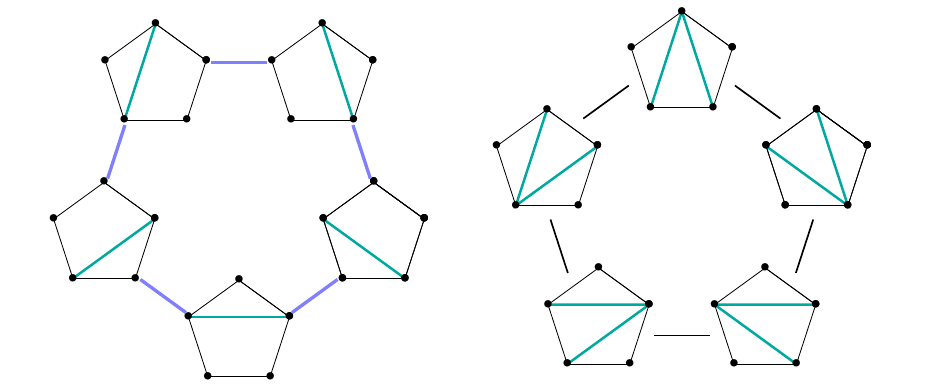}}\vskip -.2cm
\caption{The cluster complex $\CCx(A_2)$ VS the exchange graph $\uCEG(A_2)$} \label{fig:A2}
\end{figure}

\end{remark}

\begin{example}\label{ex:AR}
When $\C=\CQ$, the connected component
\[
    \ar(\tau^\ZZ\Proj \k Q) \cong \ZZ Q^{\on{op}}
\]
in the AR-quiver of $\CQ$ can be naturally embedded into the cluster complex $\CCx(\CQ)$, where the indecomposables are 0-cells and multi-arrows between two indecomposables become a 1-cell connecting the corresponding 0-cells.

For instance:
\begin{itemize}
  \item Let $Q$ be a $A_1$ quiver. The AR-quiver of $\C(\DynA{1})$ consists of two distinct points. Hence the cluster complex $\CCx(\DynA{1})=\SS^0$.

  \item Let $Q$ be a $A_3$ quiver with a bivalent sink vertex $0$ and two source vertices $1,2$. The left picture of \Cref{fig:A3-AR-CCx} shows the AR-quiver of $\C(A_3)$.
      The right picture of which shows the cluster complex $\C(A_3)$ with
      the dual graph (in black) being the cluster exchange graph $\uCEG(A_3)$
      (which is an associahedron).

  \item Let $Q$ be a $D_4$ quiver with a trivalent sink vertex $0$ and three source vertices $1,2,3$. There is a $C_3$-symmetry on $Q$ permutating the source vertices. \Cref{fig:D4-AR-CC} shows the AR-quiver of $\C(D_4)$ and the cluster complex $\C(D_4)$.
\end{itemize}

\begin{figure}[hb]\centering\makebox[\textwidth][c]{
  \includegraphics[width=15cm]{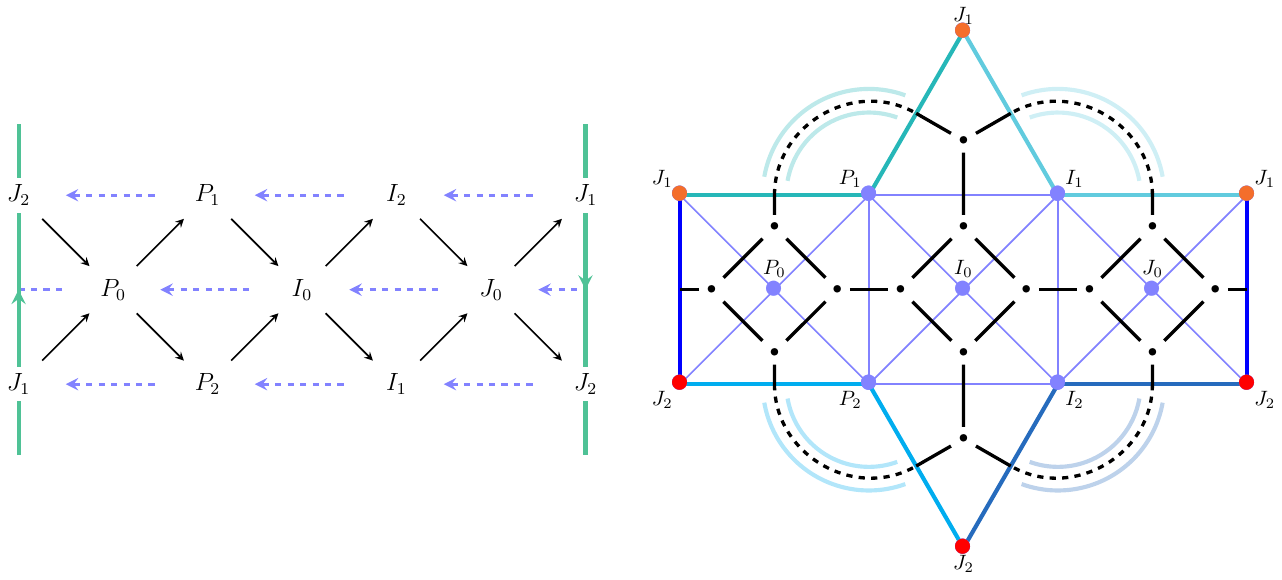}
}
\caption{The AR-quiver and cluster complex $\C(A_3)$} \label{fig:A3-AR-CCx}
\end{figure}

\begin{figure}[ht]\centering\makebox[\textwidth][c]{
  \includegraphics[width=15cm]{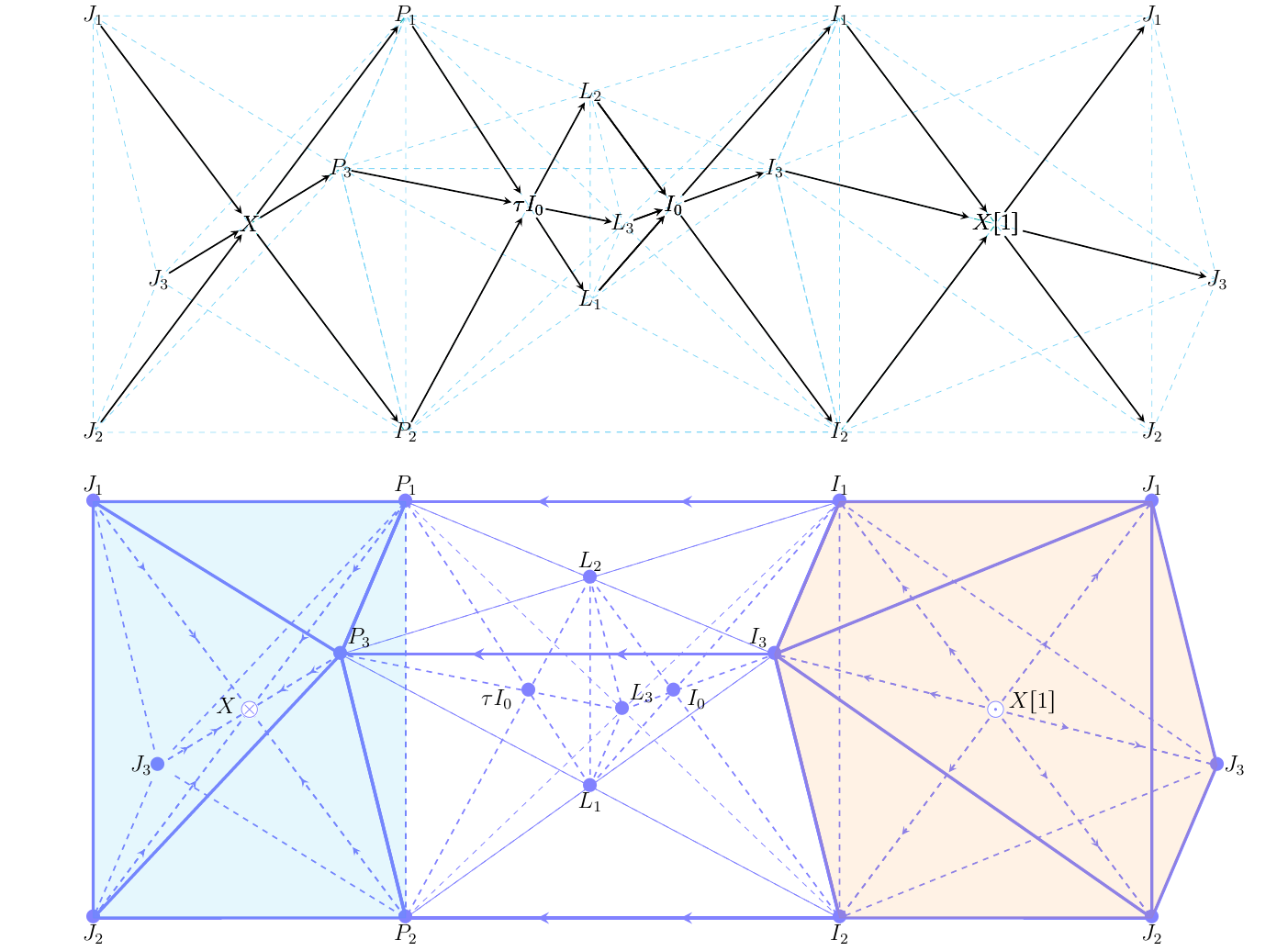}\quad}
\caption{The AR-quiver and part of cluster complex $\C(D_4)$} \label{fig:D4-AR-CC}
\end{figure}
\end{example}

Let $K\subset\CCx$ be a $k$-simplex in a complex $\CCx$. We recall some of the standard terminologies from the theory of simplicial complexes.
\begin{itemize}
\item The \emph{interior} of a $k$-simplex $[x_0,\ldots,x_k]$ is
\begin{gather}\label{def:interior}
[x_0,\ldots,x_k]^\circ=\{\sum_{i=0}^n c_i x_i: c_i\ne0,~~\forall 0\le i\le k \}.
\end{gather}
Note that it is the interior in the usual sense except for $k=0$, where $[x_0]^\circ=\{x_0\}=[x_0]$.

\item The \emph{$k$-skeleton} $\CCx^{\le k}$ of $\CCx$ consists of all interior of simplices $\sigma$ with dimension at most $k$:
\[
    \CCx^{\le k}=\{\sigma^\circ\subset\CCx:\dim\sigma\le k\},
\]
and dually
\[
\CCx^{\ge k}=\{\sigma^\circ\subset\CCx:\dim\sigma\ge k\}.
\]

\item The \emph{star $\Star(K)$} of $K$ in $\CCx$ consists of all simplices that containing $K$:
\[
    \Star(K)=\{\sigma\subset\CCx:K\subset\sigma\}.
\]

\item The \emph{link $\link(K)$} of $K$ in $\CCx$ consists of all faces of simplices in the star that do not intersect $K$:
\[
    \link(K)=\{\sigma\subset\Star(K):\sigma\cap K=\emptyset\}.
\]
\item The \emph{join} of two simplicial complexes $\CCx_1$ and $\CCx_2$ is the simplicial complex consisting of the join of their simplices:
\[
  \CCx_1*\CCx_2=\{K_1*K_2:K_i\subset\CCx_i,i=1,2\},
\]
where the join of simplices are $[v_1,\ldots,v_k]*[w_1,\ldots,w_l]=[v_1,\ldots,v_k,w_1,\ldots,w_l]$.
\end{itemize}

We will usually take $K$ be a vertex $\sink$ in a cluster complex $\CCx(\C)$,
which is a rigid indecomposable in $\C$.
Moreover, we know that
\begin{itemize}
    \item $\Star(\sink)$ consists of all partial cluster tilting sets perpendicular to $\source$, that is,
    \[
        \Star(\sink)=\{ \pcts{\vv}:\pcts{\vv} \subset{^\perp}(\sink[1]) \}.
    \]
    \item $\link(\sink)$ consists of all partial cluster tilting sets perpendicular to $\source$ but not containing $\sink$, that is,
    \[
        \link(\sink)=\{ \pcts{\vv}:\sink\notin\pcts{\vv} \subset{^\perp}(\sink[1]) \}.
    \]
\end{itemize}
By Calabi-Yau reduction, $\C\backslash\sink$ is still 2-Calabi-Yau and the (partial) clusters are preserved by the reduction functor. In other words, we can naturally identify:
\begin{gather}\label{eq:red}
\begin{cases}
\CCx(\C\backslash\sink)\simeq\link(\sink),\\
\CCx(\C\backslash\source)\simeq\link(\source).
\end{cases}
\end{gather}
Furthermore, the join of two cluster complexes can be realized as the cluster complex of the 2-Calabi-Yau category $\C_1\sqcup\C_2$:
\begin{gather}\label{eq:union}
    \CCx(\C_1\sqcup\C_2)=\CCx(\C_1)*\CCx(\C_2).
\end{gather}
For instance, the join with $\SS^0$ (two disjoint vertices) is the equivalent to the suspension.
So we have
\[
    \Sigma\CCx(\C)=\SS^0*\CCx(\C)=\CCx(\C_{A_1})*\CCx(\C)=\CCx(\C_{A_1}\sqcup\C).
\]

\begin{example}\label{ex:ccx-D4}
Continue with the $D_4$ example above.
Fix a rigid object $\sink=P_0$.
In the lower picture of \Cref{fig:D4-AR-CC},
the blue and orange octahedrons are the stars $\Star(\sink)$ and $\Star(\source)$;
the surface (=8 faces) of which are $\link(\sink)$ and $\link(\source)$,
respectively.

Moreover, the interval $[\sink,\source]$ (cf. \Cref{sec:int}) corresponds to the white prism in $\CCx(D_4)$ showing in the lower picture of \Cref{fig:D4-AR-CC}, which consists of 19 top-cells (tetrahedrons):
\begin{itemize}
  \item 4 of which are $C_3$-invariant showing in \Cref{fig:D4-evo1}.
  \item 3 of which with edges $P_iI_i$, for $1\le i\le 3$, e.g. the green one in \Cref{fig:D4-evo2}.
  \item 6 of which with edges $P_iP_j$ or $I_iI_j$, for $1\le i,j \le 3$, e.g. the blue one in \Cref{fig:D4-evo2}.
  \item 6 of which with edges $P_0P_i$ or $I_0I_i$, for $1\le i\le 3$, e.g. the orange one in \Cref{fig:D4-evo2}.
\end{itemize}
\end{example}

\begin{figure}[thb]\centering\makebox[\textwidth][c]{
  \includegraphics[height=7cm]{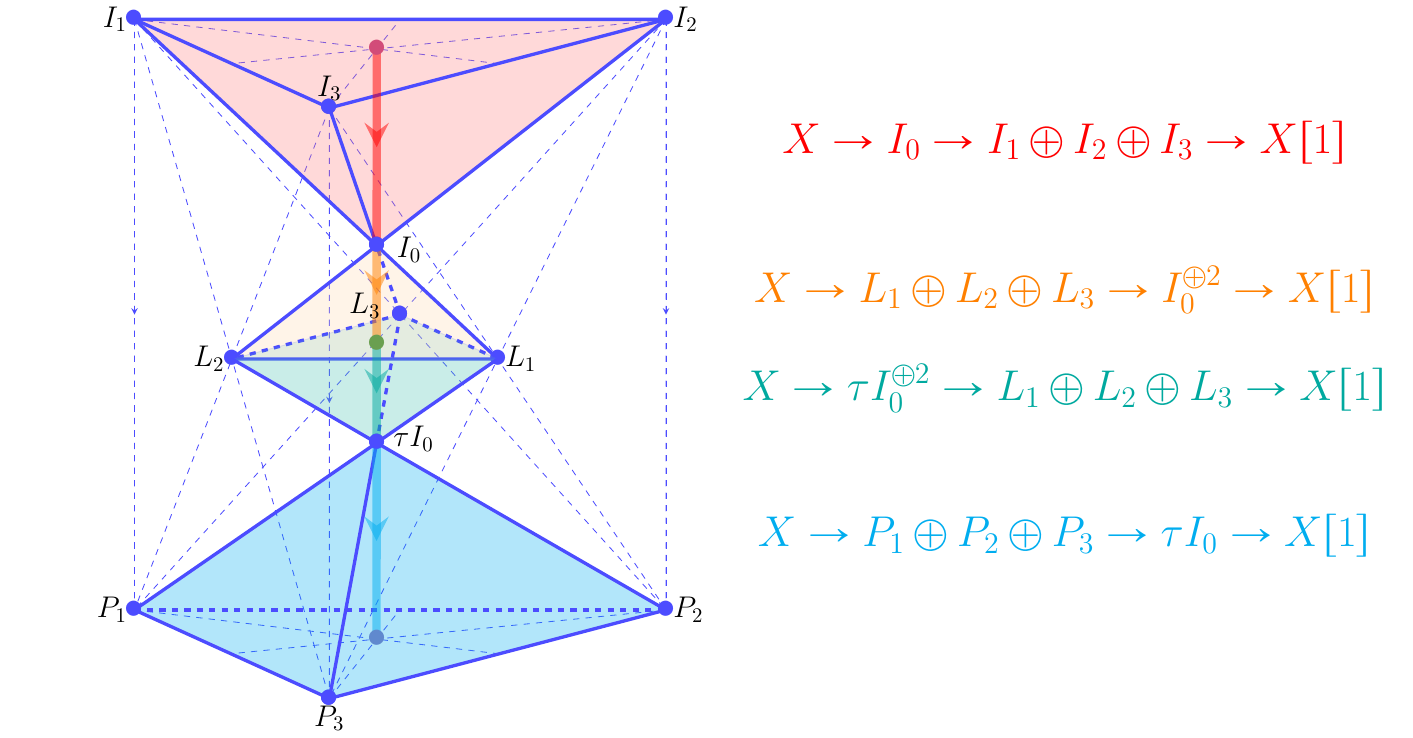}
}
\caption{Four tops cells with $\sink$-evolving triangles in $\CCx(D_4)$}
\label{fig:D4-evo1}
\end{figure}

\begin{figure}[thb]\centering\makebox[\textwidth][c]{
  \includegraphics[height=7cm]{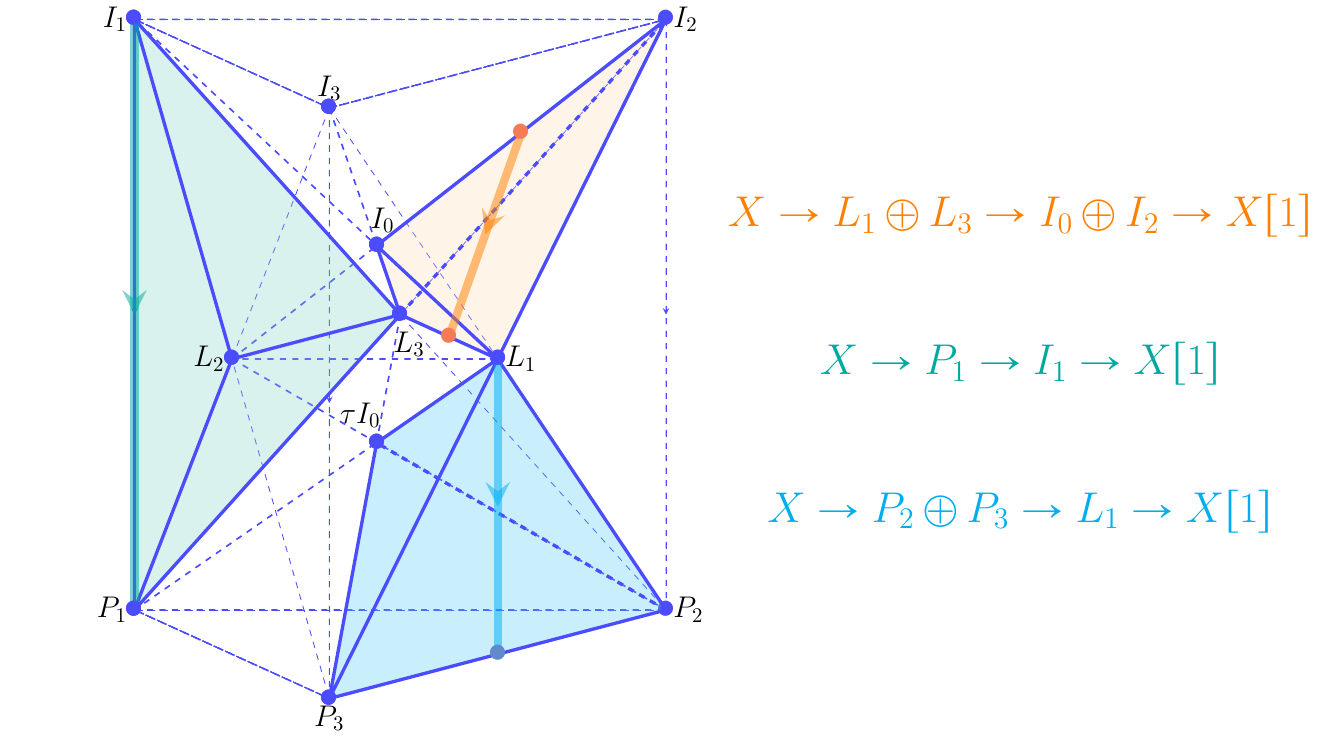}
}
\caption{Three tops cells with $\sink$-evolving triangles in $\CCx(D_4)$}
\label{fig:D4-evo2}
\end{figure}

\section{\texorpdfstring{$\xevo$-evolution flows}{X-evolution flows}}\label{sec:evoCC}

Throughout, we fix a 2-Calabi-Yau triangulated category $\C$, admitting a cluster tilting object. Moreover, we impose a mild condition on $\C$ for simplicity, that
\begin{itemize}
  \item[$\bigstar$] Any rigid objects in $\C$ is a partial cluster.
\end{itemize}
So basically, we require functorial finiteness for rigid indecomposable objects.
For instance, $\Hom$-finite Krull-Schmidt condition implies Condition~$\bigstar$,
cf. \cite[Prop.~4.2]{AS}.
In particular, if $\C$ is the cluster category of a Jacobi-finite non-degenerate quiver with potential (say an acyclic quiver with zero potential) or quiver with potential from a triangulated marked surface, then Condition~$\bigstar$ holds.
\subsection{$\sink$-evolving triangles}\label{sec:evo}\
Fix a rigid indecomposable $\sink$ in $\C$.
\begin{convention}\label{conv}
Throughout, we will use the following notations of partial cluster objects/subcategories/sets:
\begin{itemize}
    \item $\vv_i$ denotes a rigid indecomposable in $\C$, i.e. a vertex in $\CCx(\C)$.
    \item $\pcts{\vv}=\{\vv_i\}_{i=0}^k$ denotes a partial cluster tilting set in $\C$, which is also a simplex of $\CCx(\C)$, cf. \Cref{def:CCx}.
    \item $\pcto{\vv}=\oplus_{i=0}^k \vv_i$ denotes a partial cluster tilting objects in $\C$.
    \item $\pctc{\vv}=\Add(\pcto{\vv})$ denotes a partial cluster tilting subcategories in $\C$.
    \item $\ccpt{\vv}=\sum_{i=0}^k \alpha_i \vv_i$ denotes a point of $\CCx(\C)$.
    \item $\cc(\pcto{\vv})=\pcts{\vv}$ denotes the cell corresponding to $\pcto{\vv}$ in $\CCx(\C)$.
\end{itemize}
and with a slight abuse of terminology, we call all of them \emph{partial clusters}.

Any rigid object $V=\oplus_{i=0}^k V_i^{\oplus d_i}$ can be realized as a point
\[
    \real{V}=\frac{\sum_{i=0}^k d_i V_i}{\sum_{i=0}^k d_i}
\]
in $\CCx(\C)$, where $V_i$ are non-isomorphic indecomposable objects.
\end{convention}

Next, we introduce our core notion, the $\sink$-evolving triangles, which is closely related to the notion of indices in \cite{DK},
cf. \cite{IY} and \cite{Pla}.

\begin{lemdef}\label{def:evo-tri}
A triangle
\begin{gather}\label{eq:evo-tri}
    \sink \xrightarrow{g} \ww \to \uu \xrightarrow{f} \source
\end{gather}
is called a \emph{pre-$\sink$-evolving triangle}, if it satisfies the following equivalent conditions:
\begin{enumerate}
    \item $\uu$ is a partial cluster and $f$ is a right $\Add(\uu)$-approximation of $\source$;
    \item $\Ext^1(\uu,\ww)=0$;
    \item $\ww$ is a partial cluster and $g$ is a left $\Add(\ww)$-approximation of $\sink$.
\end{enumerate}
It is an \emph{$\sink$-evolving triangle}, if it satisfies the following equivalent conditions in addition:
\begin{enumerate}
    \item $f$ is right minimal;
    \item $\Add(\ww)\cap\Add(\uu)=0$;
    \item $g$ is left minimal.
\end{enumerate}
In particular, $\Add(\ww\oplus \uu)$ is a partial cluster for $\ww, \uu$ in an $\sink$-evolving triangle.
\end{lemdef}

\begin{proof}
The first set of equivalences is the 2-Calabi-Yau version of the Wakamatsu's Lemma \cite[Lem.~2.1.13]{GT}.
We only show that $1^\circ$ is equivalent to $2^\circ$. The equivalence of $2^\circ$ and $3^\circ$ is similar.

Applying $\Hom(\uu,-)$ to the triangle $\eqref{eq:evo-tri}$, we have:
\begin{equation}\label{les1}
\begin{aligned}
\Hom(\uu,\uu)\xrightarrow{\Hom(\uu,f)}&\Hom(\uu,\source)\to\Hom(\uu,\ww[1])\to\Hom(\uu,\uu[1])\\
    \to&\Hom(\uu,\sink[2])\xrightarrow{\Hom(\uu,g[2])}\Hom(\uu,\ww[2]).
\end{aligned}
\end{equation}

$1^\circ \Rightarrow 2^\circ$:
Note that the condition $1^\circ$ is equivalent to $\Hom(\uu,\uu[1])=0$ and $\Hom(\uu,f)$ is surjective, which implies that $\Hom(\uu,\ww[1])=0$.

$2^\circ \Rightarrow 1^\circ$:
$\Ext^1(\uu,\ww)=0$ implies that $\Hom(\uu,f)$ is surjective.
Applying $\Hom(\sink,-)$ and $\Hom(-,\ww)$ to the triangle $\eqref{eq:evo-tri}$ we have
\[
\begin{cases}
    \Hom(\sink,\ww)\to\Hom(\sink,\uu)\to\Hom(\sink,\source)=0.\\
    \Hom(\ww,\ww)\to\Hom(\sink,\ww)\to\Hom(\uu[-1],\ww)=0.
\end{cases}
\]

Combining with the following communicative diagram:
\[
\begin{tikzcd}
& {\Hom(\ww,\ww)} \arrow[r] \arrow[d,two heads] & {\Hom(\ww,\uu)} \arrow[d,"{\Hom(g,\uu)}"] \\
& {\Hom(\sink,\ww)} \arrow[r,two heads] & {\Hom(\sink,\uu)},
\end{tikzcd}
\]
we conclude that $\Hom(g,\uu)$ is surjective.
By the 2-Calabi-Yau condition, the surjectivity of $\Hom(g,\uu)$ is equivalent to the injectivity of $\Hom(\uu,g[2])$.
Back to $\eqref{les1}$, $\Hom(\uu,\ww[1])=0$ implies that $\Hom(\uu,\uu[1])=0$.

The second set of equivalences was proofed in \cite[Lem.~3.4 and Lem~3.5]{Pla}.
\end{proof}

There are two \emph{trivial} $\sink$-evolving triangles:
\begin{gather}\label{eq:trivial}\begin{cases}
    \sink \to \sink \to 0 \to \source,\\
    \sink \to 0 \to \source \to \source.
\end{cases}\end{gather}
We will call the other $\sink$-evolving triangles non-trivial (where $\ww\ne0\ne \uu$).

\begin{example}
In the cluster category $\C( \widetilde{A_{1,1} } )=\D^b(\pl)\big/\Serre_2$
with rigid object $\sink=\mathcal{O}(1)$, all the $\sink$-evolving triangles are
\[
\mathcal{O}(1) \to \mathcal{O}(m)^{\oplus |m|} \to \mathcal{O}(m+1)^{\oplus |m-1|}
\to  \mathcal{O}(1)[1], \quad m\in\ZZ.
\]
Note that
$$\mathcal{O}(1) \to \mathcal{O}_x \to \mathcal{O}[1] \to \mathcal{O}(1)[1]$$
is not an $\sink$-evolving triangle.

Note that in this case,
by identifying $\mathcal{O}(m)$ with integer $m\in\ZZ\subset\RR$ for any $m\in\ZZ$,
the cluster complex $\CCx(\widetilde{A_{1,1}})$ can be identified with $\RR$.
\end{example}

\begin{lemma}\label{pp:complete}
Let \eqref{eq:evo-tri} be an $\sink$-evolving triangle and $\vv$ an object.
Then
\begin{enumerate}[label=(\alph*)]
  \item $\Ext^1(\vv,\ww)=0$ implies that $f$ is still a minimal right $\Add(\vv\oplus\uu)$-approximation.
  \item $\Ext^1(\uu,\vv)=0$ implies that $g$ is still a minimal left $\Add(\ww\oplus\vv)$-approximation.
\end{enumerate}
\end{lemma}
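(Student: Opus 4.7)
The plan is to reduce to checking the extra factorization condition only for morphisms involving the new summand $\vv$, since approximations behave additively in the summands of the approximating subcategory. Concretely, if $f\colon\uu\to\source$ is already a right $\Add(\uu)$-approximation, to promote it to a right $\Add(\vv\oplus\uu)$-approximation I only need to verify that every morphism $\vv\to\source$ factors through $f$. Right minimality will then be automatic, because the source object of $f$ is still $\uu$ and the defining condition ($f\phi=f\Rightarrow\phi$ is iso, for $\phi\in\End(\uu)$) depends only on the morphism $f$ itself, not on the surrounding category.

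For (a), I would apply $\Hom(\vv,-)$ to the $\sink$-evolving triangle \eqref{eq:evo-tri} to extract the exact piece
\[
\Hom(\vv,\uu)\xrightarrow{\Hom(\vv,f)}\Hom(\vv,\source)\to\Hom(\vv,\ww[1]).
\]
The hypothesis $\Ext^1(\vv,\ww)=0$ forces the rightmost term to vanish, so $\Hom(\vv,f)$ is surjective: every morphism $\vv\to\source$ factors through $f$, as required. Together with the existing right $\Add(\uu)$-approximation property, this upgrades $f$ to a minimal right $\Add(\vv\oplus\uu)$-approximation.

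Part (b) is proved by the dual argument. Rotating \eqref{eq:evo-tri} and applying $\Hom(-,\vv)$ yields
\[
\Hom(\uu,\vv)\to\Hom(\ww,\vv)\xrightarrow{\Hom(g,\vv)}\Hom(\sink,\vv)\to\Hom(\uu[-1],\vv),
\]
and $\Hom(\uu[-1],\vv)\cong\Ext^1(\uu,\vv)=0$ by hypothesis; hence $\Hom(g,\vv)$ is surjective and every morphism $\sink\to\vv$ factors through $g$. Left minimality of $g$ is inherited exactly as in part (a). I do not anticipate any serious obstacle; the only subtle point is the additivity reduction to the single new summand $\vv$, which is immediate from the splitting of $\Hom$ on direct sums, and the careful observation that minimality is a property of $f$ (resp.\ $g$) as a morphism and is not disturbed by enlarging the approximating category.
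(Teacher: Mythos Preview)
Your proof is correct and follows essentially the same approach as the paper: apply $\Hom(\vv,-)$ to the triangle, use $\Ext^1(\vv,\ww)=0$ to get surjectivity of $\Hom(\vv,f)$, and combine with the existing surjectivity of $\Hom(\uu,f)$. The only cosmetic difference is in justifying minimality: you observe directly that right minimality is a property of the morphism $f$ alone, whereas the paper invokes the equivalent criterion $\Add(\ww)\cap\Add(\uu)=0$ from \Cref{def:evo-tri}; both are valid and equally short.
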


\begin{proof}
For $(a)$, applying $\Hom(\vv,-)$ to \eqref{eq:evo-tri}, we obtain
\begin{equation}\label{les2}
\Hom(\vv,Z)\xrightarrow{\Hom(\vv,f)}\Hom(\vv,\source)\to\Hom(\vv,\ww[1])=0.
\end{equation}
Since \eqref{eq:evo-tri} is $\sink$-evolving, $\Hom(\uu,f)$ is surjective and hence $\Hom(\vv\oplus\uu,f)$ is surjective, that is, $f$ is a right $\Add(\vv\oplus\uu)$-approximation.
The minimality is also inherited (using the second criterion).
Dually, we have $(b)$.
\end{proof}

\begin{definition}
An $\sink$-evolving triangle \eqref{eq:evo-tri} is called a \emph{downward $\sink$-evolving triangle} for $\pcts{\vv}$, if $f$ is the right minimal $\pctc{\vv}$-approximation of $\source$, that is, $\uu=\App{R}{\pctc{\vv}}{\source}$.

Dually, \eqref{eq:evo-tri} is called a \emph{upward $\sink$-evolving triangle} for $\pcts{\vv}$, if $g$ is the left minimal $\pctc{\vv}$-approximation of $\sink$, that is, $\ww=\App{L}{\pctc{\vv}}{\sink}$.
\end{definition}

\begin{remark}
By \Cref{pp:complete}, an $\sink$-evolving triangle \eqref{eq:evo-tri} is the downward/upward $\sink$-evolving triangle
for any partial cluster $\pcts{\vv}$ containing $\ww\oplus \uu$.

In particular, the downward and upward $\sink$-evolving triangles of a partial cluster $\pcts{\vv}$ may coincide,
e.g. when $\pcts{\vv}$ is a cluster.
In such a case, we call it the $\sink$-evolving triangle for $\pcts{\vv}$.
\end{remark}

Note that by the functorially finiteness, the downward/upward $\sink$-evolving triangle for any $\pcts{\vv}$ exists.
By the minimality, it is moreover unique. But two different partial clusters may share the same downward/upward $\sink$-evolving triangle.

The first property of downward/upward $\sink$-evolving triangles is the following.
\begin{lemma}\label{lem:last}
Let \eqref{eq:evo-tri} be a downward or upward $\sink$-evolving triangle for $\pcts{\vv}$.
Then $\Ext^1(\pcto{\vv},\ww)=0$ or $\Ext^1(\uu,\pcto{\vv})=0$, respectively.
In particular, $\ww\oplus\pcto{\vv}\oplus \uu$ is a partial cluster.
\end{lemma}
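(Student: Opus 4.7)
The plan is to exploit the universal property of the approximation $f$ (resp.\ $g$) together with the rigidity of $\pcts{\vv}$, and then use 2-Calabi-Yau duality to upgrade one-sided $\Ext$-vanishing to two-sided.

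For the downward case, I would apply $\Hom(\pcto{\vv},-)$ to the triangle \eqref{eq:evo-tri} to obtain the exact sequence
$$\Hom(\pcto{\vv},\uu)\xrightarrow{\Hom(\pcto{\vv},f)}\Hom(\pcto{\vv},\source)\to\Ext^1(\pcto{\vv},\ww)\to\Ext^1(\pcto{\vv},\uu).$$
The left map is surjective because $f$ is a right $\pctc{\vv}$-approximation of $\source$ and $\pcto{\vv}\in\pctc{\vv}$; the rightmost term vanishes because $\uu=\App{R}{\pctc{\vv}}{\source}\in\Add(\pcto{\vv})$ and $\pcts{\vv}$ is a partial cluster. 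Hence $\Ext^1(\pcto{\vv},\ww)=0$. The upward statement is obtained by the dual argument: apply $\Hom(-,\pcto{\vv})$ to \eqref{eq:evo-tri}, use that $g$ is a left $\pctc{\vv}$-approximation of $\sink$ and that $\ww=\App{L}{\pctc{\vv}}{\sink}\in\Add(\pcto{\vv})$ to force $\Ext^1(\uu,\pcto{\vv})=0$.

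For the final assertion, I work in the downward case (the upward one being dual). The 2-Calabi-Yau duality upgrades the previous vanishing to $\Ext^1(\ww,\pcto{\vv})\cong D\Ext^1(\pcto{\vv},\ww)=0$. Combining this with $\Ext^1(\pcto{\vv},\pcto{\vv})=0$ (since $\pcts{\vv}$ is a partial cluster) and $\Ext^1(\ww,\ww)=0$ (coming from $\ww\oplus\uu$ being a partial cluster by \Cref{def:evo-tri}), the object $\ww\oplus\pcto{\vv}$ is rigid, hence a partial cluster by Condition~$\bigstar$. Since $\uu\in\Add(\pcto{\vv})$, its indecomposable summands are already among those of $\pcto{\vv}$, so adjoining $\uu$ does not enlarge the underlying set of non-isomorphic indecomposables; thus $\ww\oplus\pcto{\vv}\oplus\uu$ is still a partial cluster.

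I do not foresee a substantive obstacle: the argument is essentially one long exact sequence plus the 2-Calabi-Yau symmetry. The only delicate point is bookkeeping, namely, recognising that the summands of $\uu$ (resp.\ $\ww$) are automatically in $\Add(\pcto{\vv})$ because the approximation is taken inside $\pctc{\vv}$, which is what justifies absorbing them into $\pcto{\vv}$ in the last step, and invoking Condition~$\bigstar$ to promote the rigid object $\ww\oplus\pcto{\vv}$ to a genuine partial cluster.
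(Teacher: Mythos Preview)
Your argument is correct and matches the paper's own proof essentially line for line: apply $\Hom(\pcto{\vv},-)$ to the triangle, use that $f$ is a right $\pctc{\vv}$-approximation and that $\uu\in\pctc{\vv}$ to kill the two outer terms, and conclude $\Ext^1(\pcto{\vv},\ww)=0$. The paper leaves the ``in particular'' clause implicit, whereas you spell out the 2-Calabi-Yau symmetry and the absorption of $\uu$ into $\Add(\pcto{\vv})$; this extra detail is accurate and helpful.
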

\begin{proof}
We only deal the downward case.
Applying $\Hom(\pcto{\vv}, -)$ to \eqref{eq:evo-tri}, we have
\[
    \Hom(\pcto{\vv},\uu)\xrightarrow{\Hom(\vv,f)}\Hom(\pcto{\vv},\source)\to\Hom(\pcto{\vv},\ww[1])
    \to \Hom(\pcto{\vv}, \uu[1])=0.
\]
The last term is zero as $\uu$ is in the partial cluster $\pctc{\vv}$;
$\Hom(\vv,f)$ is surjective since $f$ is the right minimal $\pctc{\vv}$-approximation of $\source$.
Hence $\Ext^1(\pcto{\vv},\ww)=0$ as required.
\end{proof}

\begin{example}\label{ex:no}

Consider the geometric model of $\EucA{1}{3}$ (cf. \Cref{sec:GeoApq}) with a chosen triangulation corresponding to a cluster, showing in the left of \Cref{fig:A13-tube}.

\begin{figure}[th]\centering\makebox[\textwidth][c]{
  \includegraphics[width=15cm]{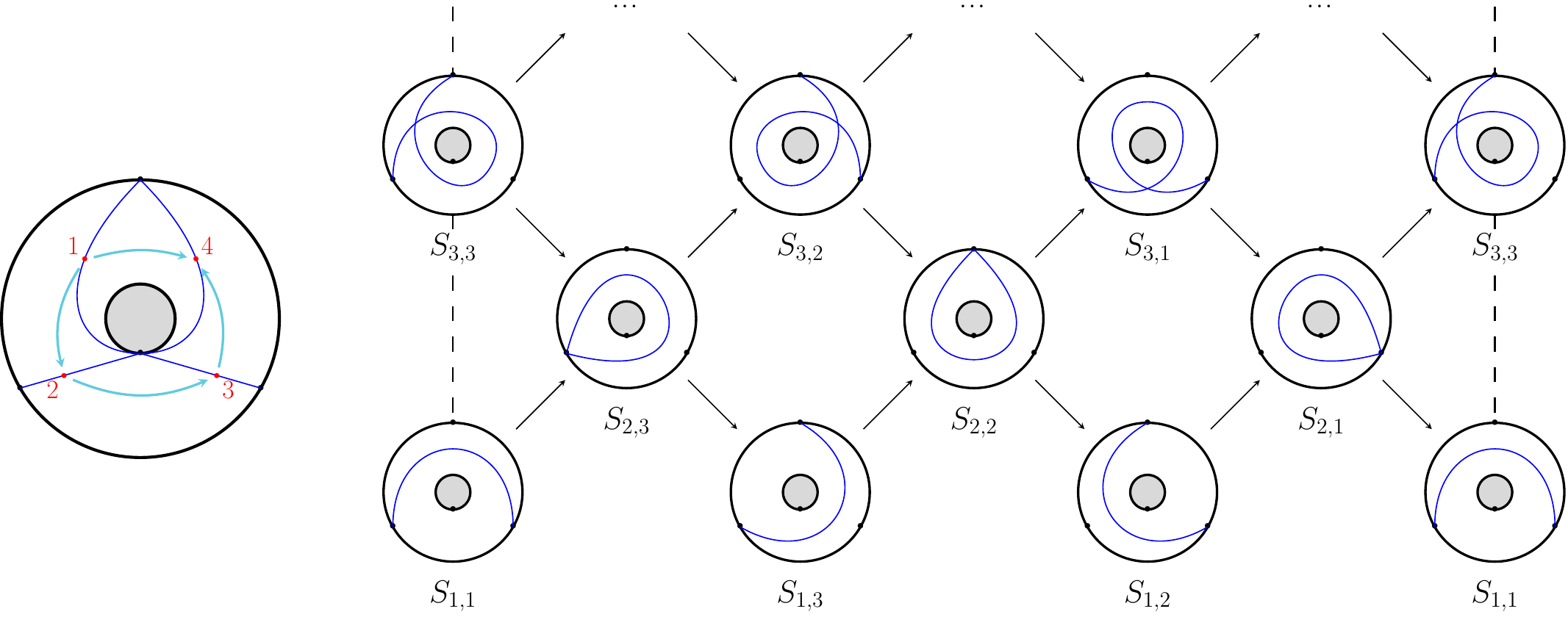}
}
\caption{A triangulation of $\EucA{1}{3}$ (left) and the AR-quiver of subcategory $\tube_{\infty}(\EucA{1}{3})$ (right)}
\label{fig:A13-tube}
\end{figure}

The the AR-quiver of the subcategory/tube $\tube_{0}(\EucA{1}{3})$ in $\C(\EucA{1}{3})$ is demonstrated in \Cref{fig:A13-tube}.
Then the upward/downward $\sink=S_{2,3}$-evolving triangle for $S_{2,2}$ are
\begin{gather*}
    S_{2,3}\to S_{2,2} \to S_{1,2}^{\oplus 2}\to S_{2,1},\\
    S_{2,3}\to  S_{1,3} ^{\oplus 2}\to S_{2,2}\to S_{2,1},
\end{gather*}
respectively.
The first one is also the downward $S_{2,3}$-evolving triangle for $S_{1,2}$.
Moreover, the downward $\sink=S_{1,2}$-evolving triangle for $S_{2,3}$ is
\begin{gather*}
    S_{1,2}\to S_{3,3} \to S_{2,3}\to S_{1,3}.
\end{gather*}
Note that $S_{1,3} ^{\oplus2}= \App{L}{ S_{1,3} }{S_{2,3}} $
but $S_{2,3}=\App{R}{ S_{2,3} }{ S_{1,3}  }$.
\end{example}

\subsection{$\sink$-evolution flows}\label{sec:evo-flow}\

For any cluster complex $\CCx(\C)$, we will introduce the \emph{(downward) $\xevo$-evolution flow} on $\CCx(\C)$ for an arbitrary point $\xevo\in\CCx(\C)$.

Firstly, consider the special case when $\xevo=\sink$ is a $0$-cell.

\begin{definition}\label{def:flow}
Let \eqref{eq:evo-tri} be a downward $\sink$-evolving triangle for a cell $\pcts{\vv}$.
we define the \emph{(downward) $\sink$-evolution flow} for any $\ccpt{\vv}\in\pcts{\vv}^\circ$ (cf. \eqref{def:interior}) as follows:
\begin{gather}\label{eq:dflow}
    \dflow{\sink}(\ccpt{\vv})=
    \begin{cases}
      \sink-\ccpt{\vv}, & \mbox{if $\ww=\sink$ and $\uu=0$}, \\
      \real{\ww}-\real{\uu}, & \mbox{if $\ww\ne0$ and $\uu\ne0$}, \\
      \ccpt{\vv}-\source, & \mbox{if $\ww=0$ and $\uu=\source$}.
    \end{cases}
\end{gather}
Dually, if \eqref{eq:evo-tri} is the upward $\sink$-evolving triangle for $\pcts{\vv}$,
define the \emph{inverse (upward) $\sink$-evolution flow} as follows:
\begin{gather}\label{eq:uflow}
    \uflow{\sink}(\ccpt{\vv})=
    \begin{cases}
      \ccpt{\vv}-\sink, & \mbox{if $\ww=\sink$ and $\uu=0$}, \\
      \real{\uu}-\real{\ww}, & \mbox{if $\ww\ne0$ and $\uu\ne0$}, \\
      \source-\ccpt{\vv}, & \mbox{if $\ww=0$ and $\uu=\source$}.
    \end{cases}
\end{gather}
\end{definition}

From the results in \Cref{sec:evo}, we have the following.
\begin{enumerate}
\item Note that as $\ww\oplus\pcto{\vv}\oplus \uu$ is still a partial cluster by \Cref{lem:last}, $\real{\ww}$ and $\real{\uu}$ are points in $\cc(\ww\oplus\pcto{\vv}\oplus\uu)$ (which contains the $k$-cell $\pcts{\vv}$).
    Thus the \emph{local $\sink$-evolution flows}
    \begin{gather}
        \label{eq:down-flow}
        \Dflow{\sink}(\ccpt{\vv},t)=\ccpt{\vv}+t\dflow{\sink}(\ccpt{\vv}),\\
    \label{eq:up-flow}
        \Uflow{\sink}(\ccpt{\vv},t)=\ccpt{\vv}+t\uflow{\sink}(\ccpt{\vv})
    \end{gather}
    are well-defined for small $t>0$.
\item Recall that the downward and upward $\sink$-evolution triangles coincide for a top-cell/cluster.
    Therefore, $\dflow{\sink}=-\uflow{\sink}$ in the interior of any top-cells.
\item There are only two singularities $\sink,\source$ of the $\sink$-evolution flow in the sense that $\uflow{\sink}=0=\dflow{\sink}$.
\end{enumerate}

\begin{definition}
Local $\sink$-evolution flows induce \emph{local $\sink$-leave}s:
\begin{itemize}
  \item a parallel linear foliation in any top-cell, that is not in $\Star(\sink)\bigcup\Star(\source)$, decomposing such a cell into disjoint one-dimensional intervals (non-degenerate local $\sink$-leaves) and points (degenerate local $\sink$-leaves).
  \item a radial linear foliation in any top-cell in $\Star(\sink)\bigcup\Star(\source)$,
  decomposing such a cell into one-dimensional intervals (non-degenerate local $\sink$-leaves), sharing a common endpoint $\sink$ or $\source$.
\end{itemize}
\end{definition}

\subsection{A family of $\xevo$-evolution flows}\label{sec:evo-flow2}\

Now we consider the general case.
\begin{definition}\label{def:Gflow}
Let $\xevo=\sum_{i=0}^k c_i \sink_i$ be an arbitrary point in $\CCx(\C)$, define the direction of $\xevo$-evolution flow to be the linear combination
\begin{gather}\label{eq:general}
\begin{cases}
  \dflow{\xevo}=\sum_{i=0}^k c_i \dflow{\sink_i},\\
  \uflow{\xevo}=\sum_{i=0}^k c_i \uflow{\sink_i}.
\end{cases}
\end{gather}
\end{definition}

\begin{lemma}\label{lem:well-defined}
The evolution flows are well-defined for any $\xevo$.
\end{lemma}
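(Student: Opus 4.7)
The plan is to show that the formula $\dflow{\xevo} = \sum_{i=0}^k c_i \dflow{\sink_i}$ depends only on the point $\xevo \in \CCx(\C)$ and not on the particular presentation $\xevo = \sum c_i \sink_i$. The core idea is that the simplicial structure on $\CCx(\C)$ pins down a canonical barycentric representation.

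First, I would establish uniqueness of representation. Every point $\xevo \in \CCx(\C)$ lies in the interior of a unique cell (simplex) whose vertices we may take to be $\sink_0, \ldots, \sink_k$; the barycentric coordinates $c_0, \ldots, c_k$ with $c_i > 0$ and $\sum c_i = 1$ are uniquely determined by $\xevo$. Any alternative convex-combination expression $\xevo = \sum c_i' \sink_i'$ whose support $\{\sink_i' : c_i' > 0\}$ is a partial cluster must lie inside this unique cell (since $\xevo$ is in the interior of no larger cell), so the non-zero coefficients match and the extra zero-weight terms contribute $0 \cdot \dflow{\sink_j'} = 0$ to the sum.

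Second, I would verify that the resulting vector is well-defined in the ambient affine realization. At a point $\ccpt{\vv}$ in the interior of a cell $\pcts{\vv}$, each $\dflow{\sink_i}(\ccpt{\vv}) = \real{\ww_i} - \real{\uu_i}$ is determined by the downward $\sink_i$-evolving triangle for $\pcts{\vv}$, whose output is unique by right-minimality of the approximation. By \Cref{lem:last}, this vector lies in the affine realization of the cell $\cc(\ww_i \oplus \pcto{\vv} \oplus \uu_i)$, which contains $\pcts{\vv}$ as a face. Hence all individual flow directions live in a common ambient affine realization of $\CCx(\C)$, and their linear combination is a well-defined vector.

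The main (mild) obstacle is to verify that the combined vector is a legitimate local flow direction, i.e. that $\ccpt{\vv} + t\dflow{\xevo}(\ccpt{\vv}) \in \CCx(\C)$ for all sufficiently small $t > 0$. For $\ccpt{\vv}$ in the interior of a top-cell $\pcts{\vv}$ this is immediate, since the right-minimal approximation inside a cluster puts $\ww_i, \uu_i \in \Add(\pcto{\vv})$, making each $\dflow{\sink_i}(\ccpt{\vv})$ and therefore their convex combination tangent to $\pcts{\vv}$. For lower-dimensional $\pcts{\vv}$, one must argue that the combined direction points into a common cell of $\Star(\pcts{\vv})$; I expect this piecewise-linear compatibility to dovetail with, and be completed by, the subsequent foliation analysis in \Cref{thm:foliation}.
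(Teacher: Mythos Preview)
Your proposal misidentifies what needs to be proven. The uniqueness of the barycentric presentation of $\xevo$ is automatic and is not the issue. The genuine content of the lemma is that the sum $\sum c_i\,\dflow{\sink_i}(\ccpt{\vv})$ makes sense as a tangent direction \emph{inside the simplicial complex} $\CCx(\C)$. Each summand $\dflow{\sink_i}(\ccpt{\vv}) = \real{\ww_i} - \real{\uu_i}$ lives in the cell $\cc(\ww_i \oplus \pcto{\vv} \oplus \uu_i)$, but these cells are \emph{different} for different $i$. There is no ``common ambient affine realization'' of $\CCx(\C)$ in which to add such vectors: the complex is abstract, and a linear combination of vectors only has meaning inside a single simplex. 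So one must prove that all the objects $\{\sink_{i_0}\}_{i_0\in I_0}$, $\{\ww_j\}_{j\in J}$, $\{\uu_j\}_{j\in J}$, $\{\sink_{i_1}[1]\}_{i_1\in I_1}$ (coming from the evolving triangles for all the $\sink_i$ simultaneously) together with $\pcts{\vv}$ form a \emph{single} partial cluster.

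This is precisely what the paper does, and it is not automatic: it requires four separate $\Ext$-vanishing claims, notably $\Ext^1(\ww_j,\ww_{j'})=0$ and $\Ext^1(\ww_j,\uu_{j'})=0$ for $j\neq j'$, proved via the approximation properties of the $f_j$ and several applications of long exact sequences. Your argument handles only the top-cell case (where indeed $\ww_i,\uu_i\in\Add(\pcto{\vv})$ already), and for the general case you defer to \Cref{thm:foliation}; but that theorem \emph{uses} the present lemma, so this would be circular.
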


\begin{proof}
For any $\pcts{\vv}$, the downward partition of the index set induced by $\pcts{\vv}$
\begin{gather}\label{eq:d-part}
\{0,\ldots,k\}=I_0\sqcup I_1\sqcup J
\end{gather}
is determined by
\begin{itemize}
  \item $I_0=I_0(\pcts{\vv})\subset\{0,\ldots,k\}$ consists of $i_0$ such that the downward $\sink_{i_0}$-evolving triangle for $\pcts{\vv}$ is trivial with $\ww_{i_0}=\sink_{i_0}$ and $\uu_{i_0}=0$.
  \item $I_1=I_1(\pcts{\vv})\subset\{0,\ldots,k\}$ consists of $i_1$ such that the downward $\sink_{i_1}$-evolving triangle for $\pcts{\vv}$ is trivial with $\ww_{i_1}=0$ and $\uu_{i_1}=\sink_{i_1}[1]$.
  \item $J=\{0,\ldots,k\}\setminus(I_0\cup I_1)$ so that the downward $\sink_j$-evolving triangles for $\pcts{\vv}$
    \begin{gather}\label{eq:S_j-evo}
        \sink_j\xrightarrow{g_j} \ww_j\to \uu_j\xrightarrow{f_j} \sink_j[1]
    \end{gather}
     is nontrivial, cf. \eqref{eq:trivial}.
\end{itemize}
Note that $I_0\cap I_1=\emptyset$ is ensured by the rigidity of $\pcts{\vv}$.

Then the direction of $\Dflow{\xevo}$ at any $\ccpt{\vv}\in\pcts{\vv}^\circ$ is
\begin{align}\label{eq:direction}
&\qquad\dflow{\xevo}(\ccpt{\vv})=\sum_{i=0}^k c_{i}\dflow{\sink_{i}}(\ccpt{\vv})\\
\label{eq:decomposition}
&=\sum_{i_0\in I_0}c_{i_0}(\sink_{i_0}-\ccpt{\vv})+\sum_{j\in J}c_j(\real{\ww_j}-\real{\uu_j})+\sum_{i_1\in I_1}c_{i_1}(\ccpt{\vv}-\sink_{i_1}[1])\\
\label{eq:quasi-ratio}
&=\left[(\sum_{i_0\in I_0}c_{i_0} \sink_{i_0}+\sum_{j\in J}c_j\real{\ww_j})-(\sum_{j\in J}c_j\real{\uu_j}+\sum_{i_1\in I_1}c_{i_1} \sink_{i_1}[1])\right]-(\sum_{i_0\in I_0}c_{i_0}-\sum_{i_1\in I_1}c_{i_1})\ccpt{\vv}.
\end{align}

By \Cref{lem:last}, we have
\begin{itemize}
    \item $\Ext^1(\pcto{\vv}, \ww_j)=0=\Ext^1(\uu_j, \pcto{\vv})$ for any $j\in J$.
    \item $\Ext^1(\pcto{\vv}, \sink_{i_0})=0=\Ext^1(\sink_{i_1}[1],\pcto{\vv})$ for any
    $i_0\in I_0$ and $i_1\in I_1$.
\end{itemize}
Note that $\uu_j,\sink_{i_1}[1]\in\pcts{\vv}$ for any $j\in J$ and $i_i\in I_1$.
We further claim that
\begin{enumerate}
    \item $\Ext^1(\ww_j,\sink_{i_0})=0$.
    \item $\Ext^1(\sink_{i_1}[1],\ww_j)=0$.
    \item $\Ext^1(\ww_j,\uu_{j'})=0$ for any $j\ne j' \in J$.
    \item $\Ext^1(\ww_j,\ww_{j'})=0$ for any $j\ne j' \in J$.
\end{enumerate}
\textbf{For $1^\circ$}: Applying $\Hom(-,\sink_{i_0}[1])$ to \eqref{eq:S_j-evo}, we have
\[
0=\Hom(\uu_j,\sink_{i_0}[1])\to\Hom(\ww_j,\sink_{i_0}[1])\to\Hom(\sink_j,\sink_{i_0}[1])=0.
\]
\textbf{For $2^\circ$}: Applying $\Hom(\sink_{i_1},-)$ to \eqref{eq:S_j-evo}, we have
\[
\Hom(\sink_{i_1},\uu_j[-1])\twoheadrightarrow\Hom(\sink_{i_1},\sink_j)\to\Hom(\sink_{i_1},\ww_j)\to\Hom(\sink_{i_1},\uu_j)=0,
\]
where the surjection comes from the right $\pctc{\vv}$-approximativity of $\uu_{j}$ and $\sink_{i_1}[1]\in\pcts{\vv}$.
Hence $\Ext^1(\sink_{i_1}[1],\ww_j)=0$.

\textbf{For $3^\circ$}: Applying $\Hom(\uu_{j'},-)$ to \eqref{eq:S_j-evo}, we have
\[
\Hom(\uu_{j'},\uu_{j})\twoheadrightarrow\Hom(\uu_{j'},\sink_{j}[1])\to\Hom(\uu_{j'},\ww_{j}[1])\to\Hom(\uu_{j'},\uu_{j}[1])=0,
\]
where the surjection comes from the right $\pctc{\vv}$-approximativity of $\uu_{j}$ and $\uu_{j'}\in\pcts{\vv}$.
Hence $\Ext^1(\ww_{j},\uu_{j'})=0$.

\textbf{For $4^\circ$}:
Applying $\Hom(-,\sink_{j}[1])$ to the $\sink_j'$-evolving triangle for $\pcts{\vv}$, we have
\[
\Hom(\uu_{j'},\sink_{j}[1])\to\Hom(\ww_{j'},\sink_{j}[1])\to\Hom(\sink_{j'},\sink_{j}[1])=0.
\]
Combining with the exact sequence in $3^\circ$, we obtain the following diagram.
\[
\begin{tikzcd}
& {\Hom(\uu_{j'},\uu_j)} \arrow[r] \arrow[d, two heads] & {\Hom(\ww_{j'},\uu_j)} \arrow[d ,"{\Hom(\ww_{j'},f_j)}"] \\
& {\Hom(\uu_{j'},\sink_j[1])} \arrow[r, two heads] & {\Hom(\ww_{j'},\sink_j[1])}.
\end{tikzcd}
\]
Hence $\Hom(\ww_{j'},f_j)$ is surjective for any $j'\in J$.
Now applying $\Hom(\ww_{j'},-)$ to \eqref{eq:S_j-evo}, we have
\[
\Hom(\ww_{j'},\uu_{j})\twoheadrightarrow\Hom(\ww_{j'},\sink_{j}[1])\to\Hom(\ww_{j'},\ww_{j}[1])\to\Hom(\ww_{j'},\uu_{j}[1])=0.
\]
and deduce that $\Ext^1(\ww_{j'},\ww_j)=0$.

In summary,
\begin{gather}\label{eq:YY}
    \{ \sink_{i_0} \}_{i_0\in I_0}\bigcup
    \{\ww_j\}_{j\in J}\bigcup
    \pcts{\vv}\bigcup
    \{\uu_j\}_{j\in J}\bigcup
     \{ \sink_{i_1}[1] \}_{i_1\in I_1}
\end{gather}
    form a partial cluster $\pcts{Y}$.
If
\begin{equation}\label{eq:c}
    c\colon=\sum_{i_0\in I_0}c_{i_0}-\sum_{i_1\in I_1}c_{i_1}
\end{equation}
is nonzero, \eqref{eq:quasi-ratio} can be written in the form
\begin{equation}\label{eq:form1}
    \dflow{\xevo}(\ccpt{\vv})=c(\ccpt{Y}-\ccpt{\vv}),
\end{equation}
where
\begin{equation}\label{eq:W}
    \ccpt{Y}=\frac{1}{c}\left[(\sum_{i_0\in I_0}c_{i_0} \sink_{i_0}+\sum_{j\in J}c_j\real{\ww_j})-(\sum_{j\in J}c_j\real{\uu_j}+\sum_{i_1\in I_1}c_{i_1} \sink_{i_1}[1])\right]
\end{equation}
is a point in the simplicial coordinate for the cell $\pcts{Y}$ (which contains $\pcts{\vv}$). Hence the flow is well-defined.

On the other hand, if $c=0$,
then \eqref{eq:quasi-ratio} can be written in the form
\begin{equation}\label{eq:form2}
    \frac{\dflow{\xevo}(\ccpt{\vv})}{c_I+c_J}=\frac{1}{c_I+c_J}
    \big(\sum_{i_0\in I_0}c_{i_0} \sink_{i_0}+\sum_{j\in J}c_j\real{\ww_j}\big)
    -\frac{1}{c_I+c_J}
    \big(\sum_{j\in J}c_j\real{\uu_j}+\sum_{i_1\in I_1}c_{i_1} \sink_{i_1}[1]\big)
\end{equation}
for $c_I=\sum_{i_0\in I_0}c_{i_0}$ and $c_J=\sum_{j\in J}c_{j}$.
The two terms are points in the cell $\pcts{Y}$ and hence the flow is also well-defined.
\end{proof}

Similar to the vertex case,
there are only two singularities in the $\xevo$-evolution flow.

\begin{lemma}\label{lem:sing}
The $\xevo$-evolution flow has only two singularities $\xevo,\Xevo$, i.e., the following holds.
\begin{gather}\label{eq:flow=0}
    \dflow{\xevo}(\ccpt{\vv})=0
    \Longleftrightarrow \ccpt{\vv}\in\{\xevo,\Xevo\}
    \Longleftrightarrow \uflow{\xevo}(\ccpt{\vv})=0.
\end{gather}
\end{lemma}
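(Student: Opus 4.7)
The plan is to prove the chain \eqref{eq:flow=0} in both directions; the statement for $\uflow{\xevo}$ will then follow by a symmetric argument using upward $\sink_i$-evolving triangles.

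For the implication $\ccpt{\vv}\in\{\xevo,\Xevo\}\Rightarrow\dflow{\xevo}(\ccpt{\vv})=0$, I would verify directly. Since $\xevo=\sum c_i\sink_i$ lies in the interior of the cell with vertex set $\{\sink_0,\ldots,\sink_k\}$, each $\sink_i$ is already a vertex of that cell, so every downward $\sink_i$-evolving triangle degenerates to the first trivial form in \eqref{eq:trivial}. Formula \eqref{eq:dflow} then gives $\dflow{\sink_i}(\xevo)=\sink_i-\xevo$, whence $\dflow{\xevo}(\xevo)=\sum_i c_i(\sink_i-\xevo)=\xevo-\xevo=0$ using $\sum c_i=1$. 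The case $\ccpt{\vv}=\Xevo$ is symmetric using the second trivial form.

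For the converse, suppose $\dflow{\xevo}(\ccpt{\vv})=0$ at some $\ccpt{\vv}\in\pcts{\vv}^\circ$. Following the proof of \Cref{lem:well-defined}, rewrite the identity as an equality $A=B$ of convex combinations
\[
A=\sum_{i_0\in I_0}c_{i_0}\sink_{i_0}+\sum_{j\in J}c_j\real{\ww_j}+C_1\ccpt{\vv},\qquad
B=C_0\ccpt{\vv}+\sum_{j\in J}c_j\real{\uu_j}+\sum_{i_1\in I_1}c_{i_1}\sink_{i_1}[1]
\]
(both of total weight $1$) inside the simplex $\pcts{Y}$ of \eqref{eq:YY}. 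Since the vertices of $\pcts{Y}$ are affinely independent, the equality $A=B$ reads off as an equality of barycentric coordinates at each vertex. For any vertex $w\in V(\pcts{Y})\setminus\pcts{\vv}$, which can only be a summand of some $\ww_j$ with $j\in J$, its coefficient in $B$ vanishes while in $A$ it is a strictly positive combination of $c_j$'s. Hence no such external vertex exists, so $\ww_j\in\Add(\pctc{\vv})$ for every $j\in J$, collapsing $\pcts{Y}$ to $\pcts{\vv}$.

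The main obstacle is then to use the remaining barycentric equations, indexed by $\vv_\ell\in\pcts{\vv}$, to show that $J$ itself must be empty. The argument combines the disjoint-support condition $\Add(\ww_j)\cap\Add(\uu_j)=0$ from \Cref{def:evo-tri} with the Grothendieck-group identity $[\sink_j]=[\ww_j]-[\uu_j]$ read off the triangle \eqref{eq:S_j-evo}: any surviving cross cancellation between different indices $j\in J$ would force a coincidence of the form $\sink_{j'}=\sink_j[1]$ among the $\sink_i$'s, contradicting the rigidity of the partial cluster $\{\sink_i\}_{i=0}^k$ via $\Ext^1(\sink_j,\sink_j[1])\neq 0$ (a consequence of the 2-Calabi-Yau property). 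Once $J=\emptyset$, the identity collapses to
\[
(C_0-C_1)\ccpt{\vv}=\sum_{i_0\in I_0}c_{i_0}\sink_{i_0}-\sum_{i_1\in I_1}c_{i_1}\sink_{i_1}[1];
\]
matching barycentric coefficients of $\ccpt{\vv}$ on $\pcts{\vv}$ together with $\alpha_\ell>0$ forces exactly one of $I_0,I_1$ to be empty and $\pcts{\vv}$ to have no vertices other than the corresponding $\sink_{i_0}$'s or $\sink_{i_1}[1]$'s, yielding either $\ccpt{\vv}=\xevo$ (when $I_1=\emptyset$) or $\ccpt{\vv}=\Xevo$ (when $I_0=\emptyset$).
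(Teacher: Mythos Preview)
Your easy direction and the first reduction (forcing every vertex of $\pcts{Y}$ to lie in $\pcts{\vv}$) are essentially fine, modulo one slip: a vertex $w\in\pcts{Y}\setminus\pcts{\vv}$ need not be a summand of some $\ww_j$; by \eqref{eq:YY} it could also be one of the $\sink_{i_0}$ with $i_0\in I_0$ (these are not assumed to lie in $\pcts{\vv}$). Your barycentric argument still applies to such a $w$, so the conclusion $\pcts{Y}=\pcts{\vv}$ survives, but the sentence as written is inaccurate.

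The real gap is the step you flag as the ``main obstacle'': forcing $J=\emptyset$. The Grothendieck-group identity $[\sink_j]=[\ww_j]-[\uu_j]$ does not control the barycentric equation, because the equation involves the \emph{normalised} points $\real{\ww_j}=\sum_\ell (w_{j\ell}/|w_j|)\,\vv_\ell$ and $\real{\uu_j}$, not the raw multiplicities. After clearing denominators you get relations among the $w_{j\ell}/|w_j|-u_{j\ell}/|u_j|$, which bear no simple relation to $w_{j\ell}-u_{j\ell}$. I do not see how any ``cross cancellation'' between different $j$'s forces a coincidence $\sink_{j'}=\sink_j[1]$; nothing in the equations singles out the classes $[\sink_j]$ once the normalisation is in place. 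As written this step is a gap.

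The paper bypasses this entirely. Instead of trying to prove $J=\emptyset$, it exhibits, when $J\ne\emptyset$, an indecomposable $\ww_0$ occurring in the minimal left $\Add(\oplus_{j\in J}\ww_j)$-approximation of $\oplus_{j\in J}\sink_j$, and shows $\ww_0$ is not a summand of any $\uu_{j'}$ nor equal to any $\sink_{i_1}[1]$. Thus the $\ww_0$-coordinate of the bracketed ``first part'' in \eqref{eq:quasi-ratio} is strictly positive; a dual choice of $\uu_0$ gives a strictly negative coordinate. Since $-c\ccpt{\vv}$ has coordinates all of one sign, the sum cannot vanish. This approximation trick is the missing ingredient; if you want to salvage your route, the same $\ww_0$ and $\uu_0$ inserted into your $A=B$ equation yield the contradictions $c>0$ and $c<0$ simultaneously, which is cleaner than trying to kill $J$ via $K$-theory.
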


\begin{proof}
Let $\pcts{\sink}=\{\sink_i\}_{i=0}^k$.
Take $\pcts{\vv}\subset\CCx(\C)$ to be any cell inducing downward partition \eqref{eq:d-part}.
If $J=\emptyset=I_1$ or equivalently $\pcts{\vv}\subset\Star(\pcts{\sink})$, then for any $\ccpt{\vv}\in\pcts{\vv}^\circ$ we have
\[
  \dflow{\xevo}(\ccpt{\vv})=\sum_{i=0}^k c_i \dflow{\sink_i}(\xevo)=\sum_{i=0}^k c_i(\sink_i-\ccpt{\vv})=\xevo-\ccpt{\vv}.\\
\]
If $I_0=\emptyset=J$ or equivalently $\pcts{\vv}\subset\Star(\pcts{\source})$, then for any $\ccpt{\vv}\in\pcts{\vv}^\circ$ we have
\[
  \dflow{\xevo}(\ccpt{\vv})=\sum_{i=0}^k c_i \dflow{\sink_i}(\xevo)=\sum_{i=0}^k c_i(\ccpt{\vv}-\sink_i[1])=\ccpt{\vv}-\Xevo.\\
\]
Hence $\xevo$ and $\Xevo$ are (the only) two singularities of $\dflow{\xevo}$ in $\Star(\pcts{\sink})\cup\Star(\pcts{\source})$.
We proceed to show that outside the two stars (so that $I_0,I_1\ne\emptyset$), there are no more singularities.

It is trivial when $J=\emptyset$ as there are both terms with positive/negative coefficients in \eqref{eq:quasi-ratio}.
Now suppose $J\ne\emptyset$.
Among $\oplus_{j\in J} g_j\colon \oplus_{j\in J} \sink_i \to \oplus_{i\in J} \ww_i$,
take a morphism $g_0\colon \sink_{j_0}\to \ww_0$ in the minimal left $\Add(\oplus_{j\in J}\ww_j)$-approximation of $\oplus_{j\in J} \sink_j$ for some indecomposable $\ww_0$.
We claim that
$\ww_0$ is not a summand of any $\uu_j$ or $\sink_{i_1}[1]$.

Indeed, since $\pcts{\sink}$ is rigid, $\ww_0\not\in\pcts{\sink}[1]$.
If $\ww_0\in\Add(\uu_j)$ for some $j\in J$, applying $\Hom(\sink_{j_0},-)$ to \eqref{eq:S_j-evo} we have
\[
\Hom(\sink_{j_0},\ww_j)\to \Hom(\sink_{j_0},\uu_j)\to \Hom(\sink_{j_0},\sink_j[1])=0,
\]
then $g_0\in\Hom(\sink_{j_0},\uu_j)$ factors through some $\widetilde{g}_0\in\Hom(\sink_{j_0},\ww_j)$, contradiction.
Therefore, both positive and negative coordinates exist in the first part of the vector, which implies that
$\dflow{\xevo}(\ccpt{\vv})\ne0$.

As a result, except for two special cases we have $\dflow{\xevo}\ne0$.
$\xevo$ and $\Xevo$ are the only two singularities of $\dflow{\xevo}$.
\end{proof}

\begin{definition}\label{rem:l-foliation}
The local $\xevo$-evolution flow induces local $\xevo$-leaves:
\begin{itemize}
  \item a parallel linear foliation in any top-cell such that $c=0$ (and then \eqref{eq:form2} holds), decomposing such a cell into one-dimensional intervals (non-degenerate local $\sink$-leaves) and points (degenerate local $\sink$-leaves).
  \item a radial linear foliation in any top-cell containing $\xevo$ or $\Xevo$.
    (i.e. in $\bigcap_{i=0}^k\Star(\sink_i)$ or $\bigcap_{i=0}^k\Star(\sink_i[1])$).
    So by deleting $\sink$ or $\source$, such a cell decomposes into one-dimensional intervals (non-degenerate local $\sink$-leaves).
  \item a quasi-radial linear foliation in any top-cell such that $c\ne0$ (and then \eqref{eq:form1} holds),
    decomposing such a cell into one-dimensional intervals (non-degenerate local $\sink$-leaves) and points (degenerate local $\sink$-leaves).
\end{itemize}
\end{definition}

Our next goal is to show that these local $\sink$-leaves glue well globally.

\section{\texorpdfstring{$\xevo$-foliations}{X-foliations}}\label{sec:foli}

\subsection{$\xevo$-foliations}\label{sec:Gflow}\

Let $\xevo$ be a fixed arbitrary point in $\CCx(\C)$.
We prove the following lemma, which basically says the downward and upward evolution flows are compatible (that they will induce the same foliation).

\begin{lemma}\label{lem:non-split}
Let $l$ be a nontrivial local $\xevo$-leaf in a cell $C_l$.
Let $\ccpt{\vv},\ccpt{\vv'}$ be two points in $l$ such that, along the downward flow,
$\ccpt{\vv}$ is not the sink and $\ccpt{\vv'}$ is not the source.
Then
\begin{equation}\label{eq:propotion}
    \RR_{>0}\cdot\dflow{\xevo}(\ccpt{\vv})=
    \RR_{<0}\cdot\uflow{\xevo}(\ccpt{\vv'}),
\end{equation}
in the sense that there is some $\lambda<0$ such that $\dflow{\xevo}(\ccpt{\vv})=\lambda\cdot\uflow{\xevo}(\ccpt{\vv'})$.

\end{lemma}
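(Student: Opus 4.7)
The plan is to reduce the claim to the pointwise identity $\dflow{\xevo}=-\uflow{\xevo}$ on the interior of a top-cell, combined with the fact that on a single leaf the vector field $\dflow{\xevo}$ is constant in direction up to a positive scalar.

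Step 1 (passing between downward and upward). By \Cref{rem:l-foliation}, local $\xevo$-leaves live on the interiors of top-cells, so $C_l$ is a top-cell and $\pctc{C_l}$ is a cluster tilting subcategory of $\C$. Write $\xevo=\sum_{i=0}^k c_i\sink_i$ with $\pcts{\sink}=\{\sink_i\}_{i=0}^k$. For each $\sink_i$ the downward $\sink_i$-evolving triangle for $C_l$ has third term $\uu_i=\App{R}{\pctc{C_l}}{\sink_i[1]}$, while the upward one has second term $\ww_i=\App{L}{\pctc{C_l}}{\sink_i}$. By the 2-Calabi-Yau $g$-vector symmetry for a cluster tilting subcategory (cf.\ the remark after \Cref{def:evo-tri}), these two triangles coincide. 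Comparing \eqref{eq:dflow} with \eqref{eq:uflow} one obtains $\dflow{\sink_i}=-\uflow{\sink_i}$ pointwise on the interior of $C_l$, and linearity in $\xevo$ yields
\begin{equation}\label{eq:pt-opp}
    \dflow{\xevo}(\ccpt{\vv})=-\uflow{\xevo}(\ccpt{\vv}), \qquad \dflow{\xevo}(\ccpt{\vv'})=-\uflow{\xevo}(\ccpt{\vv'}).
\end{equation}

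Step 2 (propagation along the leaf). Next, I would show that $\dflow{\xevo}(\ccpt{\vv})$ and $\dflow{\xevo}(\ccpt{\vv'})$ are positively proportional. Following \Cref{rem:l-foliation} there are three sub-cases. In the parallel case $c=0$, the formula \eqref{eq:form2} exhibits $\dflow{\xevo}$ as a constant vector throughout the interior of $C_l$, so the two values agree. In the radial and quasi-radial cases $c\ne 0$, the formula \eqref{eq:form1} gives $\dflow{\xevo}(\ccpt{\vv})=c(\ccpt{Y}-\ccpt{\vv})$ and $\dflow{\xevo}(\ccpt{\vv'})=c(\ccpt{Y}-\ccpt{\vv'})$ for a single focus $\ccpt{Y}$ depending only on $C_l$, so both vectors lie along the line through $\ccpt{Y}$ carrying $l$. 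The hypotheses that $\ccpt{\vv}$ is not the downward sink of $l$ and $\ccpt{\vv'}$ is not the downward source of $l$, together with \Cref{lem:sing}, forbid either vector from vanishing, and the fact that $\ccpt{\vv}$ and $\ccpt{\vv'}$ lie on the same side of $\ccpt{Y}$ along $l$ forces a common orientation. Hence there exists $\alpha>0$ with
\begin{equation}\label{eq:pos-prop}
    \dflow{\xevo}(\ccpt{\vv})=\alpha\cdot\dflow{\xevo}(\ccpt{\vv'}).
\end{equation}
Combining \eqref{eq:pos-prop} with the right-hand identity of \eqref{eq:pt-opp} then gives $\dflow{\xevo}(\ccpt{\vv})=-\alpha\cdot\uflow{\xevo}(\ccpt{\vv'})$, which is \eqref{eq:propotion} with $\lambda=-\alpha<0$.

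The main obstacle is Step 1, namely verifying that on a cluster tilting subcategory $\pctc{C_l}$ the minimal left $\pctc{C_l}$-approximation of $\sink_i$ and the minimal right $\pctc{C_l}$-approximation of $\sink_i[1]$ really are the two outer terms of one and the same triangle. This is the 2-Calabi-Yau $g$-vector/index triangle, and is the only genuinely categorical input of the proof; once it is in place, the remaining analysis is a direct reading of the formulas in \Cref{sec:evo-flow2}.
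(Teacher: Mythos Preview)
Your two-step strategy is the same blueprint as the paper's, but there is a real gap at the boundary points of $l$. By \Cref{def:flow}, the vector $\dflow{\xevo}(\ccpt{\vv})$ is computed from the downward $\sink_i$-evolving triangles for the cell $\pcts{\vv}$ whose \emph{interior} contains $\ccpt{\vv}$. If $\ccpt{\vv}$ is the source of $l$ (which the hypothesis allows, since only the sink is excluded), then $\pcts{\vv}$ is a proper face of $C_l$, and the downward partition \eqref{eq:d-part}, the approximations $\uu_i$, $\ww_i$, and hence the constants $c$ and $\ccpt{Y}$ in \eqref{eq:form1}, are a priori different for $\pcts{\vv}$ than for $C_l$. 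Your Step~2 asserts ``a single focus $\ccpt{Y}$ depending only on $C_l$'' without justifying this, and your Step~1 identity \eqref{eq:pt-opp} likewise only holds verbatim for interior points: on a boundary face the upward and downward evolving triangles need not coincide.

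This is exactly the content the paper supplies. Taking $\ccpt{\vv}$ to be the source, the paper identifies $C_l$ with the partial cluster $\pcts{Y}$ of \eqref{eq:YY} (built from the evolving triangles for $\pcts{\vv}$), and then uses the Ext-vanishing established in \Cref{lem:well-defined} together with \Cref{pp:complete} to show that the downward $\sink_i$-evolving triangle for $\pcts{\vv}$, the downward one for $\pcts{Y}$, and the upward one for $\pcts{Y}$ all coincide. That single observation simultaneously fixes both of your steps at the boundary: it forces the same $(c,\ccpt{Y})$ for $\ccpt{\vv}$ and for the interior point $\ccpt{\vv'}$, and it gives $\uflow{\xevo}(\ccpt{\vv'})=-\dflow{\xevo}(\ccpt{\vv'})$ even though $\pcts{Y}$ need not be a cluster. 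The ``main obstacle'' you flag in Step~1 (approximations fitting in one triangle for a cluster) is actually the easy part; the real work is transporting that coincidence across the face inclusion $\pcts{\vv}\subset C_l$, which is precisely what is needed later to glue local leaves in \Cref{thm:foliation}.
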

\begin{proof}
We may assume that $\ccpt{\vv}\in\pcts{\vv}^\circ$ is the source of $l$.
Let \eqref{eq:d-part} be the downward partition induced by $\pcts{\vv}$.
Then the direction $\dflow{\xevo}(\ccpt{\vv})$ can be expressed as \eqref{eq:quasi-ratio},
where $\uu_j\in\pcts{\vv}$ while $\ww_j$ is not necessarily for any $j\in J$.
For any $\ccpt{\vv'}\ne\ccpt{\vv}$ in $l$, we have
\begin{gather}\label{eq:vv'}
 \ccpt{\vv'}=\ccpt{\vv}+t\dflow{\xevo}(\ccpt{\vv}),
\end{gather}
where $t>0$ such that $\ccpt{\vv'}$ is in the interior of the cell $C_l$
, which is of the form $\pcts{Y}$ in \eqref{eq:YY}.
Combining \Cref{pp:complete} and \Cref{lem:well-defined}, for any $i\in\{0,\ldots, k\}$, the following triangles coincide.
\begin{itemize}
    \item The upward $\sink_i$-evolving triangle for $\pcts{Y}$.
    \item The downward $\sink_i$-evolving triangle for $\pcts{Y}$.
    \item The downward $\sink_i$-evolving triangle for $\pcts{\vv}$.
\end{itemize}

If $c=0$ for $c$ in \eqref{eq:c}, then $\dflow{\xevo}(\ccpt{\vv})=\dflow{\xevo}(\ccpt{\vv'})=-\uflow{\xevo}(\ccpt{\vv'})$.

If $c\ne0$, combining \eqref{eq:vv'} and \eqref{eq:form1} we have
\begin{equation}
  \dflow{\xevo}(\ccpt{\vv})= c(\ccpt{Y}-\ccpt{\vv})
    =c(\ccpt{Y}-\frac{\ccpt{\vv'}-tc\ccpt{Y}}{1-tc})
    =-(1-tc)^{-1}\uflow{\xevo}(\ccpt{\vv'}),
\end{equation}
for $Y$ in \eqref{eq:W}.
Hence, $\dflow{\xevo}(\ccpt{\vv})=\lambda\cdot\uflow{\xevo}(\ccpt{\vv'})$ for $\lambda=-(1-tc)^{-1}$.
If $1-tc<0$, then $\ccpt{\vv'}=(1-tc)\ccpt{\vv}+tc\ccpt{Y}$ will not be in the interior of $C_l=\pcts{Y}$, which is a contradiction.
Thus $\lambda<0$ as required.
\end{proof}

In conclusion, the downward/upward $\sink_j$-evolving triangles for any point $\ccpt{\vv}$ (other than the two singularities)
ensures that $\ccpt{\vv}$ is in two non-trivial local $\xevo$-leaves, which may coincide.
For instance, for an interior point of any top-cell $\ccpt{\vv}$,
since $\dflow{\sink}=-\uflow{\sink}$, we know that they are contained in exactly one non-degenerate local $\sink$-leaf.

Now, we can glue the local $\xevo$-leaves into global ones as consequences.

\begin{theorem}\label{thm:foliation}
The (downward/upward) $\xevo$-evolution flow induces a pairwise linear foliation on $\CCx(\C)$ with only two singularities $\xevo$ and $\Xevo$,
In other words, there is a decomposition
\begin{gather}\label{eq:GS-foli}
    \CCx(\C)=\bigcup_{\text{$\xevo$-leaf $\Traj$}}\Traj
\end{gather}
such that each $\xevo$-leaf $L$ is a 1-dimensional manifold and any two $\xevo$-leaves can only intersect at their endpoint, which is either $\xevo$ or $\Xevo$. We call \eqref{eq:GS-foli} the \emph{$\xevo$-foliation} on $\CCx(\C)$.
\end{theorem}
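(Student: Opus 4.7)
The plan is to construct the global $\xevo$-leaves by gluing the local $\xevo$-leaves from \Cref{rem:l-foliation} across cell boundaries, and then to verify the three required properties: local $1$-dimensional manifold structure, partition of $\CCx(\C)$, and singularity locus equal to $\{\xevo,\Xevo\}$.

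First I would observe that two pieces of the statement are essentially already in hand. \Cref{lem:sing} pins the singularity set down to $\{\xevo,\Xevo\}$, and \Cref{rem:l-foliation} supplies a linear (parallel, radial, or quasi-radial) foliation in the interior of every top-cell. What remains is purely a gluing problem: show that whenever a local leaf exits a top-cell through a codimension-$\ge1$ face it extends uniquely into an adjacent top-cell, so that the patches assemble into a globally defined piecewise linear $1$-manifold away from the two singularities.

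For the gluing I would invoke \Cref{lem:non-split}. Given a non-singular point $\ccpt{\vv}$ on a shared face of two top-cells $C_1,C_2$, apply the lemma to a pair of points $\ccpt{\vv}\in C_1$, $\ccpt{\vv'}\in C_2$ lying on the same prospective leaf: the equality \eqref{eq:propotion} says that the downward direction $\dflow{\xevo}(\ccpt{\vv})$ and the upward direction $\uflow{\xevo}(\ccpt{\vv'})$ are negative scalar multiples of each other. Hence the two linear segments join into a single continuous piecewise linear curve through $\ccpt{\vv}$ with no branching. Iterating this gluing gives, for every non-singular $\ccpt{\vv}$, a maximal piecewise linear path through $\ccpt{\vv}$; by \Cref{lem:sing} this maximal extension can terminate only at $\xevo$ or $\Xevo$. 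I would then declare this path to be the global $\xevo$-leaf through $\ccpt{\vv}$ and check that leaves partition $\CCx(\C)$: if two leaves shared a non-singular point, the direction uniqueness above would force them to coincide on a neighborhood, and then globally by the maximality of the extension.

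The main obstacle, and the step that needs the most care, is handling points that lie on faces of codimension $\ge 2$, where several top-cells meet simultaneously. At such a point $\ccpt{\vv}\in\pcts{\vv}^\circ$ the flow direction is computed via the partition \eqref{eq:d-part} and the combined partial cluster \eqref{eq:YY} attached to $\pcts{\vv}$ itself, whereas each incident higher-dimensional cell $C$ uses its own partition to compute its local direction. I would verify directly that the partial cluster \eqref{eq:YY} for $\pcts{\vv}$ is contained in the partial cluster \eqref{eq:YY} for any such $C$ (since the downward/upward evolving triangles are preserved by \Cref{pp:complete} when enlarging the partial cluster to $C$), so that the multi-way compatibility required by \Cref{lem:non-split} holds simultaneously across all incident top-cells. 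Once this is established the leaf continues unambiguously through every boundary stratum, completing the construction of the $\xevo$-foliation.
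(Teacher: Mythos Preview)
Your proposal is essentially correct and follows the same three-step architecture as the paper's own proof: local linear foliations in each top-cell via \Cref{rem:l-foliation}, gluing across faces via \Cref{lem:non-split}, and singularity identification via \Cref{lem:sing}. The paper's argument is considerably terser than yours but invokes precisely these ingredients.

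Two small comments. First, your invocation of \Cref{lem:non-split} is phrased imprecisely: the lemma concerns two points $\ccpt{\vv},\ccpt{\vv'}$ lying in the \emph{same} local leaf $l$ inside a \emph{single} cell $C_l$, not points in two different top-cells $C_1,C_2$. The correct reading is that $\ccpt{\vv}$ sits in the interior of a boundary face $\pcts{\vv}$ of $C_l$ (as the source of $l$) while $\ccpt{\vv'}$ lies in $C_l^\circ$; the lemma then matches the flow direction computed from $\pcts{\vv}$ with the direction inside $C_l$. Applying this once on each side of a face gives the two-cell gluing you want.

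Second, your extra paragraph on codimension $\ge 2$ faces is careful but not actually a separate case. Because $\dflow{\xevo}(\ccpt{\vv})$ and $\uflow{\xevo}(\ccpt{\vv})$ are computed intrinsically from the face $\pcts{\vv}$ containing $\ccpt{\vv}$ (not from any choice of incident top-cell), there is exactly one local leaf with $\ccpt{\vv}$ as source and one with $\ccpt{\vv}$ as sink, regardless of how many top-cells meet there; the compatibility you propose to check via \Cref{pp:complete} is precisely what is already established inside the proof of \Cref{lem:non-split}. So no additional verification is required, and the paper simply omits this discussion.
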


\begin{proof}
Locally, The $\xevo$-evolution flow induces a linear foliation in each top-cell as in \Cref{rem:l-foliation}.
By \Cref{lem:non-split}, we can glue the non-degenerate local $\xevo$-leaves into global $\xevo$-leaves which are $1$-dimensional topological manifolds. Hence, we obtain a piecewise linear foliation on $\CCx(\C)$, the $\xevo$-foliation.
By \Cref{lem:sing}, the $\xevo$-foliation admits a unique sink $\xevo$ and a unique source $\Xevo$.
\end{proof}

Thus, we obtain a continuous $\CCx(\C)$-family of piecewise linear foliations on $\CCx(\C)$,
parameterized by its unique sink $\xevo$ and its unique source $\Xevo$.

Let $\Traj$ be an $\xevo$-leaf and $\pcts{\vv}$ be an cell.
We say $\Traj$ intersects $\pcts{\vv}$ nontrivially
if $l_{\pcts{\vv}}=\Traj\cap\pcts{\vv}$ is a non-degenerate local $\xevo$-leaf in $\pcts{\vv}$.
In this case, \eqref{eq:propotion} holds.

\begin{definition}\label{def:ccross}
We say there is a \emph{cell-crossing} of $\xevo$-leaf $L$ at $\ccpt{V}=\sum_{i=1}^k\alpha_i V_i$ if
$\ccpt{\vv}$ is a degenerate local $\xevo$-leaf.
In such a case, the downward $\xevo$-evolution flow will travel from a cell $\pcts{U}$ to a cell $\pcts{W}$ such that there are two non-degenerate local $\xevo$-leaves $l_{\pcts{U}}\cap l_{\pcts{W}}=\ccpt{V}$.
We call $\pcts{V}=\pcts{U}\cap\pcts{W}$ the \emph{cell-wall} of this cell-crossing.

\begin{figure}[ht]\centering
\begin{tikzpicture}[scale=.7]
\draw[thick](0,0)--(6,0);
\draw[very thick, Emerald, >=stealth,->-=.5](5,2) to (3,0);
\draw[very thick, Emerald, >=stealth,->-=.5](3,0) to ($(3,0)+(-120:2.828)$);
\draw (3,0)\nn
(.2,1) node[font=\large] {$\pcts{U}$}
(.2,-1) node[font=\large]  {$\pcts{W}$}
(6.6,0) node[font=\large]  {$\pcts{V}$}
(3.3,-.3) node {$\ccpt{V}$}
(5.3,2.3) node[Emerald] {$l_{\pcts{U}}$}
($(3,0)+(-120:3)$) node[Emerald] {$l_{\pcts{W}}$}
;
\end{tikzpicture}
    \caption{Refraction during cell-crossing at $\ccpt{V}$}
    \label{fig:refraction}
\end{figure}
\end{definition}

We will discuss the first property of cell-crossing as follows.

\begin{lemma}\label{lem:cwall}
Let $\xevo=\sum_{i=1}^k c_i\sink_i$ and $\pcts{\vv}$ is the cell-wall of a cell-crossing of $\xevo$-foliation.
If
\begin{gather}
\label{eq:uevoi}
    \sink_i\to \App{L}{\pctc{\vv}}{\sink_i}\to\uu_i\to\sink_i[1],\\
\label{eq:devoi}
    \sink_i\to\ww_i\to\App{R}{\pctc{\vv}}{\sink_i[1]}\to\sink_i[1]
\end{gather}
are the upward and downward $\sink_i$-evolving triangles for $\pcts{\vv}$.
Then
\begin{enumerate}
    \item \eqref{eq:uevoi} is also the downward $\sink_i$-evolving triangle for $\pcts{\uu}$.
    \item \eqref{eq:devoi} is also the upward $\sink_i$-evolving triangle for $\pcts{\ww}$.
    \item $\pcts{\vv}\subsetneqq\pcts{\uu}=\cc(\pcto{\vv}\oplus\bigoplus_{i=1}^n\uu_i)$ and $\pcts{\vv}\subsetneqq\pcts{\ww}=\cc(\bigoplus_{i=1}^n\ww_i\oplus\pcto{\vv})$.
\end{enumerate}
\end{lemma}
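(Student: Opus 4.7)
The plan is to mimic the proof of \Cref{lem:well-defined} in the upward direction and then combine the resulting partial-cluster structure with the flow analysis at $\ccpt{V}$ to identify $\pcts{\uu}$ and $\pcts{\ww}$ with the cells $\cc(\pcto{\vv}\oplus\bigoplus_i\uu_i)$ and $\cc(\bigoplus_i\ww_i\oplus\pcto{\vv})$. For brevity, write $g_j\colon\sink_j\to\App{L}{\pctc{\vv}}{\sink_j}$ and $h_j\colon\App{L}{\pctc{\vv}}{\sink_j}\to\uu_j$ for the first two arrows of \eqref{eq:uevoi}, and let $J$ index those $j$ for which \eqref{eq:uevoi} is nontrivial.

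The first step is to show that $\pcts{\uu}':=\pcts{\vv}\cup\{\uu_j\}_{j\in J}$ is rigid. The vanishings $\Ext^1(\pcto{\vv},\uu_j)=0$ come directly from 2-CY and \Cref{lem:last}, so the real content is $\Ext^1(\uu_j,\uu_{j'})=0$ for $j\ne j'$ in $J$, the upward analog of Claim~4$^\circ$ of \Cref{lem:well-defined}. Since $\App{L}{\pctc{\vv}}{\sink_{j'}}\in\pctc{\vv}$, the left-approximation property of $g_j$ gives surjectivity of $(g_j)^*$ on $\Hom(-,\App{L}{\pctc{\vv}}{\sink_{j'}})$; applying $\Hom(\sink_j,-)$ to \eqref{eq:uevoi} for $j'$ and using rigidity of $\pcts{\sink}$ gives surjectivity of $(h_{j'})_*$ on $\Hom(\sink_j,-)$. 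A commutative square combining these produces the surjection
\[
    (g_j)^*\colon\Hom\bigl(\App{L}{\pctc{\vv}}{\sink_j},\uu_{j'}\bigr)\twoheadrightarrow\Hom(\sink_j,\uu_{j'}),
\]
and the long exact sequence for $\Hom(-,\uu_{j'})$ applied to \eqref{eq:uevoi} for $j$, together with $\Ext^1(\App{L}{\pctc{\vv}}{\sink_j},\uu_{j'})=\Ext^1(\uu_{j'},\App{L}{\pctc{\vv}}{\sink_j})=0$ (by 2-CY and \Cref{lem:last}), yields the claimed vanishing.

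With $\pcts{\uu}'$ rigid, \Cref{pp:complete}(b) implies that $g_j$ is the minimal left $\pctc{\uu}'$-approximation of $\sink_j$, so \eqref{eq:uevoi} is also an upward $\sink_j$-evolving triangle for $\pcts{\uu}'$. To identify $\pcts{\uu}'$ with the cell $\pcts{\uu}$ of the cell-crossing, I read off $\uflow{\xevo}(\ccpt{V})$ exactly as in \eqref{eq:form1}--\eqref{eq:form2}: the upward direction at $\ccpt{V}$ points to an interior point of $\pcts{\uu}'$. By definition of cell-crossing, the same direction enters the top-cell $\pcts{\uu}$, forcing $\pcts{\uu}\supseteq\pcts{\uu}'$; maximality of clusters then gives $\pcts{\uu}=\pcts{\uu}'$, and since top-cell upward and downward triangles coincide, this yields Claims~(1) and~(3) on the $\pcts{\uu}$-side. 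The strict inclusion $\pcts{\vv}\subsetneqq\pcts{\uu}$ follows from $J\ne\emptyset$, forced by the nondegeneracy of the cell-crossing at $\ccpt{V}$. Claim~(2) is the symmetric downward argument, using Claims~1$^\circ$--4$^\circ$ from the proof of \Cref{lem:well-defined} directly and \Cref{pp:complete}(a) in place of \Cref{pp:complete}(b).

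The main obstacle is the upward analog of Claim~4$^\circ$: in the downward setting $\uu_j\in\pctc{\vv}$ holds for free, but in the upward setting $\uu_j$ typically leaves $\pctc{\vv}$, so the approximation property of $g_j$ must be propagated to the non-$\pctc{\vv}$ target $\uu_{j'}$ via the commutative-square argument above.
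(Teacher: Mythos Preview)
Your overall plan is sound and close to the paper's, but two justifications do not hold as written.

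First, ``maximality of clusters then gives $\pcts{\uu}=\pcts{\uu}'$'' is invalid: from $\pcts{\uu}'\subseteq\pcts{\uu}$ with $\pcts{\uu}$ maximal you cannot conclude equality unless $\pcts{\uu}'$ is itself a cluster, and you have only shown it is rigid. The correct reason is that your upward flow computation places $\ccpt{\vv}+t\,\uflow{\xevo}(\ccpt{\vv})$ (for small $t>0$) in the \emph{open} simplex $(\pcts{\uu}')^\circ$, while by definition of the cell-crossing these same points lie in $\pcts{\uu}^\circ$; since a point of the complex lies in a unique open simplex, $\pcts{\uu}=\pcts{\uu}'$. This is precisely what the paper means by ``by the formula \eqref{eq:decomposition} for $\dflow{\xevo}$ and its dual, $3^\circ$ follows''.

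Second, your deduction of (1) via ``top-cell upward and downward triangles coincide'' presupposes that $\pcts{\uu}$ is a top-cell, which is neither assumed nor generally true---indeed (3) identifies $\pcts{\uu}$ with $\pcts{\uu}'=\cc(\pcto{\vv}\oplus\bigoplus_i\uu_i)$, typically not maximal. The paper bypasses this by applying \Cref{pp:complete}(a) directly to the given cell $\pcts{\uu}$: since $\App{L}{\pctc{\vv}}{\sink_i}\in\pctc{\vv}\subseteq\pctc{\uu}$ one has $\Ext^1(\pcto{\uu},\App{L}{\pctc{\vv}}{\sink_i})=0$, so the last arrow of \eqref{eq:uevoi} is a minimal right $\Add(\pcto{\uu}\oplus\uu_i)$-approximation of $\sink_i[1]$; once (3) gives $\uu_i\in\pctc{\uu}$ this is the downward triangle for $\pcts{\uu}$. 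In particular, your separate rigidity proof for $\pcts{\uu}'$ (the upward analogue of Claim~$4^\circ$) is not needed here, since $\pcts{\uu}$ is handed to you as a partial cluster by the cell-crossing itself; that rigidity is of course implicit in the well-definedness of the upward flow, but it does no independent work in the argument for (1).
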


\begin{proof}
By \Cref{pp:complete}, $\Ext^1(\pcto{U},\App{L}{\pctc{V}}{X_i})=0$ implies $1^\circ$ and $2^\circ$ is similar.

If $\pcts{\vv}=\pcts{\uu}$, then $\ccpt{\vv}$ is in the interior of $l_{\pcts{\uu}}$ but not a non-degenerate local $\xevo$-leaf, which is a contradiction. Similar for $\pcts{\vv}\subsetneqq\pcts{\ww}$.
By the formula \eqref{eq:decomposition} for $\dflow{\xevo}$ and its dual, $3^\circ$ follows.
\end{proof}


\subsection{Types of $\xevo$-foliations}\label{sec:type}\

Since the global $\xevo$-leaves are $1$-dimensional topological manifolds, they must be one of the following:
\begin{itemize}
    \item \emph{compact}, i.e. $\Traj\simeq[0,1]$ with endpoint $\xevo$ and $\Xevo$.
    \item \emph{semi-compact}, i.e. $\Traj\simeq[0,1)$ with endpoint $\xevo$ or $\Traj\simeq(0,1]$ with endpoint $\Xevo$.
    \item \emph{non-compact}, i.e. $\Traj\simeq(0,1)$.
    \item \emph{closed}, i.e. $\Traj\simeq\SS^1$.
\end{itemize}

\begin{definition}
We say the $\xevo$-foliation on $\CCx(\C)$ is
\begin{itemize}
    \item \emph{compact} if any $\xevo$-leaf is compact;
    \item \emph{semi-compact} if any $\xevo$-leaf is compact or semi-compact,\\
    \item \emph{acyclic} if there is no closed $\xevo$-leaf.
\end{itemize}
\end{definition}
We expect that any $\xevo$-foliation is acyclic or even semi-compact.
By the realization \eqref{eq:red}, the compact $\sink$-foliations provide us with nice homotopy equivalences.

\begin{lemma}\label{lem:compact}
A compact $\xevo$-foliation induces the following contractions.
\begin{gather}\label{eq:contr}
\begin{cases}
\dflow{\xevo}:&\CCx(\C)\setminus\{\Xevo\}\xrightarrow{\simeq}\{\xevo\},\\
\uflow{\xevo}:&\CCx(\C)\setminus\{\xevo\}\xrightarrow{\simeq}\{\Xevo\}.
\end{cases}
\end{gather}
Moreover, if $\xevo=\sink$ is a 0-cell, then the $\sink$-foliation induces a deformation retraction from $\CCx(\C)\setminus\{\xevo,\Xevo\}$ to $\CCx(\C\backslash\sink)\cong\CCx(\C\backslash\source)$
and hence we have the following homotopy equivalence.
  \begin{gather}\label{eq:susp}
    \CCx(\C)\simeq \Sigma\CCx(\C\backslash\sink).
  \end{gather}
\end{lemma}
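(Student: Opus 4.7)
The plan is to build the contraction explicitly by sliding each point along its unique $\xevo$-leaf toward $\xevo$. Under the compactness hypothesis and \Cref{thm:foliation}, every $\xevo$-leaf $L$ is homeomorphic to a closed interval with endpoints $\Xevo$ and $\xevo$, so I parameterize $L$ by a piecewise linear homeomorphism $\gamma_L\colon [0,1]\to L$ with $\gamma_L(0)=\Xevo$ and $\gamma_L(1)=\xevo$, whose breakpoints occur at the cell-crossings of $L$ and which traverses each local leaf segment linearly. This defines a parameter function $\rho\colon \CCx(\C)\to [0,1]$, and the desired contraction is the homotopy
\begin{equation*}
H(x, s) = \gamma_{L_x}\bigl(\rho(x) + s(1-\rho(x))\bigr), \qquad s\in[0,1],
\end{equation*}
where $L_x$ is the leaf through $x$; then $H(\cdot,0)=\id$ and $H(\cdot,1)\equiv \xevo$, and the dual contraction is obtained by reversing the flow direction (the underlying unoriented foliation is the same by \Cref{lem:non-split}).

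The crux of the first part is the continuity of $H$. In the interior of any top-cell the flow is linear in the sense of \Cref{rem:l-foliation}, so leaves and their parameterizations vary continuously with the initial point. At a cell-crossing at some $\ccpt{V}$ (\Cref{def:ccross}), \Cref{lem:non-split} guarantees that $\dflow{\xevo}$ on the incoming cell and $\uflow{\xevo}$ on the outgoing cell are negatively proportional, so the piecewise linear map $\gamma_{L_x}$ glues continuously across the crossing and depends continuously on $x$. Near the singularity $\xevo$ the radial foliation structure on top-cells of $\bigcap_i\Star(\sink_i)$ (second bullet of \Cref{rem:l-foliation}) ensures $\rho(x)\to 1$ as $x\to\xevo$ and that $H$ extends continuously to $\xevo$; analogous behavior holds near the excluded point $\Xevo$.

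For the deformation retraction in the case $\xevo=\sink$ a vertex, the Calabi-Yau reduction identification \eqref{eq:red} embeds $\CCx(\C\backslash\sink)\cong\link(\sink)$ as a subcomplex of $\CCx(\C)$. I claim each $\sink$-leaf $L$ meets $\link(\sink)$ in exactly one point: since the downward flow on the top-cells of $\Star(\sink)$ is radial toward $\sink$, once $L$ enters $\Star(\sink)$ it cannot exit and must terminate at $\sink$; since $L$ starts at $\source\notin\Star(\sink)$, it enters $\Star(\sink)$ through a unique point $r(x)\in\link(\sink)$. This defines a continuous retraction $r\colon \CCx(\C)\setminus\{\sink,\source\}\to\link(\sink)$, and sliding each $x$ linearly along $L_x$ from $x$ to $r(x)$ (as in the first part) gives the required deformation retraction.

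Finally, for \eqref{eq:susp}, observe that $\Star(\sink)=\sink*\link(\sink)$ and $\Star(\source)=\source*\link(\source)$ are topological cones with apexes $\sink$ and $\source$ respectively. The $\sink$-flow furnishes a canonical homeomorphism $\link(\source)\cong\link(\sink)$ sending $p\in\link(\source)$ to the unique intersection $L_p\cap\link(\sink)$, and the deformation retract of the preceding paragraph collapses the middle region cell-by-cell onto this common image; hence $\CCx(\C)$ is homotopy equivalent to the union of the two cones glued along their identified bases, which is by construction $\Sigma\CCx(\C\backslash\sink)$. The main obstacle lies in the continuity of $\gamma_{L_x}$ in $x$ across cell-crossings and at the singularities, where piecewise linearity plus \Cref{lem:non-split} must be invoked carefully; uniqueness of $L\cap\link(\sink)$ is comparatively direct once the radial structure in $\Star(\sink)$ is used.
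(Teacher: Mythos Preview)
Your argument is sound, and in fact more detailed than what the paper itself provides: the lemma is stated there without proof, the contractions being treated as an immediate consequence of compactness of the foliation together with \Cref{thm:foliation}.

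Where the paper does differ from your approach is in the second assertion. Rather than arguing topologically that each leaf meets $\link(\sink)$ exactly once via the radial structure in $\Star(\sink)$, the paper builds the retraction algebraically in \Cref{sec:uni.ext}: it introduces the irreducible extension operators $\ueot$ and $\ueby$ (\Cref{def:uext}), proves their formal properties (\Cref{prop:ie}), shows they induce a triangle equivalence $\C\backslash\sink\cong\C\backslash\source$ (\Cref{lem:1-1}), and then defines the $\sink$-traces $\des$, $\ori$ explicitly by these operators together with the projection $\proj{\sink}$. \Cref{thm:const} verifies that the traces are constant along each leaf, and the subsequent corollary identifies them with the deformation retractions asserted in the lemma. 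So the paper's route gives a closed-form categorical description of the retraction (which it later uses), while your route stays purely on the level of the piecewise linear flow. Both are valid; yours is more geometric and self-contained, the paper's buys an explicit formula for the retraction in terms of approximations that feeds into the later applications.

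One small point: your continuity argument for $H$ relies on the number and location of cell-crossings of $L_x$ varying tamely with $x$, which can fail pointwise (a nearby leaf may undergo a different number of refractions). The fix is not to tie the breakpoints of $\gamma_{L_x}$ to the cell-crossings but to use any continuous reparameterization---for instance, normalizing by total piecewise-linear length within a fixed top-cell decomposition---so that $\rho$ is manifestly continuous. The paper sidesteps this entirely since its trace maps are defined cell-by-cell by algebraic formulas.
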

\subsection{$\sink$-traces as Calabi-Yau reduction}\label{sec:uni.ext}\

\begin{definition}\label{def:uext}
Let $\ww,\uu$ be objects in $\C$.
The \emph{irreducible extension of $\uu$ on the top of $\sink$} is defined to be
\[
    \ueot(\uu)=\Cone(\uu\xrightarrow{f}\App{L}{\source}{\uu})[-1]
\]
and the \emph{irreducible extension of $\ww$ by $\sink$} is defined to be
\[
    \ueby(\ww)=\Cone(\App{R}{\sink}{\ww}\xrightarrow{g} \ww).
\]
\end{definition}

Obviously,
\begin{gather}\label{eq:tri-ext}
    \begin{cases}
      \ueot(\ww)=\ww, & \text{if $\Ext^1(\ww,\sink)=0$}; \\
      \ueby(\uu)=\uu, & \text{if $\Ext^1(\source,\uu)=0$}.
    \end{cases}
\end{gather}
In particular, $\ueot(\source)=0=\ueby(\sink)$.

\begin{lemma}\label{lem:ie}
For any triangle
\begin{equation}\label{eq:ie}
    \sink^{\oplus a}\xrightarrow{g} \ww\to \uu\xrightarrow{f}\source^{\oplus a},
\end{equation}
we have
\begin{itemize}
    \item $f$ is left $\Add(\source)$-approximation $\iff$ $\Ext^1(\ww,\sink)=0$
    \item $f$ is moreover left minimal $\iff$ $\sink\not\in\Add(\ww)$
\end{itemize}
Dually,
\begin{itemize}
    \item $g$ is right $\Add(\sink)$-approximation $\iff$ $\Ext^1(\source,\uu)=0$
    \item $g$ is moreover right minimal $\iff$ $\source\not\in\Add(\uu)$
\end{itemize}
\end{lemma}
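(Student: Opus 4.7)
The plan is to establish all four equivalences by applying the functors $\Hom(-,\source)$ and $\Hom(\sink,-)$ to the triangle~\eqref{eq:ie}, reading off the resulting long exact sequences, and exploiting the rigidity of $\sink$ to kill the relevant boundary terms. The two minimality claims are then obtained via the standard Krull--Schmidt decomposition of triangles, which is available under Condition~$\bigstar$.

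\textbf{Step 1 (Approximation vs. Ext-vanishing for $f$).} Apply $\Hom(-,\source)$ to~\eqref{eq:ie}; the long exact sequence contains the segment
\[
\Hom(\source^{\oplus a},\source)\xrightarrow{\Hom(f,\source)}\Hom(\uu,\source)\to\Hom(\ww,\source)\to\Hom(\sink^{\oplus a},\source).
\]
The rightmost term equals $\Ext^1(\sink,\sink)^{\oplus a}=0$ by rigidity of $\sink$. Since every object of $\Add(\source)$ is a finite power of $\source$, $f$ being a left $\Add(\source)$-approximation is exactly surjectivity of $\Hom(f,\source)$, which by exactness is equivalent to $\Hom(\ww,\source)=\Ext^1(\ww,\sink)=0$.

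\textbf{Step 2 (Minimality for $f$).} In a Krull--Schmidt triangulated category any triangle decomposes uniquely as a direct sum of a minimal triangle together with trivial summands. Direct inspection shows that $f$ fails to be left minimal precisely when a trivial summand of the form $\sink\xrightarrow{\id}\sink\to 0\to\source$ can be split off~\eqref{eq:ie} (which shrinks both the domain $\sink^{\oplus a}$ and the codomain $\source^{\oplus a}$). Since $\sink$ is indecomposable with local endomorphism ring, such a splitting exists iff $\sink\in\Add(\ww)$ with a unit component of $g$ landing in this summand; after a suitable automorphism of $\sink^{\oplus a}$ any $\sink$-summand of $\ww$ yields such a pairing, while the absence of any $\sink$-summand forbids it altogether.

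\textbf{Step 3 (Dual statements for $g$).} These are proved symmetrically. Applying $\Hom(\sink,-)$ to~\eqref{eq:ie} produces
\[
\Hom(\sink,\sink^{\oplus a})\xrightarrow{\Hom(\sink,g)}\Hom(\sink,\ww)\to\Hom(\sink,\uu)\to\Hom(\sink,\source^{\oplus a}),
\]
whose rightmost term is $\Ext^1(\sink,\sink)^{\oplus a}=0$. Using the shift identification $\Hom(\sink,\uu)=\Hom(\source,\uu[1])=\Ext^1(\source,\uu)$, surjectivity of $\Hom(\sink,g)$ (equivalently, $g$ being a right $\Add(\sink)$-approximation) is equivalent to $\Ext^1(\source,\uu)=0$. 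Right minimality of $g$ is then characterized by $\source\notin\Add(\uu)$ by the Krull--Schmidt argument dual to Step~2, this time identifying the trivial summands $\sink\to 0\to\source\xrightarrow{\id}\source$.

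The main obstacle is the Krull--Schmidt step in the minimality arguments, where one must single out precisely the correct type of trivial sub-triangle to be split off and verify its pairing with a coordinate of $\sink^{\oplus a}$ (respectively $\source^{\oplus a}$). This rests on the locality of $\End(\sink)$, which is guaranteed by the Hom-finite hypothesis encoded in Condition~$\bigstar$; the Ext-vanishing halves, by contrast, are routine homological consequences of rigidity.
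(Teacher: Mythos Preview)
Your Step~1 and the approximation half of Step~3 are correct and coincide with the paper's own argument (apply $\Hom(-,\source)$, respectively $\Hom(\sink,-)$, to the triangle and use rigidity of $\sink$ to kill the last term).

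For the minimality equivalences the paper gives no direct proof but simply defers to \cite[Lem.~3.4 and~3.5]{Pla}. Your Krull--Schmidt argument establishes one direction cleanly: if $f$ is not left minimal then a summand $0\to\source$ of $f$ forces a triangle summand $\sink\xrightarrow{\id}\sink\to 0\to\source$, hence $\sink\in\Add(\ww)$. The converse, however, has a genuine gap. Your claim that ``after a suitable automorphism of $\sink^{\oplus a}$ any $\sink$-summand of $\ww$ yields such a pairing'' is not justified: nothing prevents the component $\sink^{\oplus a}\to\sink$ of $g$ from lying entirely in the radical of $\End(\sink)$, and then no automorphism of the source produces a unit entry. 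Concretely, take $a=1$ and $g=0\colon\sink\to\sink$; the resulting (split) triangle is
\[
\sink\xrightarrow{\,0\,}\sink\longrightarrow \sink\oplus\source\xrightarrow{(0,\id)}\source,
\]
in which $\sink\in\Add(\ww)$ while $f=(0,\id)$ is left minimal (any $h$ with $hf=f$ forces $h=\id$). The dual gap appears verbatim in Step~3. This example suggests that the equivalence, read literally for \emph{arbitrary} triangles of shape~\eqref{eq:ie}, needs an extra hypothesis; you should consult the precise formulations in \cite{Pla} (which concern common summands of the two $\Add T$-terms in an index triangle relative to a cluster-tilting $T$) to see exactly what is being invoked and whether an additional condition such as $g$ also being an approximation is implicitly in play.
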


\begin{proof}
We only show the equivalences of $f$, the equivalences of $g$ are similar.

Applying $\Hom(-,\source)$ to the triangle \eqref{eq:ie}, we have
    \begin{gather*}
        \Hom(\source^{\oplus a},\source)\xrightarrow{\Hom(f,\source)}\Hom(\uu,\source)\to\Hom(\ww,\source)\to\Hom(\sink^{\oplus a},\source)=0.
    \end{gather*}
Then $\Hom(f,\source)$ is surjective if and only if
$\Ext^1(\ww,\sink)=0$,
which shows the first equivalence.
The second equivalence was proofed in \cite[Lem.~3.4 and Lem~3.5]{Pla}
\end{proof}

\begin{proposition}\label{prop:ie}
The irreducible extension has the following properties:
\begin{enumerate}
    \item $(\ueot)^2=\ueot\quad\text{and}\quad(\ueby)^2=\ueby$.
    \item $\ueot\ueby\ueot=\ueot\quad\text{and}\quad\ueby\ueot\ueby=\ueby$.
    \item
    $\ueot(\vv_1\oplus \vv_2)=\ueot(\vv_1)\oplus\ueot(\vv_2)$, and\\ $\ueby(\vv_1\oplus \vv_2)=\ueby(\vv_1)\oplus\ueby(\vv_2)$
    \item $\Ext^1(\ueot(\uu),\ueot(\uu'))=0$ for any $\Ext^1(\uu,\uu')=0$, and \\ $\Ext^1(\ueby(\ww),\ueby(\ww'))=0$ for any $\Ext^1(\ww,\ww')=0$.
    \item
    In any nontrivial $\sink$-evolving triangle \eqref{eq:evo-tri},\\
     $\ueot(\ww)=\ueot(\uu)\quad\text{and}\quad
    \ueby(\ww)=\ueby(\uu)$ .

\end{enumerate}
\end{proposition}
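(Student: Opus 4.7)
The plan is to dispatch the five assertions in the order $3^\circ$, $1^\circ$, $5^\circ$, $2^\circ$, $4^\circ$, each time leveraging the characterisations just established in \Cref{lem:ie} and its dual. Assertion $3^\circ$ is immediate, because the minimal $\Add(\source)$- and $\Add(\sink)$-approximations commute with finite direct sums and taking cones is additive. For $1^\circ$, I would rotate the defining cone triangle to the form
\[
    \sink^a\to\ueot(\uu)\to\uu\xrightarrow{f}\source^a,
\]
with $f=\App{L}{\source}{\uu}$; \Cref{lem:ie} then turns the fact that $f$ is a left $\Add(\source)$-approximation into $\Ext^1(\ueot(\uu),\sink)=0$, and minimality of $f$ into $\sink\notin\Add(\ueot(\uu))$. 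Hence \eqref{eq:tri-ext} yields $\ueot(\ueot(\uu))=\ueot(\uu)$; the case of $\ueby$ is dual.

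For $5^\circ$ the plan is an octahedral computation. Let $h=\App{L}{\source}{\uu}\colon\uu\to\source^a$; the universal property of $h$ factors the boundary map $f\colon\uu\to\source$ of the $\sink$-evolving triangle as $f=p\circ h$ for some $p\colon\source^a\to\source$. Applying the octahedral axiom to the composable pair $\uu\xrightarrow{h}\source^a\xrightarrow{p}\source$, and identifying the three cones as $\Cone(h)=\ueot(\uu)[1]$, $\Cone(p\circ h)=\Cone(f)=\ww[1]$ and $\Cone(p)$, assembles them into a distinguished triangle
\[
    \ueot(\uu)\to\ww\to\Cone(p)[-1]\to\ueot(\uu)[1].
\]
Nontriviality of the evolving triangle forces $f\neq 0$, hence $p\neq 0$; since $\End(\source)$ is local in a Krull-Schmidt category, a suitable basis change makes $p$ a split surjection with kernel $\source^{a-1}$, so $\Cone(p)\cong\source^{a-1}[1]$. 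Substitution yields the triangle $\sink^{a-1}\to\ueot(\uu)\to\ww\to\source^{a-1}$, and a second application of \Cref{lem:ie} (now using $\Ext^1(\ueot(\uu),\sink)=0$ and $\sink\notin\Add(\ueot(\uu))$ from $1^\circ$) recognises it as the defining triangle of $\ueot(\ww)$. Thus $\ueot(\ww)=\ueot(\uu)$; the identity $\ueby(\ww)=\ueby(\uu)$ is dual.

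Assertion $2^\circ$ is then short. Setting $V=\ueot(\uu)$, the defining triangle of $\ueby(V)$ has the shape $\sink^b\to V\to\ueby(V)\to\source^b$. From $1^\circ$ we already know $\Ext^1(V,\sink)=0$ and $\sink\notin\Add(V)$, so \Cref{lem:ie} tells us the map $\ueby(V)\to\source^b$ is precisely the minimal left $\Add(\source)$-approximation of $\ueby(V)$. Consequently the same triangle doubles as the defining triangle of $\ueot(\ueby(V))$, giving $\ueot\ueby\ueot(\uu)=V=\ueot(\uu)$; the identity $\ueby\ueot\ueby=\ueby$ is dual.

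The last assertion $4^\circ$ will require long-exact-sequence bookkeeping. Applying $\Hom(-,\ueot(\uu'))$ to the defining triangle of $\ueot(\uu)$ and using the vanishings $\Ext^1(\sink,\ueot(\uu'))=0=\Ext^1(\source,\ueot(\uu'))$ (both consequences of $\ueot(\uu')\in{}^\perp\source$ together with 2-Calabi-Yau duality) collapses the sequence to $\Ext^1(\ueot(\uu),\ueot(\uu'))\cong\Ext^1(\uu,\ueot(\uu'))$. A second application of $\Hom(\uu,-)$ to the defining triangle of $\ueot(\uu')$ rewrites this as the cokernel of a composition pairing $\Hom(\uu,\uu')\to\Hom(\uu,\source^{a'})$ induced by the minimal approximation $\uu'\to\source^{a'}$, the next term vanishing because $\Ext^1(\uu,\uu')=0$. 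The main obstacle, and the technical heart of the proposition, is to verify that this pairing is surjective under the rigidity hypothesis $\Ext^1(\uu,\uu')=0$; I expect this to follow from a Serre-duality/Yoneda argument that interprets $\Hom(\uu,\source)$ as $\Ext^1(\uu,\sink)$ and exploits the 2-Calabi-Yau pairing $\Ext^1(\uu,\uu')=D\Ext^1(\uu',\uu)$ to leverage the rigidity. The dual statement for $\ueby$ will follow by symmetry.
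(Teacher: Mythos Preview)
Your treatment of $1^\circ$, $2^\circ$, $3^\circ$ is fine; in fact your argument for $2^\circ$ is slicker than the paper's. The paper proves $2^\circ$ by comparing the two triangles
\[
\sink^{\oplus a}\to\ueot(\uu)\to\ueby\ueot(\uu)\to\source^{\oplus a}
\quad\text{and}\quad
\sink^{\oplus b}\to\ueot\ueby\ueot(\uu)\to\ueby\ueot(\uu)\to\source^{\oplus b}
\]
and squeezing $a=b$ from two approximation inequalities, whereas you observe directly that the first triangle already computes $\ueot(\ueby(\ueot(\uu)))$ by \Cref{lem:ie}. For $3^\circ$ the paper gives an octahedral argument, but your appeal to additivity of minimal approximations is standard and acceptable.

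There is, however, a genuine gap in your argument for $5^\circ$. From $f=p\circ h$ with $h$ the minimal left $\Add(\source)$-approximation of $\uu$, you claim that $p\colon\source^a\to\source$ is a split surjection because $p\neq 0$ and $\End(\source)$ is local. This inference is invalid: a local endomorphism ring may have nonzero non-units, so a nonzero map $\source^a\to\source$ need not have any invertible component. (The paper makes no brick hypothesis on $\sink$; cf.\ \Cref{ex:no}.) The paper avoids this issue by running the octahedron in the opposite direction: it starts from the minimal left approximation $f_\ww\colon\ww\to\source^a$, stacks it with the evolving triangle, and obtains a map $f'_\uu\colon\uu\to\source^{a+1}$ with cone $\ueot(\ww)[1]$. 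Then \Cref{lem:ie} (using $\Ext^1(\ueot(\ww),\sink)=0$ and $\sink\notin\Add(\ueot(\ww))$) identifies $f'_\uu$ as the minimal left approximation of $\uu$, so $\ueot(\uu)=\ueot(\ww)$ with no splitting needed.

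Your approach to $4^\circ$ is both incomplete (as you acknowledge) and contains an error. The claimed vanishing $\Ext^1(\source,\ueot(\uu'))=0$ is false: by $2$-Calabi--Yau duality it equals $\Hom(\sink,\ueot(\uu'))$, which has no reason to vanish. More seriously, your reduction leaves you needing the surjectivity of $\Hom(\uu,f')\colon\Hom(\uu,\uu')\to\Hom(\uu,\source^{a'})$, which is \emph{equivalent} to $\Ext^1(\uu,\ueot(\uu'))=0$; this is not obviously implied by $\Ext^1(\uu,\uu')=0$, and your Serre-duality sketch does not close the gap. The paper instead applies $\Ext^1(\ueot(\uu),-)$ to the triangle $\sink^{a}\to\ueot(\uu')\to\uu'\xrightarrow{f}\source^a$ and shows that $\Ext^1(\ueot(\uu),f)$ is \emph{injective}. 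The key is the commutative square
\[
\begin{tikzcd}
\Ext^1(\ueot(\uu),\uu') \arrow[r,hook] \arrow[d,"{\Ext^1(\ueot(\uu),f)}"'] & \Ext^1(\sink^{\oplus b},\uu') \arrow[d,hook] \\
\Ext^1(\ueot(\uu),\source^{\oplus a}) \arrow[r] & \Ext^1(\sink^{\oplus b},\source^{\oplus a})
\end{tikzcd}
\]
where the top injection comes from $\Ext^1(\uu,\uu')=0$ (applying $\Ext^1(-,\uu')$ to $\sink^b\to\ueot(\uu)\to\uu$) and the right injection from $\Ext^1(\sink,\ueot(\uu'))=0$ (applying $\Ext^1(\sink^b,-)$ to the first triangle). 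This forces the left vertical map to be injective, hence $\Ext^1(\ueot(\uu),\ueot(\uu'))=0$.
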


\begin{proof}
For each item, we only deal with the downward version and upward is similar.
\paragraph{\textbf{For $1^\circ$:}}
By \Cref{lem:ie}, $\Ext^1(\ueot(\uu),\sink)=0$ and
then $\eqref{eq:tri-ext}$ implies $1^\circ$.

\paragraph{\textbf{For $2^\circ$:}}
For any object $\uu$ in $\C$.
Consider the following irreducible extensions:
\begin{gather*}
    \sink^{\oplus a}\xrightarrow{g_1}\ueot(\uu)\to\ueby\ueot(\uu)\xrightarrow{f_1} \source^{\oplus a},\\
    \sink^{\oplus b}\xrightarrow{g_2}\ueot\ueby\ueot(\uu)\to\ueby\ueot(\uu)\xrightarrow{f_2} \source^{\oplus b}.
\end{gather*}

By \Cref{lem:ie}, we have
\[
    \Ext^1(\ueot\ueby\ueot(\uu),\sink)=0=\Ext^1(\source,\ueby\ueot(\uu)),
\]
hence $f_1$ is a left $\Add(\source)$-approximation and $g_2$ is a right $\Add(\sink)$-approximation. Note that $g_1$ and $f_2$ are minimal, so $b\le  a$ and $a\le b$. We have $a=b$ and thus $\ueot\ueby\ueot(\uu)=\ueot(\uu)$.

\paragraph{\textbf{For $3^\circ$:}}
Consider the irreducible extensions of $\vv_1,\vv_2$ and $\vv_1\oplus \vv_2$ as follows:
\begin{gather*}
    \sink^{a_i}\to \ueot(\vv_i)\to \vv_i\xrightarrow{f_i} \source^{a_i} \quad\mbox{for}\quad i=1,2,\\
    \sink^{a}\to \ueot(\vv_1\oplus \vv_2)\to \vv_1\oplus \vv_2\xrightarrow{f} \source^{a_i}.
\end{gather*}

Since $f$ is left minimal $\Add(\source)$-approximation of $\vv_1\oplus \vv_2$, we have $a\le a_1+a_2$.
Using the octahedra axiom, we obtain the octahedron:
\[
\begin{tikzcd}
&\sink^{a_1+a_2}\arrow[d]\arrow[r, equal]&\sink^{a_1+a_2}\arrow[d]&\\
\ueot(\vv_1\oplus \vv_2)\arrow[r]\arrow[d, equal]&\ueot(\vv_1)\oplus\ueot(\vv_2)\arrow[r]\arrow[d]&\sink^{(a_1+a_2)-a}\arrow[d]\arrow[r,"\sigma"]&\ueot(\vv_1\oplus \vv_2)[1]\arrow[d, equal]\\
\ueot(\vv_1\oplus \vv_2)\arrow[r]&\vv_1\oplus \vv_2\arrow[r]\arrow[d]&\source^a\arrow[d]\arrow[r]&\ueot(\vv_1\oplus \vv_2)[1]\\
&{\source^{a_1+a_2}}\arrow[r, equal]& {\source^{a_1+a_2}}&.
\end{tikzcd}
\]
By \Cref{lem:ie}, we have $\sigma=0$ and $\sink\not\in \Add(\ueot(\vv_1)\oplus\ueot(\vv_2))$.
Therefore, $a=a_1+a_2$ and $\ueot(\vv_1\oplus \vv_2)=\ueot(\vv_1)\oplus\ueot(\vv_2)$.

\paragraph{\textbf{For $4^\circ$:}}
Consider the irreducible extensions of $\uu,\uu'$:
\begin{gather*}
    \sink^{\oplus a}\to \ueot(\uu')\to \uu'\xrightarrow{f} \source^{\oplus a},\\
     \sink^{\oplus b}\xrightarrow{g} \ueot(\uu)\to \uu\to \source^{\oplus b}.
\end{gather*}
Applying $\Ext^1(\ueot(\uu),-)$ to the first triangle, we have
\begin{gather}\label{les3}
    \nonumber0\xlongequal{Lem.~\ref{lem:ie}}\Ext^1(\ueot(\uu),\sink^{\oplus a})\to\Ext^1(\ueot(\uu),\ueot(\uu'))\to\Ext^1(\ueot(\uu),\uu')\\
    \xrightarrow{\Ext^1(\ueot(\uu),f)}\Ext^1(\ueot(\uu),\source^{\oplus a}).
\end{gather}
Applying $\Ext^1(\sink^{\oplus b},-)$ to the first triangle and $\Ext^1(-,\uu')$ to the second, we have
\[
\begin{cases}
0\xlongequal{Lem.~\ref{lem:ie}}\Ext^1(\sink^{\oplus b},\ueot(\uu'))\to\Ext^1(\sink^{\oplus b},\uu')\to
    \Ext^1(\sink^{\oplus b},\source^{\oplus a})\\
0=\Ext^1(\uu,\uu')\to\Ext^1(\ueot(\uu),\uu')\to
    \Ext^1(\sink^{\oplus b},\uu'),
\end{cases}
\]
Combining with the following communicative diagram:
\[
\begin{tikzcd}
& {\Ext^1(\ueot(\uu),\uu')} \arrow[r, hook] \arrow[d, "{\Ext^1(\ueot(\uu),f)}"'] & {\Ext^1(\sink^{\oplus b},\uu')} \arrow[d, hook] \\
& {\Ext^1(\ueot(\uu),\source^{\oplus a})} \arrow[r] & {\Ext^1(\sink^{\oplus b},\source^{\oplus a})},
\end{tikzcd}
\]
we deduce that $\Ext^1(\ueot(\uu),f)$ is injective and thus
$\Ext^1(\ueot(\uu),\ueot(\uu'))=0$.

\paragraph{\textbf{For $5^\circ$:}}
Consider the irreducible extensions of $\ww$ and $\uu$:
\begin{gather*}
    \sink^a\to\ueot(\ww)\to \ww\xrightarrow{f_\ww}\source^a,\\
    \sink^b\to\ueot(\uu)\to \uu\xrightarrow{f_\uu}\source^b.
\end{gather*}
Combining the first triangle with \eqref{eq:evo-tri}, we obtain the following by octahedral axiom.
\[
\begin{tikzcd}
&\ueot(\ww)\arrow[d]\arrow[r, equal]&\ueot(\ww) \arrow[d]&\\
\sink\arrow[r]\arrow[d, equal]&\ww\arrow[r]\arrow[d,"f_\ww"]&\uu\arrow[d,"f'_\uu"]\arrow[r,"f"]&\source\arrow[d, equal]\\
\sink\arrow[r,"0"]&\source^{a}\arrow[r]\arrow[d]&\source^{a+1}\arrow[d]\arrow[r]&\source\\
&{\ueot(\ww)[1]}\arrow[r, equal]& {\ueot(\ww)[1]}&,
\end{tikzcd}
\]
By \Cref{lem:ie},
$\Ext^1(\ueot(\ww),\sink)=0$ and $\sink\not\in\Add(\ueot(\ww))$ implies that
    $f'_\uu$ is the minimal left $\Add(\source)$-approximation of $\uu$.
Hence $f_\uu=f'_\uu$ and $\ueot(\ww)=\ueot(\uu)$.
\end{proof}

\begin{remark}
Since the irreducible extensions preserve the direct sum and partial cluster by \Cref{prop:ie}, they can be extended to the cluster complex by
\[
\ue(\ccpt{\vv})=\sum_{i=0}^k\alpha_i\real{\ue(\vv_i)},
\]
where $?=\uparrow,\downarrow$.
\end{remark}

Unsurprisingly, the shift functor induces an equivalence $[1]:\C\backslash \sink\xrightarrow{\cong}\C\backslash \source$.
However, the irreducible extension provides a more natural way to identify the two subcategories.

\begin{corollary}\label{lem:1-1}
The irreducible extensions induce a triangle equivalence
\[
\ueby:\C\backslash \sink \overset{\cong}{\longleftrightarrow}\C\backslash \source:\ueot
\]
extending the identity on $\C\backslash \sink\cap\C\backslash \source$. Moreover, it induces an isomorphism between simplicial complexes:
\[
\ueby:\CCx(\C\backslash \sink) \overset{\cong}{\longleftrightarrow}\CCx(\C\backslash \source):\ueot.
\]
\end{corollary}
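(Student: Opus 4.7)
The plan is to use the properties established in Proposition~\ref{prop:ie} together with Iyama-Yoshino's Calabi-Yau reduction machinery \cite[\S2]{IY} to exhibit $\ueby$ and $\ueot$ as mutually inverse triangle equivalences, then to deduce the cluster-complex isomorphism from preservation of partial clusters. There are four things to verify: (i) the assignments descend to functors on the quotients; (ii) they are mutually inverse on objects and restrict to the identity on the common core; (iii) they intertwine the shift functors and distinguished triangles; and (iv) they induce the asserted simplicial isomorphism.

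For (i), given $W\in{}^\perp(\source)$, Lemma~\ref{lem:ie} applied to the defining triangle $\sink^b\to W\to\ueby(W)\to\source^b$ (where $\sink^b=\App{R}{\sink}{W}$) yields $\ueby(W)\in(\sink)^\perp$. A morphism $\phi\colon W\to W'$ lifts to a map between the minimal $\Add(\sink)$-approximations and induces $\ueby(\phi)$ well-defined modulo maps factoring through $\sink_{W}[1]=\source^b$, hence well-defined in $(\sink)^\perp/\source$. Since $\ueby(\sink)=0$, any morphism factoring through $\Add(\sink)$ in ${}^\perp(\source)$ becomes zero, so $\ueby$ descends to the quotient; the dual argument works for $\ueot$. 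For (ii), I would rotate the defining triangle to $\ueby(W)\to\source^b\to W[1]\to\ueby(W)[1]$. Since $\Hom(W,\source)=0$ and $\source\notin\Add(\ueby(W))$ (by minimality of $\App{R}{\sink}{W}$ and Lemma~\ref{lem:ie}), the map $\ueby(W)\to\source^b$ is a \emph{minimal} left $\Add(\source)$-approximation whose cocone is $W$. Hence $\ueot(\ueby(W))=W$ on the nose; dually $\ueby(\ueot(U))=U$. The identity on the intersection then follows from \eqref{eq:tri-ext}: if $\Ext^1(W,\sink)=0=\Ext^1(\source,W)$, both approximations are trivial.

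For (iii), I would invoke Iyama-Yoshino's description of the shift $\langle 1\rangle$ via triangles of left approximations. The shift $\ueby(W)\langle 1\rangle$ in $\C\backslash\source$ is the cone of the minimal left $\Add(\source)$-approximation of $\ueby(W)$, which by step (ii) is $\ueby(W)\to\source^b$ with cone $W[1]$. A parallel analysis in $\C\backslash\sink$, combined with an octahedral argument comparing the $\Add(\sink)$-approximations underlying $W\langle 1\rangle$ and $\ueby(W\langle 1\rangle)$, identifies $\ueby(W\langle 1\rangle)$ with $\ueby(W)\langle 1\rangle$ functorially, so $\ueby$ commutes with $\langle 1\rangle$. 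Compatibility with distinguished triangles then follows by applying the octahedral axiom to each triangle of $\C\backslash\sink$ together with the approximations defining $\ueby$, or equivalently by invoking the uniqueness part of the construction in \cite[Prop.~2.6]{IY}.

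Finally, under the identifications \eqref{eq:red}, $\CCx(\C\backslash\sink)$ and $\CCx(\C\backslash\source)$ are $\link(\sink)$ and $\link(\source)$ respectively. Proposition~\ref{prop:ie}(3) shows that $\ueby$ is additive on direct sum decompositions into indecomposables, and (4) shows it preserves the condition $\Ext^1=0$, so it sends partial clusters to partial clusters; the inverse property established in step (ii) promotes this to a bijection of partial clusters at every dimension, inducing the claimed simplicial isomorphism. I expect the main obstacle to lie in step (iii): while $\ueby$ is easily seen to be bijective on objects and biadditive, verifying compatibility with the shift and with distinguished triangles requires a delicate simultaneous octahedral argument in the two Calabi-Yau reductions, whereas the remaining steps follow fairly directly from the formal properties packaged in Proposition~\ref{prop:ie}.
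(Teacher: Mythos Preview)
Your proposal is correct in outline and matches the paper on steps (i), (ii), and (iv), but the paper handles your step (iii) far more efficiently. You already compute in (iii) that the minimal left $\Add(\source)$-approximation of $\ueby(W)$ has cone $W[1]$, i.e.\ $\ueby(W)\langle 1\rangle_{\C\backslash\source}=W[1]$; the paper simply reads this (together with the dual identity) as the functor equality
\[
  \langle 1\rangle_{\C\backslash\sink}\circ[-1]\;=\;\ueot\;=\;[-1]\circ\langle 1\rangle_{\C\backslash\source},
\]
obtained by rotating the defining triangle $\sink^a\to\ueot(\uu)\to\uu\to\source^a$ to $\uu[-1]\to\sink^a\to\ueot(\uu)\to\uu$ and recognising $\uu[-1]\to\sink^a$ as the minimal left $\Add(\sink)$-approximation (using that $(\sink)^\perp[-1]\subset{}^\perp(\source)$ by 2-Calabi-Yau duality). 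Since $[-1]$ and $\langle 1\rangle$ are already triangle auto-equivalences of the ambient and quotient categories respectively, their composite is automatically a triangle equivalence, and no separate octahedral verification of shift-compatibility or triangle-preservation is needed. Your direct approach via octahedra would work, but it rederives by hand what the Iyama--Yoshino machinery already packages: once you have the displayed factorisation, everything in (iii) is free.
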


\begin{proof}
By \Cref{prop:ie}, $\ueot:\C\backslash \source\to\C\backslash \sink$ is an additive equivalence which is identity on $\C\backslash \sink\cap\C\backslash \source$. By $\eqref{def:shift}$, the following triangle
\begin{gather*}
\uu[-1]\to \sink\to \ueot(\uu)\to \uu.
\end{gather*}
and its shift imply that $\<1\>_{\C\backslash \sink}[-1]=\ueot=[-1]\<1\>_{\C\backslash \source}$.
As a result, $\ueot$ preserves the triangle structure.
Moreover, it induces the isomorphism of simplicial complexes between $\CCx(\C\backslash \sink)$ and $\CCx(\C\backslash \source)$.
\end{proof}

For any $k$-cell $\pcts{T}$ in $\CCx(\C)$, we define
\begin{equation}\label{eq:proj}
    \begin{array}{rccl}
  \proj{\pcts{T}}\colon&\Star(\pcts{T})\backslash\pcts{T}&\longrightarrow& \link(\pcts{T})  \\
    & \displaystyle{ \sum_{i=0}^k c_i T_i+\sum_{i=k+1}^m c_i Y_i }
    &\longmapsto&\dfrac{\sum_{i=k+1}^m c_i Y_i}{\sum_{i=k+1}^m c_i}.
\end{array}
\end{equation}

to be the projection from $\Star(\pcts{T})$ onto $\link(\pcts{T})$.
Now we introduce \emph{the downward/upward $\sink$-traces} of $\sink$-leaves.

\begin{definition}
The \emph{downward/upward $\sink$-traces}
\begin{gather}\label{eq:dtr}
    \des\colon \CCx(\C)\setminus\{\sink,\source\}\to\CCx(\C\backslash \sink),\\
    \label{eq:utr}
    \ori\colon \CCx(\C)\setminus\{\sink,\source\}\to\CCx(\C\backslash \source)
\end{gather}
are defined to be
\begin{gather}\label{def:des}
\des(\ccpt{\vv})=
\begin{cases}
\ueot(\ccpt{\vv}),    &\mbox{if $\ccpt{\vv}\not\in\Star(\sink)$},\\
\proj{\sink}(\ccpt{\vv}), &\mbox{if $\ccpt{\vv}\in\Star(\sink)$, }
\end{cases}
\end{gather}
and, respectively,
\begin{gather}\label{def:ori}
\ori(\ccpt{\vv})=
\begin{cases}
\ueby(\ccpt{\vv}),        &\mbox{if $\ccpt{\vv}\not\in\Star(\source)$},\\
\proj{\source}(\ccpt{\vv}),  &\mbox{if $\ccpt{\vv}\in\Star(\source)$ }.
\end{cases}
\end{gather}
\end{definition}

\begin{theorem}\label{thm:const}
The downward/upward $\sink$-traces are invariant on any $\sink$-leaf $L$.
\end{theorem}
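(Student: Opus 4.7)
The plan is to prove the invariance of $\des$ along $L$ (the argument for $\ori$ is symmetric) by checking two facts: (i) $\des$ is constant on each linear piece of $L$ inside the interior of a top-cell, and (ii) $\des$ is continuous across each cell-crossing of $L$. Since $L$ is piecewise linear by \Cref{thm:foliation}, together these imply the theorem.

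For (i), let $\pcts{T}$ be a top-cell met non-degenerately by $L$. If $\pcts{T}\subset\Star(\sink)$, the local leaves are by \eqref{eq:dflow} radial segments with endpoint $\sink$, and $\des=\proj{\sink}$ is constant on each such ray by the construction in \eqref{eq:proj}. Otherwise $\des=\ueot$ on $\pcts{T}^\circ$ and the flow direction is, up to a positive scalar, $\real{\ww}-\real{\uu}$ coming from the downward $\sink$-evolving triangle $\sink\to\ww\to\uu\to\source$ for $\pcts{T}$ (with the degenerate form $\ccpt{\vv}-\source$ when $\pcts{T}\subset\Star(\source)$, using $\ueot(\source)=0$). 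The object-level identity $\ueot(\ww)=\ueot(\uu)$ from \Cref{prop:ie}(5) must be converted into the identity of points $\ueot(\real{\ww})=\ueot(\real{\uu})$ in $\CCx(\C\backslash\sink)$; once this is in hand, the affine extension of $\ueot$ on the ambient top-cell $\cc(\pcto{T}\oplus\ww\oplus\uu)$ (which contains the entire local leaf by \Cref{lem:last}) is constant along the leaf.

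For (ii), suppose $L$ crosses from a top-cell $\pcts{U}$ into a top-cell $\pcts{W}$ at $\ccpt{V}\in\pcts{V}^\circ$, where $\pcts{V}=\pcts{U}\cap\pcts{W}$ is the cell-wall. The value $\des(\ccpt{V})$ is defined pointwise, so it suffices to match the one-sided limits of $\des$ from $\pcts{U}^\circ$ and $\pcts{W}^\circ$. If both cells avoid $\Star(\sink)$, this is the piecewise-affine continuity of $\ueot$. If, say, $\pcts{W}\subset\Star(\sink)$ but $\pcts{U}\not\subset\Star(\sink)$, then $\pcts{V}$ cannot contain $\sink$ (as $\pcts{V}\subset\pcts{U}$); consequently every vertex of $\pcts{V}$ lies in $\sink^\perp$, so by \eqref{eq:tri-ext} the irreducible extension $\ueot$ acts as the identity on $\pcts{V}$. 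A direct computation in coordinates of $\pcts{W}$ then shows $\ueot(\ccpt{V})=\ccpt{V}=\lim_{\ccpt{W}\to\ccpt{V}}\proj{\sink}(\ccpt{W})$, using \Cref{lem:cwall} to see that the evolving triangles on the two sides of the wall are compatible.

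The main obstacle is the conversion in (i) from the object-level equality $\ueot(\ww)=\ueot(\uu)$ to the weighted point identity: writing $\ww=\bigoplus_j\ww_j^{d_j}$ and $\uu=\bigoplus_k\uu_k^{e_k}$ with $n=\sum_jd_j$ and $m=\sum_ke_k$, the requisite identity is
\[
\sum_j\tfrac{d_j}{n}\real{\ueot(\ww_j)}=\sum_k\tfrac{e_k}{m}\real{\ueot(\uu_k)}
\]
in $\CCx(\C\backslash\sink)$. This requires careful bookkeeping of the secondary decompositions of each $\ueot(\ww_j)$ and $\ueot(\uu_k)$ into indecomposables of $\C\backslash\sink$, exploiting the minimality of the approximations that define $\ww$ and $\uu$ and the additivity $\ueot(\oplus)=\oplus\,\ueot$ from \Cref{prop:ie}(3); this will be carried out by reducing the claim to the identity of objects after clearing the denominators $n$ and $m$.
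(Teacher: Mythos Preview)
Your overall strategy coincides with the paper's: reduce to local $\sink$-leaves, dispose of the $\Star(\sink)$ case via the projection $\proj{\sink}$, and outside $\Star(\sink)$ invoke \Cref{prop:ie}(5) to conclude that the flow direction $\real{\ww}-\real{\uu}$ is annihilated by $\ueot$. Two remarks.

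First, your step (ii) is redundant. The paper does not restrict to top-cells; it works with the nontrivial local $\sink$-leaf $l$ in its carrier cell $C_l$ (of any dimension), and consecutive local leaves along $L$ share endpoints by construction. Constancy of $\des$ on each $l$ therefore already implies constancy on $L$, with no separate cell-crossing continuity argument needed. Your discussion of the case $\pcts{W}\subset\Star(\sink)$, $\pcts{U}\not\subset\Star(\sink)$ is likewise absorbed into the local-leaf argument.

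Second, and more seriously, your ``main obstacle'' is genuine but your proposed fix does not close it. The paper simply asserts that the vector $\real{\ww}-\real{\uu}$ ``does not affect the downward trace''; what is actually needed is
\[
\frac{1}{n}\sum_j d_j\,\real{\ueot(\ww_j)}
\;=\;
\frac{1}{m}\sum_k e_k\,\real{\ueot(\uu_k)}.
\]
Clearing $n$ and $m$ is not enough, because each $\real{\ueot(\ww_j)}$ carries its own internal normalization by the number of indecomposable summands of $\ueot(\ww_j)$. For instance, if abstractly $\ww=A\oplus B$, $\uu=C$ with $\ueot(A)=P^{\oplus 2}$, $\ueot(B)=Q$, $\ueot(C)=P^{\oplus 2}\oplus Q$, then the object identity $\ueot(\ww)=\ueot(\uu)$ holds, yet
\[
\ueot(\real{\ww})=\tfrac{1}{2}P+\tfrac{1}{2}Q
\quad\text{while}\quad
\ueot(\real{\uu})=\tfrac{2}{3}P+\tfrac{1}{3}Q.
\]
So the object identity alone, together with additivity of $\ueot$, does \emph{not} yield the point identity; ``reducing to the identity of objects after clearing the denominators $n$ and $m$'' fails exactly here. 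You need an additional ingredient, for example that $\ueot$ sends each indecomposable summand $\ww_j$, $\uu_k$ arising in a (nontrivial) $\sink$-evolving triangle to an indecomposable (so that all secondary normalizations are $1$ and Krull--Schmidt forces $n=m$). This is plausible in view of the equivalence in \Cref{lem:1-1}, but it requires argument; as written, your proposal leaves this step open.
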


\begin{proof}
We only proof the downward $\sink$-trace and similar for the upward $\sink$-trace.

Since any $\ccpt{\vv}\ne\ccpt{\vv'}\in\Traj$ is connected by finite nontrivial local $\sink$-leaves $\{l_i\}_{i=0}^N$ along the downward flow.
It is sufficient to show that for any $\ccpt{\vv}\ne\ccpt{\vv'}$ in the same nontrivial local $\sink$-leaf $l$, we have $\des(\vv)=\des(\vv')$.

If $l$ is the nontrivial local $\sink$-leaf in $\Star(\sink)$.
By \eqref{eq:proj}, we have
\[
\des(\vv)=\proj{\sink}(\vv)=\proj{\sink}(\vv')=\des(\vv').
\]
If $l$ is the nontrivial local $\sink$-leaf for $C_l\not\subset\Star(\sink)$.
Let \eqref{eq:evo-tri} be the (nontrivial) downward $\sink$-evolving triangle for the partial cluster $C_{l}$.
By $5^\circ$ of \Cref{prop:ie}, we have $\ueot(\ww)=\ueot(\uu)$.
The vector $\dflow{\sink}(\ccpt{Y})=\real{\ww}-\real{\uu}$ does not affect the downward trace for any $\ccpt{Y}\in l$.
Hence $\des(\ccpt{\vv})=\des(\ccpt{\vv'})$.

In conclusion, the downward $\sink$-trace is invariant on any $\sink$-leaf $L$.
\end{proof}

\begin{corollary}
When the $\sink$-foliation is compact,
$\ori$ and $\des$ are precisely the deformation retractions in \Cref{lem:compact}.
\end{corollary}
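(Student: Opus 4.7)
The plan is to identify $\des$ (and dually $\ori$) with the endpoint map of the homotopy that slides each point along its $\sink$-leaf. The key geometric observation is that in the compact case every nondegenerate $\sink$-leaf $\Traj\simeq[0,1]$ runs from $\Xevo$ to $\sink$, and by \Cref{rem:l-foliation} the local foliation in any top-cell belonging to $\Star(\sink)$ is radial from $\sink$. Consequently the portion of $\Traj$ inside $\Star(\sink)\setminus\{\sink\}$ is a straight radial segment of the form $\{(1-t)\sink+t\,p_\Traj:t\in(0,1]\}$ for a unique point $p_\Traj\in\link(\sink)$, so each compact leaf meets $\link(\sink)$ transversally in exactly one point.

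Next I would compute the value of $\des$ on each leaf. For $\ccpt{\vv}\in \Traj\cap(\Star(\sink)\setminus\{\sink\})$, the definition \eqref{def:des} gives $\des(\ccpt{\vv})=\proj{\sink}(\ccpt{\vv})$, and the formula \eqref{eq:proj} applied to the radial segment above shows $\proj{\sink}(\ccpt{\vv})=p_\Traj$. For $\ccpt{\vv}\in \Traj\setminus\Star(\sink)$, \Cref{thm:const} yields $\des(\ccpt{\vv})=\des(\ccpt{\vv}')=p_\Traj$ for any $\ccpt{\vv}'\in \Traj\cap(\Star(\sink)\setminus\{\sink\})$. Hence $\des\colon \CCx(\C)\setminus\{\sink,\Xevo\}\to\link(\sink)\cong\CCx(\C\backslash\sink)$ is precisely the map $\ccpt{\vv}\mapsto p_{\Traj(\ccpt{\vv})}$ where $\Traj(\ccpt{\vv})$ denotes the leaf through $\ccpt{\vv}$.

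Finally, the deformation retraction in \Cref{lem:compact} is obtained by pushing each point along its compact $\sink$-leaf toward $\link(\sink)$ via a reparametrization of $\Traj\simeq[0,1]$ that collapses the two subarcs $[\Xevo,p_\Traj]$ and $[p_\Traj,\sink]$ onto $\{p_\Traj\}$ at time $1$. Its time-$1$ value is by construction $\ccpt{\vv}\mapsto p_{\Traj(\ccpt{\vv})}$, which is exactly $\des$. The identification $\ori=$ (dual deformation retraction) follows by the symmetric argument with $\uflow{\sink}$, $\link(\source)$ and the formula \eqref{def:ori}. The only nontrivial step is verifying that every compact leaf crosses $\link(\sink)$ in a single point, which reduces to the radiality of the local foliation at $\sink$ together with the fact that a leaf cannot re-exit $\Star(\sink)$ after entering, since radial segments in $\Star(\sink)$ terminate at $\sink$.
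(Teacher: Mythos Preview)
Your proposal is correct and is exactly the argument the paper leaves implicit: the corollary is stated without proof, as an immediate consequence of \Cref{thm:const} together with the radial structure of the flow in $\Star(\sink)$. You have made explicit the two ingredients — constancy of $\des$ on leaves and the fact that each compact leaf meets $\link(\sink)$ in a single point because the radial segment in $\Star(\sink)$ cannot re-exit — and your identification of $\des$ with the time-$1$ map of the sliding homotopy is the intended reading of the deformation retraction in \Cref{lem:compact}.
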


One can also extend the definition of traces to an arbitrary point $\xevo$. We leave it to the further works.

\begin{figure}\centering
\vskip -.5cm
\makebox[\textwidth][c]{
  \includegraphics[width=9cm]{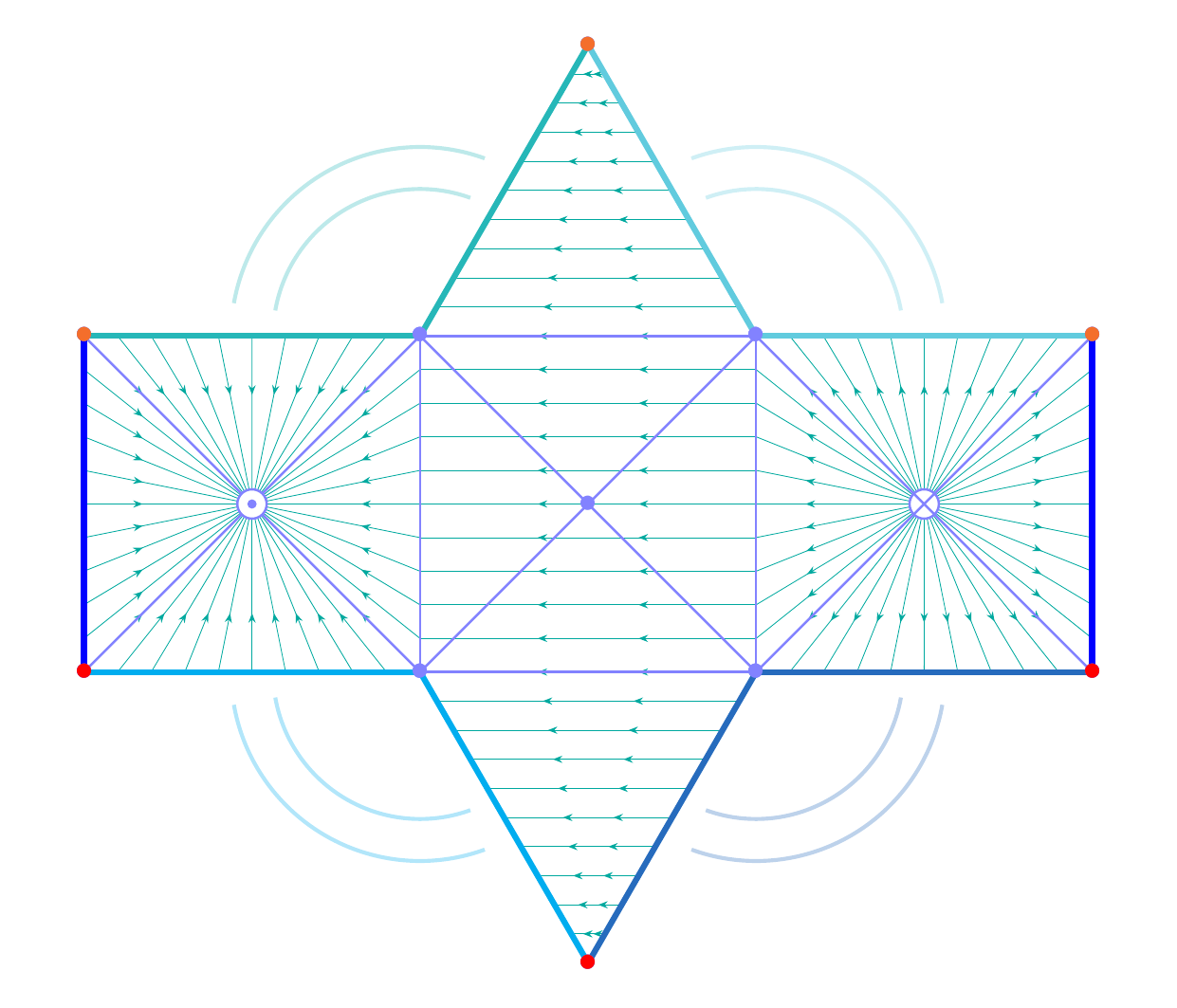}
\hskip -.7cm
  \includegraphics[width=9cm]{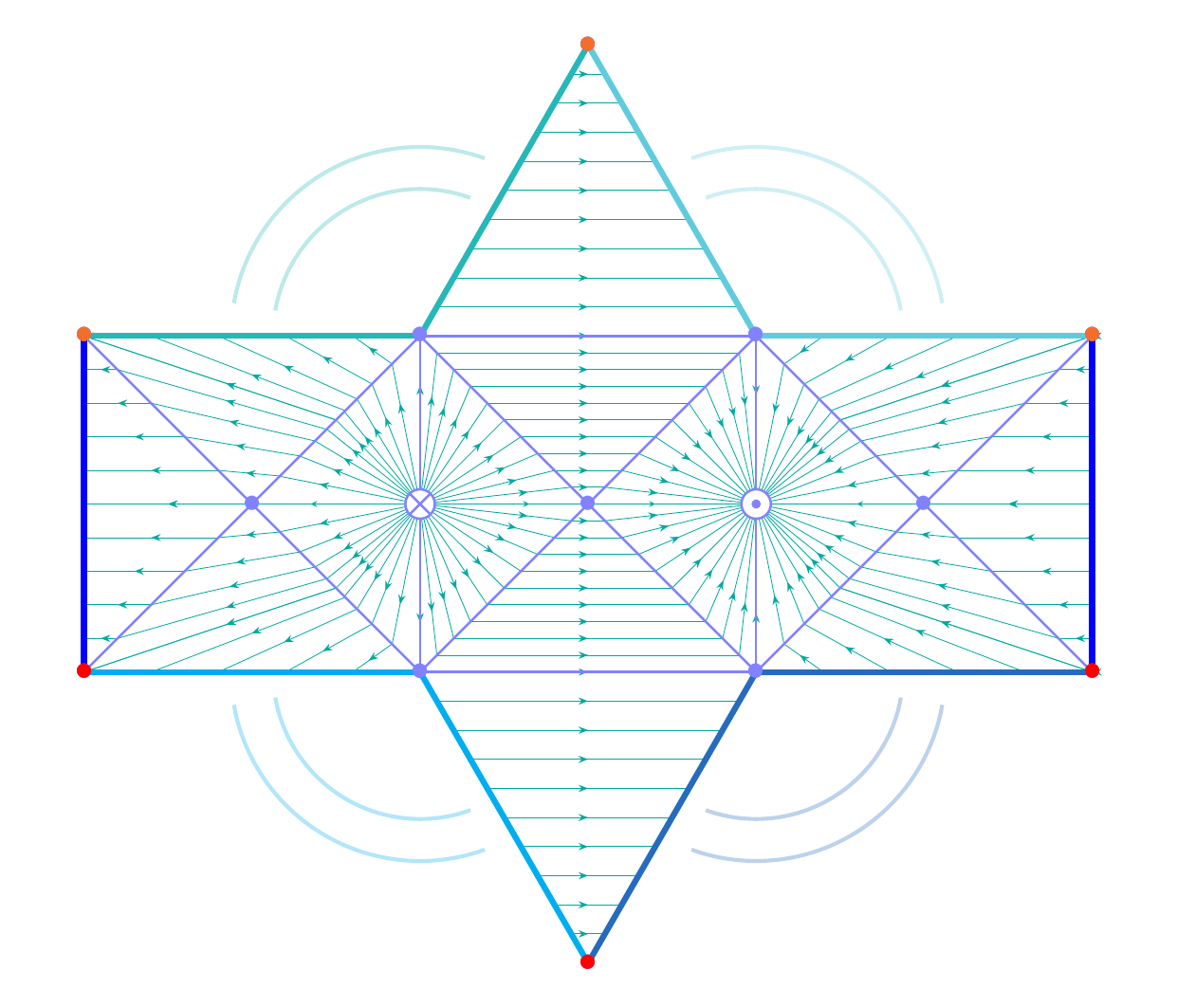}}
\vskip -.6cm
\makebox[\textwidth][c]{
  \includegraphics[width=9cm]{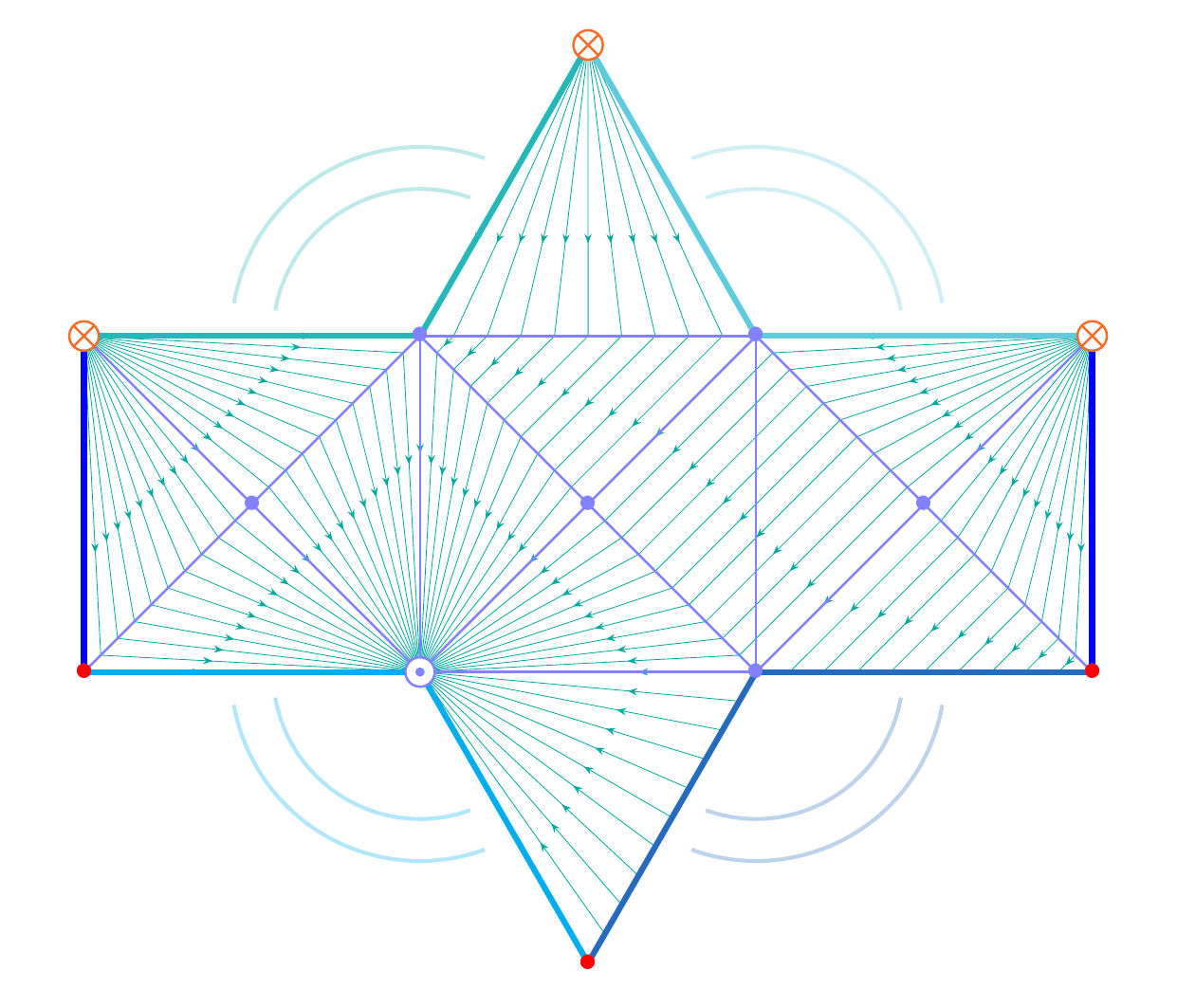}
\hskip -.7cm
  \includegraphics[width=9cm]{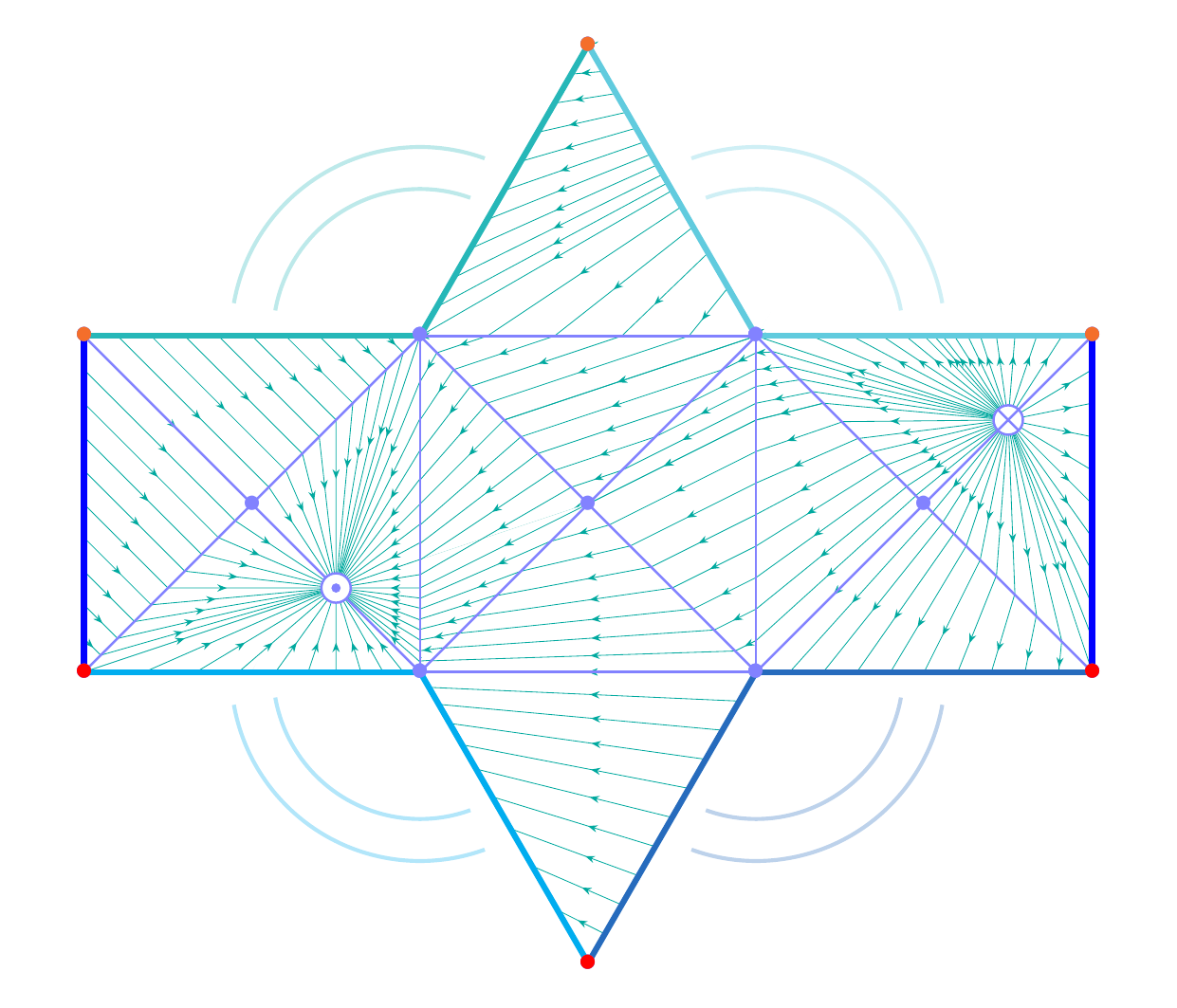}\qquad\qquad}
\vskip -.6cm
\makebox[\textwidth][c]{
  \includegraphics[width=9cm]{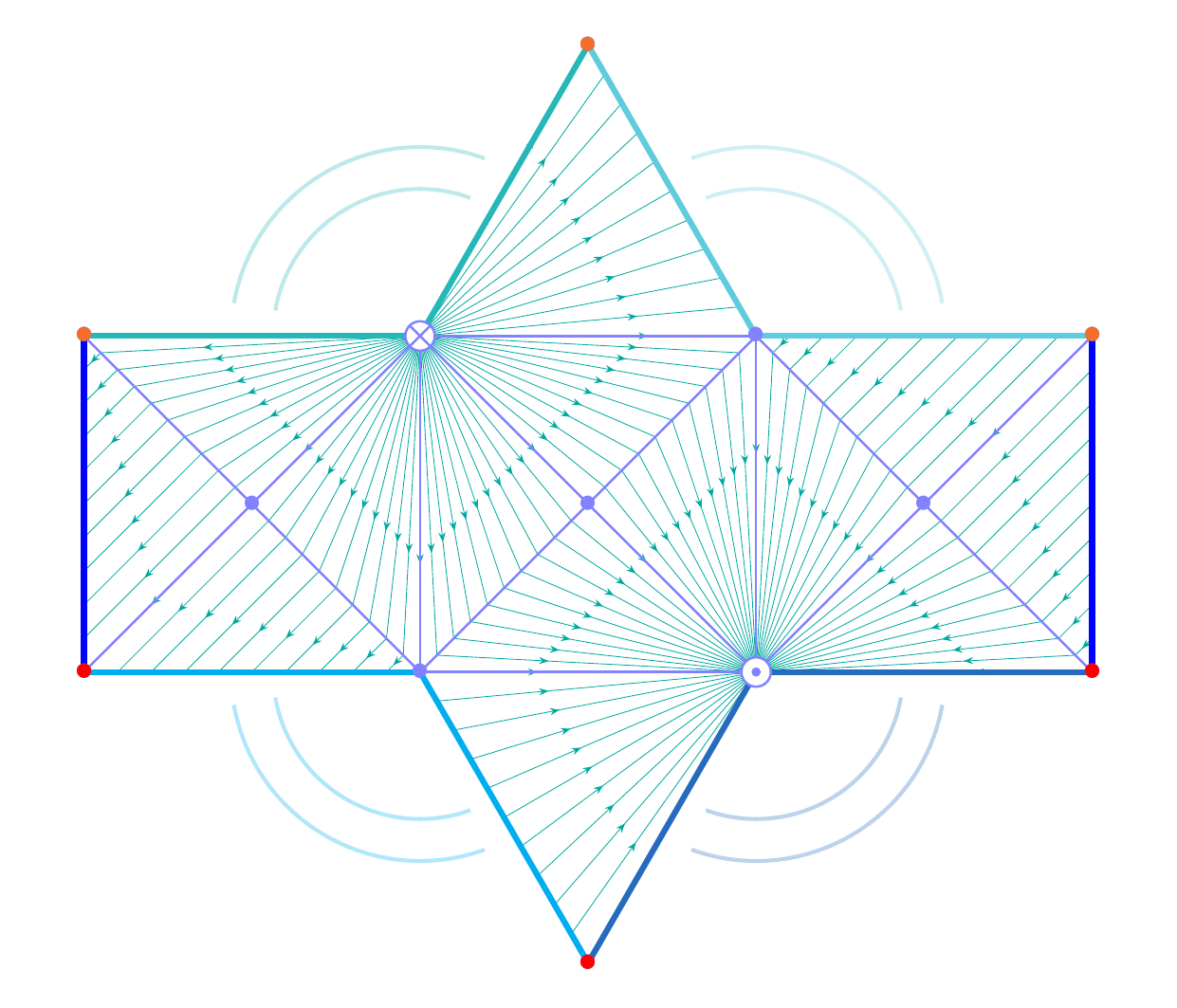}
\hskip -.7cm
  \includegraphics[width=9cm]{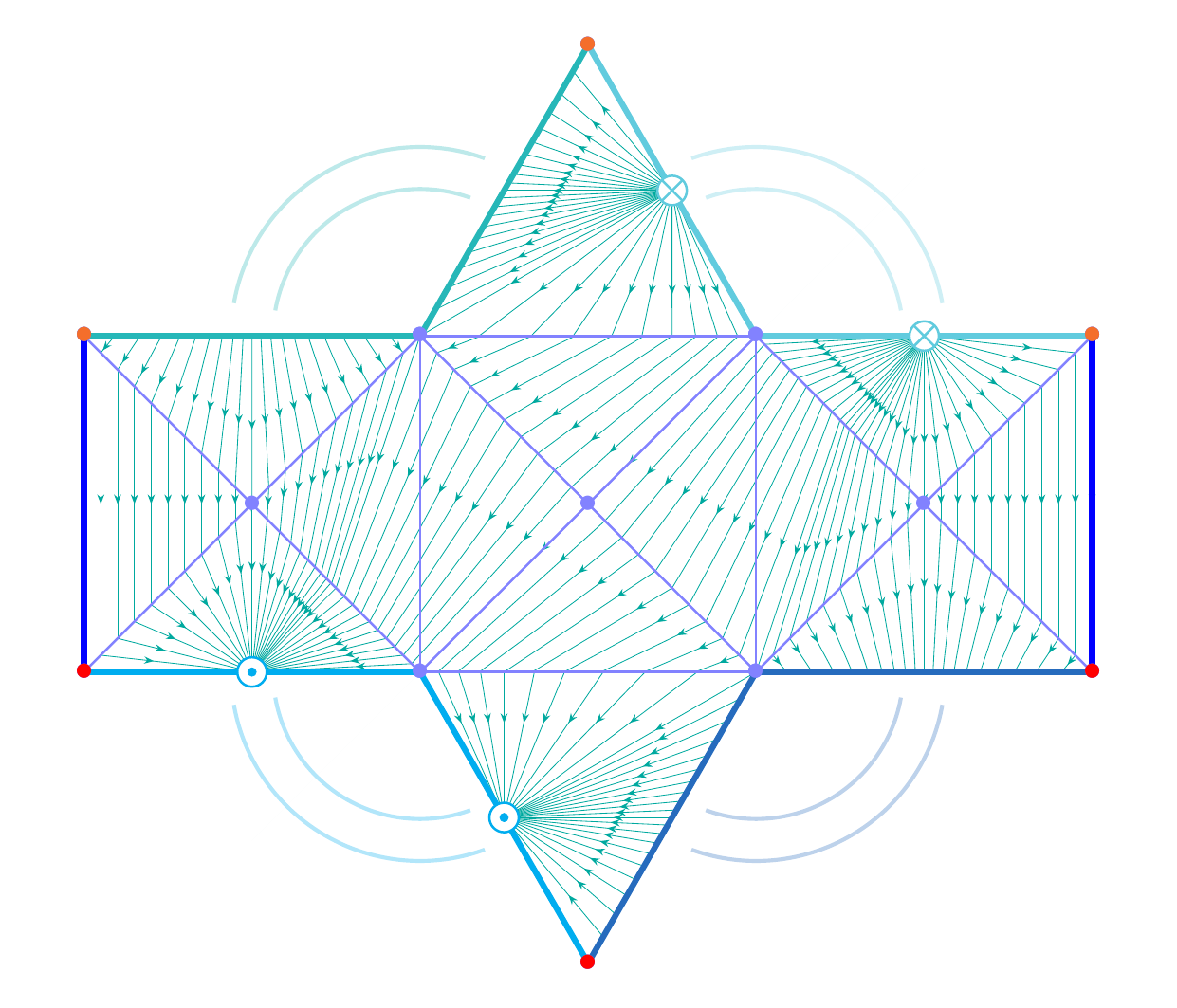}}
\caption{$\xevo$-foliations: vertex cases (left) and edge-midpoint cases (right)} \label{fig:A3-VE}
\end{figure}

\begin{example}\label{EX:A3}
Let $\C=\C(A_3)$ be the cluster category of type $A_3$.
Keep the notation in \Cref{ex:AR}, cf. \Cref{fig:A3-AR-CCx}.

The three pictures on left column of \Cref{fig:A3-VE} illustrate $\sink$-foliations for:
\begin{itemize}
    \item $\sink=P_0$-foliation on the top
    \item $\sink=P_2$-foliation in the middle.
    \item $\sink=I_2$-foliation at the bottom.
\end{itemize}
The three pictures on right column of \Cref{fig:A3-VE} illustrate $\xevo$-foliations for:
\begin{itemize}
    \item $\xevo=\real{ I_1\oplus I_2 }$-foliation on the top.
    \item $\xevo=\real{ P_0\oplus P_2 }$-foliation in the middle.
    \item $\xevo=\real{ J_2\oplus P_2 }$-foliation at the bottom.
\end{itemize}
\end{example}

\section{\texorpdfstring{Green mutations as special $\xevo$-evolution flows}
{Green mutations as special X-evolution flows}}\label{sec:green}
In this section, we show how special $\xevo$-evolution flows induce green mutations.

\subsection{Green mutation}\label{sec:Setup}
We will restrict to the case when $\C$ is the cluster category of a Jacobi-finite non-degenerated quiver with potential as our paper mainly works in the categorical setting. However, the main result here only depends on sign-coherence property and tropical duality. Thus, it is straightforward to generalize it in the setting of skew-symmetrizable cluster algebra case. We will explore this generalization (among other things) in a future work.
\paragraph{\textbf{The categorical settings}}\

Let $\pcts{Y}$ be a chosen cluster in $\C$, $(Q^\pcts{Y},W^\pcts{Y})$ the associated quiver with potential
and $\Gamma_\pcts{Y}=\Gamma(Q^\pcts{Y},W^\pcts{Y})$ the corresponding \emph{Ginzberg dg algebra} cf. \cite[\S~7]{Ke3} for more details.
The cluster category $\C$ can be constructed from $\Gamma_\pcts{Y}$ as its cosingularity category.
More precisely, consider the following categories:
\begin{itemize}
    \item the perfect derived category $\per(\Gamma_\pcts{Y})$ and
    \item the perfectly valued derived category $\pvd(\Gamma_\pcts{Y})$, which is 3-Calabi-Yau
\end{itemize}
and then $\C=\C(\Gamma_\pcts{Y})$ is defined by the short exact sequence (of triangulated categories):
\begin{gather}\label{eq:cosgY}
    0\to\pvd(\Gamma_\pcts{Y})\to\per(\Gamma_\pcts{Y})
        \xrightarrow{\pi_\pcts{Y}}   \C(\Gamma_\pcts{Y})\to0.
\end{gather}

Finally, recall that any simple $\Gamma_\pcts{Y}$-module $S$ is (3-)spherical, with associated spherical twist (functor) $\phi_S$ in the auto-equivalence group $\Aut(\pvd(\Gamma_\pcts{Y}))$.
Denote by $\ST(\Gamma_\pcts{Y})$ the spherical twist group of $\pvd$ generated by $\phi_S$ for the twists of all the simples.

\paragraph{\textbf{Orientations of exchange graphs}}\

The cluster structure on $\C(\Gamma_\pcts{Y})$, i.e. the cluster exchange graph $\CEG(\C)$, corresponds to (variation of) tilting structures on $\pvd(\Gamma_\pcts{Y})$ and $\per(\Gamma_\pcts{Y})$.
We recall the following notions, cf. \cite[\S~2]{Q5}.
\begin{itemize}
    \item The exchange graph $\EG(\pvd(\Gamma_\pcts{Y}))$ is an oriented graph whose vertices are the hearts (=bounded t-structures) and edges are backward simple tiltings.
    \item The silting exchange graph $\SEG(\per(\Gamma_\pcts{Y}))$ is an oriented graph whose vertices are silting sets and edges are backward mutations.
    \item The oriented cluster exchange graph $\CEG(\C)$, which is obtained from replacing the unoriented edges by 2-cycles in $\uCEG(\C)$.
\end{itemize}
Note that we are using the \textbf{backward} convention here to match with Keller's in \cite{Ke2} (and in \cite{Q12}),
which is the opposite to our usual convention (e.g. in \cite{KQ1,QQ, Q5,KQ2}).
We will use the notation $\EG^\circ/\SEG^\circ$ to denote the principal component of the graph containing the canonical heart $\h_\pcts{Y}$/silting set $\Gamma_\pcts{Y}$.
The initial simple-projective duality extends to a natural isomorphism between oriented graphs (cf. \cite[\S~7]{Ke3}).

\begin{lemma}\label{lem:EG}
There is a natural identification $\iota_\pcts{Y}\colon\EG^\circ(\pvd(\Gamma_\pcts{Y}))\cong\SEG^\circ(\per(\Gamma_\pcts{Y}))$.
\end{lemma}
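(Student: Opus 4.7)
The plan is to construct $\iota_\pcts{Y}$ from the simple-projective (Koszul) duality for the Ginzburg dg algebra $\Gamma_\pcts{Y}$. On vertices, a heart $\h$ in $\pvd(\Gamma_\pcts{Y})$ will be sent to the silting set in $\per(\Gamma_\pcts{Y})$ whose indecomposable summands ``corepresent'' the simples of $\h$; equivalently, a silting set $\{P_i\}$ in $\per(\Gamma_\pcts{Y})$ will be sent to the unique heart of $\pvd(\Gamma_\pcts{Y})$ whose simples are dual to the $P_i$. This is the Keller--Nicol\'as / Koenig--Yang bijection between bounded hearts and silting objects, specialised to the 3-Calabi--Yau setting.

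First I would check the basepoints: the canonical heart $\h_\pcts{Y}$ (the standard $H^0$-heart, whose simples are the simple $\Gamma_\pcts{Y}$-modules $S_i$) is matched with the canonical silting set $\Gamma_\pcts{Y}=\bigoplus_i e_i\Gamma_\pcts{Y}$, since $\Hom_{\per}(e_i\Gamma_\pcts{Y},S_j)$ is the standard dual pairing. This matches the distinguished vertices that define $\EG^\circ$ and $\SEG^\circ$.

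Next I would match edges: a backward simple tilt $\h\leadsto\h_S^\flat$ at a simple $S$ corresponds, under the bijection, to the backward silting mutation at the indecomposable summand dual to $S$. This is the compatibility between HRS-tilts and silting mutations spelled out in \cite[\S~7]{Ke3}; it holds on the nose because our backward convention was chosen precisely to align the two notions of ``backward''. In formulas, both sides are governed by the same exchange triangle involving $S$ (resp. its Koszul-dual projective), so matching amounts to identifying the triangle in $\pvd$ with its image in $\per$ under the bijection.

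The assertion then follows by transporting connectivity: $\EG^\circ$ is by definition generated from $\h_\pcts{Y}$ by iterated backward simple tilts, and $\SEG^\circ$ is generated from $\Gamma_\pcts{Y}$ by iterated backward silting mutations; the basepoint- and edge-matching above promote this to a graph isomorphism $\iota_\pcts{Y}$ between the two principal components. The only real obstacle is the bookkeeping of the convention (backward vs. forward) in the edge-matching step, which is routine but must be checked once; everything else is a reformulation of existing literature in our notation.
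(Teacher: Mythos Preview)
Your proposal is correct and matches the paper's approach: the paper does not give a proof but simply attributes the lemma to the simple--projective duality, citing \cite[\S~7]{Ke3}, which is precisely the Keller--Nicol\'as / Koenig--Yang bijection you unpack. Your sketch just spells out what that citation entails.
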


The relation between these graphs and cluster exchange graph is the following,
which is essentially original due to a unrealised manuscript of Keller-Nicol\'{a}s in the general setting, cf. the $N$-Calabi-Yau acyclic version in \cite[Thm.~8.6]{KQ1}.

\begin{theorem}\label{thm:EG}\cite[Thm.~2.10]{KQ2}
The exchange graph $\EG(\pvd(\Gamma_\pcts{Y}))$ is a $\ST(\Gamma_\pcts{Y})$-covering
\begin{equation}\label{eq:EG}
    \EG(\pvd(\Gamma_\pcts{Y}))/\ST(\Gamma_\pcts{Y})
    \xrightarrow[\pi_\pcts{Y} \circ \iota_\pcts{Y} ]{\cong} \CEG(\C).
\end{equation}
\end{theorem}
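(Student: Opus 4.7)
The plan is to verify that $\pi_\pcts{Y} \circ \iota_\pcts{Y}$ induces a well-defined, surjective morphism of oriented graphs on the quotient by $\ST(\Gamma_\pcts{Y})$, and that the $\ST$-action is free with fibers coinciding with the fibers of $\pi_\pcts{Y} \circ \iota_\pcts{Y}$. By \Cref{lem:EG}, I would work throughout with $\SEG^\circ(\per(\Gamma_\pcts{Y}))$ in place of $\EG^\circ(\pvd(\Gamma_\pcts{Y}))$, since silting mutation is more explicit than simple backward tilting on hearts.

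First I would establish well-definedness and surjectivity. The triangle functor $\pi_\pcts{Y}$ sends the canonical silting set $\Gamma_\pcts{Y}$ to the canonical cluster $\pcts{Y}$, and sends silting mutation triangles in $\per(\Gamma_\pcts{Y})$ to mutation triangles in $\C$; hence it descends to a morphism of oriented graphs $\SEG^\circ(\per(\Gamma_\pcts{Y})) \to \CEG(\C)$. Each spherical twist $\phi_S \in \ST(\Gamma_\pcts{Y})$ extends to a triangle auto-equivalence of $\per(\Gamma_\pcts{Y})$ acting trivially on the quotient $\C$ (since it is supported on $\pvd(\Gamma_\pcts{Y}) = \ker \pi_\pcts{Y}$), so the morphism factors through the quotient by $\ST(\Gamma_\pcts{Y})$. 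Surjectivity follows from connectivity of the principal component of $\CEG(\C)$: the image contains $\pcts{Y}$ and is closed under mutation, hence exhausts the target.

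The principal obstacle will be injectivity on the quotient. Suppose silting sets $T_1, T_2$ satisfy $\pi_\pcts{Y}(T_1) = \pi_\pcts{Y}(T_2)$ as cluster tilting sets in $\C$; one must produce $\phi \in \ST(\Gamma_\pcts{Y})$ with $\phi(T_1) = T_2$. I would argue by induction on the silting mutation distance: the generating case is a single indecomposable discrepancy $T_{1,i}$ versus $T_{1,i}[n]$, for which the tropical-duality picture, essentially due to Keller--Nicol\'as and developed in \cite[Thm.~8.6]{KQ1}, identifies the ambiguity with the action of $\phi^n_{S_i}$, where $S_i$ is the simple dual to $T_{1,i}$. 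The inductive step then uses that silting mutations and spherical twists satisfy compatible braid-type relations, allowing local rearrangements along a mutation path. Freeness of the $\ST$-action is a separate verification: a nontrivial element of $\ST(\Gamma_\pcts{Y})$ cannot fix a heart, since fixing a heart forces fixing all of its simples, contradicting the standard non-triviality of each $\phi_S$ on simple-tilted hearts. Together these ingredients yield the stated isomorphism.
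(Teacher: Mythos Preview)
The paper does not prove this theorem at all: it is quoted verbatim as \cite[Thm.~2.10]{KQ2}, with the preceding sentence crediting the result to an unpublished manuscript of Keller--Nicol\'as (and the acyclic $N$-Calabi--Yau case to \cite[Thm.~8.6]{KQ1}). There is therefore no proof in the paper to compare your proposal against; the authors simply invoke the result as background for \Cref{thm:f.d.} and the green-mutation discussion.

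As for your sketch itself: the well-definedness and surjectivity portions are fine, and your observation that spherical twists act trivially on $\C$ is correct. The injectivity argument, however, is where the real work lies and your outline is too optimistic. The ``generating case'' you describe---two silting sets differing in a single summand by a shift $T_{1,i}$ versus $T_{1,i}[n]$---is not the generic situation, and the spherical twist $\phi_{S_i}^n$ does not in general send $T_{1,i}$ to $T_{1,i}[n]$ while fixing the other summands. The actual proof (in \cite{KQ2}) goes through the fundamental-domain statement you see immediately afterwards as \Cref{thm:f.d.}: one shows that every $\ST$-orbit of hearts meets the interval $[\h_\pcts{\vv}[-1],\h_\pcts{\vv}]$ exactly once, and that this interval is in bijection with $\CEG(\C)$. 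So rather than a direct inductive comparison of two silting sets, the argument is organised around identifying a fundamental domain.
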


The key to prove the theorem above is to find a fundamental domain for the $\ST(\Gamma_\pcts{Y})$-action. More precisely, we have the following.

\begin{theorem}\label{thm:f.d.}\cite[\S~3.1]{Q5}
Let $\pcts{\vv}$ be any cluster in $\uCEG(\C)$ and
$\widetilde{\pcts{\vv}}$ any lifting silting in $\pi_\pcts{Y}^{-1}( \pcts{\vv} )$
with dual heart $\h_{\pcts{\vv}}$.
Then the interval
\begin{gather}\label{eq:f.d.}
    [\h_{\pcts{\vv}}[-1],\h_\pcts{\vv}]
\end{gather}
is a fundamental domain for $\EG(\pvd(\Gamma_\pcts{Y}))/\ST(\Gamma_\pcts{Y})$.
Equivalently, it gives an orientation of $\uCEG(\C)$ with a unique sink $\pcts{\vv}[-1]$ and a unique source $\pcts{\vv}$, called \emph{$\pcts{\vv}$-orientation (of backward simple tilting)}.
\end{theorem}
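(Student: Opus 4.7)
The plan is to combine \Cref{thm:EG}, which presents $\CEG(\C)$ as the orbit space $\EG(\pvd(\Gamma_\pcts{Y}))/\ST(\Gamma_\pcts{Y})$, with a transversality argument showing that the interval $[\h_\pcts{\vv}[-1], \h_\pcts{\vv}]$ meets each $\ST(\Gamma_\pcts{Y})$-orbit in exactly one vertex. Once this is established, the orientation on $\uCEG(\C)$ is transported from the intrinsic top-to-bottom order on the interval: at each heart $\h'$ in the interval, the simples of $\h'$ lying in $\h_\pcts{\vv}$ furnish backward simple tilts that stay inside the interval, while the simples in $\h_\pcts{\vv}[-1]$ furnish tilts exiting it, and the two alternatives together give a consistent orientation of the corresponding edge in $\uCEG(\C)$. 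In particular, $\h_\pcts{\vv}$ (whose simples all lie in $\h_\pcts{\vv}$) becomes the unique source and $\h_\pcts{\vv}[-1]$ (whose simples all lie in $\h_\pcts{\vv}[-1]$) the unique sink.

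First I would reduce to the canonical case $\widetilde{\pcts{\vv}} = \Gamma_\pcts{Y}$, $\h_\pcts{\vv} = \h_\pcts{Y}$. Any silting mutation from $\Gamma_\pcts{Y}$ to $\widetilde{\pcts{\vv}}$ extends, through $\iota_\pcts{Y}$ of \Cref{lem:EG}, to an automorphism of $\EG^\circ(\pvd(\Gamma_\pcts{Y}))$ which commutes with the $\ST(\Gamma_\pcts{Y})$-action (since $\ST(\Gamma_\pcts{Y})$ is intrinsic to $\pvd(\Gamma_\pcts{Y})$, independent of the choice of silting) and carries $[\h_\pcts{Y}[-1], \h_\pcts{Y}]$ onto $[\h_\pcts{\vv}[-1], \h_\pcts{\vv}]$. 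So the fundamental-domain property is invariant under change of initial cluster, and it suffices to prove the statement for the canonical interval.

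Next I would establish surjectivity onto orbits: every heart $\h' \in \EG^\circ(\pvd(\Gamma_\pcts{Y}))$ admits a spherical-twist image inside $[\h_\pcts{Y}[-1], \h_\pcts{Y}]$. The recipe is inductive along a mutation path from $\pcts{Y}$ to $\pi_\pcts{Y}(\iota_\pcts{Y}^{-1}(\h'))$ in $\CEG(\C)$: lift each mutation to a backward simple tilt in $\EG$ and, whenever the tilt threatens to exit the interval, compose with the spherical twist $\phi_{S_i}$ at the corresponding initial simple, translating the exterior heart back into the interval. Sign coherence of $g$-vectors with respect to $\Gamma_\pcts{Y}$ ensures that the relevant simple $S_i$ and the direction of the correcting twist are unambiguous at every step.

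The main obstacle is injectivity: no nontrivial element of $\ST(\Gamma_\pcts{Y})$ can send a heart in the interval to another heart in the interval. I plan to follow the $c$-vector sign-coherence approach of \cite[\S~3.1]{Q5}: associate to each heart $\h'$ in the interval the matrix of classes $[S'_i]$ of its simples in the basis $\{[S_j]\}$ of $\h_\pcts{Y}$ in $K_0(\pvd(\Gamma_\pcts{Y}))$; sign coherence forces a definite sign per column, and the resulting sign pattern is a complete $\ST$-invariant of the heart within the interval. Showing that a nontrivial word in the $\phi_{S_i}$ always alters the sign pattern in a way incompatible with landing on another heart of the interval is the delicate step, and I would tackle it by tracking a single generator $\phi_{S_i}$ at a time and invoking tropical duality with $g$-vectors to reduce the question to the already-understood transitivity and freeness of the silting structure on $\SEG^\circ(\per(\Gamma_\pcts{Y}))$. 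Combining surjectivity and injectivity gives the fundamental-domain property, and the orientation description follows as outlined in the first paragraph.
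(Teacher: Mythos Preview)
The paper does not contain its own proof of this theorem: it is stated with the citation \cite[\S~3.1]{Q5} and used as an imported background result, with no argument given in the present paper. So there is nothing here to compare your proposal against.

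That said, one point in your outline deserves care. Your reduction to the canonical case asserts that a silting mutation from $\Gamma_\pcts{Y}$ to $\widetilde{\pcts{\vv}}$ ``extends, through $\iota_\pcts{Y}$, to an automorphism of $\EG^\circ(\pvd(\Gamma_\pcts{Y}))$'' carrying one interval to the other. A mutation sequence is not by itself an automorphism of the exchange graph; what actually furnishes the required symmetry is the Keller--Yang derived equivalence $\per(\Gamma_\pcts{Y})\simeq\per(\Gamma_\pcts{\vv})$ (and its restriction to $\pvd$), which identifies the canonical heart for $\Gamma_\pcts{\vv}$ with $\h_\pcts{\vv}$ inside $\pvd(\Gamma_\pcts{Y})$ and intertwines the spherical twist groups. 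Phrasing the reduction this way makes the commutation with $\ST$ transparent, whereas the phrasing via $\iota_\pcts{Y}$ alone does not. The remainder of your sketch (surjectivity by lifting mutations with correcting twists, injectivity via sign-coherence of $c$-vectors) is in the spirit of the arguments in the cited references, but since the present paper offers no proof, a detailed comparison is not possible here.
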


The green mutation is introduced by Keller,
while we will first give a categorical interpretation based on \cite[Thm.~1.1]{Q12}.

\begin{definition}\label{def:cat-green}\cite{Ke2}
A directed mutation in $\CEG(\C)$ is \emph{green/red} with respect to $\pcts{\vv}$
if it is compatible with the $\pcts{\vv}$-orientation/reverse $\pcts{\vv}$-orientation in \eqref{eq:f.d.}, respectively.
\end{definition}

\subsection{$G$-matrix, $C$-matrix and tropical duality}\

\paragraph{\textbf{$g$-Vectors and $c$-vectors}}\

We proceed to recall the notions of $g$-vectors and $c$-vectors, which is more classical from cluster algebra point of view, cf. \cite[\S~5]{Ke3} (and the references therein) for more details on the cluster theory.

Let $\pcts{\vv}$ be a cluster (treating as the initial one) with associated quiver $Q^\pcts{\vv}$.
Choose a fixed lifting silting set
\begin{gather}\label{eq:lifting-ss}
    \pi_\pcts{Y}^{-1}( \pcts{\vv} ) \ni \widetilde{\pcts{\vv}}
        =\{ P_\alpha^\pcts{\vv} \}_{\alpha\in Q^\pcts{\vv}_0}
\end{gather}
and let $\h_{\pcts{\vv}}$ be its dual heart with
\begin{gather}\label{eq:lifting-h}
    \Sim \h_\pcts{\vv}=\{ S_\beta^\pcts{\vv} \}_{\beta\in Q^\pcts{\vv}_0}.
\end{gather}
Then $\{ [P_\alpha^\pcts{\vv}] \}$ and $\{ [S_\beta^\pcts{\vv}] \}$ form basis of
the Grothedieck group $\Grot(\per(\Gamma_\pcts{Y}))$ and $\Grot(\pvd(\Gamma_\pcts{Y}))$, respectively.
There is a non-degenerate pairing (known as the Euler form)
\begin{gather}\label{eq:pairing}
    \<-,-\>:\Grot(\per(\Gamma_\pcts{Y}))\times\Grot(\pvd(\Gamma_\pcts{Y}))\to\ZZ
\end{gather}
given by
\[
    \<W,U\>=\dim\on{Hom}^\bullet(W,U)=\sum_{d}(-1)^d\dim\on{Hom}(W,U[d]).
\]
So $\Grot(\pvd(\Gamma_\pcts{Y}))$ can be identified with the dual space of $\Grot(\per(\Gamma_\pcts{Y}))$ with respect to the Euler form.

By the simple-projective duality, we have
\begin{gather}\label{SP-Hom-dual}
    \on{Hom}^\bullet(P_\alpha^\pcts{\vv},S_\beta^\pcts{\vv})=\delta_{\alpha\beta}\k,
\end{gather}
which implies that
\begin{gather}\label{eq:SP-dual}
    \<[P_\alpha^\pcts{\vv}],[S_\beta^\pcts{\vv}]\>=\delta_{\alpha\beta}.
\end{gather}

Now take any cluster $\pcts{X}$ with lifting heart $\h_\pcts{X}$ in the fundamental domain \eqref{eq:f.d.}
with simples
$
    \Sim \h_\pcts{\sink}=\{ S^\pcts{\sink}_j\}_{j\in Q_0^{\pcts{\sink}}},
$
and dual silting set
$
    \widetilde{\pcts{X}}=\{ P_i^\pcts{X} \}_{i\in Q_0^{\pcts{\sink}}}.
$

\begin{definition}\label{def:cg-vec}
The $g$-vector $g_{X_i,\pcts{\sink}}^\pcts{\vv}$ of $X_i$ in $\pcts{\sink}$ with respect to $\pcts{\vv}$ is the coordinate (written as a column vector) of $[P_i^\pcts{X}]$ in $\Grot(\per(\Gamma_\pcts{Y}))$ with respect to the basis $\{ [P_\alpha^\pcts{\vv}] \}_{\alpha\in Q^\pcts{\vv}_0}$.
Putting them together, we obtain $G$-matrix $G_{\pcts{X}}^\pcts{\vv}=(g_{X_i,\pcts{\sink}}^\pcts{\vv})_{i\in Q_0^\pcts{\sink}}$ satisfying
\begin{gather}\label{eq:g-matrix}
    \Big ( [P_i^\pcts{\sink}] \Big )_{i\in Q_0^\pcts{\sink}}=
    \Big ( [P_\alpha^\pcts{\vv}] \Big)_{\alpha\in Q_0^\pcts{\vv}} \cdot G_{\pcts{X}}^\pcts{\vv}.
\end{gather}

Dually,
the $c$-vector $c_{X_j,\pcts{X}}^\pcts{\vv}$ of $X_j$ in $\pcts{\sink}$ with respect to $\pcts{\vv}$ is the coordinate (written as a column vector) of $[S_j^\pcts{X}]$ in $\Grot(\pvd(\Gamma_\pcts{Y}))$ with respect to the basis $\{ [S_\beta^\pcts{\vv}] \}_{\beta\in Q^\pcts{\vv}_0}$.
Putting them together, we obtain $C$-matrix $C_{\pcts{X}}^\pcts{\vv}=(c_{X_j,\pcts{X}}^\pcts{\vv})_{j\in Q_0^\pcts{\sink}}$ satisfying
\begin{gather}\label{eq:c-matrix}
    \Big ( [S_j^\pcts{\sink}] \Big)_{j\in Q_0^\pcts{\sink}}=
    \Big ( [S_\beta^\pcts{\sink}] \Big )_{\beta\in Q_0^\pcts{\vv}} \cdot C_{\pcts{X}}^\pcts{\vv}.
\end{gather}
\end{definition}

Note that the $g$-vector $g_{X_i,\pcts{\sink}}^\pcts{\vv}$ is independent of $\pcts{X}$
and hence we will denote it by $g_{X_i}^\pcts{\vv}$ from now on.

\paragraph{\textbf{Sign-coherence}}\

A major conjecture, known as the Sign-coherence Conjecture about $c$-vector states that
\begin{itemize}
  \item the entries in the $c$-vector $c_{X_j,\pcts{X}}^\pcts{\vv}$ are either all non-positive or all non-negative.
\end{itemize}
In the categorical setting, this is fairly easy.
As $\h_\pcts{X}$ is in \eqref{eq:f.d.}, or equivalently, $\h_\pcts{\vv}[-1]\le \h_\pcts{X} \le \h_\pcts{\vv}$,
we know that any simple $S_j^\pcts{X}$ is either in $\h_\pcts{\vv}[-1]$ or in $\h_\pcts{\vv}$ by \cite[Lem.~3.8]{KQ1}.
Thus we have the following criterion:
\begin{equation}\label{eq:sing-c}
  \begin{cases}
    c_{X_j,\pcts{X}}^\pcts{\vv} \in \ZZ_{\ge0}^n\backslash\{\underline{0}\}
        \quad\Longleftrightarrow\quad S_j^\pcts{X}\in\h_\pcts{\vv},  \\
    c_{X_j,\pcts{X}}^\pcts{\vv} \in \ZZ_{\le0}^n\backslash\{\underline{0}\}
      \quad\Longleftrightarrow\quad  S_j^\pcts{X}\in\h_\pcts{\vv}[-1].
  \end{cases}
\end{equation}

As a result, we can define the green mutation alternatively as follows, cf. \cite[\S5]{Ke2}.
\begin{lemdef}\label{def:cls-green}
We say the mutation of $\pcts{X}$ at $X_j$ (or $X_j$ in $\pcts{X}$, or the vertex $j\in Q^{\pcts{X}}_0$)
with respect to $\pcts{\vv}$, is
\begin{itemize}
  \item \emph{green} if $c_{X_j,\pcts{X}}^\pcts{\vv} \in \ZZ_{\ge0}^n\backslash\{\underline{0}\}$;
  \item \emph{red} if $c_{X_j,\pcts{X}}^\pcts{\vv} \in \ZZ_{\le0}^n\backslash\{\underline{0}\}$.
\end{itemize}
By \eqref{eq:sing-c}, $X_j$ in $\pcts{X}$ is either green or red.
\end{lemdef}


\paragraph{\textbf{Tropical duality}}\

The tropical duality, originally due to \cite{NZ}, holds in general (for cluster algebras).
In the categorical setting, they are much more straightforward again.

\begin{lemma}\label{eq:T-dual}
The $G$-matrices and $C$-matrices satisfy the following relations (where $(-)^t$ is the transpose of matrices):
\begin{itemize}
  \item $G_{\pcts{\sink}}^\pcts{\vv} \cdot \left( C_{\pcts{\sink}}^\pcts{\vv} \right)^t=1$.
  \item $C_{\pcts{\sink}}^\pcts{\vv} \cdot C^{\pcts{\sink}[1]}_\pcts{\vv} =-1$.
\end{itemize}
\end{lemma}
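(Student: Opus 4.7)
The plan is to derive both identities directly from the Euler pairing \eqref{eq:pairing}, simple-projective duality, and the fact that the shift functor acts as multiplication by $-1$ on $\Grot(\pvd(\Gamma_\pcts{Y}))$. Unlike the original argument for abstract cluster algebras, the categorical setting reduces everything to short computations in the Grothendieck group once the matrix conventions are fixed.

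For the first identity $G_{\pcts{\sink}}^\pcts{\vv}\cdot(C_{\pcts{\sink}}^\pcts{\vv})^t=1$, I would substitute the defining expansions \eqref{eq:g-matrix} and \eqref{eq:c-matrix} into the pairing $\langle[P_i^\pcts{\sink}],[S_j^\pcts{\sink}]\rangle$. By simple-projective duality \eqref{SP-Hom-dual} this pairing equals $\delta_{ij}$, while by bilinearity and the dual-basis relation \eqref{eq:SP-dual},
\[
\delta_{ij}=\sum_{\alpha,\beta}(G_{\pcts{\sink}}^\pcts{\vv})_{\alpha i}(C_{\pcts{\sink}}^\pcts{\vv})_{\beta j}\,\delta_{\alpha\beta}=\bigl((G_{\pcts{\sink}}^\pcts{\vv})^t\,C_{\pcts{\sink}}^\pcts{\vv}\bigr)_{ij}.
\]
Hence $(G_{\pcts{\sink}}^\pcts{\vv})^t\cdot C_{\pcts{\sink}}^\pcts{\vv}=I$, and inverting and transposing gives the first identity.

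For the second identity $C_{\pcts{\sink}}^\pcts{\vv}\cdot C^{\pcts{\sink}[1]}_\pcts{\vv}=-1$, the key observation is that a natural lifting of the cluster $\pcts{\sink}[1]$ is the shift of the lifting of $\pcts{\sink}$, whose dual heart is $\h_{\pcts{\sink}}[1]$ with simples $S_\beta^{\pcts{\sink}}[1]$; consequently, in $\Grot(\pvd(\Gamma_\pcts{Y}))$ one has $[S_\beta^{\pcts{\sink}[1]}]=-[S_\beta^\pcts{\sink}]$. Substituting this sign flip into the defining expansion \eqref{eq:c-matrix} of $[S_j^\pcts{\vv}]$ in the $\pcts{\sink}[1]$-basis and comparing with the analogous expansion in the $\pcts{\sink}$-basis yields $C^{\pcts{\sink}[1]}_\pcts{\vv}=-C^\pcts{\sink}_\pcts{\vv}$. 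Since $C_{\pcts{\sink}}^\pcts{\vv}$ and $C^\pcts{\sink}_\pcts{\vv}$ encode inverse changes of basis between $\{[S_\beta^\pcts{\vv}]\}$ and $\{[S_i^\pcts{\sink}]\}$, we obtain $C_{\pcts{\sink}}^\pcts{\vv}\cdot C^\pcts{\sink}_\pcts{\vv}=I$, and hence $C_{\pcts{\sink}}^\pcts{\vv}\cdot C^{\pcts{\sink}[1]}_\pcts{\vv}=-C_{\pcts{\sink}}^\pcts{\vv}\cdot C^\pcts{\sink}_\pcts{\vv}=-I$.

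I do not anticipate any substantive obstacle: the whole argument is a brief exercise in Grothendieck-group bookkeeping, resting on the two structural inputs already highlighted (the perfect Euler pairing and the $[1]$-sign). The only care needed is in tracking the row/column conventions in \eqref{eq:g-matrix}--\eqref{eq:c-matrix} so that the transposes land in the right places, and in checking that the choice of lifting for $\pcts{\sink}[1]$ is compatible with the one chosen for $\pcts{\sink}$ so that the identification $[S_\beta^{\pcts{\sink}[1]}]=-[S_\beta^\pcts{\sink}]$ is unambiguous.
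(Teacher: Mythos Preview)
Your proposal is correct and follows essentially the same approach as the paper: both arguments derive the first identity from the Euler pairing and simple-projective duality in the two bases, and the second from the sign flip $[S_\beta^{\pcts{\sink}[1]}]=-[S_\beta^\pcts{\sink}]$ combined with the inverse change-of-basis relation. The paper additionally records the equivalence $\h_\pcts{\vv}[-1]\le \h_\pcts{\sink}\le \h_\pcts{\vv} \Longleftrightarrow \h_\pcts{\sink}\le \h_\pcts{\vv}\le \h_\pcts{\sink}[1]$ to justify that $C^{\pcts{\sink}[1]}_\pcts{\vv}$ is well-defined with the shifted lifting, which is precisely the compatibility check you flagged.
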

\begin{proof}
The first equality is a consequence of the simple-projective duality \eqref{eq:SP-dual}
and the one for $\pcts{\sink}$ (i.e. $\<[P_i^\pcts{\sink}],[S_j^\pcts{\sink}]\>=\delta_{ ij }$).

Next, we note that
\[
    \h_\pcts{\vv}[-1]\le \h_\pcts{\sink}\le \h_\pcts{\vv} \Longleftrightarrow
        \h_\pcts{\sink}\le \h_\pcts{\vv}\le \h_\pcts{\sink}[1].
\]
Then $C^{\pcts{\sink}[1]}_\pcts{\vv}$ is defined via
\[
    \Big([S_i^\pcts{\vv}]\Big)_{i\in Q_0^\pcts{\vv}}=
    \Big([S_\beta^\pcts{\sink[1]}]\Big)_{\beta\in Q_0^\pcts{\sink[1]}} \cdot C_{\pcts{\vv}}^\pcts{\sink[1]}.
\]
Since $S_\beta^\pcts{\sink[1]}=S_\beta^\pcts{\sink}[1]$, we have
$([S_\beta^\pcts{\sink[1]}])_{\beta\in Q_0^\pcts{\sink[1]}}=-([S_\beta^\pcts{\sink}])_{\beta\in Q_0^\pcts{\sink}}$.
Comparing with \eqref{eq:c-matrix}, we see that the second equality holds.
\end{proof}

\subsection{$\xevo$-evolution flows induces green mutation}\label{sec:X-green}\

Recall that $\uCEG(\C)$ is the $1$-skeleton of the dual complex of $\CCx(\C)$.
Consider the downward $\xevo$-evolution flow and $\xevo$-foliation on $\CCx(\C)$
for an arbitrary point $\xevo=\sum_{i} c_i X_i$ in $\CCx(\C)$.

Given a top-cell $\pcts{\vv}=\{\vv_\beta\}_{\beta\in Q^\pcts{\vv}_0}$,
there is a downward partition \eqref{eq:d-part} of the index set of $\xevo$.
For any point $\ccpt{V}$ in $\pcts{\vv}$, the $\xevo$-downward flow can be expressed as \eqref{eq:quasi-ratio} with all terms in $\pcts{\vv}$.
So we have
\begin{align}\label{eq:b}
    \sum_{\beta\in Q^\pcts{\vv}_0} b_\beta \vv_\beta\colon
    &=\left[
    (\sum_{i_0\in I_0}c_{i_0} \sink_{i_0}+\sum_{j\in J}c_j\real{\ww_j} ) -
    ( \sum_{j\in J}c_j\real{\uu_j}+\sum_{i_1\in I_1}c_{i_1} \sink_{i_1}[1] )
    \right]\\
    &=\dflow{\xevo}(\ccpt{\vv}) + ( \sum_{i_0\in I_0}c_{i_0} - \sum_{i_1\in I_1}c_{i_1} )\ccpt{\vv},
\end{align}
where $b_\beta=b_\beta(\pcts{\vv})$ only depends on the top-cell $\pcts{\vv}$
(instead of the point $\ccpt{\vv}$).
\begin{lemma}\label{lem:cell-crossing-b}
Consider a codimension-1 cell $\mut{\vv}{\alpha}=\pcts{\vv}\backslash\{\vv_\alpha\}$ of $\pcts{\vv}$.
We have the following for the downward $\xevo$-evolution flow:
\begin{itemize}
  \item If $b_\alpha<0$, then there is a cell-crossing from $\pcts{\vv}$ to $\mu_\alpha(\pcts{\vv})$ through $\mut{\vv}{\alpha}$.
  \item If $b_\alpha=0$, then there is no cell-crossing on $\mut{\vv}{\alpha}$.
  \item If $b_\alpha>0$, then there is a cell-crossing from $\mu_\alpha(\pcts{\vv})$ to $\pcts{\vv}$ through $\mut{\vv}{\alpha}$.
\end{itemize}
\end{lemma}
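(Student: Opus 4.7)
The strategy is to pass to barycentric coordinates on the top-cell $\pcts{\vv}$, read off the $\alpha$-component of the downward flow along the face $\mut{\vv}{\alpha}$, and determine the cell-crossing behaviour from its sign.

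First I would express the flow direction in coordinates. Since $\pcts{\vv}$ is top-dimensional, the partial cluster $\pcts{Y}$ of \eqref{eq:YY} equals $\pcts{\vv}$, so each of $\sink_{i_0},\ww_j,\uu_j,\sink_{i_1}[1]$ is a summand of $\pcto{\vv}$ and the real numbers $b_\beta=b_\beta(\pcts{\vv})$ of \eqref{eq:b} are well defined; a direct mass count gives $\sum_\beta b_\beta=c$. For $\ccpt{\vv}=\sum_\beta\alpha_\beta\vv_\beta\in\pcts{\vv}^\circ$, the identity $\sum_\beta b_\beta\vv_\beta=\dflow{\xevo}(\ccpt{\vv})+c\ccpt{\vv}$ rewrites as
\[
\dflow{\xevo}(\ccpt{\vv})=\sum_\beta(b_\beta-c\alpha_\beta)\vv_\beta,
\]
a tangent vector to $\pcts{\vv}$ (as $\sum_\beta(b_\beta-c\alpha_\beta)=0$) whose $\alpha$-component along the face $\mut{\vv}{\alpha}=\{\alpha_\alpha=0\}$ is exactly $b_\alpha$.

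Next I would fix an interior point $\ccpt{V}\in\mut{\vv}{\alpha}^\circ$ and analyse the piecewise linear leaves in $\pcts{\vv}^\circ$ near $\ccpt{V}$. If $b_\alpha<0$, then $b_\alpha-c\alpha_\alpha<0$ in a one-sided neighbourhood of $\ccpt{V}$ inside $\pcts{\vv}^\circ$, so a nearby leaf transversely exits $\pcts{\vv}$ through $\mut{\vv}{\alpha}^\circ$; by \Cref{thm:foliation} it continues uniquely into the only other top-cell at $\mut{\vv}{\alpha}$, namely $\mu_\alpha(\pcts{\vv})$, with compatibility at the exit point provided by \Cref{lem:non-split}. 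This produces the required cell-crossing from $\pcts{\vv}$ to $\mu_\alpha(\pcts{\vv})$. The case $b_\alpha>0$ follows by applying the previous case to $\mu_\alpha(\pcts{\vv})$, whose analogous coefficient at the shared face is then negative (the flow direction on $\mut{\vv}{\alpha}^\circ$ is intrinsic to the face, by \Cref{lem:cwall} together with \Cref{lem:non-split}). For $b_\alpha=0$, the local leaf is either parallel to the face (when $c=0$) or a ray from $\ccpt{\vv}$ towards the ``radial centre'' $\ccpt{Y}=\sum_\beta(b_\beta/c)\vv_\beta$ of \eqref{eq:W} (when $c\ne0$), and along such a ray the $\alpha$-coordinate $(1-s)\alpha_\alpha+s(b_\alpha/c)$ vanishes only at $s=1$, i.e.\ precisely at $\ccpt{Y}$.

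The delicate point is this last observation: one must exclude $\ccpt{Y}\in\mut{\vv}{\alpha}^\circ$ as a genuine cell-crossing. If $\ccpt{Y}$ lies outside $\overline{\pcts{\vv}}$ or on a proper sub-face of $\mut{\vv}{\alpha}$, the leaf exits $\pcts{\vv}$ through a face \emph{other} than $\mut{\vv}{\alpha}$ before reaching $\ccpt{Y}$, so no crossing on $\mut{\vv}{\alpha}^\circ$ arises; if instead $\ccpt{Y}\in\mut{\vv}{\alpha}^\circ$, then $\ccpt{Y}$ is a local singularity of the radial flow in $\pcts{\vv}^\circ$, forcing $\ccpt{Y}\in\{\xevo,\Xevo\}$ by \Cref{lem:sing}. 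But $\xevo$ and $\Xevo$ are singular points of the global foliation and by \Cref{def:ccross} are not cell-crossings (where two distinct non-degenerate local leaves must meet), completing the argument.
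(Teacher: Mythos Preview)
Your approach is the same as the paper's: write the flow in barycentric coordinates on the top-cell $\pcts{\vv}$ and read off the $\vv_\alpha$-component on the face $\mut{\vv}{\alpha}$. The paper does exactly this in three lines: take $\ccpt{\vv}\in\mut{\vv}{\alpha}$, observe that since $\vv_\alpha$ does not occur in $\ccpt{\vv}$ the $\vv_\alpha$-coefficient of $\dflow{\xevo}(\ccpt{\vv})$ is $b_\alpha$, and conclude directly from the sign. Your derivation $\dflow{\xevo}(\ccpt{\vv})=\sum_\beta(b_\beta-c\alpha_\beta)\vv_\beta$ makes this transparent and is a welcome bit of bookkeeping.

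Where you diverge from the paper is in the $b_\alpha=0$ case, and here your ``delicate point'' is both unnecessary and not quite justified as written. The clean argument (implicit in the paper) is simply that on $\mut{\vv}{\alpha}^\circ$ the $\vv_\alpha$-component of the flow vanishes, so the local leaves from $\pcts{\vv}$ meet the face tangentially and no transversal crossing can occur; nothing more is needed. Your worry about the radial centre $\ccpt{Y}$ landing in $\mut{\vv}{\alpha}^\circ$ leads you to invoke \Cref{lem:sing}, but that lemma concerns the \emph{global} flow value $\dflow{\xevo}(\ccpt{Y})$, which for $\ccpt{Y}\in\mut{\vv}{\alpha}^\circ$ is computed from the evolving triangles of $\mut{\vv}{\alpha}$, not from the formula $c(\ccpt{Y}-\ccpt{\vv})$ valid only in $\pcts{\vv}^\circ$. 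So ``$\ccpt{Y}$ is a local singularity of the radial flow in $\pcts{\vv}^\circ$'' does not directly give $\dflow{\xevo}(\ccpt{Y})=0$, and the appeal to \Cref{lem:sing} is a non sequitur. If you want to keep this case analysis, the right route is via \Cref{thm:foliation}: distinct global leaves intersect only at $\xevo$ or $\Xevo$, so if all rays in $\pcts{\vv}$ converged to $\ccpt{Y}\in\mut{\vv}{\alpha}^\circ$ it would have to be one of the two singularities. But really you can drop the whole paragraph and rest on the tangency observation.
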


\begin{proof}
To determine the direction of the cell-crossing, we will take $\ccpt{\vv}\in\mut{\vv}{\alpha}$ for $\dflow{\xevo}$.
As $\vv_\alpha$ does not occur in $\ccpt{\vv}\in\mut{\vv}{\alpha}$, the coefficient of $\vv_\alpha$ in $\dflow{\xevo}$ equals $b_\alpha$.
Hence the lemma follows, cf. \Cref{fig:b-direction}.
\end{proof}

\begin{figure}
\begin{tikzpicture}[scale=.7]

\foreach \i in {-1.6,-2,-2.4,-2.8,-3.2,-3.6}
{
\begin{scope}[shift={(\i-1.5,0)}]
\draw[thick, >=stealth,->-=.5, Emerald](60:3) to (0,0);
\draw[thick, >=stealth,->-=.5, Emerald](0,0) to (-110:3);
\end{scope}
\begin{scope}[shift={(-\i+1.5,0)},xscale=-1]
\draw[thick, >=stealth,-<-=.5, Emerald](60:3) to (0,0);
\draw[thick, >=stealth,-<-=.5, Emerald](0,0) to (-110:3);
\end{scope}
}
\draw (-2,-3) node {$b_\alpha<0$}
      (2,-3) node {$b_\alpha>0$}
;
\draw[ultra thick,-stealth]
    (-0.5,0) to (-7,0) node[above, font=\large]{$\mut{\vv}{\alpha}$};
\draw[ultra thick,-stealth]
    (0.5,0) to (7,0) node[above, font=\large]{$\mut{\vv}{\alpha}$};
\draw[thick, -stealth](0,-4) to (0,4);
\node at (0,4.5) {${\vv_\alpha}$};
\end{tikzpicture}
\caption{The directions of cell-crossings on $\mut{\vv}{\alpha}$}
\label{fig:b-direction}
\end{figure}

\begin{remark}
In \Cref{EX:A3}, we have $b_\alpha=0$ for
\begin{itemize}
    \item $\mut{\vv}{\alpha}=P_1\oplus I_1$ in the $P_0$-foliation,
    \item $\mut{\vv}{\alpha}=P_1\oplus I_1$ in the $\real{I_1\oplus I_2}$-foliation,
    \item $\mut{\vv}{\alpha}=P_2\oplus I_2$ in the $\real{P_0\oplus P_2}$-foliation,
\end{itemize}
In those cases, the $\xevo$-evolution flows in the neighbor top-cells are `parallel' to the codim-1 cell $\mut{\vv}{\alpha}$ (which is in fact a local $\xevo$-leaf itself).
\end{remark}


\begin{theorem}\label{thm:green}
Fix $\xevo=\sum_{i=0}^{n-1} c_iX_i$ be a generic point/interior point in a cluster $\pcts{\sink}$ (i.e. $c_i\ne0$).
Then the downward $\xevo$-foliation induces an orientation on $\uCEG(\C)$ (by cell-crossing),
which coincides with the $\pcts{X}[1]$-orientation (i.e. the green mutation with respect to $\pcts{\sink}[1]$, cf. \Cref{def:cat-green}).
\end{theorem}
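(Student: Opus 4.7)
The plan is to reduce the theorem to a sign computation at each wall of $\uCEG(\C)$, then resolve that sign using tropical duality together with the non-overlap condition $\Add(\ww)\cap\Add(\uu)=0$ built into the definition of the evolving triangle. By \Cref{lem:cell-crossing-b}, the cell-crossing orientation at a codim-1 wall $\mut{\vv}{\alpha}$ is governed by $\operatorname{sign}(b_\alpha(\pcts{\vv}))$. On the other hand, by \Cref{def:cls-green} and sign coherence \eqref{eq:sing-c}, the green mutation orientation with respect to $\pcts{\sink}[1]$ is governed by the common sign $\epsilon_\alpha \in \{+,-\}$ of the entries of the $c$-vector $c^{\pcts{\sink}[1]}_{V_\alpha, \pcts{\vv}}$: $\epsilon_\alpha = +$ corresponds to the oriented edge $\pcts{\vv} \to \mu_\alpha(\pcts{\vv})$, and \Cref{lem:cell-crossing-b} requires $b_\alpha < 0$. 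So it suffices to show $\operatorname{sign}(b_\alpha) = -\epsilon_\alpha$ for all top-cells $\pcts{\vv}$ and all vertices $V_\alpha \in \pcts{\vv}$.

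The categorical input is to identify the downward $X_i$-evolving triangle with an index-triangle. Since $\pcts{\vv}$ is a cluster, the triangle $X_i \to W_i \to U_i \xrightarrow{f_i} X_i[1]$ has $W_i, U_i \in \Add\pcto{\vv}$; rotating to $W_i \to U_i \xrightarrow{f_i} X_i[1] \to W_i[1]$ exhibits $f_i$ as a right $\Add\pcto{\vv}$-approximation and so presents this as the index-triangle of $X_i[1]$. Writing $W_i = \bigoplus_\beta V_\beta^{w_{i,\beta}}$ and $U_i = \bigoplus_\beta V_\beta^{u_{i,\beta}}$, one obtains $g^{\pcts{\vv}}_{X_i[1]} = [U_i] - [W_i]$ in $\Grot(\Add\pcto{\vv})$, with the trivial cases $i \in I_0, I_1$ contributing $-[X_i]$ and $[X_i[1]]$ respectively. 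Tropical duality (\Cref{eq:T-dual}) gives $G^{\pcts{\vv}}_{\pcts{\sink}[1]} = (C^{\pcts{\sink}[1]}_{\pcts{\vv}})^t$, whence
\[
(g^{\pcts{\vv}}_{X_i[1]})_\alpha \;=\; (c^{\pcts{\sink}[1]}_{V_\alpha, \pcts{\vv}})_i ,
\]
so sign coherence forces $\operatorname{sign}(u_{i,\alpha} - w_{i,\alpha}) = \epsilon_\alpha$ uniformly in $i$.

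The key step is combining this uniform sign with the non-overlap condition $\Add(W_j)\cap\Add(U_j)=0$. For fixed $\beta$ with $\epsilon_\beta = +$, the inequality $u_{j,\beta} \ge w_{j,\beta}$ combined with $w_{j,\beta}\cdot u_{j,\beta} = 0$ forces $w_{j,\beta} = 0$ for every $j \in J$; likewise no $i_0 \in I_0$ can have $X_{i_0} = V_\beta$ (else $(g^{\pcts{\vv}}_{X_{i_0}[1]})_\beta = -1 < 0$). Substituting into the formula \eqref{eq:b} for $b_\beta$ collapses it to
\[
b_\beta \;=\; -\sum_{j \in J} c_j \,\tfrac{u_{j,\beta}}{|U_j|} \;-\; \sum_{\substack{i_1 \in I_1 \\ X_{i_1}[1] = V_\beta}} c_{i_1},
\]
a sum of nonpositive terms; nonvanishing of $c^{\pcts{\sink}[1]}_{V_\beta, \pcts{\vv}}$ guarantees at least one strictly negative contribution, so $b_\beta < 0$. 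The case $\epsilon_\beta = -$ is symmetric, yielding $b_\beta > 0$; by \Cref{lem:cell-crossing-b} the induced cell-crossing orientation then matches the green mutation orientation.

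The principal obstacle is recognizing how the non-overlap property of evolving triangles, together with sign coherence of $c$-vectors, rigidly controls which $V_\beta$ can appear in $W_j$ versus $U_j$ and thereby yields a clean sign for $b_\beta$ in spite of the normalization factors $1/|W_j|, 1/|U_j|$ present in its definition; without the non-overlap condition the normalized $J$-contributions could in principle reverse sign of individual terms.
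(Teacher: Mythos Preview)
Your proof is correct and follows essentially the same route as the paper's: reduce via \Cref{lem:cell-crossing-b} to determining $\operatorname{sign}(b_\alpha)$, read off the entries of $c^{\pcts{\sink}[1]}_{V_\alpha,\pcts{\vv}}$ from the evolving triangles through tropical duality, and then use the non-overlap condition $\Add(W_i)\cap\Add(U_i)=0$ together with sign coherence to force the sign of $b_\alpha$. The only cosmetic differences are that the paper works with $g^{\pcts{\vv}}_{X_i}$ (picking up a sign via $(G^{\pcts{\vv}}_{\pcts{\sink}})^t=-C^{\pcts{\sink}[1]}_{\pcts{\vv}}$) while you work directly with $g^{\pcts{\vv}}_{X_i[1]}$, and that the paper absorbs the trivial cases $I_0,I_1$ into the uniform formula \eqref{eq:any-b} whereas you keep the partition explicit; neither affects the argument.
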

\begin{proof}
By \Cref{lem:cell-crossing-b}, we only need to prove that for any top-cell $\pcts{\vv}$:
\begin{itemize}
    \item $b_\alpha(\pcts{\vv})<0 \iff \vv_\alpha$ in $\pcts{\vv}$ is green with respect to $\pcts{\sink}[1]$.
    \item $b_\alpha(\pcts{\vv})>0 \iff \vv_\alpha$ in $\pcts{\vv}$ is red with respect to $\pcts{\sink}[1]$.
\end{itemize}
Note that these imply $b_\alpha(\pcts{\vv})\ne0$ in particular.

Consider all the $X_i$-evolving triangles for $\pcts{\vv}$
\begin{gather}\label{eq:evo-i}
    \sink_i \xrightarrow{g} \ww_i \to \uu_i \xrightarrow{f} \sink_i[1],
\end{gather}
where
\[
    \ww_i=\bigoplus_{\beta\in Q^\pcts{\vv}_0} V_\beta^{\oplus w_i^\beta} \quad\text{and}\quad
    \uu_i=\bigoplus_{\beta\in Q^\pcts{\vv}_0} V_\beta^{\oplus u_i^\beta}
\]
for $i\in Q^\pcts{\sink}_0=\{1,\ldots,n\}$, where $w_i^\beta, u_i^\beta\in \ZZ_{\ge0}$.
Note that we do not divide $Q^\pcts{\sink}_0$ into \eqref{eq:d-part} here.
Then \eqref{eq:b} becomes
\begin{equation}\label{eq:===}
\begin{array}{rcl}
  \displaystyle\sum_{\beta\in Q^\pcts{\vv}_0} b_\beta \vv_\beta
    &=&\displaystyle \sum_{i\in Q^\pcts{\sink}_0}
    \Big ( \sum_{\beta\in Q^\pcts{\vv}_0}
        \frac{w^\beta_i \vv_\beta}{\sum_{\gamma\in Q^\pcts{\vv}_0}w^\gamma_i}
    -\sum_{\beta\in Q^\pcts{\vv}_0}
        \frac{u^\beta_i \vv_\beta}{\sum_{\gamma\in Q^\pcts{\vv}_0}u^\gamma_i}
    \Big )
    \\
    &=&\displaystyle
    \sum_{\beta\in Q^\pcts{\vv}_0} \vv_\beta \sum_{i\in Q^\pcts{\sink}_0}
    \Big (
    \frac{w^\beta_i}{\sum_{\gamma\in Q^\pcts{\vv}_0}w^\gamma_i} -  \frac{u^\beta_i}{\sum_{\gamma\in Q^\pcts{\vv}_0}u^\gamma_i}
    \Big ).
\end{array}
\end{equation}
Comparing the coefficients of $\beta\in Q^\pcts{\vv}_0$, we have
\begin{gather}\label{eq:any-b}
    b_\beta=\sum_{i\in Q^\pcts{\sink}_0}
    \Big (
    \frac{w^\beta_i}{\sum_{\gamma\in Q^\pcts{\vv}_0}w^\gamma_i} -  \frac{u^\beta_i}{\sum_{\gamma\in Q^\pcts{\vv}_0}u^\gamma_i}
    \Big ).
\end{gather}

Now take a lifting heart $\h_\pcts{\vv}$ of $\pcts{\vv}$
in $[\h_\pcts{\sink},\h_\pcts{\sink}[1]]$, or equivalently,
$\h_\pcts{\sink}\in[\h_\pcts{\vv}[-1],\h_\pcts{\vv}]$.
By \Cref{def:cg-vec}, the $g$-vectors for $X_i$ with respect to $\pcts{\vv}$ are derived from \eqref{eq:evo-i} as follows.
\[
g^\pcts{\vv}_{X_i}=
\left(
\begin{array}{cc}
     w^{\beta_1}_i-u^{\beta_1}_i  \\
     w^{\beta_2}_i-u^{\beta_2}_i  \\
     \ldots \\
     w^{\beta_{n}}_{i}-u^{\beta_{n}}_{i}
\end{array}
\right),
\]
By the dualities in \Cref{eq:T-dual}, we have
\[
    (G^\pcts{\vv}_\pcts{\sink})^t=(C^\pcts{\vv}_\pcts{\sink})^{-1}=-C^{\pcts{\sink}[1]}_\pcts{\vv}.
\]
Hence the $c$-vectors of $\vv_\alpha$ in $\pcts{\vv}$ with respect to $\pcts{\sink}[1]$ are
\[
    c^{\pcts{\sink}[1]}_{V_\alpha,\pcts{V}}=
\left(
\begin{array}{cc}
     u^\alpha_1-w^\alpha_1  \\
     u^\alpha_2-w^\alpha_2  \\
     \ldots \\
     u^\alpha_n-w^\alpha_n
\end{array}
\right).
\]

Note that $\Add(\ww_i)\cap\Add(\uu_i)=0$ implies that one of $u^\alpha_i$ and $w^\alpha_i$ is zero. Thus
\[\begin{cases}
  u^\alpha_i-w^\alpha_i>0 \Longleftrightarrow (u^\alpha_i>0 \;\&\; w^\alpha_i=0) ,\\
  u^\alpha_i-w^\alpha_i=0 \Longleftrightarrow (u^\alpha_i=0 \;\&\; w^\alpha_i=0) ,\\
  u^\alpha_i-w^\alpha_i<0 \Longleftrightarrow (u^\alpha_i=0 \;\&\; w^\alpha_i>0) .
\end{cases}\]
Therefore
\begin{equation}
\begin{cases}
  c^{\pcts{\sink}[1]}_{\vv_\alpha,\pcts{\vv}}\in\ZZ_{\ge0}^n\backslash\{\underline{0}\}
    \;\xLongleftrightarrow{}\;
  \Big( \forall~i\in Q^\pcts{\sink}_0~\text{s.t.}~ u^\alpha_i\ge0=w^\alpha_i \Big)
    \;\&\; \sum_{i\in Q^\pcts{\sink}_0}  u^\alpha_{i}>0,\\
  c^{\pcts{\sink}[1]}_{\vv_\alpha,\pcts{\vv}}\in\ZZ_{\le0}^n\backslash\{\underline{0}\}
    \;\xLongleftrightarrow{}\;
  \Big( \forall~i\in Q^\pcts{\sink}_0~\text{s.t.}~ u^\alpha_i=0\le w^\alpha_i \Big)
    \;\&\; \sum_{i\in Q^\pcts{\sink}_0}  w^\alpha_{i}>0,
\end{cases}
\end{equation}
Comparing with \eqref{eq:any-b}, we have
\[
\begin{cases}
  c^{\pcts{\sink}[1]}_{\vv_\alpha,\pcts{\vv}}\in\ZZ_{\ge0}^n\backslash\{\underline{0}\}
    \;\xLongleftrightarrow{}\; b_\alpha<0,\\
  c^{\pcts{\sink}[1]}_{\vv_\alpha,\pcts{\vv}}\in\ZZ_{\le0}^n\backslash\{\underline{0}\}
    \;\xLongleftrightarrow{}\; b_\alpha>0.
\end{cases}
\]
Finally, the sign-coherence property of $c$-vectors implies the claim at the beginning of the proof and we are done.
\end{proof}

%
%

\subsection{Showcase}

\begin{example}\label{EX:A3+}[Inducing green mutation]
We continue with \Cref{EX:A3}.
\begin{figure}\centering
\vskip -.5cm
\makebox[\textwidth][c]{
  \includegraphics[width=9cm]{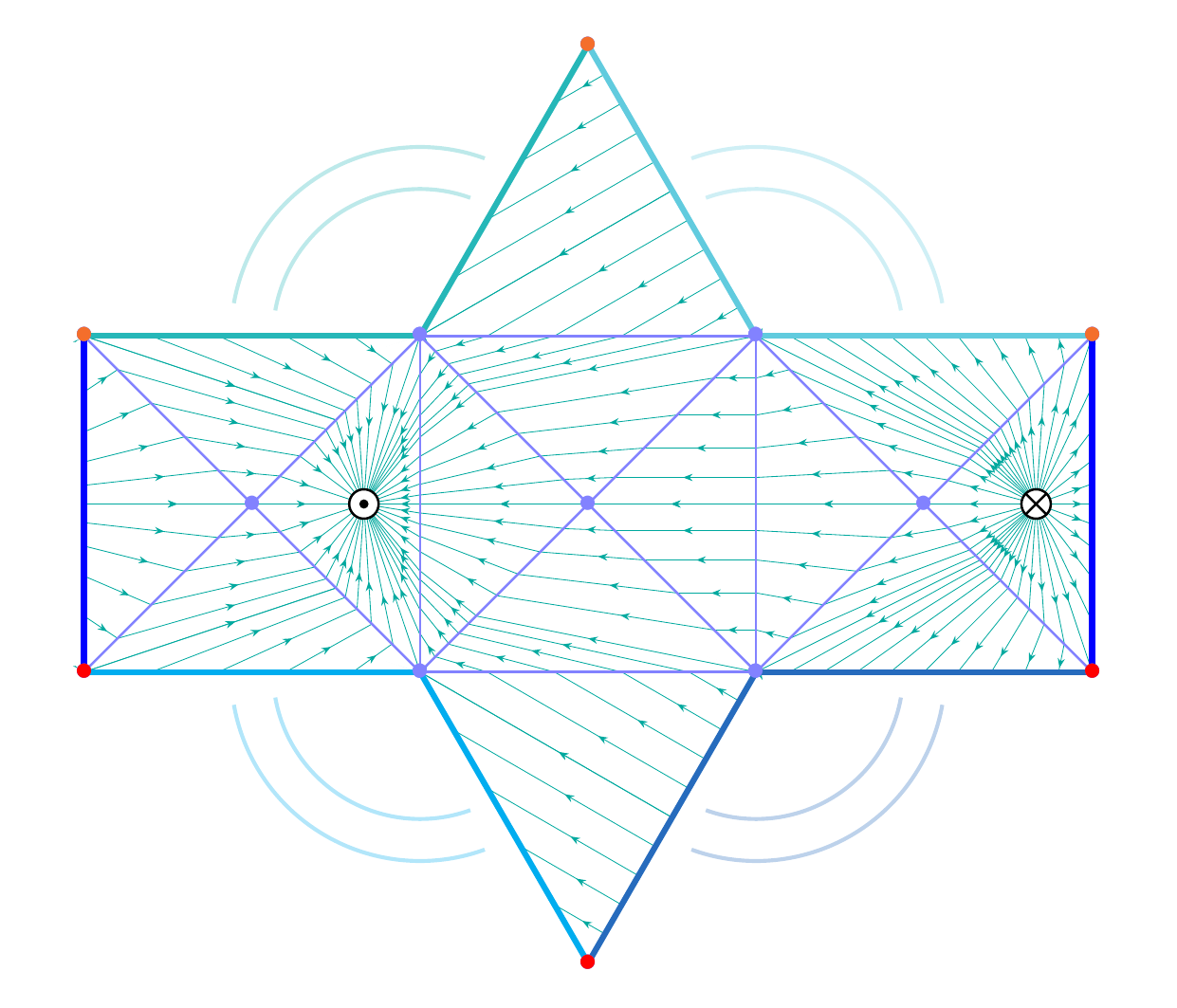}
\hskip -.7cm
  \includegraphics[width=9cm]{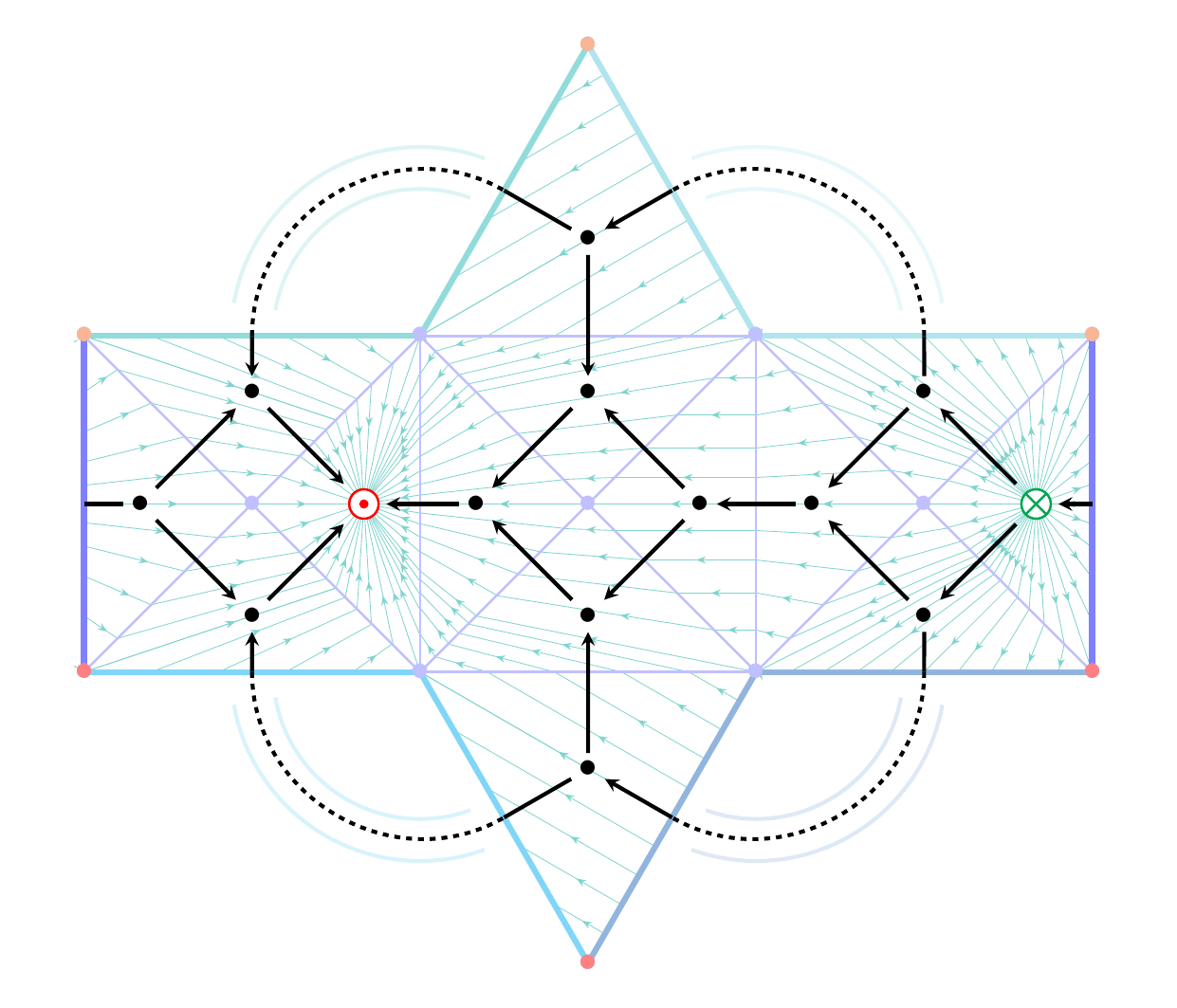}}
\vskip -.6cm
\makebox[\textwidth][c]{\qquad\qquad
  \includegraphics[width=9cm]{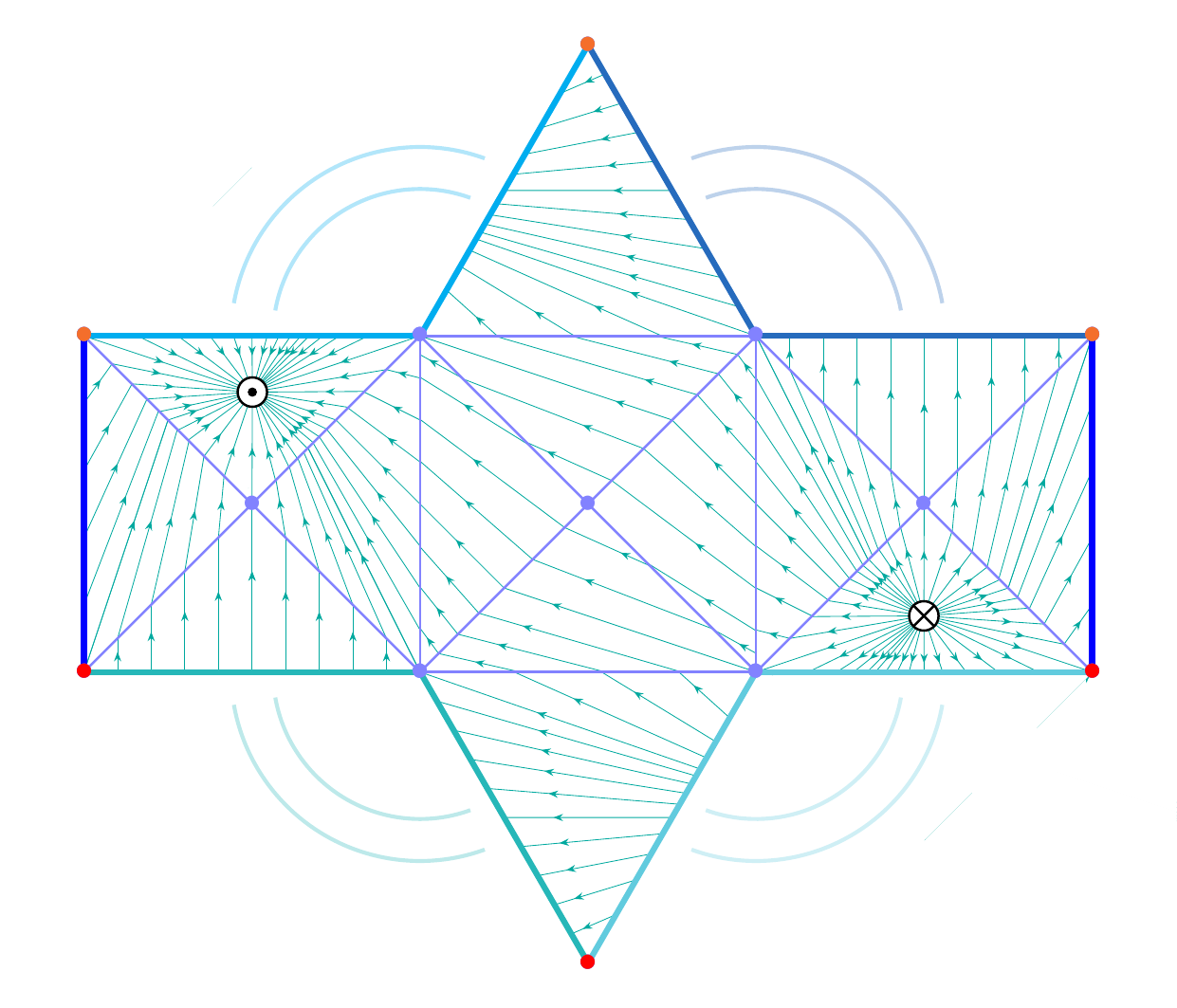}
\hskip -.7cm
  \includegraphics[width=9cm]{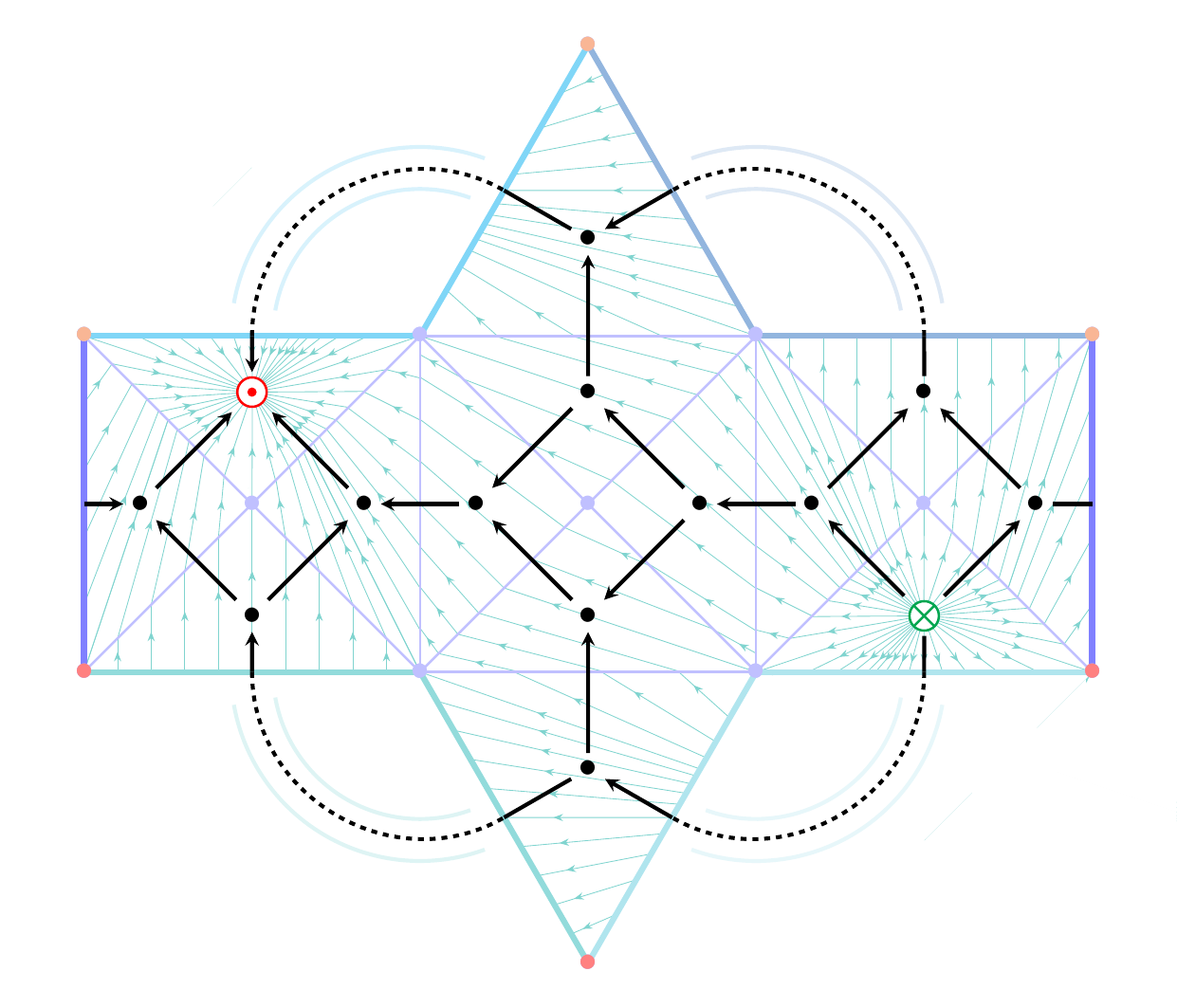}}
\vskip -.6cm
\makebox[\textwidth][c]{
  \includegraphics[width=9cm]{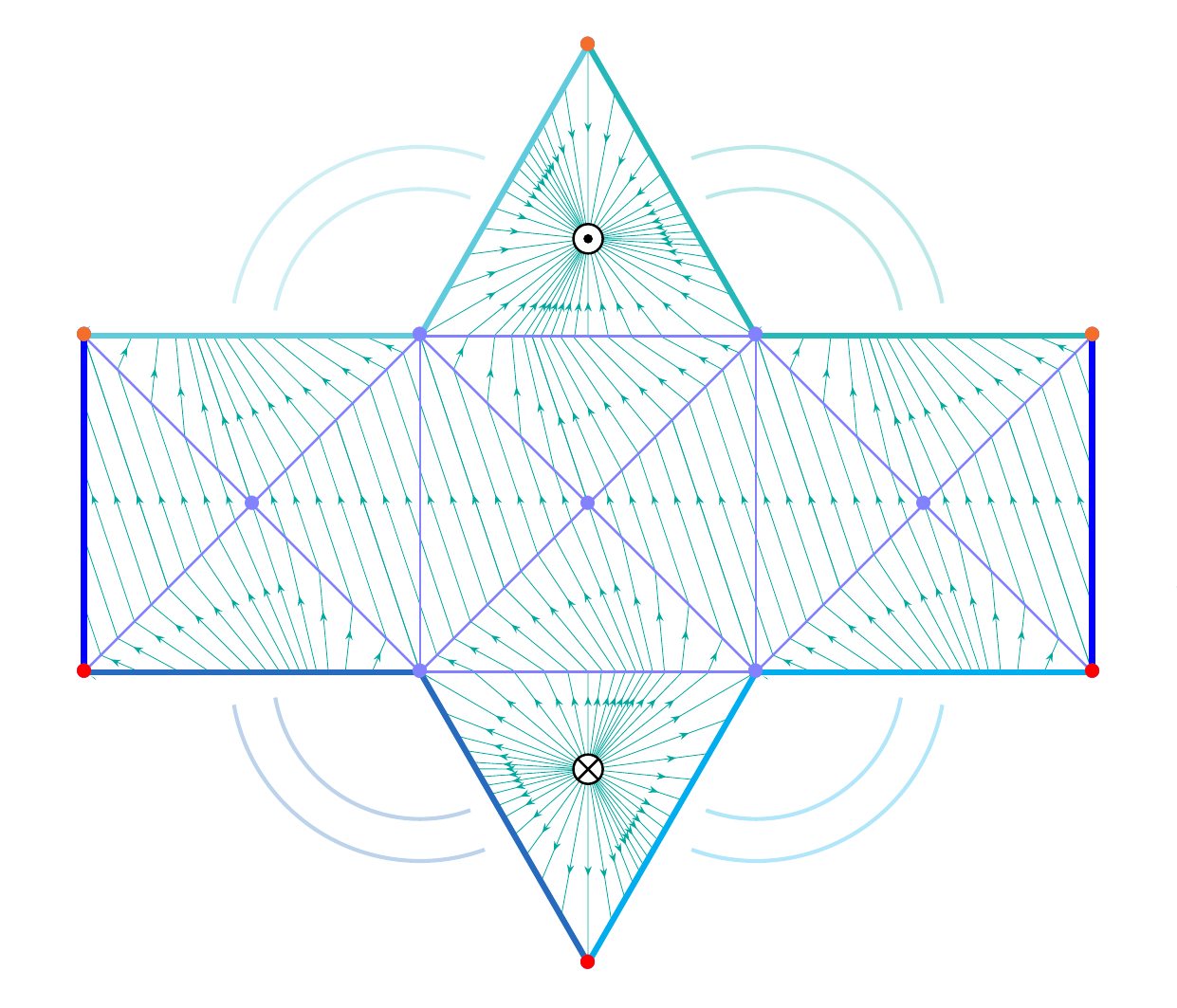}
\hskip -.7cm
  \includegraphics[width=9cm]{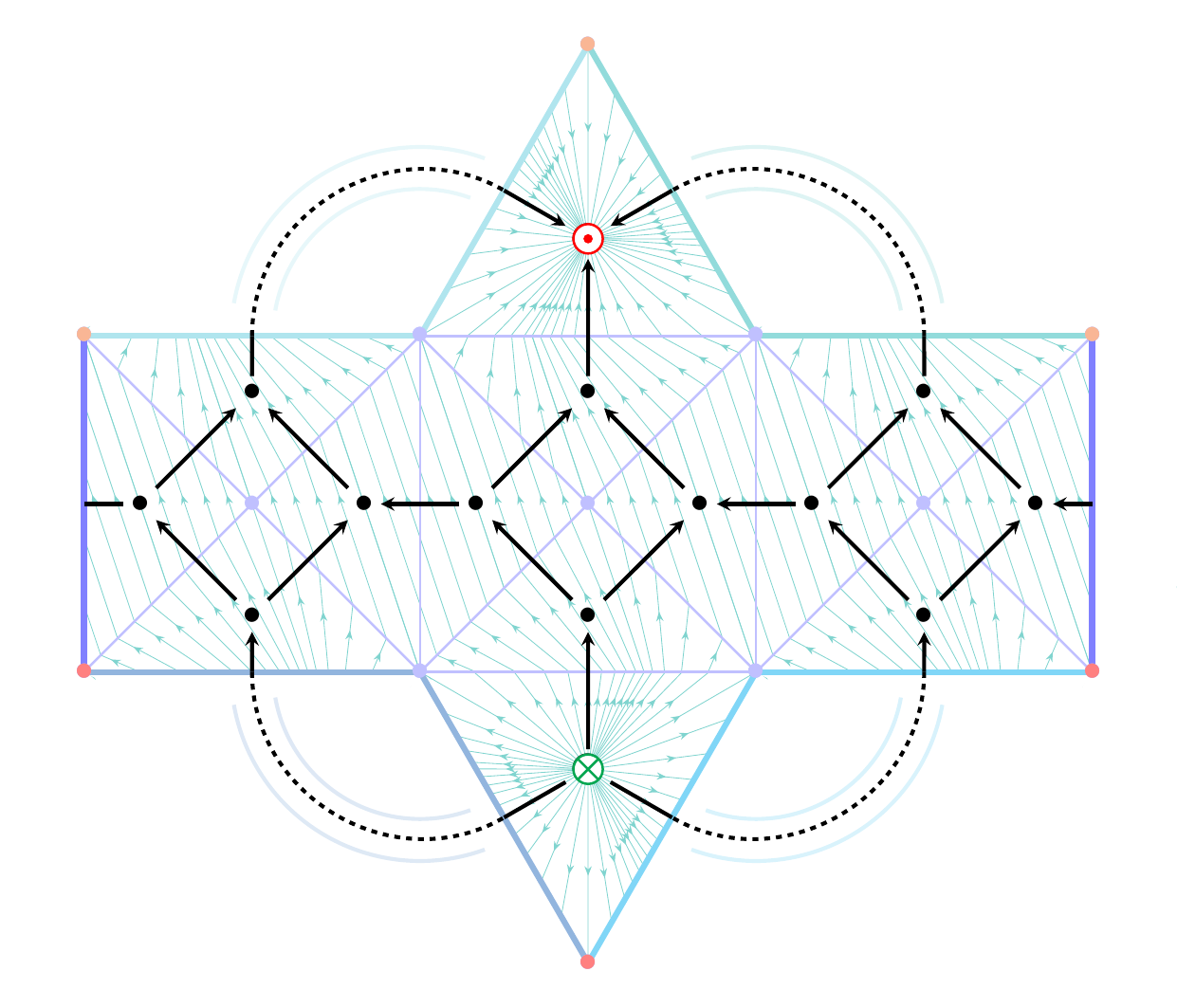}}
\caption{$\xevo$-foliations: face-center cases and induced orientations on $\uCEG(A_3)$} \label{fig:A3-F-F*}
\end{figure}

The three pictures on left column of \Cref{fig:A3-F-F*} illustrate $\xevo$-foliations for:
\begin{itemize}
    \item $\xevo=\real{ \pcto{P} }$-foliation on the top, for the initial cluster of $Q$:
      \[
        \pcto{P}=\k Q=P_0\oplus P_1\oplus P_2.
      \]
    \item $\xevo=\real{ \pcto{\uu} }$-foliation in the middle, for the cluster
      \[
        \pcto{\uu}=\mu_{P_2}(\pcto{P})=P_0\oplus P_1\oplus J_1.
      \]
    \item $\xevo=\real{ \pcto{\ww} }$-foliation at the bottom, for the cluster
      \[
        \pcto{\ww}=\mu_{P_0}(\pcto{\uu})=I_1\oplus P_1\oplus J_1.
      \]
\end{itemize}
The three pictures on right column of \Cref{fig:A3-F-F*} are
the induced orientations of the cluster exchange graph $\uCEG(A_3)$,
which indeed coincide with the orientation induced from intervals of hearts in the 3-Calabi-Yau category.

Note that the middle and bottom pictures on the right column of \Cref{fig:A3-F-F*} are precisely the ones in \Cref{fig:KQ1-Fig5},
which is taken from \cite[Fig.~5]{KQ1} (but with opposite orientation convention; see there for more details).
This figure shows how the orientations change when the source/sink hearts are tilted.
\end{example}

\begin{figure}[hbt]\centering\makebox[\textwidth][c]{
\begin{tikzpicture}[scale=0.8,
  arrow/.style={<-,>=stealth,thick},
  equalto/.style={double,double distance=2pt},
  mapto/.style={|->},
  c-before/.style={cyan},
  c-after/.style={red!15!blue!40!green}, 
  c-cross/.style={red},
  c-div/.style={orange},
  c-fixed/.style={black}]
\foreach \n/\a\b in {1/1/6, 2/1/4, 3/2/7, 4/2/5, 5/3.25/5, 6/4/7.75,
  7/4/6.5, 8/4/3.5, 9/4/2, 10/4.75/5, 11/6/7, 12/6/5, 13/7/6, 14/7/3.75}
 \coordinate (X\n) at (\a,\b);
\draw[c-before] (X3) node (x3) {$\vsink$};
\draw[c-fixed] (X14) node (x14) {$\vsource$};
\foreach \n in {1,2,4,5,8,9}
  \draw[c-before] (X\n) node (x\n) {$\vertx$};
\foreach \n/\x/\y in {6,7,10,11,12,13}
  \draw[c-fixed] (X\n) node (x\n) {$\vertx$};
\draw[c-cross] (x1) edge[arrow,dashed] (x13);
\foreach \t/\h in {3/6, 5/7, 8/10, 9/14}
  \draw[c-cross] (x\t) edge[arrow] (x\h);
\foreach \t/\h in {1/2, 2/9, 3/1, 3/4, 4/5, 4/2, 5/8, 8/9}
  \draw[c-before] (x\t) edge[arrow] (x\h);
\foreach \t/\h in {6/7, 6/11, 7/10, 10/12, 11/12, 11/13, 12/14, 13/14}
  \draw[c-fixed] (x\t) edge[arrow] (x\h);
\draw[c-fixed] (X6)++(3,-6) node (y6) {$\vsink$};
\draw[c-after] (X9)++(3,-6) node (y9) {$\vsource$};
\foreach \n in {1,2,3,4,5,8}
  \draw[c-after] (X\n)++(3,-6) node (y\n) {$\vertx$};
\foreach \n in {7,10,11,12,13,14}
  \draw[c-fixed] (X\n)++(3,-6) node (y\n) {$\vertx$};
\draw[c-cross] (y13) edge[arrow,dashed] (y1); 
\foreach \t/\h in {3/6, 5/7, 8/10, 9/14}
  \draw[c-cross] (y\h) edge[arrow] (y\t); 
\foreach \t/\h in {1/2, 2/9, 3/1, 3/4, 4/5, 4/2, 5/8, 8/9}
  \draw[c-after] (y\t) edge[arrow] (y\h);
\foreach \t/\h in {6/7, 6/11, 7/10, 10/12, 11/12, 11/13, 12/14, 13/14}
  \draw[c-fixed] (y\t) edge[arrow] (y\h);
\draw[c-div] (2.5,8) edge[dotted,thick] (8.5,-4);
\draw (9.2, 4.8) node (R1) {$\EG(A_3,\pcto{\uu})^\vsource_{P_0}$};
\draw (11, 2) node (R2) {$\EG(A_3,\pcto{\ww})^\vsink_{I_1}$};
\draw[c-before] (0.4, 1.8) node (L1) {$\EG(A_3,\pcto{\uu})^\vsink_{P_0}$};
\draw[c-after] (2, -1) node (L2) {$\EG(A_3,\pcto{\ww})^\vsource_{I_1}\quad $};
\draw (R1) edge[equalto] (R2);
\draw (L1) edge[equalto] (L2);
\end{tikzpicture}
}
\caption{Mutation of orientations of $\uCEG(A_3)$}\label{fig:KQ1-Fig5}
\end{figure}

\begin{remark}[Realizing cluster complexes in g-fans]
Consider the dual lattices
\[\begin{cases}
    M=\Grot(\pvd(\Gamma_\pcts{Y})), \\
    N=\Grot(\per(\Gamma_\pcts{Y})).
  \end{cases}
\]
with pairing \eqref{eq:pairing}.
Let $M_\RR=M\otimes_{\ZZ}\RR$ be the $\RR$-extension of $M$, 
which admits a basis
$
\{[P^\pcts{\vv}_\alpha]\}_{\alpha\in Q^\pcts{\vv}_0}
$
with respect to the initial cluster $\pcts{\vv}$.
There exists an piecewise linear embedding
\begin{equation}\label{eq:real-g}
\begin{array}{rccc}
  g^\pcts{\vv}\colon
    & \CCx(\C) & \longrightarrow & M_\RR  \\
    & \ccpt{Y}=\sum_{i}c_i Y_i & \longmapsto & \sum_{i} c_i g^\pcts{\vv}_{Y_i}.
\end{array}
\end{equation}
Under such an embedding, a partial cluster $\pcts{Y}=\{ Y_i \}_{i=1}^k$ becomes a $g$-cone in $M_\RR$:
\begin{gather}
    \con( \pcts{Y} ) \colon=
    \sum_{i=1}^k \RR_{\ge0} \cdot g^\pcts{\vv}_{Y_i}.
\end{gather}
In particular, clusters become \emph{$G$-cones} (top dimensional cones) and
the cluster complex $\CCx(\C)$ is realized in the \emph{$g$-fan}, which is the union of $g$-cones:
\begin{gather}
    \fan^\pcts{\vv}(\C)\colon=
        \{ \con(\pcts{Y}) \mid \pcts{Y}~\text{is a partial cluster in}~\CCx(\C)\}.
\end{gather}
See \cite{Nak} for more details about $g$-cones and $g$-fans.
\end{remark}

\begin{figure}[thb]\centering
\makebox[\textwidth][c]{
\includegraphics[scale=.4]{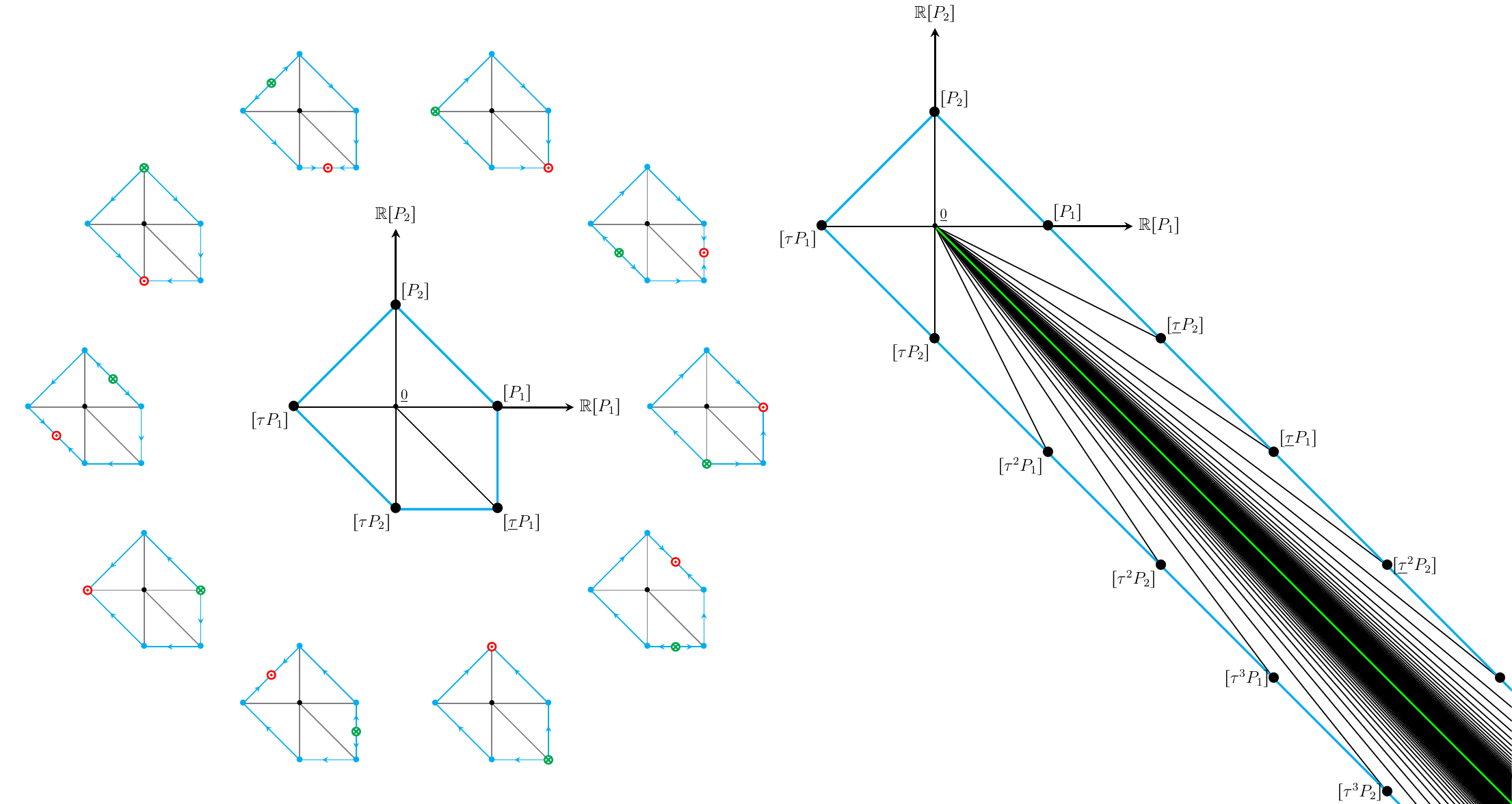}\qquad}
\caption{The flows on $\CCx(A_2)$ and the embedding of $\CCx(\EucA{1}{1})$ into $\fan^\pcts{P}(\EucA{1}{1})$}
\label{fig:A2-cycle}
\end{figure}

\begin{example}[g-fans]
Let $Q$ be an acyclic quiver and $P_i$ its $i$-th projective.
We have
\begin{itemize}
    \item The middle picture on the left of \Cref{fig:A2-cycle} show 
    the embedding \eqref{eq:real-g} for the $A_2$ case, where $\CCx(A_2)$ is in cyan.
    \item The 10 surrounding pictures on the left of \Cref{fig:A2-cycle} demonstrate 10 $\xevo$-evolutions, where $\xevo$ (the green sinks)and $\xevo[1]$ (the red sources) are moving clockwise around.
    \item The picture on the right of \Cref{fig:A2-cycle}, and \Cref{fig:A3-gfan}
    show the embedding \eqref{eq:real-g} for the $\widetilde{A_{1,1}}$ and the $A_3$ cases, respectively.
 \end{itemize}
 \end{example}

\vskip -.5cm
\begin{figure}[bht]\centering
\makebox[\textwidth][c]{
\includegraphics[scale=.45]{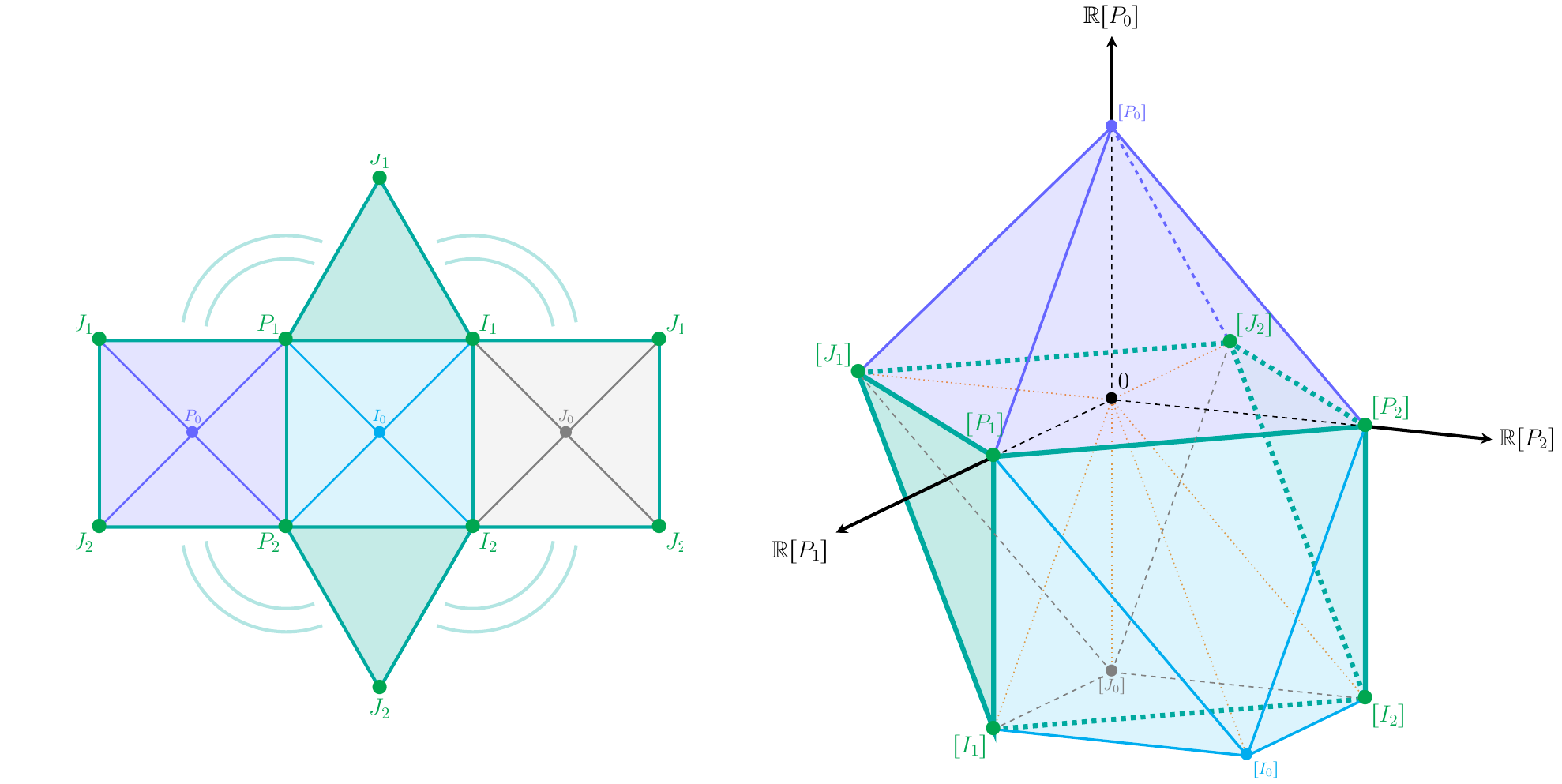}}
\vskip -.2cm
\caption{The cluster complex $\CCx(A_3)$(left) embeds into the $\fan^\pcts{P}(A_3)$(right)}
\label{fig:A3-gfan}
\end{figure}

\section{Case study and examples}\label{sec:case}

\subsection{Reduction in quiver categories}\label{sec:redQ}\

Let $Q$ be an acyclic quiver and $\CQ$ its cluster category as in \Cref{sec:qc}.
Take an rigid indecomposable $\sink$ in the $\tau$-orbit of a projective object $P_0$ corresponding to the vertex $0$.
Recall the CY-reduction in \Cref{sec:Red}.
Since the reduction functor $\Red_{\sink}$ (resp. $\Red_{\source}$) preserves (partial) cluster,
$\CQ\backslash \sink$ (resp. $\CQ\backslash \sink[1]$) admits a cluster tilting set $\Ind(\Proj(Q)\backslash P_0)$ (resp. $\Ind(\Proj(Q)\backslash P_0)[1]$).
Hence, we have
\begin{gather}\label{eq:Qred}
\CQ\backslash \sink\cong\C(\Qminus)\cong\CQ\backslash \sink[1],
\end{gather}
where $\Qminus$ is the quiver obtained by deleting $0$ from $Q$.
By the realization \eqref{eq:red},
the formula above can be translated to the following homotopy equivalence in $\CCx(Q)$.
\begin{gather}\label{eq:ccred}
    \CCx(Q)\backslash\sink\simeq\CCx(\Qminus)\simeq\CCx(Q)\backslash\source.
\end{gather}
For application, we will consider $Q$ to be a Dynkin or Euclidean quiver from now on.
The discussion above covers the following cases of reduction:
\begin{itemize}
    \item $Q$ is a Dynkin quiver, $\sink$ is any rigid indecomposable.
    \item $Q$ is an Euclidean quiver, $\sink$ is any rigid indecomposable in $\vect(Q)$.
\end{itemize}
Recall that each Euclidean quiver is derived equivalent to $\plpqr$ as in \Cref{sec:qc}.
The following lemma extend the formula \eqref{eq:Qred} and \eqref{eq:ccred} to the case
when $Q$ is Euclidean and $\sink$ is any rigid indecomposable in $\tube(Q)$.

\begin{lemma}\label{lem:EG-red}
Let $Q$ be an Euclidean quiver.
For any rigid indecomposable $\sink$ in the height-$l$ layer of the fat tube $\tube_{\infty}(Q)$,
there are triangle equivalences
\begin{gather}\label{eq:EG-red}
\C(\plpqr)\backslash\sink\cong\C(\pl_{p,q,r-l})\sqcup\C(\DynA{l-1})\cong\C(\plpqr)\backslash \source.
\end{gather}
In particular, we have the following homotopy equivalence in $\CCx(\plpqr)$.
\begin{gather}\label{eq:plredcc}
    \CCx(Q)\backslash\sink\simeq\CCx(\pl_{p,q,r-l})*\CCx(\DynA{l-1})\simeq\CCx(Q)\backslash\source.
\end{gather}
\end{lemma}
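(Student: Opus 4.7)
The strategy is to establish the triangle equivalences \eqref{eq:EG-red} via the Geigle--Lenzing perpendicular category theory for weighted projective lines, and then deduce the join decomposition \eqref{eq:plredcc} formally by combining the simplicial realization \eqref{eq:red} with the disjoint-union-to-join formula \eqref{eq:union}.

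First, I lift $\sink$ to a rigid indecomposable torsion sheaf of Loewy length $l$ inside the exceptional tube of rank $r$ of $\coh(\plpqr)$. The key input is the classical description of the perpendicular category: excising a height-$l$ rigid indecomposable from a rank-$r$ tube cuts it into a tube of rank $r-l$ together with an $A_{l-1}$-configuration of sub-quotient modules sitting strictly inside the original indecomposable. On the level of bounded derived categories this produces the orthogonal decomposition
\[
    \sink^{\perp} \;\simeq\; \D^{b}(\coh(\pl_{p,q,r-l})) \;\oplus\; \D^{b}(\k \DynA{l-1}),
\]
with the first factor inheriting the two untouched tubes and the vector bundle part of $\plpqr$ (with the affected tube now of rank $r-l$), and the second factor recording the internal ``slice'' structure cut off by $\sink$.

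Next, I propagate this decomposition through the Iyama--Yoshino reduction of \Cref{sec:Red}. The functor $\Red_{\sink}\colon{}^{\perp}(\sink[1]) \to \C(\plpqr)\backslash\sink$ is a triangle functor that is compatible with orthogonal decompositions of the ambient 2-CY category, and the formation of cluster categories sends disjoint unions to disjoint unions. Combining these observations with the perpendicular decomposition above yields
\[
    \C(\plpqr)\backslash\sink \;\cong\; \C(\pl_{p,q,r-l}) \;\sqcup\; \C(\DynA{l-1}).
\]
The second equivalence with $\C(\plpqr)\backslash\source$ then follows either by applying the same argument to $\source$, which is a rigid indecomposable sitting at the same position and height in the same fat tube, or more directly from the canonical triangle equivalence $\ueby\colon\C\backslash\sink\xrightarrow{\cong}\C\backslash\source$ supplied by \Cref{lem:1-1}. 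This establishes \eqref{eq:EG-red}.

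Finally, the homotopy equivalence \eqref{eq:plredcc} is a formal consequence: applying the simplicial realization \eqref{eq:red} to both sides of \eqref{eq:EG-red} gives $\CCx(\plpqr)\backslash\sink \simeq \CCx(\C(\pl_{p,q,r-l}) \sqcup \C(\DynA{l-1}))$, and the join formula \eqref{eq:union} rewrites the right-hand side as $\CCx(\pl_{p,q,r-l}) * \CCx(\DynA{l-1})$; the $\source$-version is symmetric. The main obstacle is the concrete identification of the two summands in the Geigle--Lenzing decomposition: proving that the tube rank drops by exactly $l$ and that the complementary piece is precisely Dynkin type $A_{l-1}$ (rather than some other quiver) requires careful bookkeeping in the Auslander--Reiten quiver of a rank-$r$ tube stratified by rigid slices, which is standard but combinatorially delicate.
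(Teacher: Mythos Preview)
Your perpendicular-category route is genuinely different from the paper's. The paper bypasses Geigle--Lenzing theory entirely and instead works with an explicit tilting object: it recalls (from \cite{CK}) that $\D(\plpqr)$ admits a tilting object whose endomorphism algebra is the squid algebra $\sqd{p}{q}{r}$, identifies $\sink$ with the projective $P_0$ at the $l$-th vertex from the end of the length-$r$ branch, and then applies the vertex-deletion principle \eqref{eq:Qred}. Since deleting that vertex disconnects the squid quiver into $\sqd{p}{q}{r-l}\sqcup\DynA{l-1}$, the equivalence \eqref{eq:EG-red} drops out in one line. The deduction of \eqref{eq:plredcc} from \eqref{eq:red} and \eqref{eq:union} is the same as yours.

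There is a real gap in your argument that the paper's approach sidesteps. You establish a semi-orthogonal decomposition of $\sink^\perp$ \emph{in the bounded derived category}, but the Iyama--Yoshino reduction $\C(\plpqr)\backslash\sink={}^\perp(\sink[1])/\Add(\sink)$ lives in the cluster category, and there is no general principle that transports orthogonal decompositions of $\D^b$ across the orbit quotient $\D^b\to\C$ and then through the additive quotient $\Red_\sink$. Your sentence ``the formation of cluster categories sends disjoint unions to disjoint unions'' only applies once you already know the reduction is a cluster category of something disconnected---which is exactly what you are trying to prove. To close this gap you must exhibit a cluster tilting object of $\C(\plpqr)$ containing $\sink$ whose remaining summands realise your two perpendicular pieces; but producing such an object is precisely what the squid tilting does. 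So once patched, your argument converges to the paper's. The paper's version is shorter and self-contained; your perpendicular viewpoint is conceptually illuminating (it explains \emph{why} an $A_{l-1}$ piece appears, as the internal slice cut off by $\sink$) but does not stand on its own without the tilting input.
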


\begin{proof}
By \cite[\S6.9]{CK}, each $\D(\plpqr)$ admits a tilting object whose endomorphism algebra is isomorphic to the squid algebra $\sqd{p}{q}{r}$ presented as follows (cf. \cite{BB}).
\[
\begin{tikzpicture}[scale=.8,rotate=0]
\draw (-2,0) node {$\bullet$} (0,0) node {$\bullet$};
\draw[-stealth] (-1.8,0.1) to (-0.2,0.1);
\draw[-stealth] (-1.8,-0.1) to (-0.2,-0.1);
\draw (-1,0.5) node[font=\small]{$b_0$}
(-1,-0.5) node[font=\small]{$b_1$};
\foreach \i in {-2,0,2}{
\draw (1.5,\i) node {$\bullet$} (6,\i) node {$\bullet$}
(3.75,\i) node {$\ldots$};
\draw[-stealth] (0.2,0) to (1.3,0);
\draw[-stealth] (0.12,0.16) to (1.38,1.84);
\draw[-stealth] (0.12,-0.16) to (1.38,-1.84);

\begin{scope}[shift={(1.5,\i)}]
    \draw[-stealth] (0.2,0) to (1.3,0);
\end{scope}
\begin{scope}[shift={(4.5,\i)}]
    \draw[-stealth] (0.2,0) to (1.3,0);
\end{scope}
\draw[decorate, decoration={brace,raise=6pt}](1.5,\i) -- (6,\i);
}
\draw (3.75,2.7) node[font=\small]{$p-1$}
(3.75,0.7) node[font=\small]{$q-1$}
(3.75,-1.3) node[font=\small]{$r-1$}
;
\draw (0.55,1.2) node[font=\small]{$c_0$}
(0.75,0.2) node[font=\small]{$c_1$}
(0.55,-1.2) node[font=\small]{$c_\infty$}
;
\draw (9,1.8) node{Relations:};
\draw (10,0) node[]{$\begin{cases}

    c_0 b_1 &=0\\
    c_1 b_1-c_1 b_0&=0\\
    \qquad-c_\infty b_0&=0,

\end{cases}$};
\end{tikzpicture}
\]
where $\sink$ is the projective $P_0$
corresponding to the $l$-th vertex away from the end of the length-$r$ branch, denoted by $0$.
Hence $\C(\sqd{p}{q}{r})\backslash \sink$ admits a cluster tilting object whose endomorphism algebra is isomorphc to $\sqd{p}{q}{r}\backslash\{\underline{0}\}=\sqd{p}{q}{r-l}\sqcup\DynA{l-1}$.
We have
\[
\C(\sqd{p}{q}{r})\backslash \sink\cong\C(\sqd{p}{q}{r-l}\sqcup\DynA{l-1})\cong\C(\sqd{p}{q}{r})\backslash \source,
\]
hence \eqref{eq:EG-red} holds.
Combining \eqref{eq:red} and \eqref{eq:union}, we have \eqref{eq:plredcc}.
\end{proof}

\begin{corollary}\label{cor:red}
Let $Q$ be a Dynkin or Euclidean quiver with $n$ vertices and $\sink$ be any rigid indecomposable in $\CQ$.
Then $\CQ\backslash\sink=\C(\hat{Q})$,
where $\hat{Q}$ is a quiver with $n-1$ vertices whose connected components are Dynkin or Euclidean.
\end{corollary}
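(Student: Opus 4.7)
The plan is to split into two cases based on the position of $\sink$ in the AR-quiver of $\CQ$ and invoke the two reduction formulas already established in \Cref{sec:redQ}.

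\emph{Case 1: $\sink$ lies in the transjective component.} For Dynkin $Q$ this holds automatically, since the AR-quiver has a single component; for Euclidean $Q$ this corresponds to $\sink \in \vect(Q)$. In either sub-case, $\sink$ is in the $\tau$-orbit of some projective $P_0$ at a vertex $0$, and I will invoke \eqref{eq:Qred} to conclude $\CQ \backslash \sink \cong \C(\Qminus)$, where $\Qminus$ is obtained from $Q$ by deleting the vertex $0$. Setting $\hat{Q} = \Qminus$ reduces the claim to the classical observation that deleting a vertex from a Dynkin (resp. connected Euclidean) diagram produces a disjoint union whose components are Dynkin (resp. Dynkin or Euclidean); this is a direct inspection of the classification.

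\emph{Case 2: $Q$ is Euclidean and $\sink$ lies in a tube.} Here $\sink$ sits at height $l$ of a tube of rank $r$ inside $\tube_{\infty}$, with $1 \le l \le r-1$ forced by rigidity (a height-$r$ object in a rank-$r$ tube is not rigid). I will apply \Cref{lem:EG-red} to obtain
\[
    \CQ \backslash \sink \;\cong\; \C(\pl_{p,q,r-l}) \sqcup \C(\DynA{l-1}).
\]
The $\DynA{l-1}$-factor is Dynkin (or empty when $l=1$). Taking $\hat{Q}$ to be the disjoint union of the hereditary quiver associated to $\pl_{p,q,r-l}$ and $\DynA{l-1}$, the total vertex count is $n-1$ by bookkeeping through \Cref{lem:EG-red}. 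What remains is to confirm that $\pl_{p,q,r-l}$ itself is of Dynkin or Euclidean representation type, which follows from the monotonicity of $\invsum$ under decreasing $r$ to $r-l$: since $l \ge 1$, the triple $(p,q,r-l)$ lies strictly above the Euclidean threshold attained by $(p,q,r)$, and hence remains in the non-wild regime.

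The main obstacle, modest in scope, is the verification in Case 2 that $\pl_{p,q,r-l}$ never lies in the wild regime and corresponds to a genuine hereditary quiver with the correct vertex count. This reduces to a finite inspection of the weight triples $(p,q,r)$ arising from Euclidean quivers, together with tracking $\invsum$ under the reduction; Case 1 is then essentially a direct appeal to the sub-diagram classification for Dynkin and Euclidean graphs.
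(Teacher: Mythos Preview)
Your proposal is correct and matches the paper's intended argument: the corollary is stated without proof precisely because it is the evident case split between \eqref{eq:Qred} (transjective case) and \Cref{lem:EG-red} (tube case), and you have reconstructed that split faithfully. One small wording caution: the triple $(p,q,r)$ for the weighted projective line of an Euclidean quiver already satisfies $\invsum>1$ (domestic regime), not $=1$, so it does not ``attain'' a threshold; your monotonicity argument is nonetheless correct since decreasing $r$ only pushes the sum further above $1$.
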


\subsection{Compact examples in Dynkin case}\label{sec:homotopy}

Recall that an $\xevo$-foliation on $\CCx(\C)$ is compact if
each $\xevo$-leaf is compact.
In such a case, the $\xevo$-flow provides many nice homotopy properties.

We study the cluster complexes for Dynkin quivers with specific choices of $\xevo$
and show that the corresponding $\xevo$-foliations are compact.
As applications, we obtain inductive (homotopic) relations and thus their homotopy types.

Let $Q$ be a Dynkin quiver with $n$ vertices.
Fix any indecomposable $\sink$ in $\CQ$.
Let $0$ be a vertex of $Q$ such that $\sink$ is in the $\tau$-orbit of the projective $P_0$.
Then we have \eqref{eq:Qred}.
The main result in this section is the following, where the proof is in \Cref{sec:P1}.

\begin{theorem}\label{thm:Dynkin}
For any vertex $\sink$ in $\CCx(Q)$,
the $\sink$-foliation is compact. As a result, we deduce that $\CCx(Q)$ is homotopy to the $(n-1)$-sphere $\SS^{n-1}$.
\end{theorem}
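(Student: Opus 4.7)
The plan is to induct on $n$, the number of vertices of $Q$, using the suspension formula from \Cref{lem:compact} together with the Calabi-Yau reduction \eqref{eq:Qred} and \Cref{cor:red}. The base case $n=1$ is the quiver $\DynA{1}$, for which $\CCx(\DynA{1})=\SS^0$ consists of the two antipodal points $\sink$ and $\source$; the $\sink$-foliation consists of a single trivial leaf and is manifestly compact, while $\SS^0$ is the expected homotopy type.

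For the inductive step, fix a Dynkin quiver $Q$ with $n\geq 2$ vertices and a rigid indecomposable $\sink\in\CQ$. By \Cref{cor:red}, $\CQ\backslash\sink\cong\C(\hat{Q})$ where $\hat{Q}$ is a disjoint union of Dynkin quivers with a total of $n-1$ vertices. Using \eqref{eq:union}, $\CCx(\hat{Q})$ is the join of the cluster complexes of its Dynkin components; by the inductive hypothesis applied to each component and the standard identification $\SS^a * \SS^b\simeq\SS^{a+b+1}$, we obtain $\CCx(\hat{Q})\simeq\SS^{n-2}$. Granting compactness of the $\sink$-foliation on $\CCx(Q)$, \Cref{lem:compact} then yields $\CCx(Q)\simeq\Sigma\CCx(\CQ\backslash\sink)\simeq\Sigma\SS^{n-2}\simeq\SS^{n-1}$, closing the induction.

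The core step is therefore to prove that the $\sink$-foliation on $\CCx(Q)$ is compact. Since $\CCx(Q)$ is a finite simplicial complex and the local $\sink$-leaves within each top-cell form a parallel or radial linear foliation, each $\sink$-leaf visits any given top-cell at most once. Consequently non-compact and semi-compact leaves are excluded by finiteness, and the only topological types possible are compact leaves and closed leaves $\simeq\SS^1$. It remains to rule out closed leaves. Here I would exploit \Cref{thm:green}: for a generic point $\xevo'$ in the interior of a top-cell containing $\sink$, the downward $\xevo'$-flow induces the green-mutation orientation on $\uCEG(\CQ)$, which is acyclic. By \Cref{lem:cell-crossing-b}, whether and how a codimension-one cell is crossed is determined by the signs of the coefficients $b_\alpha$ appearing in \eqref{eq:b}, and these signs depend continuously (piecewise constantly) on the base point of the flow. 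A closed $\sink$-leaf would therefore persist as an oriented cycle in the induced orientation on $\uCEG(\CQ)$ for $\xevo'$ sufficiently close to $\sink$, contradicting acyclicity of the green mutation.

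The main obstacle is making this perturbation argument fully rigorous: the coefficients $b_\alpha(\pcts{\vv})$ can vanish on cells intersecting $\Star(\sink)\cup\Star(\source)$, where the local foliation switches from radial (at $\sink,\source$) to parallel/quasi-radial in the surrounding cells, so one must verify that a hypothetical closed leaf can be perturbed to give a genuine cycle in $\uCEG(\CQ)$ rather than a degenerate configuration lying along a cell-wall. A cleaner alternative, which I would attempt in parallel, is to use the downward $\sink$-trace $\des\colon\CCx(Q)\setminus\{\sink,\source\}\to\CCx(\CQ\backslash\sink)\cong\CCx(\hat{Q})$ of \Cref{thm:const}, which is constant on each leaf; combined with the inductive compactness of the target, one would show directly that $\des$ realises each leaf as a compact interval fiber and thereby produces the deformation retract, deriving both compactness and the suspension equivalence \eqref{eq:susp} in one stroke.
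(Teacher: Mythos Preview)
Your inductive scheme for deducing $\CCx(Q)\simeq\SS^{n-1}$ from compactness via \Cref{lem:compact} and \eqref{eq:Qred} is exactly what the paper does. The gap is in the compactness step. The assertion that ``each $\sink$-leaf visits any given top-cell at most once'' does not follow from the local structure: a parallel linear foliation of a simplex only says that distinct local segments are disjoint, not that a global leaf cannot exit, wander through other cells, and re-enter the same simplex along a different parallel segment. So finiteness of the complex does not by itself rule out semi-compact or non-compact leaves, and your reduction to ``compact or closed'' is unjustified. Your green-mutation perturbation idea to exclude closed leaves is suggestive, but as you note, the coefficients $b_\alpha$ can vanish and the passage from generic $\xevo'$ back to the vertex $\sink$ is exactly where the argument must do real work; the paper in fact leaves acyclicity for general $\xevo$ as \Cref{conj1}. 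The trace alternative also falls short as stated: constancy of $\des$ on leaves only places each leaf in a fiber of a map to $\CCx(\hat Q)$, which says nothing about whether the fiber meets the leaf in a compact interval.

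The paper's argument is entirely different and does not go through acyclicity or perturbation. It works directly in the AR-quiver: after normalising so that $\sink$ is simple-projective and identifying $\Ind\CQ$ with an interval $[\Ps_+(\sink),\Ps_+(\sink)[1]]$ in $\ZZ Q$, one attaches to each cell $\pcts{\vv}$ the interval $\II(\pcts{\vv})=[\sink,\dapp{\vv}]$, where $\dapp{\vv}$ records the indecomposable summands of the minimal right $\pctc{\vv}$-approximation of $\source$. The key computation (\Cref{lem:ccross1}) is that at every downward cell-crossing $\pcts{\uu}\to\pcts{\ww}$ one has a strict inclusion $[\sink,\dapp{\ww}\setminus\dapp{\uu}]\subsetneq[\sink,\dapp{\uu}\setminus\dapp{\ww}]$, because any new summand $\ww\in\dapp{\ww}\setminus\dapp{\uu}$ must factor through some old $\uu\in\dapp{\uu}\setminus\dapp{\ww}$ strictly further from $\sink$. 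A short combinatorial lemma (\Cref{lem:dd}) then forces $\II(\pcts{\vv}_k)=\emptyset$ after finitely many crossings, i.e.\ the leaf enters $\Star(\sink)$ and terminates at $\sink$; the dual argument handles the upward direction. This is a monovariant (Lyapunov-type) argument in the AR combinatorics, and it is what you are missing.
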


\begin{remark}
We do not assume the connectedness of $Q$.
Even if $Q$ is connected, $\Qminus$ is likely to be disconnected.
Nevertheless, \Cref{thm:Dynkin} still holds.
Indeed, we have \eqref{eq:union} and $\SS^{m-1}*\SS^{n-1}\simeq\SS^{m+n-1}$.
\end{remark}

\begin{example}
Let $Q=\DynE{8}$ and take $0$ to be the trivalent vertex, we have $\Qminus=A_1\sqcup A_2\sqcup A_4$
(see \Cref{fig:E8-A124}) and hence
\[
    \SS^7\simeq\CCx(\DynE{8})
        \simeq\susp\CCx( \DynA{1}
        \sqcup \DynA{2} \sqcup \DynA{4} )
        \simeq\susp(\SS^0*\SS^1*\SS^3)  \simeq\SS^7.
\]
\begin{figure}
\begin{tikzpicture}
    \node[label=below:0] (v0) at (0,0) {$\bullet$};
    \node (v1) at (0,1) {$\bullet$};
    \node (v2) at (-1,0) {$\bullet$};
    \node (v3) at (-2,0) {$\bullet$};
    \node (v4) at (1,0) {$\bullet$};
    \node (v5) at (2,0) {$\bullet$};
    \node (v6) at (3,0) {$\bullet$};
    \node (v7) at (4,0) {$\bullet$};
\draw[thick]
    (v0) edge (v1)  edge (v2) edge (v4)
    (v2) edge (v3) (v4) edge (v5) (v5) edge (v6) (v6) edge (v7)
;
\begin{scope}[shift={(0,-3)}]
\node[label=below:, gray!10] (v0) at (0,0) {$\circ$};
    \node (v1) at (0,1) {$\bullet$};
    \node (v2) at (-1,0) {$\bullet$};
    \node (v3) at (-2,0) {$\bullet$};
    \node (v4) at (1,0) {$\bullet$};
    \node (v5) at (2,0) {$\bullet$};
    \node (v6) at (3,0) {$\bullet$};
    \node (v7) at (4,0) {$\bullet$};
\draw[thick,dotted,gray!20]
    (v0) edge (v1)  edge (v2) edge (v4);
\draw[thick]
    (v2) edge (v3) (v4) edge (v5) (v5) edge (v6) (v6) edge (v7);
\end{scope}

\draw (-3,0.5) node {$Q$};
\draw (-3,-2.5) node {$\Qminus$};

\draw (1,-1.5) node {$\Downarrow$};

\end{tikzpicture}
\caption{From $E_8$ to $A_1\sqcup A_2\sqcup A_4$}\label{fig:E8-A124}
\end{figure}
\end{example}

\subsection{Compact and semi-compact examples in Euclidean case}\label{sec:homotopy2}
We continue the compact examples in the Euclidean case.
Let $Q$ be a connected Euclidean quiver with $n$ vertices.
The compact examples are studied the following theorem, where the proof is in \Cref{sec:P2}.

\begin{theorem}\label{thm:Ess}
Let $\sink$ be any rigid regular simple.
Then the $\sink$-foliation on $\CCx(Q)$ is compact.
As a result, we deduce that $\CCx(Q)$ is contractible.
\end{theorem}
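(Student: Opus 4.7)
The plan is to first establish compactness of the $\sink$-foliation, and then deduce contractibility of $\CCx(Q)$ by iterating the suspension formula from \Cref{lem:compact} with the reduction identification in \Cref{lem:EG-red}. The compactness step is the main technical content; the contractibility then follows formally via induction.

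For compactness, I would fix $\sink$ at the mouth (height $l=1$) of the fat tube of rank $r\ge 2$ and introduce a discrete monovariant on cells lying outside $\Star(\sink)\cup\Star(\source)$. For any partial cluster $\pcts{\vv}$ avoiding $\{\sink,\source\}$, the downward $\sink$-evolving triangle $\sink\to\ww\to\uu\to\source$ (cf.~\eqref{eq:evo-tri}) yields a pair $(\ww,\uu)$ with $\Add(\ww)\cap\Add(\uu)=0$. Using tube coordinates of the summands of $\ww$ and $\uu$ (or an AR-depth function measuring distance from $\sink$ inside the tube), I would define a monovariant $h$ on cells. Via \Cref{lem:cwall} one reads off each cell-crossing directly from the downward $\sink$-evolving triangles of the cell-wall, and combined with the $\Ext$-vanishing relations isolated in the proof of \Cref{lem:well-defined}, $h$ should strictly decrease under every downward cell-crossing and strictly increase under every upward one. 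Since the rigid indecomposables in any fixed AR-window are finite, the level sets of $h$ are finite, so no closed $\sink$-leaf can exist and every leaf must terminate at $\sink$ downward and at $\source$ upward after finitely many crossings.

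For contractibility, once compactness is in hand, \Cref{lem:compact} gives $\CCx(Q)\simeq \Sigma\,\CCx(\C(Q)\backslash\sink)$, and \Cref{lem:EG-red} with $l=1$ identifies $\C(Q)\backslash\sink$ with $\C(\pl_{p,q,r-1})$ (the $\C(\DynA{0})$ factor being trivial). I would then iterate: apply the theorem again to a rigid regular simple inside the reduced category and use the join formula \eqref{eq:union} to peel off any disjoint components that appear. The induction terminates at a manifestly contractible base case such as $\CCx(\widetilde{A_{1,1}})\simeq\RR$; since $\Sigma X$ is contractible iff $X$ is, contractibility propagates upward through the suspension chain.

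The main obstacle is the strict-monotonicity step in the compactness argument. Cells of $\CCx(Q)$ generically mix indecomposables from the tubes with those from the preprojective and preinjective components, and the downward $\sink$-evolving triangle can rotate summands across this split in ways that are not a priori controlled by tube coordinates alone. Choosing a monovariant that behaves well simultaneously on such mixed cells and under the approximation triangles of \Cref{lem:last} requires a careful case analysis of the $\Hom$- and $\Ext$-structure between the regular and transjective components of $\D^b(Q)$. Once this structural work is done, the rest of the proof, including the inductive passage from compactness to contractibility, should be largely formal.
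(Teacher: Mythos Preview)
Your overall strategy matches the paper: prove compactness via a monovariant that strictly decreases along downward cell-crossings, then iterate \Cref{lem:compact} and \Cref{lem:EG-red} to reduce $r$ until reaching $\CCx(\widetilde{A_{1,1}})\simeq\RR$. The contractibility half of your proposal is essentially what the paper does.

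The gap is in the compactness argument, precisely at the point you yourself flag. Your proposed monovariant (``tube coordinates'' or ``AR-depth from $\sink$ inside the tube'') together with the sentence ``the rigid indecomposables in any fixed AR-window are finite'' does not close the argument in the Euclidean case. Unlike the Dynkin situation, $\Ind\vect(Q)$ is infinite, and nothing you have written prevents the approximation term $\dapp{\vv}$ from drifting unboundedly inside $\vect(Q)$ under successive cell-crossings. A single depth function anchored only at $\sink$ cannot simultaneously control the tube part and the infinite transjective part; this is exactly the obstacle you identify but do not resolve.

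The paper's fix has two ingredients you are missing. First, it classifies the shape of the approximation: in any $\sink$-evolving triangle the term $\uu$ is either a single indecomposable in $\Psm(\tau\sink)\cup\Psp(\uat\sink)$ or lies entirely in $\vect(Q)$ (\Cref{cor:sim-evo}, proved via the factorisation \Cref{lem:factor}). This dichotomy is what lets one separate the tube contribution from the transjective contribution. Second, and crucially, the monovariant on the $\vect$ part is the interval $[\ueot(L),\dapp{\vv}]|_{\vect}$, whose left endpoint is the downward $\sink$-trace $\ueot(L)$, which is \emph{constant along the leaf} by \Cref{thm:const}. This fixed anchor is what makes the interval finite despite $\vect(Q)$ being infinite. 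The actual monovariant is then a three-tier lexicographic one (\Cref{lem:ccross2}): first $\num_-(\pcts{\vv})=|\pcts{\vv}\cap\Psm(\tau\sink)|$ drops to zero, then the anchored $\vect$-interval shrinks via \Cref{lem:dd}, and finally $\num_+(\pcts{\vv})=|\pcts{\vv}\cap\Psp(\uat\sink)|$ drops to zero. Without the trace invariant providing a leaf-constant base point, your finiteness claim has no footing.
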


\begin{example}[]\label{ex:A12-cfoli}
Consider the cluster category $\C(\EucA{1}{2})$ in \Cref{ex:A12}.
We have that:
\begin{itemize}
    \item \Cref{fig:A12-cc} illustrates the cluster complex $\CCx( \EucA{1}{2} )$.
    \item \Cref{fig:A12-V1} illustrates a compact $\xevo$-foliation for $\xevo=S^{(\infty)}_{1,1}$.
\end{itemize}

\begin{figure}
\centering\makebox[\textwidth][c]{
    \includegraphics[width=1\linewidth]{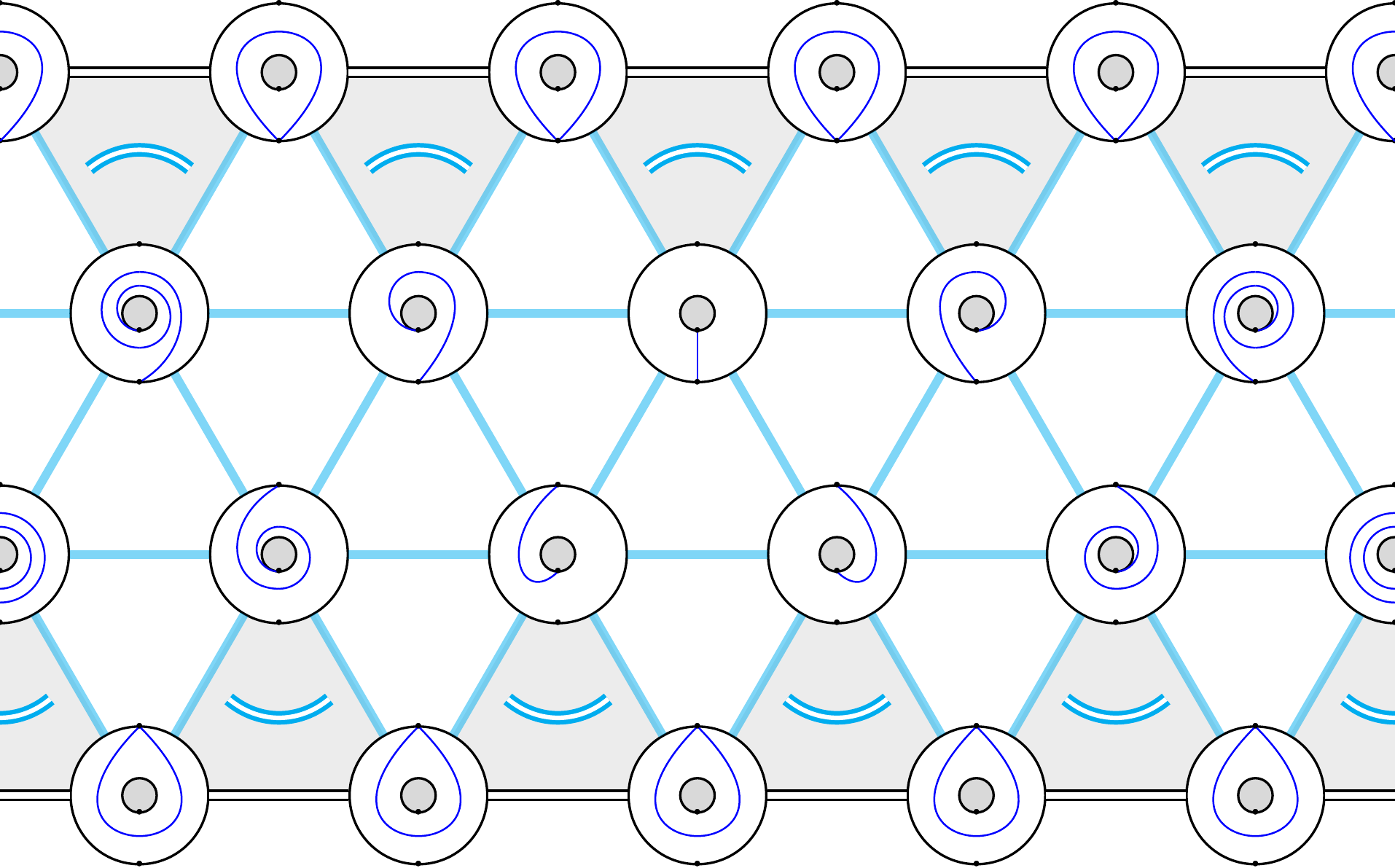}
}\caption{The cluster complex of $C( \EucA{1}{2} )$}
\label{fig:A12-cc}
\end{figure}

\begin{figure}\centering\makebox[\textwidth][c]{
    \includegraphics[width=1\linewidth]{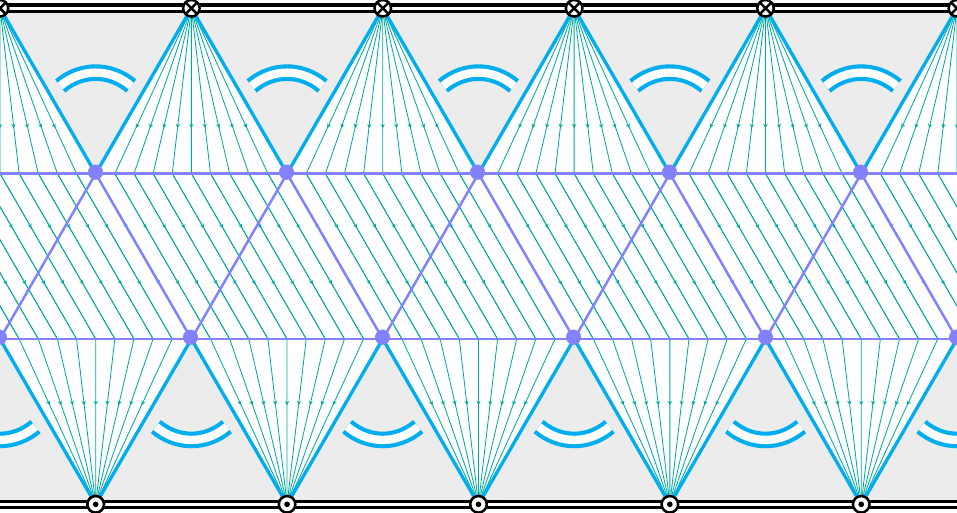}
}\caption{A compact $\sink$-foliation for $\CCx(\widetilde{A_{1,2}})$}
\label{fig:A12-V1}
\end{figure}
\end{example}

Next, we study the non-compact cases.

\begin{lemma}\label{lem:non-compact}
For any Euclidean quiver $Q$ and $\sink\in\Ind\vect(Q)$, the $\sink$-foliation is non-compact.
\end{lemma}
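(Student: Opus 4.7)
The goal is to exhibit at least one non-compact $X$-leaf when $X \in \Ind\vect(Q)$ is transjective. My plan is to use an infinite-cell-crossing strategy: because $X$ is transjective, $\C(Q)$ has an infinite $\tau$-orbit inside its transjective component, hence an infinite family of clusters obtained by iterated mutation in a fixed direction (in sharp contrast to the Dynkin case where such a mutation chain must terminate). I would then show that a single downward $X$-leaf traverses infinitely many of these top-cells.

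First I would fix a starting top-cell $\pcts{V}_0$ adjacent to $\source$ and apply \Cref{lem:cell-crossing-b} to identify the codim-$1$ walls through which the downward $X$-flow exits; namely those $\mut{V_0}{\alpha}$ with $b_\alpha(\pcts{V}_0) < 0$. I would then construct an infinite sequence $\pcts{V}_0,\pcts{V}_1,\ldots$ of pairwise distinct top-cells, each connected to the next by such an exit wall, so that a \emph{single} leaf $L$ passes through every $\pcts{V}_k$. In type $\widetilde{A_{p,q}}$, a natural candidate is the chain of clusters obtained by successively rotating a transjective arc around the inner boundary of the annulus model; the required sign $b_{\alpha_k}(\pcts{V}_k) < 0$ then reduces to a $g$-vector/$c$-vector computation via the tropical duality \Cref{eq:T-dual} and the sign-coherence \eqref{eq:sing-c}. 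For a general Euclidean $Q$, I would replace this geometric chain by its algebraic analogue obtained from $\tau$-translation inside the transjective component. Geometrically, this corresponds to the fact that the transjective $g$-cones in $M_\RR$ accumulate on the null-root ray $\RR_{\ge 0}\delta$, which is \emph{not} a point of $\CCx(\C(Q))$; a leaf chasing this accumulation cannot reach $\sink$ or $\source$ in finitely many cell-crossings.

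Once such a sequence is in hand, non-compactness is immediate: by \Cref{thm:foliation} the $X$-foliation is piecewise linear, so any compact leaf $L \simeq [0,1]$ meets only finitely many top-cells; hence a leaf traversing infinitely many cells cannot be compact.

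The main obstacle, as I see it, is the step of producing the sequence $\{\pcts{V}_k\}$ \emph{for the same leaf}, i.e.\ verifying that the persistent sign condition $b_{\alpha_k}(\pcts{V}_k)<0$ holds and that the exiting $L$ at step $k$ is the entering $L$ at step $k{+}1$. This is really a compatibility statement between the downward $X$-evolving triangles for $\pcts{V}_k$ and for $\pcts{V}_{k+1}$, which I expect can be controlled by \Cref{pp:complete} together with the transformation of the downward partition \eqref{eq:d-part} under a single mutation. An alternative route, should the direct construction become cumbersome, is to argue by contradiction via \Cref{lem:compact}: compactness would yield the suspension equivalence \eqref{eq:susp} together with a globally defined $\sink$-trace $\des$ (\Cref{thm:const}); one would then contradict compactness by exhibiting two distinct leaves with the same $\des$-image, using the infinitely many mutation-equivalent clusters that share the same image in $\CCx(\C\backslash\sink)$.
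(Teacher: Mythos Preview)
Your proposal takes a much harder route than the paper, and the gaps you yourself identify are real and not easily closed. The paper's proof is a three-line contradiction using homotopy types that have \emph{already been computed}: since $\sink\in\Ind\vect(Q)$, there is a vertex $0$ with $\sink$ in the $\tau$-orbit of $P_0$, so $\CQ\backslash\sink\cong\C(\Qminus)$ with $\Qminus$ Dynkin. If the $\sink$-foliation were compact, then \Cref{lem:compact} would give $\CCx(Q)\simeq\Sigma\CCx(\Qminus)$, which is spherical by \Cref{thm:Dynkin}. But \Cref{thm:Ess} says $\CCx(Q)$ is contractible, contradiction. No leaf is ever constructed.

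Your direct approach is not wrong in spirit, but the step you flag as ``the main obstacle'' is genuinely hard: showing that a \emph{single} leaf crosses infinitely many walls requires tracking not just the sign $b_{\alpha_k}(\pcts{V}_k)<0$ but also that the specific point where the leaf hits the wall $\mut{V_k}{\alpha_k}$ lies in the entry region of $\pcts{V}_{k+1}$, and this recursively. The paper never does this computation, even in the $\widetilde{A_{p,q}}$ case treated in \Cref{thm:E-charge}, where semi-compactness is proved by a completely different interval-shrinking argument. Your alternative contradiction via $\des$ is also off target: two distinct leaves sharing a $\des$-image does not by itself contradict compactness, since $\des$ is only asserted to be constant on each leaf (\Cref{thm:const}), not injective on leaves. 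The actual contradiction the paper exploits is global-topological, not trace-based.
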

\begin{proof}
There exist a vertex $0$ of $Q$ such that $\sink$ is in the $\tau$-orbit of the projective $P_0$ in $\CQ$.
Then we have \eqref{eq:Qred}.
Suppose the $\sink$-foliation is compact,
then there is a homotopy equivalence \eqref{eq:susp} by \Cref{lem:compact}.
Combining with \eqref{eq:Qred}, we have
\[
\CCx(Q)\simeq\Sigma\CCx(\Qminus).
\]
Since $\Qminus$ is Dynkin, the homotopy type of $\CCx(\Qminus)$ is spherical as in \Cref{thm:Dynkin} and so does $\CCx(Q)$,
which contradicts to \Cref{thm:Ess}.
\end{proof}

We will give some semi-compact examples from the Euclidean quiver of type $\EucA{p}{q}$ in the following theorem, where the proof is in \Cref{sec:P3}.

\begin{theorem}\label{thm:E-charge}
For an $\EucA{p}{q}$ quiver $Q$ and any $\sink\in\Ind\vect(Q)$,
the $\sink$-foliation in $\CCx(Q)$ is semi-compact.
\end{theorem}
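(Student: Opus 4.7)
The plan is to use the downward $\sink$-trace
$\des \colon \CCx(\C) \setminus \{\sink, \source\} \to \CCx(\C\backslash\sink)$
of \Cref{thm:const}, which is constant on each $\sink$-leaf. A maximal leaf $L$ is semi-compact exactly when at least one of its two ends accumulates at $\sink$ or $\source$; equivalently, $L$ must eventually enter $\Star(\sink)$ under $\dflow{\sink}$ or $\Star(\source)$ under $\uflow{\sink}$, since the radial local foliation in these stars then forces termination at the corresponding singularity. Hence it suffices to show that in the $\EucA{p}{q}$ setting with $\sink \in \Ind\vect(Q)$, no maximal $\sink$-leaf remains forever outside $\Star(\sink) \cup \Star(\source)$.

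First I would pin down the target of $\des$. By \Cref{cor:red} combined with the observation that deleting any vertex from the cyclic underlying graph of $\EucA{p}{q}$ yields a type $A$ path, one has $\C(Q)\backslash\sink \simeq \C(\DynA{p+q-1})$. By \Cref{thm:Dynkin} the target $\CCx(\C\backslash\sink)$ is homotopy equivalent to $\SS^{p+q-2}$, and each leaf $L$ is uniquely labelled by a point of this cluster complex; the question reduces to analysing, for each such label, the finitely many leaves with that trace.

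Next I would exploit the geometric model of $\CCx(\EucA{p}{q})$ as the arc complex of an annulus with $p$ marked points on one boundary and $q$ on the other, where transjective indecomposables correspond to arcs crossing between the two boundaries and tube indecomposables to arcs within a single boundary. Under this identification the cell-crossings of the $\sink$-foliation correspond to flips of triangulations, and \Cref{lem:cell-crossing-b} prescribes the direction of each flip via the sign of $b_\alpha$. I would then construct a non-negative integer-valued potential $\Phi$ along $\dflow{\sink}$ --- for instance the intersection count of the current cluster with a fixed dual arc attached to $\sink$ --- and prove that $\Phi$ strictly decreases at every non-degenerate cell-crossing until the leaf reaches $\Star(\sink) \cup \Star(\source)$. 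Such a strict monotone simultaneously rules out closed leaves and, by finiteness of values, forces every leaf to terminate at $\sink$ or $\source$.

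The main obstacle will be twofold. First, constructing $\Phi$ genuinely strictly decreasing --- rather than merely non-increasing --- requires handling the degenerate cell-crossings with $b_\alpha = 0$ (cf.\ \Cref{lem:cell-crossing-b}), where the leaf glides tangentially along a codimension-one wall; one must argue that such tangential segments cannot chain indefinitely away from the two stars, presumably via a finiteness argument on top-cells meeting a given $\des$-fiber. Second, although \Cref{lem:non-compact} already forbids a compact foliation when $\sink \in \Ind\vect(Q)$, ruling out leaves open at \emph{both} ends is precisely the semi-compactness content (cf.\ the right of \Cref{fig:2shapes}); here I would compare the two traces $\des$ and $\ori$ via the identification in \Cref{lem:1-1}, arguing that if an end of $L$ fails to reach $\sink$ under $\dflow{\sink}$ then the equality of traces forces the opposite end to reach $\source$ under $\uflow{\sink}$, thereby establishing semi-compactness.
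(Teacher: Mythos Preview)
Your proposal has the right high-level shape---a decreasing integer potential along the flow---but it misses the structural fact that actually makes the $\EucA{p}{q}$ case work, and the final trace argument does not do what you claim. The paper's proof hinges on a dichotomy specific to affine type~$A$: for any partial cluster $\pcts{\vv}$, the right approximation $\dapp{\vv}$ of $\source$ lies either entirely in $[\uat\sink,\infty)\cup\tube(Q)$ or entirely in $\tube(Q)\cup(-\infty,\source]$, because $\Ext^1$ between these two half-lines of $\vect(Q)$ never vanishes. This splits the cluster complex into two regions, and the paper then defines explicit interval invariants $\II|_{\tube}(\pcts{\vv})$ and $\II|_{\vect}(\pcts{\vv})$ in the AR-quiver (not intersection numbers on the surface) and proves, via a factorisation lemma analogous to \Cref{lem:ccross1}, that they shrink strictly under cell-crossing along $\dflow{\sink}$ whenever $\dapp{\vv}$ lies in the ``plus'' region. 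This forces such a leaf to reach $\sink$; the dual argument handles the ``minus'' region and reaches $\source$. Without this dichotomy you have no mechanism to decide \emph{which} singularity a given leaf should reach, and a single potential ``until the leaf reaches $\Star(\sink)\cup\Star(\source)$'' conflates the two directions: the downward flow can only terminate at $\sink$, never at $\source$.

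Your last paragraph is where the argument actually breaks. The identification $\ueot,\ueby$ of \Cref{lem:1-1} holds for every leaf regardless of its compactness type, so the equality $\des(L)=\ori(L)$ (up to that identification) carries no information about whether either end converges. You would need an additional argument---essentially the dichotomy above---to extract from the trace which end must terminate. Likewise, the remark that ``the question reduces to analysing, for each such label, the finitely many leaves with that trace'' is not justified: the $\des$-fibres over $\CCx(\DynA{p+q-1})$ are not finite in general (the complex $\CCx(\EucA{p}{q})$ has infinitely many top-cells), so finiteness must come from the decreasing-interval mechanism, not from counting leaves per trace.
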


\begin{figure}[hbt]
    \centering

    \makebox[\textwidth][c]{
    \includegraphics[width=1.2\linewidth]{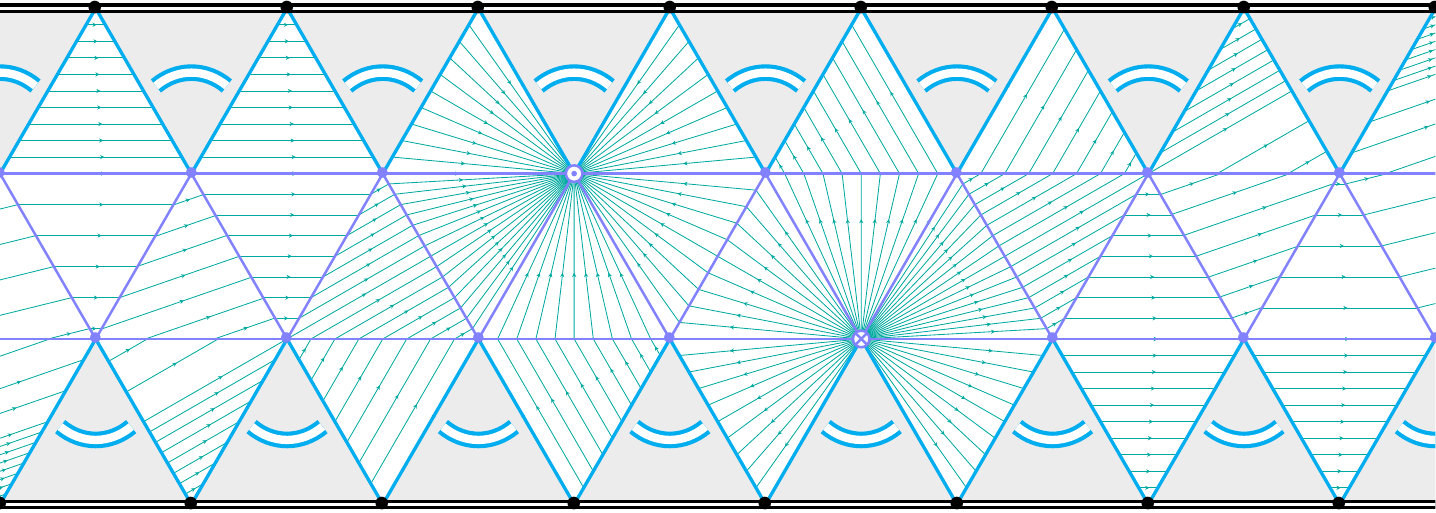}}

    \caption{A semi-compact $\sink$-foliation for $\CCx(\widetilde{A_{1,2}})$}
    \label{fig:A12-sfoli}
\end{figure}

\begin{example}\label{EX:A12}
Continue the $\EucA{1}{2}$ example in \Cref{ex:A12-cfoli}.
We have various semi-compact $\xevo$-foliations as follows.
\begin{itemize}
    \item \Cref{fig:A12-sfoli} illustrates the $P_1$-foliation.
    \item The first picture in \Cref{fig:A12-F-F*} illustrates the $\real{\pcto{P}}$-foliation,
    for the initial cluster
    \[
    \pcto{P}=\k Q=P_1\oplus P_2\oplus P_3.
    \]
    The second picture shows its induced green mutation.
    \item The third picture in \Cref{fig:A12-F-F*} illustrates the $\real{\pcto{U}}$-foliation,
    for the cluster
    \[
    \pcto{U}=\mu_{P_2}(\pcto{P})=P_1\oplus S^{(0)}_{1,1}\oplus P_3.
    \]
    The last picture shows its induced green mutation.
\end{itemize}
\end{example}

\begin{figure}
    \centering\makebox[\textwidth][c]{
    \includegraphics[width=1\linewidth]{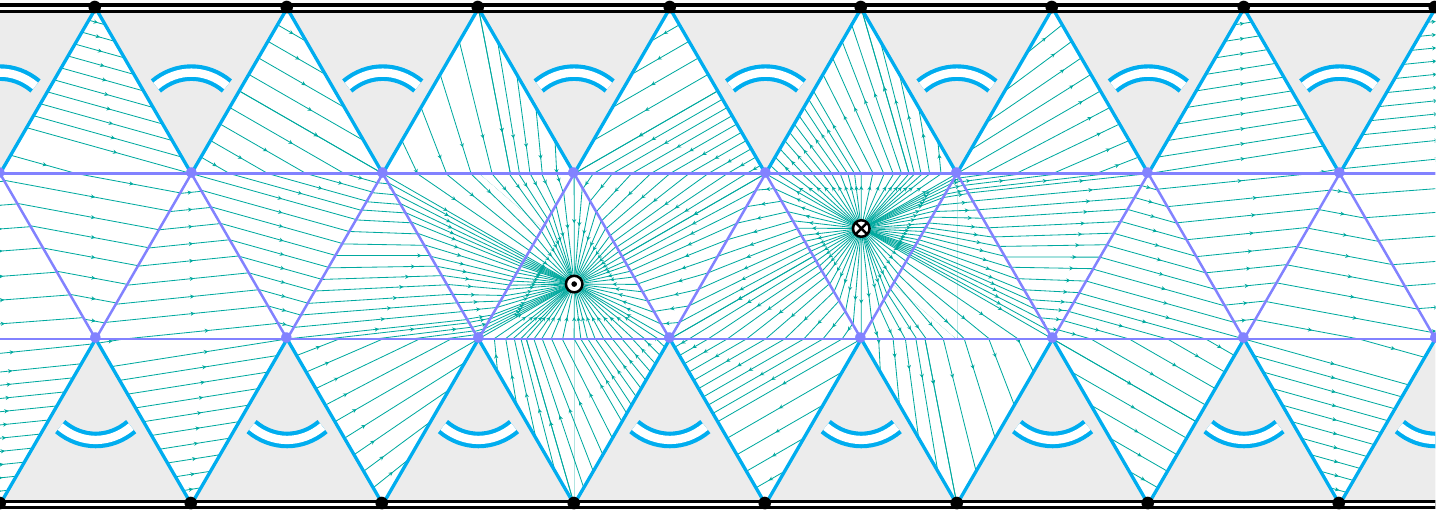}}
    \makebox[\textwidth][c]{
    \includegraphics[width=1\linewidth]{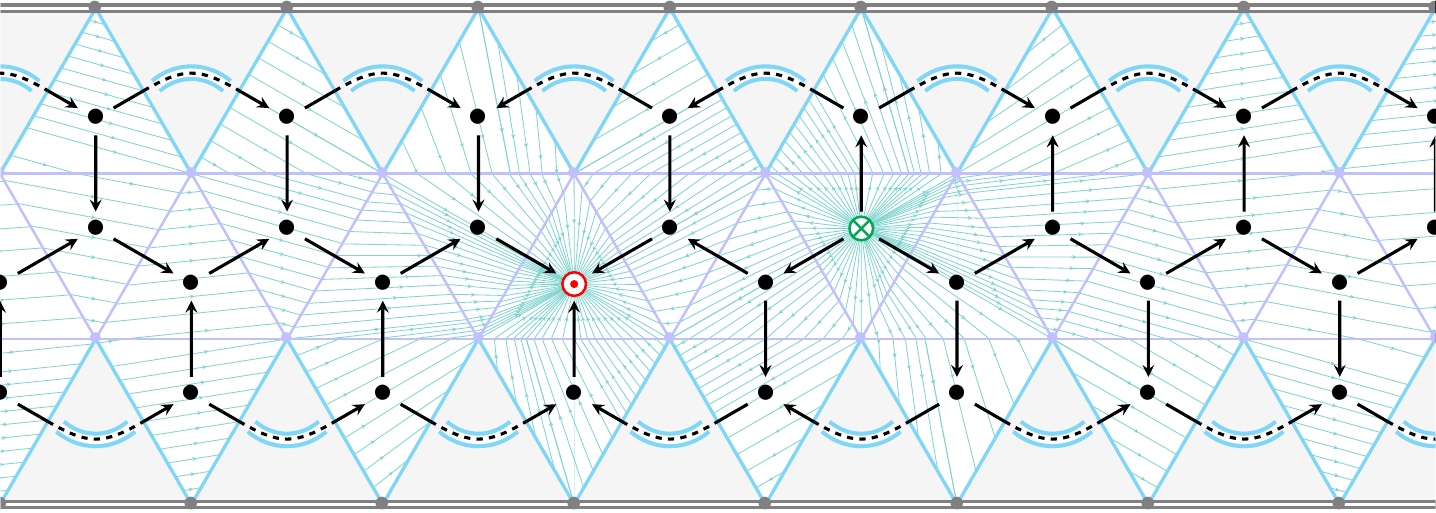}}
    \makebox[\textwidth][c]{
    \includegraphics[width=1\linewidth]{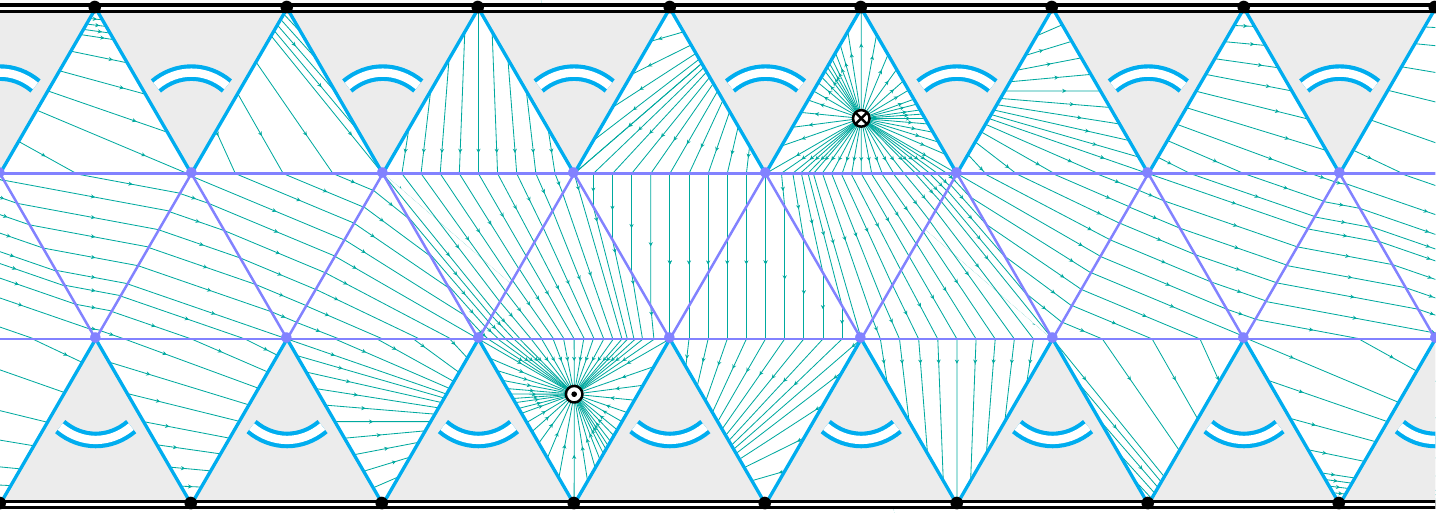}}
    \makebox[\textwidth][c]{
    \includegraphics[width=1\linewidth]{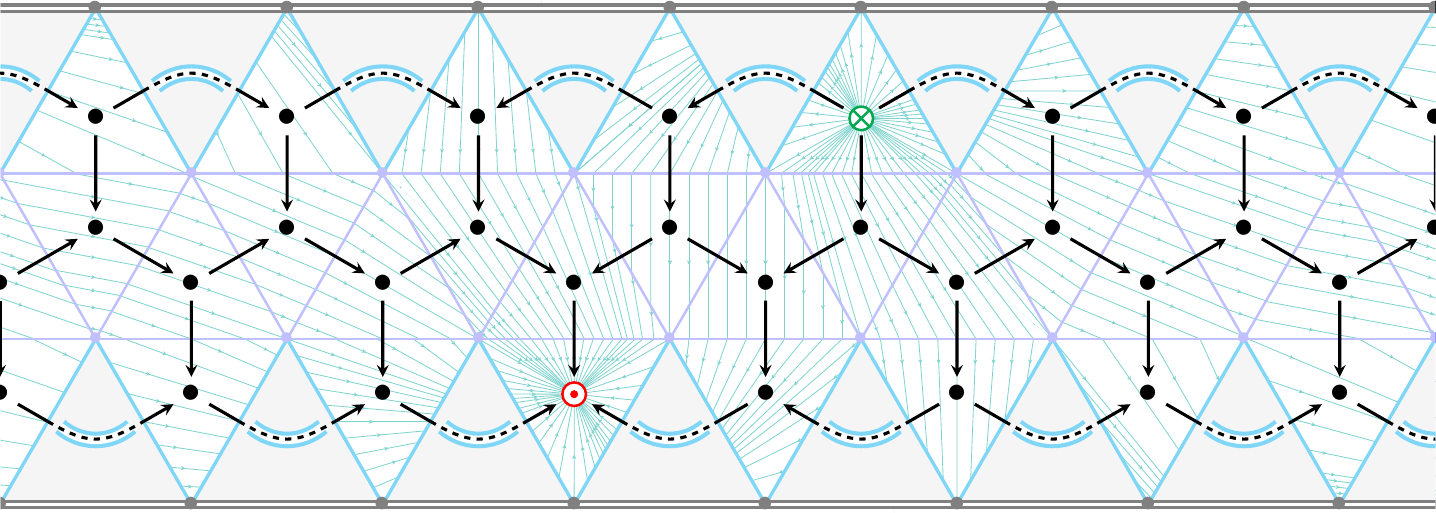}}
    \caption{$\xevo$-foliations: face center cases and induced orientations on $\uCEG(\EucA{1}{2})$}.
    \label{fig:A12-F-F*}
\end{figure}

\section{Application to cluster exchange graphs}\label{sec:ceg}
In this section, let $\C=\C(\class)$ be the cluster category associated to a mutation-equivalent classes of non-degenerate quivers with potential, cf. \cite[\S~2]{KQ2} for more details.
For each cluster $\pcts{\vv}$ in $\C(\class)$,
there is an associated quiver $Q_{\pcts{\vv}}$, which is the Gabriel quiver of $\pcts{\vv}$.
More precisely, the vertices of $Q_{\pcts{\vv}}$ are indecomposables in $\pcts{\vv}$
and the arrows correspond to (basis of) the irreducible morphisms between the vertices.

Let $\dim\CCx(\C)=n-1$.
Then any loop in $\CCx(\C)$ is homotopy to a loop in $\CCx^{\ge n-2}(\C)$.
Recall from \Cref{rem:dual}, we know that,
\begin{gather}\label{eq:Cpx-CEG}
    \CCx^{\ge n-2}(\C)\simeq\uCEG(\C).
\end{gather}

There are length four and five loops in $\uCEG(\C)$, known as the squares and pentagon.
One has the following well-known lemma, cf. \Cref{fig:45-gon} and \cite[\S2]{KQ2}.

\begin{lemma}
Let $\pcts{\vv}=\{\vv_i\}_{i=0}^n$ be any cluster in $\uCEG(\C)$
and take $\vv_i\ne \vv_j$ in $\pcts{\vv}$.
\begin{itemize}
  \item If $\dim\Irr_{\pcto{\vv}}(\vv_i,\vv_j)=0$, then there is a square in $\uCEG(\C)$.
  \item If $\dim\Irr_{\pcto{\vv}}(\vv_i,\vv_j)=1$, then there is a pentagon in $\uCEG(\C)$.
\end{itemize}
\end{lemma}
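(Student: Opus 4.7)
My plan is to reduce the problem to an analysis of the rank-2 cluster exchange graph via iterated Calabi--Yau reduction, so that squares and pentagons appear as the whole of $\uCEG(\C')$ for appropriate rank-2 subquotients $\C'$.

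Set $\pcts{T} := \pcts{\vv}\setminus\{\vv_i,\vv_j\}$, which is a codimension-$2$ face of $\pcts{\vv}$. By iterating the Calabi--Yau reduction of \Cref{sec:CTS} at each vertex of $\pcts{T}$, one obtains a 2-Calabi--Yau category $\C' := \C\backslash \pcts{T}$ whose cluster tilting sets biject with the cluster tilting sets of $\C$ containing $\pcts{T}$. The iterated form of \eqref{eq:red} then identifies $\link(\pcts{T})$ with $\CCx(\C')$, and dually the full subgraph of $\uCEG(\C)$ spanned by clusters containing $\pcts{T}$ is exactly $\uCEG(\C')$, viewed as the $1$-skeleton of the dual of $\CCx(\C')$. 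Moreover, since reduction kills morphisms factoring through $\Add(\pcts{T})$, the Gabriel quiver of the cluster $\{\bar{\vv}_i, \bar{\vv}_j\}$ in $\C'$ is precisely the full subquiver of $Q_{\pcts{\vv}}$ on the two vertices $\{i,j\}$.

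For the \textbf{square case}, the hypothesis $\dim\Irr_{\pcto{\vv}}(\vv_i,\vv_j)=0$ together with the no-$2$-cycle property of Gabriel quivers of cluster tilting objects in 2-CY categories (which forces $\dim\Irr_{\pcto{\vv}}(\vv_j,\vv_i)=0$ as well) means that the Gabriel quiver of $\{\bar{\vv}_i,\bar{\vv}_j\}$ in $\C'$ has no arrows. Then $\C'$ decomposes as a disjoint union of two rank-$1$ cluster categories, so by \eqref{eq:union} its cluster exchange graph is $\uCEG(\C')\cong\uCEG(A_1)\square\uCEG(A_1)$, a $4$-cycle. This yields the desired square in $\uCEG(\C)$. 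For the \textbf{pentagon case}, $\dim\Irr_{\pcto{\vv}}(\vv_i,\vv_j)=1$ combined with the no-$2$-cycle property shows that the Gabriel quiver in $\C'$ is of type $A_2$; since $\C$ is the cluster category of a Jacobi-finite non-degenerate quiver with potential, Keller--Yang's equivalence identifies $\C'\simeq\C(A_2)$, and $\uCEG(A_2)$ is a pentagon by direct inspection (cf.\ \Cref{fig:A2}).

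The main obstacle is justifying the rank-$2$ classification and the reduction formalism in enough generality: one must confirm that the iterated reduction $\C\backslash \pcts{T}$ satisfies the hypotheses of \Cref{sec:Red} at each step (rigidity is preserved, and the relevant perpendicular categories are functorially finite), and that the resulting 2-CY category of rank $2$ whose Gabriel quiver is $A_2$ is actually equivalent to $\C(A_2)$ rather than only having the same exchange combinatorics. Under the standing assumption on $\C$ in this section, Keller--Yang takes care of the latter point; otherwise one would have to verify the pentagon combinatorics directly by computing the five exchange triangles and showing that the sequence $\mu_j\mu_i\mu_j\mu_i\mu_j$ restores $\pcts{\vv}$, which is a routine if slightly tedious calculation using the Wakamatsu-type argument in \Cref{def:evo-tri}.
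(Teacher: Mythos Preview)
The paper does not actually prove this lemma: it is stated as ``well-known'' with a reference to \cite[\S2]{KQ2} and an illustrative figure, and no argument is given. So there is no proof in the paper to compare against; your Calabi--Yau reduction to rank~2 is precisely the standard justification one would supply, and the overall strategy is sound.

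That said, there is a genuine logical slip in the square case. You write that $\dim\Irr_{\pcto{\vv}}(\vv_i,\vv_j)=0$ ``together with the no-$2$-cycle property \dots\ forces $\dim\Irr_{\pcto{\vv}}(\vv_j,\vv_i)=0$ as well''. This is false: the no-$2$-cycle property says only that $\Irr(\vv_i,\vv_j)$ and $\Irr(\vv_j,\vv_i)$ cannot both be nonzero; it gives no information when one of them already vanishes. Indeed, if $\dim\Irr(\vv_i,\vv_j)=0$ but $\dim\Irr(\vv_j,\vv_i)=1$, the reduced category is of type $A_2$ and you get a pentagon, not a square. The lemma as stated in the paper is therefore slightly imprecise on this point; the honest hypothesis for the square is that there are no arrows in \emph{either} direction (equivalently, $\dim\Irr(\vv_i,\vv_j)+\dim\Irr(\vv_j,\vv_i)=0$). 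You should either read the hypothesis that way or add it explicitly, rather than appeal to a property that does not deliver the needed conclusion.

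A smaller point: in the pentagon case you do not strictly need Keller--Yang to identify $\C'\simeq\C(A_2)$. It suffices to observe that in any $2$-CY category with a cluster tilting object whose Gabriel quiver is a single arrow, the two exchange triangles of \Cref{def:mut} force the five-periodicity directly; this avoids the standing assumption on $\C$ and makes the argument self-contained.
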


\begin{figure}
    \centering
\begin{tikzpicture}[scale=1.5]
\begin{scope}[shift={(0,0)}]
\node (y1) at (0,0) {$\pcts{\vv}$};
\node (y2) at (1,1) {$\pcts{\vv}_i$};
\node (y3) at (1,-1) {$\pcts{\vv}_j$};
\node (y4) at (2,0) {$\pcts{\vv}_{ij}$};
\draw[thick] (y1) to (y2) (y1) to (y3) (y2) to (y4) (y3) to (y4);
\draw
(.3,.7) node[font=\small]{$\vv_i$}
(1.7,.7) node[font=\small]{$\vv_j$}
(.3,-.7) node[font=\small]{$\vv_j$}
(1.7,-.7) node[font=\small]{$\vv_i$}
;
\end{scope}
\begin{scope}[shift={(3,0)}]
\node (y1) at (0,0) {$\pcts{\vv}$};
\node (y2) at (.75,1) {$\pcts{\vv}_i$};
\node (y3) at (2.25,1) {$\pcts{\vv}_*$};
\node (y4) at (1.5,-1) {$\pcts{\vv}_j$};
\node (y5) at (3,0) {$\pcts{\vv}_{ij}$};
\draw[thick] (y1) to (y2) (y2) to (y3) (y3) to (y5) (y1) to (y4) (y4) to (y5);
\draw
(.2,.7) node[font=\small]{$\vv_i$}
(2.8,.7) node[font=\small]{$\vv_j$}
(1.5,1.2) node[font=\small]{$Y_j$}
(.6,-.7) node[font=\small]{$\vv_j$}
(2.4,-.7) node[font=\small]{$\vv_i$}
;
\end{scope}
\end{tikzpicture}
    \caption{The squares and pentagons}
    \label{fig:45-gon}
\end{figure}

A key assumption about $\uCEG(\C)$ in \cite{KQ2} is the following.
\begin{itemize}
  \item the fundamental group $\pi_1\CEG$ is generated by squares and pentagons.
\end{itemize}
It indeed holds for various cases:
\begin{itemize}
  \item If $\C=\CQ$ for a Dynkin quiver $Q$, it was studied in \cite{Q1}, cf. \cite{QW}.
  \item If $\C=\C(\surf)$ for a marked surface $\surf$, then it holds by \cite{FST}.
\end{itemize}

In particular, we can give a uniform prove for all Dynkin and Euclidean quivers.

\begin{lemma}\label{lem:starsum}
Suppose that $\CCx(\C)$ is simply connected.
For any $[\alpha]\in\pi_1\CCx^{\ge n-2}(\C)$, there are finite $0$-cells $\{\sink_i\}_{i\in I}$ and $[\alpha_i]\in\pi_1\Star^{\ge n-2}(\sink_i)$ such that
\[
[\alpha]=\sum_{i\in I}[\alpha_i].
\]
\end{lemma}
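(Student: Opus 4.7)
The plan is to use the simple-connectedness of $\CCx(\C)$ to fill $\alpha$ by a disk, and then excise the portions of the disk that touch the codimension-two skeleton $\CCx^{\le n-3}(\C)$; each excision will produce a small loop around some codimension-two cell $\sigma_k$, which automatically lies in the star of any vertex of $\sigma_k$.

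First, by the simple-connectedness hypothesis, I would extend $\alpha$ to a continuous $F\colon D^2\to\CCx(\C)$. After a general-position/PL perturbation of $F$ rel $\partial D^2$ (noting that $\alpha$ itself already maps into $\CCx^{\ge n-2}(\C)$), I would arrange that $F^{-1}(\CCx^{\le n-3}(\C))$ consists of finitely many interior points $p_1,\dots,p_m\in D^2$, each mapping to the interior of a unique codimension-two cell $\sigma_k$ of $\CCx(\C)$. The key dimension count is $\dim D^2+\dim\CCx^{\le n-3}(\C)=2+(n-3)=\dim\CCx(\C)-1$, which forces generic contact to be $0$-dimensional, while the lower skeleta $\CCx^{\le n-4}(\C)$ can be avoided entirely.

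Next, I would excise small pairwise-disjoint open disks $B_k\subset D^2$ around the $p_k$'s, chosen small enough that $F(\overline{B_k}\setminus\{p_k\})$ lies in a punctured neighborhood of $\sigma_k^\circ$ disjoint from $\CCx^{\le n-3}(\C)$. The restriction of $F$ to the resulting holey disk $D':=D^2\setminus\bigsqcup_k B_k$ then takes values in $\CCx^{\ge n-2}(\C)$ and exhibits a homotopy
\[
[\alpha]=\prod_{k=1}^m[\gamma_k\cdot\beta_k\cdot\gamma_k^{-1}]
\]
in $\pi_1\CCx^{\ge n-2}(\C)$, where $\beta_k:=F|_{\partial B_k}$ and the $\gamma_k$'s are connecting paths in $D'$ from a fixed basepoint to $\partial B_k$. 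For each $k$, I pick any vertex $\sink_k\in\sigma_k$. A small punctured neighborhood of $\sigma_k^\circ$ in $\CCx(\C)$ is the union of open top and codimension-one cells having $\sigma_k$ as a face; each such cell contains $\sink_k$ because $\sink_k\in\sigma_k$, so this neighborhood sits inside $\Star^{\ge n-2}(\sink_k)$. Hence $\beta_k\subset\Star^{\ge n-2}(\sink_k)$, and setting $[\alpha_k]:=[\gamma_k\beta_k\gamma_k^{-1}]$ (transported to the basepoint via $\gamma_k$) yields the required decomposition, with the symbol $\sum$ read as the product in $\pi_1\CCx^{\ge n-2}(\C)$ (or literally as a sum after abelianising).

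The main obstacle I anticipate is the general-position step, since $\CCx(\C)$ is a simplicial complex rather than a smooth manifold. The clean approach is to first apply simplicial approximation after sufficient subdivision of $D^2$ so that $F$ becomes PL, and then push any $2$-simplex of $D^2$ that collapses onto the $(n-3)$-skeleton off by a small perturbation inside a regular neighborhood of $\CCx^{\le n-3}(\C)$, leaving only isolated transverse crossings with the codimension-two cells. The final combinatorial containment $\beta_k\subset\Star^{\ge n-2}(\sink_k)$ is purely simplicial: any $(\ge n-2)$-simplex having $\sigma_k$ as a face necessarily contains every vertex of $\sigma_k$, and in particular $\sink_k$.
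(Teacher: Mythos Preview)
Your approach is correct in outline but takes a genuinely different route from the paper.  The paper's argument is more elementary: having filled $\alpha$ by a disk $D_\alpha$, it simply uses the open cover $\CCx(\C)=\bigcup_{\sink}\Star^\circ(\sink)$ by open stars of vertices, extracts a finite subcover of the compact disk $D_\alpha$, and chops $D_\alpha$ into subdisks $D_{\alpha_i}$ each lying in a single $\Star^\circ(\sink_i)$; the boundaries $\alpha_i$ give the required decomposition, and a final perturbation pushes them into $\Star^{\ge n-2}(\sink_i)$.  No transversality or general position is needed---just compactness and the fact that open stars of vertices cover the complex.

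Your transversality/excision argument also works, and has the pleasant feature of identifying the $\alpha_i$ explicitly as meridian loops around codimension-two cells.  Two remarks, though.  First, a small arithmetic slip: since $\dim\CCx(\C)=n-1$, one has $2+(n-3)=n-1=\dim\CCx(\C)$, not $\dim\CCx(\C)-1$; this is exactly the equality needed for the generic preimage to be $0$-dimensional, so your conclusion stands.  Second, the general-position step you flag is indeed the delicate point: $\CCx(\C)$ is not a manifold, so one needs the pseudomanifold structure (every $(n-2)$-simplex lies in exactly two top simplices, by mutation) together with the fact that every simplex is a face of a top simplex to justify the PL perturbation.  This can be done, but the paper's open-cover argument avoids the issue entirely and is correspondingly shorter.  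Both proofs share the same informality about basepoints, which is harmless for the intended application to normal generation of $\pi_1\uCEG(Q)$.
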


\begin{proof}
Let $\alpha:[0,1]\to\CCx^{\ge n-2}(\C)$ be a representative of $[\alpha]$.
Since $\CCx(\C)$ is simply connected, $\alpha$ can be realized as the boundary of a contractible disk $D_\alpha$.

Note that there is an open cover for $\CCx(\C)$ as follows.
\[
\CCx(\C)=\bigcup_{\sink\in\C} \Star^\circ(\sink),
\]
where $\Star^\circ(\sink)=\Star(\sink)\setminus\link(\sink)$.
By the compactness of $D_\alpha$, there is a finite subcover as follows.
\[
D_\alpha\subset\bigcup_{i\in I} \Star^\circ(\sink_i).
\]
Hence one can decompose $\D_\alpha$ into finitely many small disks
\[
D_\alpha=\bigcup_{i\in I}D_{\alpha_i}
\]
such that each $D_{\alpha_i}$ is in $\Star^\circ(\sink_i)$ with boundary $\alpha_i$.
Hence, we have $[\alpha_i]\in\pi_1\Star^\circ(\sink_i)$ and $[\alpha]=\sum_{i\in I }[\alpha_i]$.
By a small perturbation up to homotopy, we can make $[\alpha_i]\in\pi_1\Star^{\ge n-2}(\sink_i)$ as required.
\end{proof}

\begin{theorem}
For any Dynkin or Euclidean quiver $Q$,
$\pi_1\uCEG(Q)$ is generated by squares and pentagons.
\end{theorem}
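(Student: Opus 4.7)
The plan is to induct on $n=|Q_0|$. The small cases are handled directly: for $n=1$ and for $\widetilde{A_{1,1}}$, $\uCEG$ is a tree (so $\pi_1$ is trivial), while for $n=2$ Dynkin, $\uCEG$ is either a pentagon ($A_2$) or a square ($A_1\sqcup A_1$), and the generator is the polygon itself.

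For the inductive step, assume $n\ge 3$. Combining \Cref{thm:Dynkin}, \Cref{thm:Ess}, and the join formula \eqref{eq:union} together with $\SS^{a}*\SS^{b}\simeq\SS^{a+b+1}$ (to handle the disconnected Dynkin case), $\CCx(Q)$ is simply connected. Thus \Cref{lem:starsum}, together with the identification $\uCEG(Q)\simeq\CCx^{\ge n-2}(Q)$ from \eqref{eq:Cpx-CEG}, decomposes any class $[\alpha]\in\pi_1\uCEG(Q)$ as a finite sum $\sum_i[\alpha_i]$, with each $[\alpha_i]$ supported in $\Star^{\ge n-2}(\sink_i)$ for some rigid indecomposable $\sink_i$.

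Next, each $[\alpha_i]$ should be reduced to a loop in a smaller cluster exchange graph. The 1-skeleton of the dual of $\Star(\sink_i)$ in $\CCx(Q)$ is exactly the full subgraph of $\uCEG(Q)$ on clusters containing $\sink_i$. Under the Calabi--Yau reduction identification $\link(\sink_i)\cong\CCx(\C\backslash\sink_i)$ of \eqref{eq:red}, this subgraph is precisely $\uCEG(\C\backslash\sink_i)$, via $\pcts{\vv}\cup\{\sink_i\}\mapsto\pcts{\vv}$. By \Cref{cor:red}, $\C\backslash\sink_i\cong\C(\hat Q_i)$ for a (possibly disconnected) Dynkin/Euclidean quiver $\hat Q_i$ on $n-1$ vertices, so the inductive hypothesis expresses $[\alpha_i]$ as a product of conjugates of squares and pentagons of $\uCEG(\hat Q_i)$.

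Finally, each such square or pentagon must be shown to remain a square or pentagon when viewed in $\uCEG(Q)$. The key point is that mutation commutes with CY-reduction: the Gabriel quiver of the cluster $\pcts{\vv}\cup\{\sink_i\}$ in $\C$ restricts (by deleting the vertex $\sink_i$) to the Gabriel quiver of $\pcts{\vv}$ in $\C\backslash\sink_i$, so for any two indecomposables $\vv_j,\vv_k\in\pcts{\vv}$ the irreducible-morphism dimension $\dim\Irr_{\pcto{\vv}}(\vv_j,\vv_k)$ is preserved. Hence squares and pentagons are preserved under the reduction, and the induction closes. The main technical obstacle to pin down is precisely this preservation of Gabriel quivers (equivalently, of irreducible morphism dimensions between non-$\sink_i$ summands) under Calabi--Yau reduction; once this standard compatibility is granted, the argument is essentially formal from the machinery already developed in the paper.
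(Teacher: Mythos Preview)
Your proposal is correct and follows essentially the same route as the paper's proof: induction on $n$, the same base cases, simple connectedness of $\CCx(Q)$ to invoke \Cref{lem:starsum}, and reduction to $\uCEG(\hat Q_i)$ via \Cref{cor:red}. The paper phrases the reduction step as the homotopy equivalence $\Star^{\ge n-2}(\sink)\simeq\CCx^{\ge n-3}(\C(Q)\backslash\sink)$ induced by $\proj{\sink}$, which is exactly your identification of the subgraph of $\uCEG(Q)$ on clusters containing $\sink_i$ with $\uCEG(\C\backslash\sink_i)$; your extra remark on preservation of $\dim\Irr$ under Calabi--Yau reduction makes explicit a point the paper leaves implicit.
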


\begin{proof}
We use induction on the number of vertices $n$ of $Q$.
The $n=1$ case is trivial and for $n=2$, there are three cases: $Q=\DynA{1}\times\DynA{1},\DynA{2}$ or $\EucA{1}{1}$.
The theorem holds as
\begin{itemize}
    \item $\uCEG(\DynA{1}\times\DynA{1})$ is a square;
    \item $\uCEG(\DynA{2})$ is a pentagon;
    \item $\uCEG(\EucA{1}{1})$ is contractible.
\end{itemize}
Now assume the theorem holds for Dynkin/Euclidean quivers with vertices less than $n$ and consider the case for $n>2$.
By \eqref{eq:Cpx-CEG}, it is equivalent to show $\pi_1\CCx^{n\ge2}(Q)$ is generated by squares and pentagons.

Since $\CCx(Q)$ is either spherical or contractible by \Cref{thm:Dynkin} and \Cref{thm:Ess}.
By \Cref{lem:starsum}, for each $[\alpha]\in\CCx^{\ge n-2}(Q)$, there are $0$-cells $\{\sink_i\}_{i\in I}$ and $[\alpha_i]\in\pi_1\Star^{\ge n-2}(\sink_i)$ such that
\[
[\alpha]=\sum_{i\in I}[\alpha_i].
\]
Note that for each $0$-cell $\sink$, the projection $\proj{\sink}$ (cf. \eqref{eq:proj}) induces a homotopy equivalence as follows.
\[
\Star^{\ge n-2}(\sink)\simeq\CCx^{\ge n-3}(\C(Q)\backslash\sink).
\]
By \Cref{cor:red}, $\CQ\backslash\sink_i\cong\C(\hat{Q})$ for some subquiver $\hat{Q}$ with $n-1$ vertices whose connected components are Dynkin or Euclidean.
As a result, we have
\[
\pi_1\Star^{\ge n-2}(\sink)\cong\pi_1\CCx^{\ge n-3}(\hat{Q}).
\]
Using the inductive assumption, we deduce that $[\alpha_i]\in\pi_1\Star^{\ge n-2}(\sink_i)$ is generated by squares and pentagons for any $i\in I$ and so does $[\alpha]$.
\end{proof}

\appendix

\section{Elements on quivers and Auslander-Reiten theory}

\subsection{Quiver categories}\label{sec:qc}\

Let $Q$ be an acyclic quiver, i.e. an oriented graph without cycles.

Denote by $\k Q$ the path algebra, $\h(Q)=\mathrm{mod}~\k Q$ the module category and
and $\Proj(\k Q)$ the full subcategory of $\h(Q)$ consisting of projective modules.

Let $\DQ=\D^b(\k Q)$ be the bounded derived category of $\k Q$ with the canonical heart $\h(Q)$.
As $\k Q$ is hereditary, $\Ind\DQ=\Ind\h(Q)[\ZZ]$,
where $\Ind\C$ denotes a complete set of indecomposables in an additive category $\C$.
Denote by $\tau\in\Aut(\DQ)$ its Auslander-Reiten (AR) functor and $\Serre=\tau\circ[1]$ its Serre functor.
Set $\Serre_d=\Serre\circ[-d]$ for $d\in\ZZ$.

The {(2-)cluster category} $\CQ$ of $Q$ is the orbit category $\DQ/\Serre_2$ (cf. \cite{BMRRT,Ke1}),
where
\[
    \Hom_{\CQ}(M,N)=\bigoplus_{i\in\ZZ}\Hom_{\DQ}(M,\Serre_2^i N).
\]
Moreover, $\CQ$ is 2-Calabi-Yau with an initial/canonical cluster tilting set $\Ind\Proj(\k Q)$ and
\[
    \Ind\CQ=\Ind\h_Q\cup\Ind\Proj(\k Q)[1].
\]

\paragraph{\textbf{Dynkin and Euclidean quivers}}\

A Dynkin quiver is a quiver whose underlying diagram is a ternary tree, as shown in \Cref{fig:ter-tre},
with branches of length $(p,q,r)$ satisfy \eqref{eq:invsum>1}.

\begin{figure}
\begin{minipage}[b]{.3\linewidth}
    \centering
\begin{tikzpicture}[scale=.5,rotate=0]
\draw (0,0) node {$\bullet$};
\foreach \i in {0,120,240}{
\begin{scope}[shift={(\i:1.5)}]
\draw (0,0) node {$\bullet$};
\end{scope}
\begin{scope}[shift={(\i:3)}]
\draw (0,0) node[rotate=\i]{$\ldots$};
\end{scope}
\begin{scope}[shift={(\i:4.5)}]
\draw (0,0) node {$\bullet$};
\end{scope}
\draw[thick] (\i:0.3)to(\i:1.2);
\draw[thick] (\i:1.8)to(\i:2.3);
\draw[thick] (\i:3.7)to(\i:4.2);
\draw[decorate, decoration={brace,raise=6pt}](0,0) -- (\i:4.5);
}
\draw (2.25,1.4) node[font=\small]{$p$}
    (-2.3,1.35)node[font=\small]{$q$}
    (0.25,-2.5)node[font=\small]{$r$};
\end{tikzpicture}
\end{minipage} 
\begin{minipage}[b]{.5\linewidth}
\centering
\begin{gather}\label{eq:invsum>1}
    \text{Dynkin type:\quad}\invsum>1.
\end{gather}\vskip .5cm
\begin{gather}\label{eq:invsum=1}
     \text{Euclidean type:\quad}\invsum=1.
\end{gather}
\[\]\[\]
\end{minipage}
\caption{The ternary tree for Dynkin/Euclidean quivers}
    \label{fig:ter-tre}
\end{figure}

An Euclidean quiver is a quiver whose underlying diagram is one of the following (extended Dynkin) diagrams:
\begin{description}
  \item[$\EucA{p}{q}$] as shown in the left pictures of \Cref{fig:affineAD} with $p$ clockwise arrows.
  \item[$\EucD{n}$] as shown in the right pictures of \Cref{fig:affineAD}.
  \item[$\EucE{n}$] a ternary tree, as shown in \Cref{fig:ter-tre}, with branches of length $(p,q,r)$ satisfy \eqref{eq:invsum=1}.
\end{description}

\begin{figure}[htb]\centering
\begin{tikzpicture}[scale=.8,rotate=0]
\foreach \i in {-60,60}{
\draw (0,0) node {$\bullet$};
\draw (\i:1.5)node {$\bullet$};
\draw[thick] (\i:0.3) to (\i:1.2);
\begin{scope}[shift={(\i:1.5)}]
\draw[thick] (0:0.3) to (0:1.5);
\end{scope}
}

\begin{scope}[shift={(6,0)},xscale=-1]
\foreach \i in {-60,60}{

\draw (0,0) node {$\bullet$};
\draw (\i:1.5)node {$\bullet$};

\draw[thick] (\i:0.3) to (\i:1.2);
\begin{scope}[shift={(\i:1.5)}]
\draw[thick] (0:0.3) to (0:1.5);
\end{scope}
}
\end{scope}

\draw (3,1.3) node {$\ldots$} (3,-1.3) node {$\ldots$};
\begin{scope}[shift={(0,1.5)}]
\draw[decorate, decoration={brace,raise=4pt}](0:0.7) -- (0:5.3);
\end{scope}
\begin{scope}[shift={(0,-1.5)}]
\draw[decorate, decoration={brace,raise=4pt}](5.3,0) -- (180:-0.7);
\end{scope}

\draw (3,2.1) node[font=\small]{$p-1$};
\draw (3,-2.1) node[font=\small]{$q-1$};
\begin{scope}[shift={(8,0)}]
\foreach \i in {120,240}{
\draw (0,0) node {$\bullet$};
\begin{scope}[shift={(\i:1.5)}]
\draw (0,0) node {$\bullet$};
\end{scope}
\draw[thick] (\i:0.3)to(\i:1.2);
}
\draw[thick] (0:0.3)to(0:1.2);

\begin{scope}[shift={(4,0)},xscale=-1]
\draw (0,0) node {$\bullet$};
\foreach \i in {120,240}{

\begin{scope}[shift={(\i:1.5)}]
\draw (0,0) node {$\bullet$};
\end{scope}

\draw[thick] (\i:0.3)to(\i:1.2);
}
\draw[thick] (0:0.3)to(0:1.2);
\end{scope}

\begin{scope}[shift={(0:2)}]
\draw (0,0) node{$\ldots$};
\end{scope}
\draw[decorate, decoration={brace,raise=6pt}](0:0) -- (0:4);
\draw (2,0.7) node[font=\small]{$n-3$};
\end{scope}

\end{tikzpicture}
\caption{The underlying graph of type $\EucA{p}{q}$(left) and type $\EucD{n}$(right)}
    \label{fig:affineAD}
\end{figure}

\paragraph{\textbf{Weighted projective lines of tame type}}\

In the Euclidean case, besides the `algebraic' canonical heart $\h(Q)$,
there is a `geometric' canonical heart corresponding to coherent sheaves on a weighted projective line.

The weighted projective line $\pl_\wt$ is a projective line with orbifold points $(p_1,\ldots,p_t)\subset\pl$ and weights $\wt=(w_1,\ldots,w_t)$, that is, equipping an $\ZZ_{w_i}$-action for the orbifold point $p_i$, $1\le i\le t$.

The category $\coh(\pl_\wt)$ of coherent sheaves on $\pl_\wt$ is an abelian category and naturally splits into two parts:
\[
    \Ind\coh(\pl_\wt)=\Ind\vect(\pl_\wt) \coprod \Ind\tube(\pl_\wt),
\]
where $\vect(\pl_\wt)$ is the category
of vector bundles and $\tube(\pl_\wt)=\coprod_{\lambda\in\pl_{\wt}}\tube_{\lambda}(\pl_\wt)$ is the category consisting of $\pl_{\wt}$-family of stable tubes.
The following decomposition for an object $\vv$ will be used later.
\begin{gather}\label{eq:Euc-dcp}
\vv=\vv|_{\vect}\oplus\bigoplus_{\lambda\in\pl_{\wt}}\vv|_{\tube}.
\end{gather}
We only consider the tame case,
Let $t=3$, $(p_1,p_2,p_3)=(0,1,\infty)$ and $(w_1,w_2,w_3)=(p,q,r)$ satisfying \eqref{eq:invsum>1}.
We will write $\plpqr$ for $\pl_\wt$ and $\D(\plpqr)$ for $\D^b(\coh(\plpqr))$.

We will fix a triangle equivalence (cf. \cite[\S3]{GL} and \cite[\S6]{LR})
\begin{equation}\label{eq:derived-equiv}
    \DQ\cong\D(\plpqr)
\end{equation}
such that $\tube(\pl_\wt)$ becomes the regular modules.
This makes $\coh(\plpqr)$ the other canonical heart of $\DQ$.
More precisely, we have
\[
    \D(\widetilde{A_{p,q}}) \cong \D( \pl_{p,q,1} ), \quad
    \D(\widetilde{D_{n}}) \cong \D( \pl_{2,2,n-1} ) \quad\text{and}\quad
    \D(\widetilde{E_{n}}) \cong\D( \pl_{2,3,n-3} ).
\]
For convenience, we will not distinguish $\DQ\cong\D(\plpqr)$ and similarly for $?(\plpqr)$ and $?(Q)$,
where $?=\C, \vect, \tube, \CCx$.

\subsection{Shape of AR quivers}\label{sec:shape}\

Let $\ZZ Q$ be the translation quiver of $Q$.
We will write $\uat=\tau^{-1}$ for simplicity.

The Auslander-Reiten (AR-)quiver $\ar(\C)$ of a $k$-linear additive category $\C$ is a directed graph, whose vertices are (parameterized by) indecomposable objects $\Ind(\C)$ of $\C$
and, the number of arrows from $X$ to $Y$ equals the dimension of
    the space $\Irr_{\C}(X,Y)$ of irreducible maps.

The following facts are well-known. (\cite[Chap.~I]{Hap})
\begin{itemize}
    \item If $Q$ is a Dynkin quiver, then $\ar(\DQ)\cong\ZZ Q$;
    \item If $Q$ is an Euclidean quiver, then $\ar(\DQ)$ consists of $\ZZ$-copies of the components $\ar(\vect(Q))\cong\ZZ Q$
    and the components of thin tubes (i.e. rank one), except for the three below (possibly fat).
    \begin{gather}\label{eq:fatt}
    \begin{cases}
    \ar(\tube_{0}(Q))\cong\ZZ_p\DynA{\infty}\\
    \ar(\tube_{1}(Q))\cong\ZZ_q\DynA{\infty}\\
    \ar(\tube_{\infty}(Q))\cong\ZZ_r\DynA{\infty}.
    \end{cases}
    \end{gather}
\end{itemize}
In particular, we concern the rigid objects in $\DQ$.
Denote by $(-)^\rgd$ the full subcategory of $-$ consisting of rigid objects,
then $\vect^\rgd(Q)=\vect(Q)$ and
$\tube_{\lambda}^\rgd(Q)$ consists of the objects of length$<k$,
where $k$ is the rank of $\tube_{\lambda}^\rgd(Q)$
(cf. \cite[\S1]{CK}).
In particular,
\[
    \ar(\tube^\rgd_{\lambda}(Q))\cong\ZZ_k\DynA{k-1}.
\]
For application,
let $\widetilde{\ar}(\tube^\rgd_{\lambda}(Q))\cong\ZZ\DynA{m-1}$ be the $\ZZ$-covering space of $\ar(\tube^\rgd_{\lambda}(Q))$ induced by the natural quotient $\ZZ\DynA{k-1}\to\ZZ_k\DynA{k-1}$.

\subsection{Intervals in AR quivers}\label{sec:int}\

In $\ZZ Q$,
we call $M_0$ the predecessor of $M_t$ and $M_t$ the successor of $M_0$
if there is a path $ M_0\xrightarrow{f_1}\ldots\xrightarrow{f_t}M_t $.
The intervals associated with vertices $M,N\in\ZZ Q$ are:
\begin{itemize}
    \item $[M,\infty)$ is the set of all the successors of $M$,
    \item $(-\infty,N]$ is the set of all the predecessors of $N$,
    \item $[M,N]=[M,\infty)\cap(-\infty,N]$.
\end{itemize}
We also define the interval between two set of vertices $\pcts{M}=\{M_i\}_i$ and $\pcts{N}=\{N_j\}_j$ as:
\begin{gather}\label{def:D-interval}
[\pcts{M},\pcts{N}]=\bigcup_{i,j}[M_i,N_j].
\end{gather}
This definition can be extend to disjoint union of $\ZZ Q_s$
by setting $[M_i,N_j]=\emptyset$ when $M_i$ and $N_j$ are not in the same component of $\sqcup_{s}\ZZ Q_s$.
The following notation of restriction will be used later.
\begin{gather}\label{def:res}
    [\pcts{M},\pcts{N}]|_{\ZZ Q_s}=[\pcts{M}|_{\ZZ Q_s},\pcts{N}|_{\ZZ Q_s}]=\bigcup_{M_i,N_j\in\ZZ Q_s}[M_i,N_j].
\end{gather}
We will establish a technical criterion about interval-decreasing for a series of intervals from set of vertices.

\begin{lemma}\label{lem:dd}
Fix $\pcts{Y}$ and let $\{\pcts{Z}_k\}_{k\in\NN}$ be a series of sets of vertices satisfying
\begin{gather}\label{eq:dd}
    [\pcts{Y},\pcts{Z}_{k+1}\backslash\pcts{Z}_{k}]\subset[\pcts{Y},\pcts{Z}_{k}\backslash\pcts{Z}_{k+1}]
\quad\text{with equality only if}\quad
[\pcts{Y},\pcts{Z}_k]=\emptyset.
\end{gather}
for any $k\in\NN$.
If $[\pcts{Y},\pcts{Z}_0]\ne\emptyset$, then there exists $k_0>0$ such that $[\pcts{Y},\pcts{Z}_{k_0}]\subsetneqq[\pcts{Y},\pcts{Z}_{0}]$.
In particular, $[\pcts{Y},\pcts{Z}_k]=\emptyset$ when $k\gg0$.
\end{lemma}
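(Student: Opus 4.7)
The plan is to first establish weak monotonicity of the intervals $A_k\colon=[\pcts{Y},\pcts{Z}_k]$, then bootstrap the hypothesis into a single strict drop $A_{k_0}\subsetneqq A_0$, and finally iterate to emptiness using finiteness of $A_0$ (which holds because intervals in $\ZZ Q$ are finite for Dynkin/Euclidean $Q$).

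First I would decompose $\pcts{Z}_k=C_k\sqcup L_k$ and $\pcts{Z}_{k+1}=C_k\sqcup N_k$, where $C_k=\pcts{Z}_k\cap\pcts{Z}_{k+1}$, $L_k=\pcts{Z}_k\setminus\pcts{Z}_{k+1}$ and $N_k=\pcts{Z}_{k+1}\setminus\pcts{Z}_k$. Then $A_k=[\pcts{Y},C_k]\cup[\pcts{Y},L_k]$ and $A_{k+1}=[\pcts{Y},C_k]\cup[\pcts{Y},N_k]$, so the hypothesis $[\pcts{Y},N_k]\subseteq[\pcts{Y},L_k]$ immediately yields $A_{k+1}\subseteq A_k$. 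In particular $(A_k)_{k\ge 0}$ is a weakly decreasing sequence of subsets of the finite set $A_0$.

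Next comes the key step: if $A_0\ne\emptyset$, then $A_{k_0}\subsetneqq A_0$ for some $k_0>0$. I would argue by contradiction, assuming $A_k=A_0$ for all $k\ge 0$. Since each $A_k$ is nonempty, the hypothesis forces the strict inclusion $[\pcts{Y},N_k]\subsetneqq[\pcts{Y},L_k]$ at every $k$. Pick $m_k\in[\pcts{Y},L_k]\setminus[\pcts{Y},N_k]$; the equality $A_{k+1}=A_k$ combined with $m_k\notin[\pcts{Y},N_k]$ forces $m_k\in[\pcts{Y},C_k]$, so some persistent element of $\pcts{Z}_k\cap\pcts{Z}_{k+1}$ also provides $m_k$. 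By pigeonhole on the finite set $A_0$, some fixed $m\in A_0$ appears as $m_k$ for infinitely many $k$. For each such $k$, the multiplicity $M_k(m)\colon=|\{Z\in\pcts{Z}_k:m\in[\pcts{Y},Z]\}|$ strictly decreases from step $k$ to $k+1$, since $N_k$ contributes zero new providers while $L_k$ removes at least one. The main obstacle will be controlling $M_k(m)$ at the intermediate steps (where $m_{k'}\ne m$), since a priori a new provider of $m$ could enter through $N_{k'}$. The way I would eliminate this is to choose $m=m^*$ to be a $\le$-minimal element of $A_0$ in the partial order inherited from $\ZZ Q$: any provider $Z$ of $m^*$ satisfies $m^*\le Z$, and since $\pcts{Z}_{k}\subseteq\pcts{Z}_0\cup(\bigcup_{j<k}N_j)$ with all new providers of $m^*$ forced to lie in $[m^*,\pcts{Y}\text{-reachable}]$, a careful two-sided bookkeeping converts the one-step drops into a genuine monotone descent of $M_k(m^*)$, contradicting $M_k(m^*)\ge 1$ for all $k$.

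With the key step in hand, iterate: starting from $\pcts{Z}_0$, produce $k^{(1)}>0$ with $A_{k^{(1)}}\subsetneqq A_0$; applying the key step to the shifted sequence $(\pcts{Z}_{k^{(1)}+j})_{j\ge 0}$ (which still satisfies the hypothesis), produce $k^{(2)}>k^{(1)}$ with $A_{k^{(2)}}\subsetneqq A_{k^{(1)}}$, and so on. The cardinalities $|A_{k^{(i)}}|$ form a strictly decreasing sequence in $\NN$, so the process terminates at some $k_*$ with $A_{k_*}=\emptyset$; weak monotonicity then gives $A_k=\emptyset$ for every $k\ge k_*$, which is the "in particular" clause of the lemma.
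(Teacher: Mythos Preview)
Your weak monotonicity step ($A_{k+1}\subseteq A_k$) and the final iteration to emptiness are both correct and match the paper. The gap is in the middle step.

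The multiplicity argument does not close. You first invoke pigeonhole to obtain some $m$ that recurs as $m_k$ infinitely often, and then say you would instead \emph{choose} $m=m^*$ to be $\le$-minimal in $A_0$; but pigeonhole gives you no control over which $m$ recurs, so these two moves are incompatible. More seriously, even granting a minimal $m^*$, nothing prevents a new provider of $m^*$ from entering through $N_{k'}$ at an intermediate step: minimal elements of $A_0$ are elements $Y\in\pcts{Y}$, and any $Z\in N_{k'}$ with $Y\le Z$ becomes a fresh provider, so $M_{k'+1}(m^*)$ can go up. The phrase ``careful two-sided bookkeeping'' is doing work that has not been justified.

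The paper's argument is cleaner and uses the opposite extremal choice: take $Z_0\in\pcts{Z}_0$ with $[\pcts{Y},Z_0]$ \emph{maximal} among $\{[\pcts{Y},Z]:Z\in\pcts{Z}_0\}$. Under the standing assumption $A_k=A_0$, one shows inductively that $Z_0\in\pcts{Z}_k$ for every $k$: since $Z_0\in A_k$, there is $Z'\in\pcts{Z}_k$ with $Z_0\in[\pcts{Y},Z']$, hence $[\pcts{Y},Z_0]\subseteq[\pcts{Y},Z']$; but $[\pcts{Y},Z']\subset A_k=A_0$ forces (via maximality and the hypothesis) $Z'=Z_0$. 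One then removes $Z_0$ from every $\pcts{Z}_k$, checks that \eqref{eq:dd} still holds for $\pcts{Z}'_k=\pcts{Z}_k\setminus\{Z_0\}$, observes $|[\pcts{Y},\pcts{Z}'_0]|<|A_0|$, and appeals to induction on $|A_0|$. For large $k$ this gives $[\pcts{Y},\pcts{Z}'_k]=\emptyset$, hence $[\pcts{Y},L_k]\subseteq[\pcts{Y},\pcts{Z}'_k]=\emptyset$, which is incompatible with the strict inclusion $[\pcts{Y},N_k]\subsetneqq[\pcts{Y},L_k]$ demanded by \eqref{eq:dd} when $A_k\ne\emptyset$. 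If you want to salvage your approach, switching from a minimal $m^*$ to a maximal $Z_0$ is the missing idea.
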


\begin{proof}
We use induction on $|[\pcts{Y},\pcts{Z}_0]|=N$,
which is trivial for the case $N=1$.
When $N>1$, we assume the lemma holds for any $\{\pcts{Z}'_k\}_{k\in\NN}$ satisfying \eqref{eq:dd} with $|[\pcts{Y},\pcts{Z}'_0]|<N$.
Suppose that $|[\pcts{Y},\pcts{Z}_0]|=N$ and $[\pcts{Y},\pcts{Z}_{k+1}]=[\pcts{Y},\pcts{Z}_{k}]$ for any $k\in\NN$.
Then for any maximal interval $[\pcts{Y},Z_0]$ in $\{[\pcts{Y},Z]~|~Z\in\pcts{Z}_0\}$,
we have $Z_0\in\bigcap_{k\in\NN}\pcts{Z}_k$.

Now let $\pcts{Z}'_k=\pcts{Z}_k\backslash\{Z_0\}$ then $\{\pcts{Z}'_k\}_{k\in\NN}$
satisfies $|[\pcts{Y},\pcts{Z}'_0]|<N$ and \eqref{eq:dd}.
By the assumption, $[\pcts{Y},\pcts{Z}'_k]=\emptyset$ when $k\gg0$ thus the original interval have to decrease for some $k_0\ge0$.
\end{proof}

The section in $\ZZ Q$ is a full subquiver whose underlying graph is a connected graph that meets each $\tau$-orbit exactly once.
For any vertex $M,N$ in $\ZZ Q$, denote by
\begin{itemize}
    \item $\Ps_{+}(M)$ the section containing $M$ in $[M,\infty)$.
    \item $\Ps_{-}(N)$ the section containing $N$ in $(-\infty,N]$.
\end{itemize}

In the end, we will state some Hom-properties of Dynkin/Euclidean quiver without proof.
Suppose $Q$ is a Dynkin quiver.
Given any $\sink, Y\in\Ind\DQ$, we have
\begin{gather}
\begin{cases}
\Hom_{\DQ}(\sink,Y)\ne0\implies Y\in[\sink,\tau \sink[1]],\\
\Hom_{\DQ}(Y,\sink)\ne0\implies Y\in[\uat \sink[-1],\sink].
\end{cases}
\end{gather}
Suppose $Q$ is an Euclidean quiver and
fix any $\sink\in\Ind\vect(Q)=\Ind\vect^\rgd(Q)$.
Then for any $Y\in\Ind\DQ$ we have similarly
\begin{gather}\label{eq:Homvect}
\begin{cases}
\Hom_{\DQ}(\sink,Y)\ne0\implies Y\in[\sink,\infty)\cup\tube(Q)\cup(-\infty,\tau \sink[1]],\\
\Hom_{\DQ}(Y,\sink)\ne0\implies
Y\in[\uat \sink[-1],\infty)\cup\tube(Q)[-1]\cup(-\infty,\sink].
\end{cases}
\end{gather}
When we take $\sink$ to be a rigid regular simple in a fat tube $\tube_{\lambda}(Q)$.
It is also known that for any $Y\in\Ind\tube_{\lambda}(Q)$,
\begin{gather}\label{eq:Homtube}
\begin{cases}
\Hom_{\DQ}(\sink,Y)\ne0\implies Y\in\Ps_{+}(\sink)\\
\Hom_{\DQ}(Y,\sink)\ne0\implies Y\in\Ps_{-}(\sink).
\end{cases}
\end{gather}

\subsection{Geometric model for type $\EucA{p}{q}$}\label{sec:GeoApq}\

In case study, we will use a geometric model for the cluster category $\C(\EucA{p}{q})\cong\C(\surf)$.
Here $\surf$ is an annulus with $p$ and $q$ marked points on its boundary components, respectively.
Recall that the rotation $\rho(\gamma)$ of an arc $\gamma$ is the arc obtained from $\gamma$ by moving each endpoint of which along the boundary anti-clockwise to the next marked point.
\begin{lemma}\cite[\S3]{QZ1}\label{lem:gm}
There is a bijection
\[\begin{array}{rccl}
  &\{\text{simple closed curves in $\surf$}\}
  &\longleftrightarrow
  &\{\text{rigid objects in $\C(\surf)$}\}
\end{array}\]
sending $\gamma$ to $M_{\gamma}$, such that
\begin{itemize}
    \item the rotation $\rho$ becomes the AR-functor $\tau$, i.e.
    $M_{\rho(\gamma)}=\tau M_{\gamma}$.
    \item $\Int(\gamma,\delta)=\dim_\k\Ext_{\C(\surf)}^1(M_\gamma,M_\delta)$.
\end{itemize}
\end{lemma}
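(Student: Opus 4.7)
The plan is to establish the correspondence via the standard geometric realization of cluster categories of marked surfaces, following the method of [QZ1]. I would proceed in three steps: constructing the map, verifying the rotation/AR-translation identification, and verifying the intersection/$\Ext^1$ formula.

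First, I would fix a triangulation $T$ of $\surf$, whose arcs correspond to an initial cluster $\pcts{\vv}_T \subset \C(\surf) \simeq \C(\EucA{p}{q})$. Given a simple curve $\gamma$ (an arc between marked points or an essential loop around the annulus core), the object $M_\gamma$ is read off from the sequence of intersections of $\gamma$ with $T$: the usual string-module construction yields objects in $\vect(\EucA{p}{q})$ and in the rank-one tubes when $\gamma$ is an arc, while the analogous band-like construction produces objects in the two fat tubes of ranks $p$ and $q$ when $\gamma$ is an essential loop. Rigidity of $M_\gamma$ is then equivalent to simplicity of $\gamma$ (no self-intersection), via the (yet to be verified) intersection formula applied to the self-intersection count.

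Second, to establish $M_{\rho(\gamma)} = \tau M_\gamma$, I would use the description of $\ar(\C(\EucA{p}{q}))$ from Section~A.2: the AR-translation $\tau$ shifts one step along each transjective component $\ZZ Q$ and rotates cyclically within each fat tube. Geometrically, rotating $\gamma$ by moving both endpoints anticlockwise to the next marked point produces precisely the arc corresponding to the AR-predecessor under this shift. It suffices to verify this on generators of each $\tau$-orbit (short arcs between adjacent marked points, and rank-one loops in each fat tube), and then extend by naturality of both involutive operations. For the intersection formula $\Int(\gamma,\delta) = \dim_\k \Ext^1_{\C(\surf)}(M_\gamma, M_\delta)$, I would exploit the bijection between transversal intersection points of $\gamma$ and $\delta$ and the two smoothings at each intersection, which are exactly the middle terms of the corresponding exchange triangle in $\C(\surf)$; the 2-Calabi-Yau property then guarantees the sum over intersections has no cancellations.

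The main obstacle is the treatment of essential loops in the two fat tubes, where the naive string-module machinery does not directly apply and one must work on the universal cover of the annulus to match winding data with the $\ZZ_p\DynA{\infty}$ and $\ZZ_q\DynA{\infty}$ structures of \eqref{eq:fatt}. Since [QZ1, \S3] carries out precisely this bookkeeping in detail, the proof reduces to a citation after the setup above is in place.
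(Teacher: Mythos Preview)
The paper does not prove this lemma at all: it is stated with the citation \cite[\S3]{QZ1} and used as a black box, so there is no argument in the paper to compare your sketch against. Your final sentence (reducing everything to the citation) is in fact all that the paper does.

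That said, your sketch contains a genuine misconception about the geometric model that would cause trouble if you actually tried to carry it out. You write that ``the analogous band-like construction produces objects in the two fat tubes of ranks $p$ and $q$ when $\gamma$ is an essential loop''. This is wrong: essential closed loops around the core of the annulus correspond to band modules in the homogeneous (rank-one) tubes, and these are \emph{not} rigid. The rigid indecomposables in the fat tubes $\tube_0$ and $\tube_1$ correspond to \emph{arcs} with both endpoints on the same boundary component (the one with $p$, respectively $q$, marked points), exactly as the paper states immediately after the lemma. So there is no ``main obstacle'' involving loops and winding data on the universal cover; the entire bijection is between simple arcs and rigid indecomposables, and everything is handled by the string-module construction. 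The phrase ``simple closed curves'' in the statement is loose terminology for simple (non-self-intersecting) arcs, not for closed loops.
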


In particular, indecomposable objects in $\tube_0(Q)$ (resp. $\tube_1(Q)$) correspond to the arcs with both endpoints in the boundary component with $p$ (resp. $q$) marked points.

\begin{example}\label{ex:A12}
Consider an $\EucA{1}{2}$ quiver $Q$, whose geometric model for $\CQ$ is shown in the left side of \Cref{fig:A12-tube},
where the triangulation corresponds to the initial cluster $\k Q$.
The AR-quiver of $\CQ$ has $2$ connected components containing rigid objects:
\begin{itemize}
    \item The vector bundle part $\vect(Q)$ shown in \Cref{fig:A12-vect} and
    \item the fat tube $\tube_0(Q)$ shown in the right side of \Cref{fig:A12-tube}.
\end{itemize}

\begin{figure}[htbp]
    \centering\makebox[\textwidth][c]{
    \includegraphics[width=.9\linewidth]
    {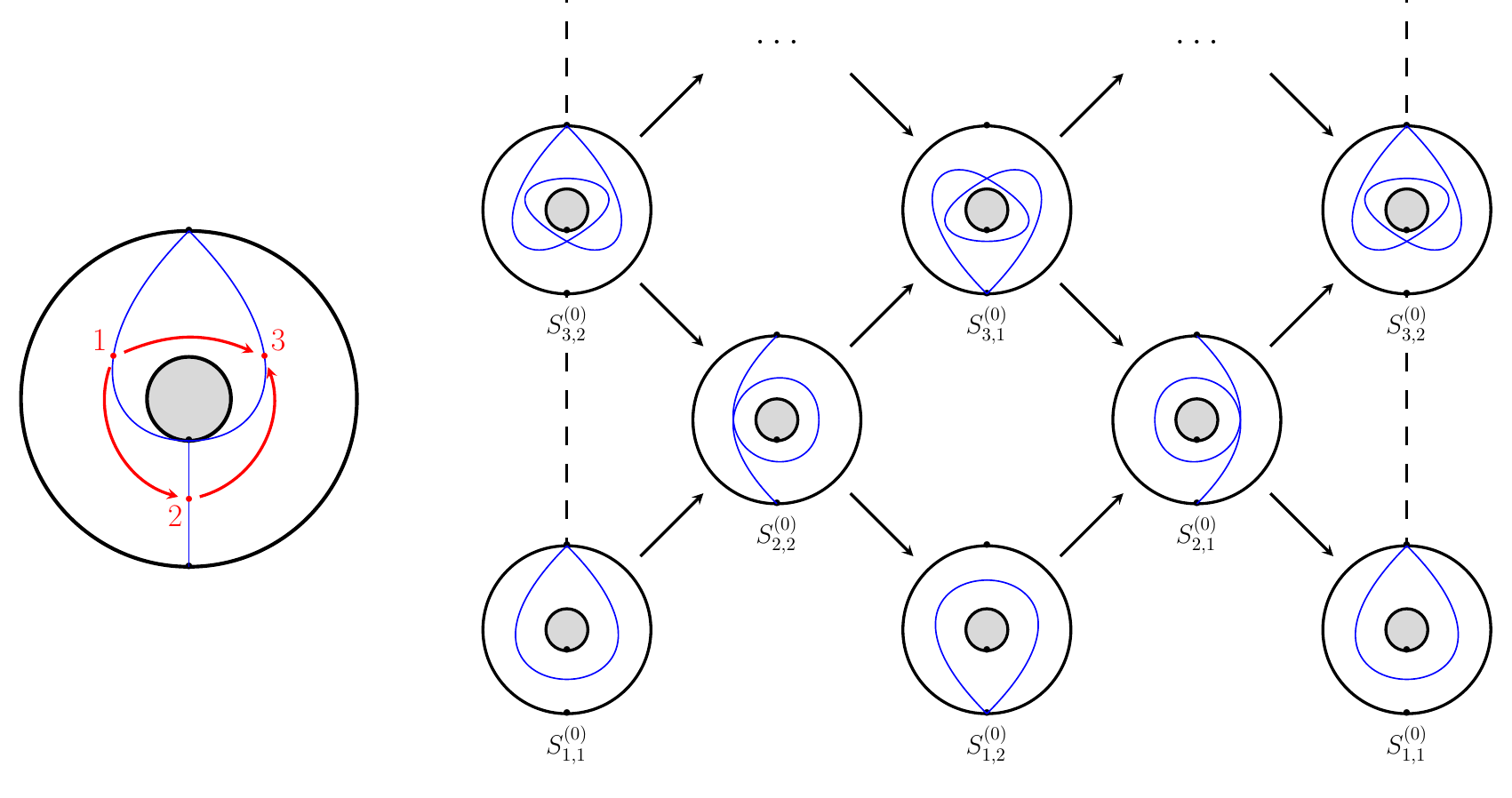}\qquad}
    \caption{The triangulation for $\k\EucA{1}{2}$ on the left;
    the rank $2$ fat tube $\tube_0({\EucA{1}{2}})$ on the right}
    \label{fig:A12-tube}
\end{figure}

\begin{figure}[h]
    \centering\makebox[\textwidth][c]{\quad
    \includegraphics[width=1.1\linewidth]{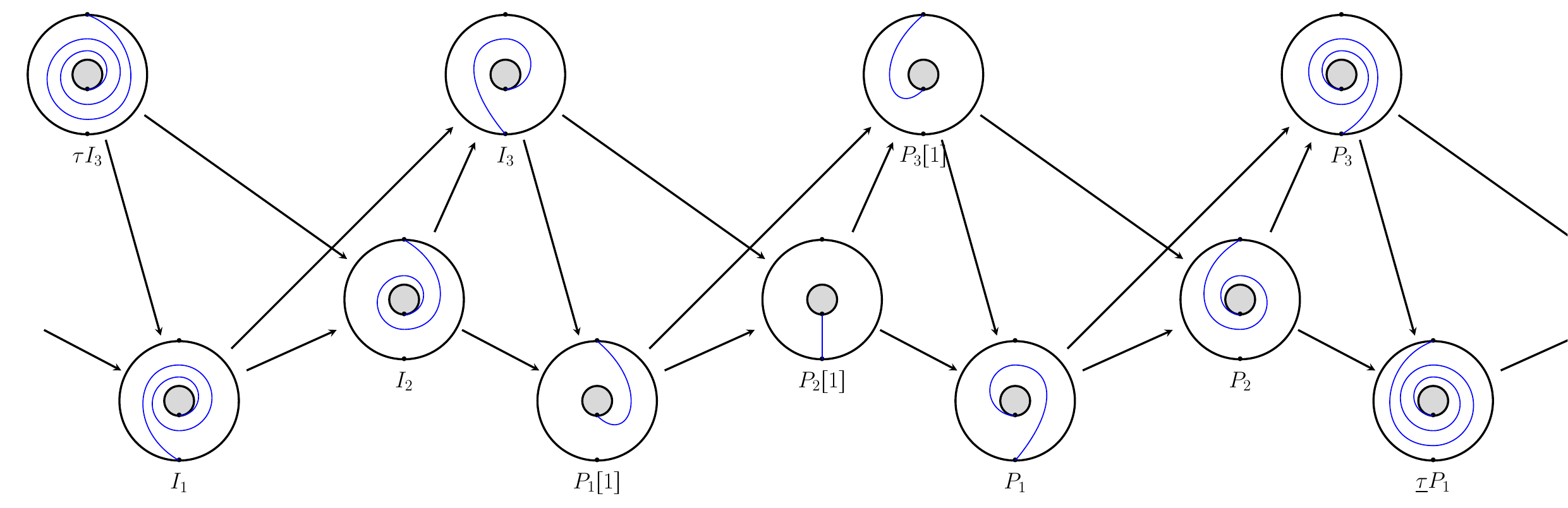}}
    \caption{The AR-quiver $\ar(\vect(\EucA{1}{2})) \cong\ZZ\EucA{1}{2}$}
    \label{fig:A12-vect}
\end{figure}
\end{example}

\section{On the proofs in Dynkin/Euclidean cases}
\paragraph{\textbf{Convention}} We will write $\dapp{\vv}=\cc(\App{R}{\pctc{\vv}}{\source})$ for short.
\subsection{Proof of \Cref{thm:Dynkin}}\label{sec:P1}
We may assume that $\sink=\sink_0$ is a simple-projective of $\h(Q)$ corresponding to vertex $0$ (using AR-translation) and $\Proj \k Q=\Ps_{+}(\sink)$ (using APR-tilting).

Take a fundamental domain
\[
    \on{FdD}_{\sink}(\CQ)=\h(Q) \cup \Proj \k Q[1] \subset \DQ
\]
with the canonical identification/bijection
\begin{gather}\label{id1}
\Ind\CQ \longleftrightarrow [ \Ps_{+}(\sink), \Ps_{+}(\sink)[1]].
\end{gather}
Let $L$ be an $\sink$-leaf and $\pcts{\vv}$ be any cell intersecting $L$ nontrivially.
We define the intervals associated to $\pcts{\vv}$:
\begin{gather}\label{eq:int}
    \II(\pcts{\vv})=[\sink,\dapp{\vv}].
\end{gather}
It is straightforward to see that
\begin{gather}\label{criterion1}
\pcts{\vv}\subset\Star(\sink)\iff\Ext^1(\pcto{\vv},\sink)=0
\iff\II(\pcts{\vv})=\emptyset.
\end{gather}
In such a case, $L$ intersects $\pcts{\vv}\subset\Star(\sink)$ nontrivially and ends at $\sink$ along $\dflow{\sink}$.

\begin{lemma}\label{lem:ccross1}
When there is a cell-crossing from $\pcts{\uu}$ to $\pcts{\ww}$ along $\dflow{\sink}$ (cf. \Cref{def:ccross}), we have
\begin{gather}\label{eq:throw}
    [\sink,\dapp{\ww}\backslash\dapp{\uu}]
    \subsetneqq
    [\sink,\dapp{\uu}\backslash\dapp{\ww}].
\end{gather}
\end{lemma}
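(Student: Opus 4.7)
The plan is to invoke \Cref{lem:cwall} to rewrite both sides of the claimed strict inclusion as intervals at the cell-wall $\pcts{V} = \pcts{U} \cap \pcts{W}$, and then to exploit the Dynkin AR geometry. By \Cref{lem:cwall}(1), the downward $\sink$-evolving triangle for $\pcts{U}$ coincides with the upward triangle for $\pcts{V}$, namely
\[
    \sink \to \App{L}{\pctc{V}}{\sink} \to \uu \to \source,
\]
and so $\App{R}{\pctc{U}}{\source}=\uu$, i.e.\ $\dapp{U}=\cc(\uu)$. For $\pcts{W}$, since $\pctc{W} = \pctc{V} \cup \Add(\ww)$ and $\Ext^1(\ww,\ww)=0$, applying $\Hom(\ww,-)$ to the downward triangle for $\pcts{V}$ gives surjectivity of $\Hom(\ww, \App{R}{\pctc{V}}{\source}) \to \Hom(\ww,\source)$, so $\App{R}{\pctc{V}}{\source}$ is already a right $\pctc{W}$-approximation; minimality is inherited because it is a condition on endomorphisms of the unchanged domain. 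Hence $\dapp{W}=\dapp{V}$ and the target inequality becomes
\[
    [\sink,\, \App{R}{\pctc{V}}{\source} \setminus \uu]
    \subsetneqq
    [\sink,\, \uu \setminus \App{R}{\pctc{V}}{\source}].
\]

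Next I would use the structural relation between $\uu$ and $\App{R}{\pctc{V}}{\source}$ encoded by the two $\sink$-evolving triangles at $\pcts{V}$. By \Cref{prop:ie}(5) we have $\ueot(\App{L}{\pctc{V}}{\sink})=\ueot(\uu)$ and $\ueot(\ww)=\ueot(\App{R}{\pctc{V}}{\source})$, and via \Cref{lem:1-1} this identifies all summands of $\uu$ and of $\App{R}{\pctc{V}}{\source}$ lying outside $\Star(\sink)\cup\Star(\source)$. Thus the symmetric difference $\cc(\uu) \triangle \dapp{V}$ is concentrated in indecomposables with non-trivial Hom-relation to $\sink$, reducing the comparison to a finite AR-local calculation near $\sink$ in $\ZZ Q$.

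The main obstacle will be upgrading this containment to a strict one. My approach is to match each summand $X$ of $\App{R}{\pctc{V}}{\source} \setminus \uu$ with $[\sink, X]\ne\emptyset$ to a summand $\uat X$ of $\uu \setminus \App{R}{\pctc{V}}{\source}$ lying one mesh-step further along in $[\sink,-]$; the mesh-shift originates from the identity $\uu = \Cone(\sink \to \App{L}{\pctc{V}}{\sink})$ and the explicit Hom-geometry recalled in \eqref{eq:Homvect}. In the Dynkin case the AR quiver is finite with no infinite paths, so the shifted interval on the right must strictly exceed the one on the left, unless $[\sink,\dapp{V}]=\emptyset$, which by \eqref{criterion1} would mean $\pcts{V}\subset\Star(\sink)$ and no further cell-crossing can take place. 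The hard step is making this matching uniform across all indecomposable summands simultaneously; I expect the decisive technical input to be an octahedral comparison of the upward and downward $\sink$-evolving triangles at $\pcts{V}$, converting the summand-level Hom-geometry into a single mesh-shift on the whole of $\App{R}{\pctc{V}}{\source}$.
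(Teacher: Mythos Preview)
Your first reduction via the cell-wall $\pcts{V}$ is correct and even clarifies a point the paper leaves implicit (namely that $\dapp{W}=\dapp{V}\subset\pcts{V}\subset\pcts{U}$, which is what makes every $\ww\in\dapp{W}$ lie in $\pctc{U}$). But from that point on you take a detour that is both unnecessary and unjustified. The paper's argument is a two--line factorisation trick: for any $\ww\in\dapp{W}\setminus\dapp{U}$, the map $\ww\to\source$ must factor through the right $\pctc{U}$-approximation of $\source$, i.e.\ through some $\uu\in\dapp{U}$; irreducibility of $\ww\to\source$ in $\pctc{W}$ forces $\uu\notin\dapp{W}$. A nonzero $\Hom_{\DQ}(\ww,\uu)$ with $\ww\ne\uu$ in $\ZZ Q$ immediately gives $[\sink,\ww]\subsetneqq[\sink,\uu]$, hence the desired containment; strictness of the union follows because the maximal interval on the right cannot be matched from the left. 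No trace machinery, no meshes, no octahedra.

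By contrast, your proposed route through $\ueot$ has a real gap. From \Cref{prop:ie}(5) you only learn that $\ueot(\App{L}{\pctc{V}}{\sink})=\ueot(\uu)$ and $\ueot(\ww)=\ueot(\App{R}{\pctc{V}}{\source})$ separately; there is no reason these two quantities agree, so you cannot conclude that the symmetric difference $\cc(\uu)\,\triangle\,\dapp{V}$ is confined to summands ``near $\sink$''. The subsequent ``mesh-shift'' matching $X\mapsto\uat X$ is then speculation resting on that unproved localisation, and the octahedral comparison you anticipate would still have to produce, summand by summand, exactly the factorisation the paper writes down directly. (Also, the Hom-geometry you cite as \eqref{eq:Homvect} is the Euclidean formula; the Dynkin version you need is the unlabelled display just before it.) Replace the whole second half with the approximation/factorisation argument and the proof becomes a paragraph.
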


\begin{proof}
Since $\pcts{\uu}\not\subset\Star(\sink)$, $\II(\pcts{\uu})\ne\emptyset$ by \eqref{criterion1}.
We may assume that $\dapp{\ww}\backslash\dapp{\uu}\ne\emptyset$.

For any $\ww\in\dapp{\ww}\backslash\dapp{\uu}$, we have $\Irr_{\pctc{\ww}}(\ww,\source)\ne0$.
There exists $\uu\in\dapp{\uu}$ such that any $w\in\Irr_{\pctc{\ww}}(\ww,\source)$ factors through some $u\in\Irr_{\pctc{\uu}}(\uu,\source)$ since $\ww\not\in\dapp{\uu}$.
Moreover, the irreducibility of $w$ in $\pctc{\ww}$ implies that $\uu\not\in\dapp{\ww}$.
Hence
\begin{gather}\label{eq:ind1}
[\sink,\ww]\subsetneqq[\sink,\uu],
\end{gather}
which implies that
\[
[\sink,\dapp{\ww}\backslash\dapp{\uu}]\subset
[\sink,\dapp{\uu}\backslash\dapp{\ww}].
\]
The inclusion have to be proper since the maximal interval in
\[
\{[\sink,\uu]~|~\uu\in
\dapp{\uu}\backslash\dapp{\ww}\}
\]
can not be in any $[\sink,\ww]$ for $\ww\in\dapp{\ww}\backslash\dapp{\uu}$ due to \eqref{eq:ind1}.
\end{proof}

Suppose $\pcts{\vv}_0\not\subset\Star(\sink)$ intersects $L$ nontrivially.
Let $\pcts{\vv}_1,\pcts{\vv}_2,...$ be the consecutive cells intersecting $L$ nontrivially from $\pcts{\vv}_0$ along $\dflow{\sink}$.
By \Cref{lem:ccross1}, we can apply \Cref{lem:dd} for $\{\pcts{Y}=\sink,\pcts{Z}_k=\dapp{\vv}_k\}$
and deduce that $\II(\pcts{\vv}_k)=\emptyset$ when $k\gg0$.

As a consequence, $L$ enters $\Star(\sink)$ by \eqref{criterion1} and then eventually hits $\sink$ along $\dflow{\sink}$.
Dually, $L$ ends up at $\source$ along $\uflow{\sink}$
thus any $\sink$-leaf $L$ on $\CCx(Q)$ is compact, i.e. the $\sink$-foliation is compact.
By \Cref{lem:compact}, the flow induces the homotopy equivalence \eqref{eq:susp}.
Combining with \eqref{eq:Qred},
we have
\[
\CCx(Q)\simeq\susp\CCx( \Qminus ).
\]
Note that $\CCx( \DynA{1} )\cong\SS^0$. We inductively deduce that the cluster complex is spherical.

\subsection{Proof of \Cref{thm:Ess}}\label{sec:P2}
Recall that $Q$ is derived equivalent to a weighted projective line $\plpqr$ as in \Cref{sec:qc}.
Fix the rigid regular simple $\sink\in\Ind\tube^\rgd_{\infty}(Q)$, i.e. the indecomposable in the bottom of the fat tube.
By \Cref{lem:EG-red}, we have
\begin{gather}\label{eq:EG-red1}
    \C(\plpqr)\backslash\sink\cong\C(\pl_{p,q,r-1})\cong\C(\plpqr)\backslash\source.
\end{gather}

Take a fundamental domain
\[
    \on{FdD}({\CQ})=
    \bigcup_{\lambda\in\plpqr}\tube_\lambda(Q)\cup\vect(Q)[1]\subset\DQ
\]
for $\CQ$ with an identification:
\begin{gather}\label{id2}
    \Ind\CQ\longleftrightarrow
    \Ind(\bigcup_{\lambda\in\plpqr}\tube_\lambda(Q)\cup
    \vect(Q)[1]).
\end{gather}
Recall the $\ZZ$-cover of (the A-R quiver of) the fat tube (cf. \Cref{sec:shape}) and
choose the following identification:
\begin{gather}\label{idtube}
    \Ind\tube^\rgd_{\infty}(Q)\longleftrightarrow
    [\Psp(\sink),\Psp(\uat^{r-1}\sink)]\subset\widetilde{\ar}(\tube^\rgd_{\infty}(Q))\cong\ZZ\DynA{r-1}.
\end{gather}
Then the following holds for any $\vv$ in \eqref{idtube} by \eqref{eq:Homvect}.
\begin{gather}\label{eq:Hom1}
\begin{cases}
    \Hom_{\CQ}(\sink,\vv)=\Hom_{\DQ}(\tau \sink[-1],\vv)\oplus\Hom_{\DQ}(\sink, \vv),\\
    \Hom_{\CQ}(\vv,\source)=\Hom_{\DQ}(\vv,\source)\oplus\Hom_{\DQ}(\vv,\tau \sink).
\end{cases}
\end{gather}
In particular, combining with \eqref{eq:Homtube} we obtain (cf. \Cref{fig:5tube1})
\begin{gather}
\begin{cases}\label{eq:Zone}
    \Hom_{\CQ}(\sink,\vv)\ne0\iff \vv\in\tau(\Psm(\tau \sink)\cup\Psp(\uat\sink))\\
    \Hom_{\CQ}(\vv,\source)\ne0\iff \vv\in\Psm(\tau \sink)\cup\Psp(\uat \sink).
\end{cases}
\end{gather}

We need one more lemma to classify the $\sink$-evolving triangles.
For $T_\pm$ in $\on{Ps}_\pm^\rgd(\tau^{\mp1} \sink)$,
there are unique maps (up to scaling)
\begin{gather}\label{eq:tpm}
t_\pm\in\Hom_{\CQ}(T_\pm,\source),
\end{gather}
where $t_-$ comes from $\Hom_{\DQ}(T_-,\tau\sink)$ and $t_+$ comes from $\Hom_{\DQ}(T_+,\source)$.

\begin{figure}[htb]\centering
\begin{tikzpicture}[rotate=0,scale=.9]
\clip(-6,-.7)rectangle(8,6.5);
\draw[dashed, ultra thick] (-5,0)--(-5,7);
\draw[dashed, ultra thick] (5,0)--(5,7);
\foreach \i in {-5,-3,-1,1,3,5}
{
\draw (\i,0) node[fill=white] {$\bullet$} ;
\draw (\i,2) node[fill=white] {$\bullet$} ;
\draw (\i,4) node[fill=white] {$\circ$} ;
}
\foreach \i in {-4,-2,0,2,4}
{
\draw (\i,1) node {$\bullet$} ;
\draw (\i,3) node {$\bullet$} ;
\draw (\i,5) node[fill=white] {$\circ$} ;
\foreach \j in {0,2,4}{
\begin{scope}[shift={(0,\j)}]
\draw[-stealth, thin] (\i-0.9,0.1) to (\i-0.1,0.9);
\begin{scope}[shift={(1,1)},yscale=-1]
    \draw[-stealth, thin] (\i-0.9,0.1) to (\i-0.1,0.9);
\end{scope}
\begin{scope}[shift={(0,2)}, yscale=-1]
    \draw[-stealth, thin] (\i-0.9,0.1) to (\i-0.1,0.9);
    \begin{scope}[shift={(1,1)},yscale=-1]
        \draw[-stealth, thin] (\i-0.9,0.1) to (\i-0.1,0.9);
    \end{scope}
\end{scope}
\end{scope}
}
\draw[line width=4mm, color=SkyBlue, opacity=.1, line cap=round]
(-3,0) -- (3,6);
\draw[line width=4mm, color=Orange, opacity=.1, line cap=round]
(-3,6) -- (3,0);
}
\filldraw[color=white, opacity=.7] (-5.1,3.1) rectangle (5.1,7);
\filldraw[color=white](-5.2,5.5) rectangle (5.2,7.1);
\draw[thick, color=gray, dotted]
(-6,3.1)--(8,3.1);
\draw
(-2.6,5.8) node[color=Orange] {$\Ps_{+}(\uat \sink)$}
(2.6,5.8) node[color=SkyBlue] {$\Ps_{-}(\tau \sink)$}
(-5,-0.5) node {$\sink$}
(-3,-0.5) node {$\uat \sink$}
(3,-0.5) node {$\tau \sink=\uat^{r-1}\sink$}
(5,-0.5) node {$\sink$}
(6.5,2.7) node {$\rgd$=rigid part}
(6.5,3.5) node {non-rigid part};
\end{tikzpicture}
\caption{The tube of length 5}
    \label{fig:5tube1}
\end{figure}

\begin{lemma}\label{lem:factor}
Let $\vv\in\Ind\vect(Q)[1]$ and $f\in\Hom_{\CQ}(\vv,\source)$ be nonzero,
then $f$ factors through $t_-$ and $t_+$ factors through $f$.
\end{lemma}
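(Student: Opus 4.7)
The plan is to reduce both factorizations in $\CQ$ to standard surjectivity statements in the canonical heart $\coh(\plpqr)$, via the orbit-category description $\CQ = \DQ / F$ with $F = \tau^{-1}[1]$.

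\textbf{Setup.} Write $V = V'[1]$ for an indecomposable vector bundle $V'$. A direct computation using \eqref{eq:Hom1} and the orbit-category decomposition shows that, in this situation, each of $\Hom_{\CQ}(V, \source)$, $\Hom_{\CQ}(V, T_-)$, $\Hom_{\CQ}(T_+, V)$ collapses to a single summand: the other potential summands are either $\Ext^{-1}$-terms (vanishing since all objects sit in the heart) or Homs from a tube object to a vector bundle (vanishing by Serre duality plus heredity of $\coh(\plpqr)$). Explicitly,
\[
  \Hom_{\CQ}(V, \source) \cong \Hom(V', X), \quad
  \Hom_{\CQ}(V, T_-) \cong \Hom(V', \uat T_-), \quad
  \Hom_{\CQ}(T_+, V) \cong \Ext^1(T_+, V').
\]
Thus $f$ is represented by a nonzero $\bar f \colon V' \to X$ in $\coh(\plpqr)$, which is automatically surjective since $X$ is simple in $\tube_\infty(Q)$; set $K = \ker \bar f \in \tube_\infty(Q)$. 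Likewise $\bar t_- \colon T_- \twoheadrightarrow \tau X$ is surjective, and $\bar t_+ \in \Ext^1(T_+, X)$ is a nonzero extension class.

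\textbf{Factoring $f$ through $t_-$.} Unwinding the orbit composition, the composite $t_- \circ g$ for $g \in \Hom_{\CQ}(V, T_-)$ corresponds in $\coh(\plpqr)$ to $\uat(\bar t_-) \circ \bar g$, so I need a $\bar g \colon V' \to \uat T_-$ lifting $\bar f$ along the surjection $\uat(\bar t_-) \colon \uat T_- \twoheadrightarrow X$. The obstruction is
\[
  \Ext^1(V', \uat K_-) \cong D\Hom(\uat K_-, \tau V'),
\]
where $K_- = \ker \bar t_-$, and this vanishes because $\uat K_-$ is a torsion sheaf and $\tau V'$ is a vector bundle. Hence $\bar g$ exists.

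\textbf{Factoring $t_+$ through $f$.} Dually, the composite $f \circ h$ for $h \in \Hom_{\CQ}(T_+, V) \cong \Ext^1(T_+, V')$ corresponds to the image of $\bar h$ under the map $\Ext^1(T_+, V') \to \Ext^1(T_+, X)$ induced by $\bar f$. Applying $\Hom(T_+, -)$ to $0 \to K \to V' \to X \to 0$ gives
\[
  \Ext^1(T_+, V') \to \Ext^1(T_+, X) \to \Ext^2(T_+, K) = 0
\]
by heredity of $\coh(\plpqr)$, so $\bar t_+$ lifts to the desired $\bar h$.

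\textbf{Main obstacle.} The only non-routine step is the orbit-category bookkeeping: identifying the above Hom spaces in $\CQ$ with the correct single summands of their orbit decompositions (through the vanishings indicated) and verifying that the orbit composition formula recovers ordinary composition in $\coh(\plpqr)$. Once this setup is in place, both factorizations follow directly from exactness in the heart, with the two required vanishings $\Hom(\text{torsion}, \text{vector bundle}) = 0$ and $\Ext^2(\,\cdot\,,\,\cdot\,) = 0$.
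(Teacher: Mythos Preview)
Your argument is correct and rests on the same two vanishings the paper uses (no maps from torsion to vector bundles; heredity of $\coh(\plpqr)$). The paper organizes it a bit differently: instead of descending to the heart and doing the orbit-category bookkeeping, it lifts $t_\pm$ and $f$ to morphisms $\on{t}_\pm$, $\on{f}$ in $\DQ$ sitting in triangles
\[
  \sink\to\uat T'_-[1]\to \uat T_-[1]\xrightarrow{\on{t}_-}\source,
  \qquad
  V'\to V\xrightarrow{\on{f}}\source\to V'[1],
\]
then applies $\Hom_{\DQ}(V,-)$ to the first and $\Hom_{\DQ}(T_+,-)$ to the second; the required surjectivities follow from $\Hom_{\DQ}(V,\uat T'_-[2])=0$ and $\Hom_{\DQ}(T_+,V'[1])=0$, both instances of \eqref{eq:Homvect}. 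This sidesteps the explicit identification of the single nonvanishing summands in each $\Hom_{\CQ}$, though your identifications are correct and the extra bookkeeping is routine.

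One small slip: $K=\ker(\bar f\colon V'\twoheadrightarrow X)$ is a subsheaf of the vector bundle $V'$, hence itself a vector bundle, not an object of $\tube_\infty(Q)$. This does not affect your argument, since you only invoke $\Ext^2(T_+,K)=0$, which holds by heredity regardless of where $K$ lives.
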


\begin{proof}
By \eqref{eq:Hom1}, \eqref{eq:tpm} come from the following triangles in $\DQ$.
\begin{gather}\label{eq:tri+}
\sink\to T'_+\to T_+\xrightarrow{\on{t}_+}\source,\\
\label{eq:tri-}
\sink\to\uat T'_-[1]\to \uat T_-[1]\xrightarrow{\on{t}_-} \source,
\end{gather}
where $T'_\pm\in\Ind\tube^\rgd_{\infty}(Q)$ and $\on{t}_\pm$ are preimages of $t_\pm$ in $\DQ$.
Also, $f$ comes from the following triangle in $\DQ$.
\begin{gather}\label{eq:trif}
\vv'\to \vv\xrightarrow{\on{f}}\source\to \vv'[1],
\end{gather}
where $\vv'$ is an object in $\vect(Q)[1]$ since $\source$ is a rigid regular simple. $\on{f}$ is a preimage of $f$ in $\DQ$.

Applying $\Hom_{\DQ}(\vv,-)$ to \eqref{eq:tri-}, we obtain
\[
\Hom_{\DQ}(\vv,\uat T_-[1])\xrightarrow{\Hom(\vv,\on{t}_-)}\Hom_{\DQ}(\vv,\source)\to \Hom_{\DQ}(\vv,\uat T'_-[2])\xlongequal{\eqref{eq:Homvect}}0,
\]
thus $\on{f}$ factors through $\on{t_-}$ and so do their images in $\CQ$.
Applying $\Hom_{\DQ}(T_+,-)$ to \eqref{eq:trif},
we obtain
\[
\Hom_{\DQ}(T_+,\vv)\xrightarrow{\Hom(T_+,\on{f})}\Hom_{\DQ}(T_+,\source)\to\Hom_{\DQ}(T_+,\vv'[1])\xlongequal{\eqref{eq:Homvect}}0,
\]
Hence $\on{t}_+$ factors through $\on{f}$ and so do their images in $\CQ$.
\end{proof}

\begin{corollary}\label{cor:sim-evo}
In any $\sink$-evolving triangle $\eqref{eq:evo-tri}$,
either $\uu\in\vect(Q)$ or
\[
\uu\in\Ind(\Psm(\tau \sink)\cup\Psp(\uat \sink)).
\]
\end{corollary}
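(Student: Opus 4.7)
The plan is to analyze each indecomposable summand of $\uu$ using the identification \eqref{id2} together with the right minimality of $f\colon \uu\to\source$, and then sharpen this per-summand analysis into the stated dichotomy via \Cref{lem:factor}.

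First I would observe that every indecomposable summand $\uu_0$ of $\uu$ must satisfy $\Hom_{\CQ}(\uu_0,\source)\ne 0$; otherwise $\uu_0$ could be dropped from $\uu$ without destroying the right approximation property, contradicting right minimality of $f$. Using \eqref{id2}, $\uu_0$ lies in some tube $\tube_\lambda(Q)$ or in $\vect(Q)[1]$. Orthogonality of distinct tubes in $\DQ$ propagates through $\Serre_2$-orbits and the decomposition \eqref{eq:Hom1}, so $\uu_0\in\tube_\lambda(Q)$ with $\lambda\ne\infty$ forces $\Hom_{\CQ}(\uu_0,\source)=0$, a contradiction. Hence each $\uu_0$ lies in $\tube_\infty(Q)$ or in $\vect(Q)[1]$.

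Next I would treat the two cases. If $\uu_0\in\tube_\infty(Q)$, then rigidity of $\uu_0$ (as a summand of the partial cluster $\uu$) confines it to $\tube^\rgd_\infty(Q)$, identified with the strip in \eqref{idtube}; applying \eqref{eq:Zone} to the condition $\Hom_{\CQ}(\uu_0,\source)\ne 0$ forces $\uu_0\in\Psm(\tau\sink)\cup\Psp(\uat\sink)$, which is the second alternative. The case $\uu_0\in\vect(Q)[1]$ directly gives the first alternative. To upgrade this to the dichotomy on $\uu$ itself (i.e. no mixing of the two types), I would invoke \Cref{lem:factor}: any summand $\uu_v\in\vect(Q)[1]$ of $\uu$ carries both factorizations $f|_{\uu_v}=t_-\circ h$ and $t_+=f|_{\uu_v}\circ k$, so $t_\pm$ already factors through $\uu_v$. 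Consequently any prospective tube summand in $\Psp(\uat\sink)\cup\Psm(\tau\sink)$ would be redundant for the right $\pctc{\vv}$-approximation of $\source$, violating right minimality of $f$.

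The main obstacle will be this last sharpening step. The per-summand analysis in the first two paragraphs is essentially immediate from orthogonality and \eqref{eq:Zone}, but turning it into the global dichotomy requires careful bookkeeping of how the individual restrictions $f|_{\uu_i}$ assemble into $f$, together with a clean use of \Cref{lem:factor} to show that presence of a $\vect(Q)[1]$-summand exhausts the approximation budget already provided by $t_\pm$, thereby excluding any coexisting tube summand. Once this reconciliation is in place, the classification of $\uu$ in the corollary follows.
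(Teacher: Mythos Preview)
Your approach mirrors the paper's: locate each indecomposable summand of $\uu$ via \eqref{id2}, \eqref{eq:Zone} and tube orthogonality, then use \Cref{lem:factor} to rule out mixing tube summands with $\vect(Q)[1]$-summands. That part is fine (with a small caveat below). However, there is a genuine gap: you never prove that in the tube case $\uu$ is \emph{indecomposable}. The corollary asserts $\uu\in\Ind(\Psm(\tau\sink)\cup\Psp(\uat\sink))$, not merely that every summand of $\uu$ lies in that set. Your ``no mixing'' step only excludes coexistence of $\vect(Q)[1]$-summands and tube summands; it says nothing about having several tube summands. The paper closes this gap by observing that, within each of $\Psm(\tau\sink)$ and $\Psp(\uat\sink)$, the map to $\source$ from an object in a higher layer factors through the one in a lower layer (and dually), so right minimality of $f$ forces a single indecomposable summand. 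You need an argument of this kind.

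A minor imprecision: in your use of \Cref{lem:factor}, the redundancy does not always fall on the tube summand. For $T_+\in\Psp(\uat\sink)$ you are right: $t_+$ factors through $f|_{\uu_v}$, so $T_+$ is redundant. But for $T_-\in\Psm(\tau\sink)$ the lemma gives $f|_{\uu_v}=t_-\circ h$, which makes the $\vect(Q)[1]$-summand $\uu_v$ redundant, not $T_-$. Either way right minimality is violated, so your conclusion (no mixing) survives, but the wording should be corrected. Also, there is no ambient $\pctc{\vv}$ here: in a general $\sink$-evolving triangle $f$ is only a minimal right $\Add(\uu)$-approximation, so phrase the redundancy argument in those terms.
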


\begin{proof}
If $\uu\not\in\vect(Q)$, then there exists $\uu_0\in\Ind\tube^\rgd_{\infty}(Q)$ in $\Add(\uu)$.
By \eqref{eq:Zone}, $\uu_0\in\Psm(\tau \sink)\cup\Psp(\uat \sink)$.
By \Cref{lem:factor}, $\uu$ have no summand in $\vect(Q)[1]$.
In particular, since the indecomposable in the higher layer must factor through the lower one in $\Psm(\tau \sink)$ and dually in $\Psp(\uat \sink)$, we have $\uu=\uu_0\in\Psm(\tau \sink)\cup\Psp(\uat \sink)$.
\end{proof}

Fix an $\sink$-leaf $L$ and let $\pcts{\vv}$ be any cell intersecting $L$ nontrivially.
Similar to \eqref{eq:int}, we consider the following data associated with $\pcts{\vv}$.
\begin{gather}
\begin{cases}
    \num_{-}(\pcts{\vv})=|\pcts{\vv}\cap\Psm(\tau\sink)|,\\
    \II|_{\vect}(\pcts{\vv})=
    [\ueot(L),\dapp{\vv}]|_{\vect},\\
    \num_{+}(\pcts{\vv})=|\pcts{\vv}\cap\Psp(\uat\sink)|,
\end{cases}
\end{gather}
Recall that $\ueot(L)=\ueot(\pcto{\vv})$ is an invariant on any $\sink$-leaf $L$ as a corollary of \Cref{thm:const}.
The interval is well-defined by the properties of the irreducible extension, cf. \Cref{prop:ie}.
By \Cref{cor:sim-evo}, the following are equivalent.
\begin{gather}\label{criterion2}
\pcts{\vv}\subset\Star(\sink)\iff\Ext^1(\pcto{\vv},\sink)=0\iff
\num_{\pm}(\pcts{\vv})=0~\&~\II|_{\vect}(\pcts{\vv})=\emptyset.
\end{gather}

\begin{lemma}\label{lem:ccross2}
When there is a cell-crossing of $L$ from $\pcts{\uu}$ to $\pcts{\ww}$ along $\dflow{\sink}$ (cf. \Cref{def:ccross}), we have the following.
\begin{enumerate}
    \item $\num_{-}(\pcts{\ww})=\max\{\num_{-}(\pcts{\uu})-1,0\}$.
    \item
    If $\num_{-}(\pcts{\uu})=0$, then
    \begin{gather}\label{eq:throw2}
        [\ueot(L),\dapp{\ww}\backslash\dapp{\uu}]|_{\vect}\subset
        [\ueot(L),\dapp{\uu}\backslash\dapp{\ww}]|_{\vect}
    \end{gather}
    with equality only if $\II|_{\vect}(\pcts{\uu})=\emptyset$.
    \item If $\num_{-}(\pcts{\uu})=0$ and $\II|_{\vect}(\pcts{\uu})=\emptyset$, then
    $\num_{+}(\pcts{\ww})=\num_{+}(\pcts{\uu})-1$.
\end{enumerate}
\end{lemma}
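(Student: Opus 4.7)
The plan is to reduce the proof to a zone-classification of the object $U_\uu := \App{R}{\pctc{\uu}}{\source}$ in the downward $\sink$-evolving triangle $\sink \to W_\uu \to U_\uu \to \sink[1]$ of $\pcts{\uu}$, together with a dual analysis of $W_\ww := \App{L}{\pctc{\ww}}{\sink}$ from the upward triangle of $\pcts{\ww}$. By \Cref{lem:cwall}, these triangles coincide respectively with the upward and downward $\sink$-evolving triangles for the cell-wall $\pcts{V}$, so the new vertices in $\pcts{\uu}\setminus\pcts{V}$ are precisely the indecomposable summands of $U_\uu$ outside $\pcts{V}$, and similarly $\pcts{\ww}\setminus\pcts{V}$ is recovered from $W_\ww$. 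By \Cref{cor:sim-evo}, both $U_\uu$ and (dually) $W_\ww$ are either entirely in $\vect(Q)$ or a single indecomposable in $\Psm(\tau\sink)\cup\Psp(\uat\sink)$.

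First, I would establish the following zone classification of $U_\uu$ in terms of $\pcts{\uu}$: $U_\uu\in\Psm(\tau\sink)\iff\num_{-}(\pcts{\uu})>0$; if $\num_{-}(\pcts{\uu})=0$ and $\II|_{\vect}(\pcts{\uu})\neq\emptyset$, then $U_\uu\in\vect(Q)$; otherwise $U_\uu\in\Psp(\uat\sink)$. The nontrivial direction, that $\num_{-}(\pcts{\uu})>0$ forces $U_\uu\in\Psm(\tau\sink)$, uses that any $V\in\pcts{\uu}\cap\Psm(\tau\sink)$ carries a nonzero map $V\to\source$ through the $\tau\sink$-summand of \eqref{eq:Hom1} which must factor through $U_\uu$; a short Serre-duality computation exploiting the hereditary property of $Q$ shows $\Hom_{\CQ}(V,U_\uu)=0$ when $U_\uu\in\vect(Q)$, ruling out that case, while the case $U_\uu\in\Psp(\uat\sink)$ is ruled out by locating $V$ and $U_\uu$ inside the AR-quiver of the fat tube via \eqref{eq:Homtube}. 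The remaining cases follow from \Cref{lem:factor}, \eqref{eq:Zone}, and minimality of the approximation. A completely dual statement governs $W_\ww$.

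For part $(1)$, when $\num_{-}(\pcts{\uu})\geq 1$ the classification forces $U_\uu=u\in\Psm(\tau\sink)$ to be the unique new vertex (a single indecomposable), so $\pcts{V}$ has exactly $\num_{-}(\pcts{\uu})-1$ vertices in $\Psm(\tau\sink)$. The dual classification applied to $\pcts{\ww}$, combined with $\num_{-}(\pcts{V})=\num_{-}(\pcts{\uu})-1$, shows $W_\ww\notin\Psm(\tau\sink)$, giving $\num_{-}(\pcts{\ww})=\num_{-}(\pcts{\uu})-1$. When $\num_{-}(\pcts{\uu})=0$ the same dual statement, together with $\num_{-}(\pcts{V})=0$, prevents $W_\ww$ from producing any $\Psm(\tau\sink)$-vertex, yielding $\num_{-}(\pcts{\ww})=0$. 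Part $(2)$ is the analogue of \Cref{lem:ccross1} restricted to the vector-bundle part: under $\num_{-}(\pcts{\uu})=0$ we have $U_\uu\in\vect(Q)\cup\Psp(\uat\sink)$, and using that $\ueot(L)$ is invariant on $L$ (\Cref{thm:const}, \Cref{prop:ie}), any $w'\in(\dapp{\ww}\setminus\dapp{\uu})|_{\vect}$ must have its irreducible maps to $\source$ factoring through some $u'\in(\dapp{\uu}\setminus\dapp{\ww})|_{\vect}$, producing the strict interval inclusion $[\ueot(L),w']|_{\vect}\subsetneq[\ueot(L),u']|_{\vect}$; summation and the maximality argument of \Cref{lem:ccross1} yield \eqref{eq:throw2}, with equality only if no vector-bundle interval survives, i.e., $\II|_{\vect}(\pcts{\uu})=\emptyset$. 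Part $(3)$ mirrors part $(1)$ on the $\Psp$-side: under both hypotheses the classification forces $U_\uu\in\Psp(\uat\sink)$, and the dual $W_\ww$-argument gives $\num_{+}(\pcts{\ww})=\num_{+}(\pcts{\uu})-1$.

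The main obstacle will be the rigorous verification of the zone classification of $U_\uu$: translating the minimality of the right approximation together with the rigidity of $\pcts{\uu}$ into precise location constraints for $U_\uu$ via \eqref{id2} and the AR-structure of $\tube_{\infty}(Q)\cong\ZZ_r\DynA{\infty}$ demands a careful case analysis of $\Hom_{\CQ}$-calculations combining \eqref{eq:Hom1}, \eqref{eq:Zone} and \eqref{eq:Homtube}. Once the classification is in place, parts $(1)$ and $(3)$ follow by duality, and part $(2)$ is a direct transcription of the Dynkin argument.
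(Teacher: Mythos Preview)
Your overall plan—classifying $U_\uu=\dapp{\uu}$ (and dually $W_\ww=\ww_0$) into the zones $\Psm(\tau\sink)$, $\vect(Q)$, $\Psp(\uat\sink)$ according to $(\num_-,\II|_{\vect},\num_+)$—is exactly the right idea and is essentially what the paper does, only spelled out. The paper's proof is very compressed: it observes $\ww_0\notin\Psm(\tau\sink)$ directly from $\Hom(\sink,\ww_0)\ne0$ and \eqref{eq:Zone} (a shortcut that replaces your full ``dual classification'' of $W_\ww$), and then hides the implication ``$\num_-(\pcts{\uu})>0\Rightarrow\dapp{\uu}\in\Psm$'' behind the phrase ``By \Cref{cor:sim-evo} the equation holds.'' So you are filling in a genuine expository gap rather than taking a different route.

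There is, however, a concrete error in your justification of that implication. The claim that a Serre-duality computation gives $\Hom_{\CQ}(V,U_\uu)=0$ for $V\in\Psm(\tau\sink)$ and $U_\uu\in\vect(Q)$ is false. Working in the fundamental domain \eqref{id2} with $U_\uu=U'[1]$, $U'\in\vect$, one finds
\[
\Hom_{\CQ}(V,U_\uu)\cong D\Hom_{\coh}(U',\tau V),
\]
and vector bundles generically do map to torsion sheaves in the fat tube, so this is typically nonzero. Rigidity of $\pcts{\uu}$ only yields $\Hom_{\coh}(U',V)=0$, not $\Hom_{\coh}(U',\tau V)=0$. Thus the factorization of $t_-^V$ through $U_\uu$ is not obstructed by a vanishing Hom.

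The correct way to rule out $U_\uu\in\vect$ is via \Cref{lem:factor} and \emph{uniqueness} of the minimal approximation: if $U_\uu\in\vect(Q)[1]$, then by \Cref{lem:factor} every indecomposable component of $f\colon U_\uu\to\source$ factors through $t_-^V\colon V\to\source$. Since $f$ is a right $\pctc{\uu}$-approximation and factors through $t_-^V$ with $V\in\pcts{\uu}$, the map $t_-^V$ is itself a right $\pctc{\uu}$-approximation; being nonzero on an indecomposable, it is minimal. Uniqueness forces $U_\uu\cong V\in\Psm$, a contradiction. Your treatment of the case $U_\uu\in\Psp(\uat\sink)\setminus\Psm(\tau\sink)$ via \eqref{eq:Homtube} is plausible but should be made equally explicit (one checks that $t_-^V$, which lives in the $\Hom_\D(V,\tau\sink)$-summand of \eqref{eq:Hom1}, cannot arise as a composite through $t_+$, which lives in the $\Hom_\D(T_+,\source)$-summand). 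Once this zone classification is secured, your deductions for $(1)$--$(3)$ go through as you describe, matching the paper's argument.
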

\begin{proof}
Let $\pcts{\vv}$ be the cell-wall of the cell-crossing and
\[
\sink\to\ww_0\to\dapp{\vv}\to\source
\]
the downward $\sink$-evolving triangle for $\pcts{\vv}$.
Then $\pcts{\ww}=\pcts{\vv}\cup\{\ww_0\}$.

\textbf{For $1^\circ$}:
Since $\Hom(\sink,\ww_0)\ne0$, $\ww_0\not\in\Psm(\tau \sink)$ by \eqref{eq:Zone}.
By \Cref{cor:sim-evo} the equation holds.

\textbf{For $2^\circ$}:
Let $\num_{-}(\pcts{\uu})=0$,
then $\num_{-}(\pcts{\ww})=0$ by $1^\circ$.
$\dapp{\uu}$ (resp. $\dapp{\ww})$ is either in $\Psp(\uat \sink)$ or in $\vect(Q)$ by \Cref{cor:sim-evo}.

If $\II|_{\vect}(\pcts{\uu})\ne\emptyset$, then $\dapp{\uu}$ is in $\vect(Q)$.
The proper inclusion is trivial for $\dapp{\ww}\subset\Psp(\uat \sink)$.
Suppose not then $\dapp{\uu},\dapp{\ww}\subset\vect(Q)$.
Similar to \Cref{lem:ccross1}, we also have the proper inclusion.

If $\II|_{\vect}(\pcts{\uu})=\emptyset$, then $\dapp{\uu}\subset\Psp(\uat \sink)$ and so does $\dapp{\ww}$ by \Cref{lem:factor}.
Two sides in \eqref{eq:throw2} are equal since they are both empty.

\textbf{For $3^\circ$}:
Let $\num_{-}(\pcts{\uu})=0$, $\II|_{\vect}(\pcts{\uu})=\emptyset$,
then $\num_{-}(\pcts{\ww})=0$, $\II|_{\vect}(\pcts{\ww})=\emptyset$ by $1^\circ$ and $2^\circ$.
Note that $\pcts{\uu}\not\subset\Star(\sink)$, $\num_{+}(\pcts{\uu})\ne0$ by \eqref{criterion2}.
By \Cref{cor:sim-evo}, $\dapp{\ww}\subset\Psp(\uat \sink)$ thus $\ww_0\not\in\Psp(\uat \sink)$.
It is similar to $1^\circ$ that $3^\circ$ holds.
\end{proof}

Take $\pcts{\vv}_0$ to be an arbitrary cell intersecting $L$ nontrivially.
Consider the continuous cells $\pcts{\vv}_1,\pcts{\vv}_2$ intersecting $L$ nontrivially from $\pcts{\vv}_0$ along $\dflow{\sink}$.
By \Cref{lem:ccross2},
    $\num_{-}(\pcts{\vv}_k)=0$ for $k\gg0$ firstly.
    Then one can use \Cref{lem:dd}
    to show $\II|_{\vect}(\pcts{\vv}_k)=\emptyset$ when $k\gg0$.
    At last, $\num_{+}(\pcts{\vv}_k)=0$ for $k\gg0$.

Combining with \eqref{criterion2}, $L$ intersects $\pcts{\vv}_k\subset\Star(\sink)$ nontrivially for $k\gg0$,
which will contract into $\sink$ along $\dflow{\sink}$.
Dually, $L$ will end up at $\source$ along $\uflow{\sink}$.
As a consequence, the $\sink$-foliation is compact.
By \Cref{lem:compact}, it induces the homotopy equivalences \eqref{eq:susp}.
Combining with \eqref{eq:EG-red1}, we have
\[
\CCx(\pl_{p,q,r})\simeq\susp\CCx(\pl_{p,q,r-1}).
\]
Note that $\CCx(\EucA{1}{1})=\CCx(\pl)$ is contractible. Inductively, we deduce that the cluster complex is contractible.

\subsection{Proof of \Cref{thm:E-charge}}\label{sec:P3}
Consider the following heart.
\[
\h_\sink=\on{mod}(\k \Ps_{+}(\sink)^{\on{op}}).
\]
Take the fundamental domain
\[
\on{FdD}_{\sink}(\CQ)=\h_\sink\cup\Proj(\k \Ps_{+}(\sink)^{\on{op}})[1]\subset\DQ,
\]
it induces an identification of $\CQ$ as follows
\begin{gather}\label{id3}
    \Ind\CQ\longleftrightarrow[\Ps_+(\sink),\infty)\cup\tube(Q)\cup(-\infty,\Ps_+(\sink)[1]].
\end{gather}

Up to symmetric, take $X$ to be the green arc in the left of \Cref{fig:Apq+5tube}
and $T^{(0)}$ and $T^{(1)}$ the blue and red ones.
Choose the following bijection for each tube:
\begin{gather}\label{idtube2}
\Ind(\tube_\lambda^\rgd(Q))\longleftrightarrow[\Psp(T^{(\lambda)}),\Psp(\uat^{l-1}T^{(\lambda)})]\subset\widetilde{\ar}(\tube^\rgd_\lambda(Q))\cong\ZZ\DynA{l-1}.
\end{gather}
Then for any $\vv\in\Ind\tube_\lambda(Q)$,
we have the following by \Cref{lem:gm} (cf. right of \Cref{fig:Apq+5tube}):
\begin{gather}
\begin{cases}
\Hom_{\CQ}(\sink,\vv)\ne0\iff \vv\in\tau[\uat T^{(\lambda)},\tau T^{(\lambda)}]\\
\Hom_{\CQ}(\vv,\source)\ne0\iff \vv\in[\uat T^{(\lambda)},\tau T^{(\lambda)}].
\end{cases}
\end{gather}

\begin{figure}
    \centering
    \includegraphics[width=\linewidth]{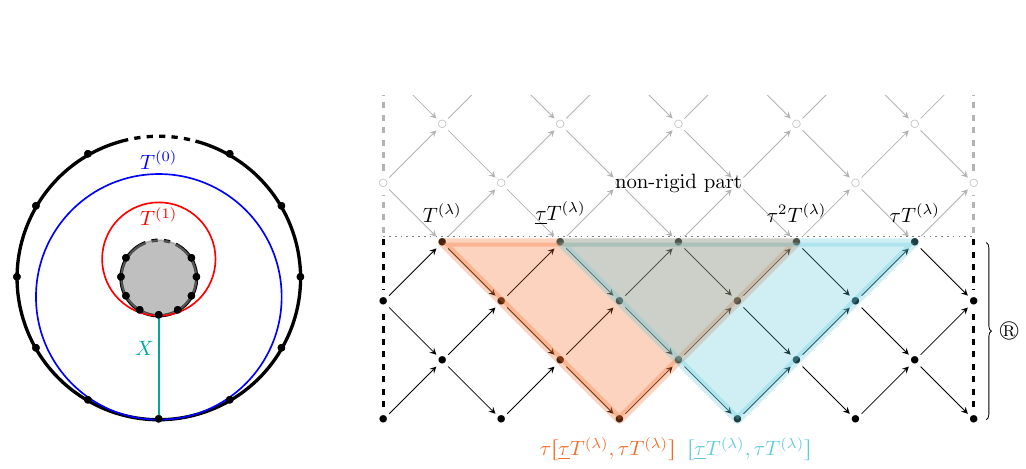}
    \caption{The geometric model for $\EucA{p}{q}$ (left) and the rank 5 tube (right)}
    \label{fig:Apq+5tube}
\end{figure}

Consider any partial cluster $\pcts{\vv}$.
By \eqref{eq:Homvect}, $\Ext^1(\vv,\sink)\ne0~\text{for any}~\vv\in\dapp{\vv}$ implies that
\[
\dapp{\vv}\subset[\uat\sink,\infty)\cup\tube(Q)\cup(-\infty,\source].
\]
Note that $\Ext^1([\uat\sink,\infty),(-\infty,\source])\ne0$ in type $\EucA{p}{q}$ case, we have moreover
\begin{gather}\label{eq:ext}
    \text{Either}~\dapp{\vv}\subset[\uat\sink,\infty)\cup\tube(Q)
    ~\text{or}~\dapp{\vv}\subset\tube(Q)\cup(-\infty,\source].
\end{gather}
Fix an $\sink$-leaf $L$.
For any cell $\pcts{\vv}$ intersecting $L$ nontrivially
with $\dapp{\vv}\in[\uat\sink,\infty)\cup\tube(Q)$,
we consider the following two intervals associated to it.
\begin{gather} \label{eq:int3}
\begin{cases}
     \II|_{\tube}(\pcts{\vv})=
    \bigsqcup_{\lambda=0,1}[T^{(\lambda)},\dapp{\vv}]|_{\tube_\lambda},\\
     \II|_{\vect}(\pcts{\vv})=
    [\sink,\dapp{\vv}]|_{\vect}.
\end{cases}
\end{gather}
Similar to the previous cases, we have the following observation.
\begin{gather}\label{criterion3}
    \pcts{\vv}\subset\Star(\sink)\iff
    \Ext^1(\pcto{\vv},\sink)=0\iff
    \II|_{\tube}(\pcts{\vv})=\emptyset=\II|_{\vect}(\pcts{\vv}).
\end{gather}

\begin{lemma}\label{lem:ccross3}
When cell-crossing from $\pcts{\uu}$ to $\pcts{\ww}$ along $\dflow{\sink}$ (cf. \Cref{def:ccross})
provided $\dapp{\uu}\subset[\uat\sink,\infty)\cup\tube(Q)$,
then the following holds:
\begin{enumerate}
    \item
    $\dapp{\ww}\subset[\uat\sink,\infty)\cup\tube(Q)$ and
    \begin{gather}\label{eq:throw3}
    \bigsqcup_{\lambda=0,1}[T^{(\lambda)},\dapp{\ww}\backslash\dapp{\uu}]|_{\tube_\lambda}\subset
     \bigsqcup_{\lambda=0,1}[T^{(\lambda)},\dapp{\uu}\backslash\dapp{\ww}]|_{\tube_\lambda}.
    \end{gather}
    with equality only if $(\dapp{\uu}\backslash\dapp{\ww})|_{\tube}=\emptyset$.
    \item If $(\dapp{\uu}\backslash\dapp{\ww})|_{\tube}=\emptyset$, then
    \begin{gather}\label{eq:throw4}
    [\sink,\dapp{\ww}\backslash\dapp{\uu}]|_{\vect}\subsetneqq
    [\sink,\dapp{\uu}\backslash\dapp{\ww}]|_{\vect}.
    \end{gather}
\end{enumerate}
\end{lemma}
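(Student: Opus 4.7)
The plan is to mirror the cell-crossing analyses of \Cref{lem:ccross1} and \Cref{lem:ccross2}, adapted to the type $\EucA{p}{q}$ Euclidean setting where the vector bundle and tube components are both present and interact. Let $\pcts{\vv} = \pcts{\uu} \cap \pcts{\ww}$ be the cell-wall, and take
\[
\sink \xrightarrow{g} \ww_0 \to \dapp{\vv} \xrightarrow{f} \source
\]
to be the downward $\sink$-evolving triangle for $\pcts{\vv}$, so that $\pcts{\ww} = \pcts{\vv} \cup \{\ww_0\}$ by \Cref{lem:cwall}. Since $g \ne 0$, \eqref{eq:Homvect} localizes $\ww_0$ to $[\sink, \infty) \cup \tube(Q) \cup (-\infty, \tau\sink[1]]$.

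For the containment part of (1), apply the dichotomy \eqref{eq:ext} to $\pcts{\ww}$: either $\dapp{\ww} \subset [\uat\sink, \infty) \cup \tube(Q)$, which is the desired conclusion, or $\dapp{\ww} \subset \tube(Q) \cup (-\infty, \source]$. The second branch is ruled out as follows. Any summand of $\dapp{\ww}$ already lying in $\pcts{\vv}$ is compatible with the given $\dapp{\uu} \subset [\uat\sink, \infty) \cup \tube(Q)$ via the shared partial cluster structure of $\pcts{\uu}$, so the only candidate for a vect summand strictly in $(-\infty, \source]$ is $\ww_0 \in (-\infty, \tau\sink[1]]$ itself. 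I would then invoke the $\EucA{p}{q}$-specific Ext-nonvanishing $\Ext^1([\uat\sink, \infty), (-\infty, \source]) \ne 0$ underlying \eqref{eq:ext} to derive a contradiction with the rigidity of $\pcts{\ww}$: since $\pcts{\ww}$ shares the cell-wall $\pcts{\vv}$ with $\pcts{\uu}$, it inherits vect elements in $[\uat\sink, \infty)$ through the compatibility of $\pcts{\vv}$ with $\dapp{\uu}$, and these would fail to be $\Ext^1$-orthogonal to such a $\ww_0$.

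For the tube-interval inclusion \eqref{eq:throw3} and its strictness clause, imitate \Cref{lem:ccross1} inside each fat tube $\tube_\lambda(Q)$ independently. Fix $\lambda \in \{0, 1\}$ and a summand $\ww \in (\dapp{\ww} \setminus \dapp{\uu}) \cap \tube_\lambda(Q)$; any irreducible $\pctc{\ww}$-morphism $\ww \to \source$ factors through some $\uu \to \source$ with $\uu \in \dapp{\uu} \cap \tube_\lambda(Q)$, since by \eqref{eq:Homtube} maps in $\tube_\lambda(Q)$ to $\source$ come from $\Psm(\tau\sink)$ inside the tube and the factorization therefore stays within that tube. Irreducibility then forces $\uu \in \dapp{\uu} \setminus \dapp{\ww}$, and the tube AR-structure via \eqref{idtube2} yields $[T^{(\lambda)}, \ww] \subsetneqq [T^{(\lambda)}, \uu]$. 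Summing over $\lambda$ produces \eqref{eq:throw3}; strictness when $(\dapp{\uu} \setminus \dapp{\ww})|_{\tube} \ne \emptyset$ follows from the maximal-interval argument of \Cref{lem:ccross1}. For Part (2), the hypothesis $(\dapp{\uu} \setminus \dapp{\ww})|_{\tube} = \emptyset$ combined with the strictness clause gives $(\dapp{\ww} \setminus \dapp{\uu})|_{\tube} = \emptyset$ as well, so the cell-crossing modifies only vect summands; the factorization argument of \Cref{lem:ccross1} then applies verbatim to the vect part and yields the strict inclusion \eqref{eq:throw4}.

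The main obstacle will be the containment step of Part (1), namely, keeping $\dapp{\ww}$ inside $[\uat\sink, \infty) \cup \tube(Q)$ across a cell-crossing. This is precisely the $\EucA{p}{q}$-specific feature that fails in types $\EucD{n}$ and $\EucE{n}$, where vect elements on both sides of $\sink$ can coexist rigidly in a partial cluster. Carefully propagating the regional constraint from $\pcts{\uu}$ through the shared cell-wall $\pcts{\vv}$ to $\pcts{\ww}$, while tracking where $\ww_0$ is forced to live by the evolving triangle and by rigidity, is the most delicate step of the argument.
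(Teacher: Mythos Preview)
Your overall structure matches the paper's, and for the tube-interval inclusion \eqref{eq:throw3} and Part~(2) you proceed essentially as the paper does: reduce to the \Cref{lem:ccross1}-style factorization argument inside each tube, then observe that when the tube differences vanish the vect argument applies verbatim.

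The divergence is in the containment step of Part~(1). The paper does not invoke the dichotomy \eqref{eq:ext} for $\pcts{\ww}$ or try to locate $\ww_0$; instead it argues directly that any $\ww\in\dapp{\ww}\setminus\dapp{\uu}$ lies in $\pcts{\uu}$, so its irreducible map $\ww\to\source$ in $\pctc{\ww}$ must factor through the $\pctc{\uu}$-approximation $\dapp{\uu}\to\source$. This gives a nonzero morphism from $\ww$ to an object of $\dapp{\uu}\subset[\uat\sink,\infty)\cup\tube(Q)$, and the Hom-constraints \eqref{eq:Homvect} then force $\ww$ into the same region. This factorization step is exactly the mechanism you already use later for the interval inequalities, so reusing it here is both shorter and avoids any case analysis on $\ww_0$.

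Your dichotomy-then-contradict route has a genuine gap. The claim that $\pcts{\ww}$ ``inherits vect elements in $[\uat\sink,\infty)$'' from $\pcts{\vv}$ is not justified: the cell-wall $\pcts{\vv}$ need not contain any vect element in $[\uat\sink,\infty)$ at all (for instance when $\dapp{\uu}\subset\tube(Q)$ and the remaining summands of $\pcts{\vv}$ are also tube objects), so the Ext-nonvanishing $\Ext^1([\uat\sink,\infty),(-\infty,\source])\ne0$ gives you nothing to contradict. Similarly, the assertion that summands of $\dapp{\ww}$ lying in $\pcts{\vv}$ are automatically in $[\uat\sink,\infty)\cup\tube(Q)$ ``via the shared partial cluster structure'' is precisely what needs to be proved, and the clean way to prove it is the factorization argument above. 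Replace your dichotomy paragraph by that one-line factorization and the proof goes through.
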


\begin{proof}
\textbf{For $1^\circ$}:
For any $\ww\in\dapp{\ww}\backslash\dapp{\uu}$, we have $\ww\to\source$ factors through $\dapp{\uu}\to\source$ since $\ww\in\pcts{\uu}$ but $\ww\not\in\dapp{\uu}$.
Then by \eqref{eq:Homvect}, $\dapp{\uu}\in[\uat\sink,\infty)\cup\tube(Q)$ guarantees that $\dapp{\ww}\subset[\uat\sink,\infty)\cup\tube(Q)$.

When $(\dapp{\uu}\backslash\dapp{\ww})|_{\tube_\lambda}\ne\emptyset$,
we may assume  $(\dapp{\ww}\backslash\dapp{\uu})|_{\tube_\lambda}\ne\emptyset$.
It is similar to \Cref{lem:ccross1} that \eqref{eq:throw3} holds.

When $(\dapp{\uu}\backslash\dapp{\ww})|_{\tube}=\emptyset$, suppose $(\dapp{\ww}\backslash\dapp{\uu})|_{\tube}\ne\emptyset$. Then for any $\ww\in(\dapp{\ww}\backslash\dapp{\uu})|_{\tube}$, we have
\[
\Hom(\ww,\dapp{\uu}\backslash\dapp{\ww})=\Hom(\ww,(\dapp{\uu}\backslash\dapp{\ww})|_{\tube})=0,
\]
a contradiction.
Hence \eqref{eq:throw3} becomes equal.

\textbf{For $2^\circ$}:
If $(\dapp{\uu}\backslash\dapp{\ww})|_{\tube}=\emptyset$,
then $(\dapp{\ww}\backslash\dapp{\uu})|_{\tube}=\emptyset$ as above.
Hence the two differences $\dapp{\uu}\backslash\dapp{\ww}$ and $\dapp{\ww}\backslash\dapp{\uu}$
are both in $\vect(Q)$.
Note that $\pcts{\uu}\not\subset\Star(\sink)$, we have $\II(\pcts{\uu})\ne\emptyset$ and thus $\dapp{\uu}\backslash\dapp{\ww}\ne\emptyset$.
Applying the proof of \Cref{lem:ccross1} again we obtain \eqref{eq:throw4}.
\end{proof}

Choose $\pcts{\vv}_0$ to be a cell intersecting $L$ nontrivially with $\dapp{\vv}_0\subset[\uat\sink,\infty)\cup\tube(Q)$.
Then by $1^\circ$ of \Cref{lem:ccross3}, the consecutive cells $\pcts{\vv}_1,\pcts{\vv}_2,...$ intersecting $L$ nontrivially along $\dflow{\sink}$ satisfy
\[
\dapp{\vv}_k\subset[\uat\sink,\infty)\cup\tube(Q).
\]
Assume $\II|_{\tube}(\pcts{\vv}_0)\ne\emptyset$.
We claim that there exist $N_1\ge0$ such that
\[
\II|_{\tube}(\pcts{\vv}_{N_1})\subsetneqq\II|_{\tube}(\pcts{\vv}_{0}).
\]
Suppose not, then $\II|_{\tube}(\pcts{\vv}_{k+1})=\II|_{\tube}(\pcts{\vv}_{k})$ for any $k\ge0$.
Hence
\[
(\dapp{\vv}_{k+1}\backslash\dapp{\vv}_k)|_{\tube}=\emptyset
\]
for any $k\ge0$.
By $2^\circ$ of \Cref{lem:ccross3}, we have \eqref{eq:throw4}.
Applying \Cref{lem:dd} to $\II|_{\vect}$ we deduce that
$\II|_{\vect}(\pcts{\vv}_k)=\emptyset$ for $k\gg0$.
Hence
\[
(\dapp{\vv}_{k+1}\backslash\dapp{\vv}_k)|_{\vect}=\emptyset
\]
for $k\gg0$.
In such a case, $\dapp{\vv}_{k+1}\backslash\dapp{\vv}_k=\emptyset$.
Since there is a cell-crossing, $\dapp{\vv}_{k+1}\ne\dapp{\vv}_{k}$
then $\dapp{\vv}_{k+1}\subsetneqq\dapp{\vv}_{k}$.
Hence $\dapp{\vv}_{k}=\emptyset$ when $k\gg0$ due to the finiteness of $\dapp{\vv}_0$.
This contradiction proves our claim.
Repeating the process, we obtain a descending chain
\[
\II|_{\tube}(\pcts{\vv}_{0})\supsetneqq\II|_{\tube}(\pcts{\vv}_{N_1})\supsetneqq
\II|_{\tube}(\pcts{\vv}_{N_2})\supsetneqq\ldots
\]
and deduce that $\II|_{\tube}(\pcts{\vv}_k)=\emptyset$ when $k\gg0$.
From the proof above, we also have
$\II|_{\vect}(\pcts{\vv}_k)=\emptyset$ when $k\gg0$.
$L$ finally enters $\Star(\sink)$ by \eqref{criterion3} and then ends at $\sink$.

As a consequence, any $L$ intersecting $\pcts{\vv}$ nontrivially with $\dapp{\vv}\subset[\sink,\infty)\cup\tube$ has endpoint $\sink$.
Dually, when $\dapp{\vv}\subset\tube\cup(-\infty,\source]$,
$L$ has endpoint $\source$.
Then each $\sink$-leaf $L$ in the $\sink$-foliation is either compact or semi-compact by \eqref{eq:ext},
i.e. the $\sink$-foliation is semi-compact.


\end{document}